\newcommand{\C}{{\mathcal{C}}}
\newcommand{\rep}{{\mathrm{rep}}}
\newcommand{\vect}{{\mathrm{vect}}}
\newcommand{\Coh}{{\mathrm{Coh}}}
\newcommand{\dimvec}{\underline{\dim}}
\renewcommand{\mod}{{\mathrm{mod}}}
\newcommand{\gr}{\mathrm{gr}}
\newcommand{\Hom}{{\mathrm{Hom}}}
\newcommand{\GL}{{\mathrm{GL}}}
\newcommand{\Gm}{{\mathbb{G}_m}}
\newcommand{\sst}{{\mathrm{sst}}}
\newcommand{\st}{{\mathrm{st}}}
\newcommand{\ssp}{{\mathrm{ssp}}}
\newcommand{\Lul}{{\underline{L}}}
\newcommand{\ch}{{\mathrm{ch}}}
\newcommand{\todd}{{\mathrm{todd}}}
\newcommand{\E}{{\mathcal{E}}}
\newcommand{\F}{{\mathcal{F}}}
\newcommand{\rk}{{\mathrm{rk}}} 
\newcommand{\sat}{{\mathrm{sat}}}
\newcommand{\hmu}{{\widehat{\mu}}}
\newcommand{\Ocal}{{\mathcal{O}}}
\newcommand{\links}{{\mathrm{left}}}
\newcommand{\rechts}{{\mathrm{right}}}
\newcommand{\G}{{\mathcal{G}}}
\newcommand{\K}{{\mathcal{K}}}
\newcommand{\U}{{\mathcal{U}}}
\newcommand{\M}{{\mathcal{M}}}
\newcommand{\reg}{{\mathrm{reg}}}
\newcommand{\imgreg}{{[\mathrm{reg}]}}
\newcommand{\imgsst}{{[\mathrm{sst}]}}
\newcommand{\imgst}{{[\mathrm{st}]}}
\newcommand{\Cbar}{{\overline{C}}}
\newcommand{\ev}{{\mathrm{ev}}}
\newcommand{\Mbb}{{\mathbb{M}}}
\newcommand{\Ebb}{{\mathbb{E}}}
\renewcommand{\flat}{{\mathrm{flat}}}
\newcommand{\R}{{\mathbb{R}}}
\newcommand{\Q}{{\mathbb{Q}}}
\newcommand{\Z}{{\mathbb{Z}}}
\newcommand{\N}{{\mathbb{N}}}
\newcommand{\Spec}{{\mathrm{Spec}}}
\renewcommand{\ker}{{\mathrm{ker}}}
\newcommand{\im}{{\mathrm{im}}}
\newcommand{\Ext}{{\mathrm{Ext}}}
\newcommand{\HN}{{\mathrm{HN}}}
\newcommand{\Gal}{{\mathrm{Gal}}}
\newcommand{\Aut}{\mathrm{Aut}}
\newcommand{\I}{{\mathcal{I}}}
\newcommand{\m}{{\mathfrak{m}}}
\newcommand{\op}{{\mathrm{op}}}
\newcommand{\End}{{\mathrm{End}}}
\newcommand{\Quotfunc}{{\mathcal{Q}\mathrm{uot}}}
\newcommand{\Quot}{{\mathrm{Quot}}}
\newcommand{\Grassfunc}{{\mathcal{G}\mathrm{rass}}}
\newcommand{\Sym}{{\mathrm{Sym}}}
\renewcommand{\P}{{\mathbb{P}}}
\newtheorem{proposition}{Proposition}[section]
\newtheorem{lemma}[proposition]{Lemma}
\newtheorem{corollary}[proposition]{Corollary}
\newtheorem{theorem}[proposition]{Theorem}
\theoremstyle{definition}
\newtheorem{thmx}{Theorem}
\newtheorem{definition}[proposition]{Definition}
\newtheorem{remark}[proposition]{Remark}
\newtheorem{example}[proposition]{Example}
\newtheorem{assumption}[proposition]{Assumption}
\newtheorem{convention}[proposition]{Convention}
\title[]{Multi-Gieseker semistability and moduli of quiver sheaves}
\author{Marcel Maslovari\'c and Henrik Sepp\"anen} 
\thanks{The first author was supported by the DFG Research Training Group 1493 "Mathematical Structures in Modern Quantum Physics".}
\address{Marcel Maslovari\'c,
Mathematisches Institut,
Georg-August-Univer\-sit\"at G\"ot\-tingen,
Bunsenstra\ss e 3-5, 
D-37073 G\"ottingen,
Germany}
\email{marcel-helmut.maslovaric@mathematik.uni-goettingen.de}
\address{Henrik Sepp\"{a}nen,
Mathematisches Institut,
Georg-August-Univer\-sit\"at G\"ot\-tingen,
Bunsenstra\ss e 3-5, 
D-37073 G\"ottingen,
Germany}
\email{henrik.seppaenen@mathematik.uni-goettingen.de}
\begin{document}
 
\maketitle

%

\begin{abstract}
 We generalize the notion of multi-Gieseker semistability for coherent sheaves, introduced by Greb, Ross, and Toma, to quiver sheaves for a quiver $Q$. We construct coarse moduli 
 spaces for semistable quiver sheaves using a functorial method that realizes these as subschemes of moduli spaces of representations of 
 a twisted quiver, depending on $Q$, with relations. We also show the projectivity of the moduli space in the case when $Q$ has no oriented cycles. Further, we construct moduli spaces of quiver sheaves which satisfy a given set of relations as closed subvarieties. 
 Finally, we investigate the parameter dependence of the moduli.
\end{abstract}

\tableofcontents

\section{Introduction}
Mumfords's Geometric Invariant Theory (GIT) (cf. \cite{mumford}) plays an important role in the construction of moduli spaces of vector bundles, and, more generally, coherent sheaves, on projective 
varieties. Instead of considering all coherent sheaves of a given topological type, that is, with some predescribed Chern character, one only looks at the \emph{semistable} ones, a 
condition that compares a Hilbert polynomial of a sheaf with that of its subsheaves. 

The idea of Gieseker's construction is to construct the moduli space as a quotient of a locally closed subset of a Grassmannian, or a Quot-scheme, by the action of a general linear group. The 
semistable sheaves will then appear as orbit closures of semistable points--in the sense of GIT--with respect to a line bundle on the Grassmannian.

Another example of the role of GIT in the study of moduli is King's construction of moduli spaces of semistable representations of a quiver, $Q$, with a given 
dimension vector $d \in \N^{Q_0}$ (\cite{king}). Again, there is 
a natural notion of semistability, depending on some tuple $\theta$ of real parameters, for a quiver representation, and King identifies this notion with Mumford's notion of 
semistability for points on the so-called representation variety $R_d(Q)$, where a suitable linearization is defined.  

The notions of semistability from the two above examples meet in the functorial construction of moduli spaces of Gieseker-semistable coherent sheaves due to 
\'Alvarez-C{\'o}nsul and King (\cite{ack}). Here, a functor from the category $\mbox{Coh}(X)$ of coherent sheaves on $X$ to the category of representations of a 
Kronecker quiver is defined in such a way that the moduli space of coherent sheaves is given as a GIT quotient of a locally closed subset of a 
representation variety $R_d(Q)$ of this quiver. Thus, the moduli of sheaves becomes embedded as a closed subscheme of a moduli space of semistable representations of $Q$.

In both cases, some parameter needs to be chosen in order to define the appropriate notion of semistability. In the case of sheaves, this parameter is an ample line bundle, $L$, on $X$, so that 
the Hilbert polynomials are calculated with respect to this line bundle. In the second case, the parameter is a tuple of real numbers. These parameters can naturally vary in a real vector space.
When Hilbert polynomials are considered as parameters defining semistability, it is natural to fix a tuple $\Lul=(L_1,\ldots, L_N)$ 
of ample line bundles and to compute Hilbert polynomials with respect to $\Q$-line bundles $L_1^{q_1} \otimes \cdots \otimes L_N^{q_N}$, for tuples $(q_1,\ldots, q_N)$ of nonnegative rational 
numbers. In the case when $X$ is a surface, it was shown by Matsuki and Wentworth (\cite{matsukiwentworth}) that the cone of parameters is divided up by locally finitely many rational walls
into chambers in such a way that two rational points in the interior of the same full-dimensional cone define the same moduli space. Moreover, the moduli spaces 
from two chambers separated by one of these walls are related via a Thaddeus flip, a notion from GIT which produces a birational map if both spaces are 
irreducible (cf. \cite{thaddeus}). In fact, similar results had previously been proven for slope semistability by several authors 
(cf. the introduction in \cite{grt}). In higher dimensions, the picture is more complicated. First of all, the walls dividing the 
cone of parameters into subcones with constant moduli need neither be linear, nor rational. In fact, Schmitt (\cite{schmitt-wall}) found an example of a wall 
separating two full-dimensional cones that contains no rational points. For points in such a wall there is no obvious definition of a Hilbert polynomial, and 
moduli spaces of the type above do not make sense.

Motivated by this lack of a notion of semistability for irrational parameters, Greb, Ross, and Toma (\cite{grt}) introduced the notion of multi-Gieseker semistablity. Again, a tuple 
$\Lul=(L_1,\ldots, L_N)$ of ample line bundles is fixed. However, instead of looking at tensor powers, the authors consider 
nonnegative linear combinations $\sum_{j=1}^N \sigma_j P^{L_j}_{E}$, with $\sigma_1,\ldots, \sigma_N \in \R$, of the Hilbert polynomials 
$P^{L_j}_{E}$ with respect to the $L_j$, of  a coherent sheaf $E$. Moreover, they provide a functorial construction of a moduli space--generalizing that of \cite{ack}--by defining a functor from 
the category $\mbox{Coh}(X)$ to the category of representations of a certain quiver which depends on the line bundles $L_1,\ldots, L_N$ and the section spaces of tensor powers of these line bundles.

In this paper, we address the question, asked in \cite{grt}, whether the construction of multi-Gieseker semistable sheaves has a counterpart for quiver sheaves, that is, for 
representations of a quiver, $Q$, in the category $\mbox{Coh}(X)$ of coherent sheaves. As a simple example of a quiver sheaf, we could consider triples $(\mathcal{E}_1, \mathcal{E}_2, \varphi)$, 
where the $\mathcal{E}_i$ are vector bundles on $X$, and $\varphi: \mathcal{E}_1 \to \mathcal{E}_2$ is a morphism of $\mathcal{O}_X$-modules.

We introduce a notion of multi-Gieseker semistability which depends on a tuple $\Lul=(L_1,\ldots, L_N)$ of ample line bundles, as well as on a pair, $(\sigma, \rho)$, 
of tuples in $\R^{Q_0 \times N}$. This generalizes both the above mentioned multi-Gieseker semistability for sheaves and King's notion of semistability for quiver representations. 

To be more precise, let $Q=(Q_0, Q_1)$ be a quiver, where $Q_0$ is the set of vertices, and $Q_1$ is the set of arrows. Let 
$\sigma, \rho \in (\R_{\geq 0})^{Q_0 \times N} \setminus 0$ be tuples of nonnegative real numbers. For 
a coherent quiver sheaf $\mathcal{E}$ with sheaves $\mathcal{E}_i, i \in Q_0$, attached to the vertices of $Q$ we define 
the multi-Hilbert polynomial with respect to $(\Lul, \sigma)$ by
\begin{align*}
P_{\mathcal{E}}^{(\sigma, \rho)}(m)=\sum_{i\in Q_0}\sum_{j=1}^N \sigma_{ij} \chi(\mathcal{E}_i \otimes L_j^m), \quad m \in \N.
\end{align*}
Moreover, we define the rank of $\mathcal{E}_i$ with respect to $L_j$ as 
\begin{align*}
 \rk^{L_j}(\mathcal{E}_i)=\rk(\mathcal{E}_i)\int_X c_1(L_j)^n,
\end{align*}
where $n=\dim(X)$. 
The reduced multi-Hilbert polynomial of $\mathcal{E}$ with respect to $(\Lul, \sigma, \rho)$ is then
\begin{align*}
 p_{\mathcal{E}}^{(\sigma, \rho)}=\frac{P_{\mathcal{E}}^{\sigma}}{\sum_{i,j} \rho_{ij}\rk^{L_j}_{\mathcal{E}_i}}.
\end{align*}
We require from a quiver sheaf $\mathcal{E}$ that all the sheaves $\mathcal{E}_i$ be pure of the same dimension, $d$. In this case, 
we say that $\mathcal{E}$ is pure of dimension $d$.
A quiver sheaf $\mathcal{E}$ is multi-Gieseker semistable with respect to $(\Lul,
\sigma, \rho)$ if it is pure and for all nontrivial quiver subsheaves 
$\mathcal{F}$ we have
\begin{align*}
 p_{\mathcal{E}}^{(\sigma, \rho)}\leq p_{\mathcal{F}}^{(\sigma, \rho)}, 
\end{align*}
where the inequality is with respect to the lexicographic order of the coefficients of the polynomials. We say that $\mathcal{E}$ is 
stable if the inequality is strict for all nontrivial proper quiver subsheaves $\mathcal{F}$. 

We then see that King's notion of a semistable representation of $Q$ in the category of vector spaces over $k$ is indeed 
the special case when $X$ is a point, and $\rho=(1,\ldots, 1)$, whereas the semistability condition for sheaves in \cite{grt} is 
the special case of a trivial quiver and $\rho=\sigma$.

Being inspired by the work \cite{grt}, in the present paper we focus on the case $\rho=\sigma$, for an arbitrary quiver $Q$.
We then write $p_{\mathcal{E}}^\sigma$ instead of $p_{\mathcal{E}}^{(\sigma, \sigma)}$, and say that a quiver sheaf is $\sigma$-semistable. 
We let $\tau=(\tau_i)_{i \in Q_0} \in B(X)_\Q^{Q_0}$ denote a topological type. Our goal is to construct moduli spaces 
for families of $\sigma$-semistable quiver sheaves of a fixed topological type $\tau$. The main theorem of the paper is the following. 
(We present it here in a simplified version for the sake of readability. The precise formulation concerns quiver sheaves satisfying a set $I$ of relations.)
\begin{thmx} (cf. Theorem \ref{T: existmoduli}, Corollary \ref{C:C:relations_closed})
Let $Q$ be a quiver, $k$ an algebraically closed field of characteristic zero, and $X$ a smooth projective variety over $k$. Let $\tau$ be a fixed topological type. 
If the family of $\sigma$-semistable quiver sheaves of topological type $\tau$ is bounded, then the coarse moduli space of $\sigma$-semistable quiver sheaves of 
topological type $\tau$ exists 
as a quasi-projective scheme, $M^\sigma$, over $k$. If $Q$ has no oriented cycles, and $\sigma$ is rational, $M^\sigma$ is projective over $k$.
\end{thmx}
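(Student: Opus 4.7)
The plan is to adapt the functorial construction of \'Alvarez-C\'onsul--King, as extended to the multi-Gieseker setting by Greb, Ross, and Toma, to the case of quiver sheaves. The first task is to exploit boundedness: I would fix two large integers $m \ll n$ such that, for every $\sigma$-semistable quiver sheaf $\E$ of type $\tau$ and every $(i,j) \in Q_0 \times \{1,\dots,N\}$, the vertex sheaf $\E_i$ is both $m$- and $n$-regular with respect to $L_j$, its evaluation at the $m$-level is surjective, and the natural multiplication $H^0(\E_i \otimes L_j^m) \otimes H^0(L_j^{n-m}) \to H^0(\E_i \otimes L_j^n)$ is surjective. Boundedness forces uniform such $m,n$ to exist, and all section-space dimensions then depend only on $\tau$.

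With these regularity choices in place I would introduce a twisted quiver $\widetilde{Q}$ with relations, whose vertices are indexed by $Q_0 \times \{1,\dots,N\} \times \{m,n\}$ and whose arrows encode two kinds of data: the section-multiplication maps of \cite{grt} transitioning from level $m$ to level $n$ (and mixing the different $L_j$), and, for every arrow $\alpha : i \to i'$ of $Q_1$, the maps on section spaces induced by $\varphi_\alpha$ composed with section multiplication. A functor $\Phi$ from $\Coh(X)$-representations of $Q$ to representations of $\widetilde{Q}$ is then defined by sending $\E$ to the collection of section spaces $H^0(\E_i \otimes L_j^\bullet)$ together with all these structure maps, and a left adjoint $\Psi$ can be constructed as in \cite{ack,grt}. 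The quiver sheaf axioms (together with any relations that one wishes to impose on $Q$) translate into a set of relations cutting out a closed subvariety of the representation variety $R_{d_\tau}(\widetilde{Q})$ for the dimension vector $d_\tau$ determined by $\tau$. One then identifies, as in loc.\ cit., the locally closed subscheme of $R_{d_\tau}(\widetilde{Q})$ parameterizing representations isomorphic to $\Phi(\E)$ for some pure $\E$ of type $\tau$.

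The main obstacle is the comparison of semistability notions. I would construct a King parameter $\theta = \theta(\sigma)$ on $\widetilde{Q}$ so that, when restricted to the image of $\Phi$, King $\theta$-semistability is equivalent to $\sigma$-semistability of quiver sheaves. The lexicographic comparison of reduced multi-Hilbert polynomials is encoded by working at two Gieseker indices: the leading coefficient is controlled at level $n$ while finer coefficients are read at level $m$, and the weights $\sigma_{ij}$ distribute across the vertices of $\widetilde{Q}$. The heart of this step is the dictionary between quiver subsheaves of $\E$ and subrepresentations of $\Phi(\E)$, which combines saturation with the uniform regularity from Step~1. Once the two semistability notions are shown to agree, King's GIT theorem produces $M^\sigma$ as the good quotient of the $\theta$-semistable locus of the cut-out locally closed subscheme, yielding a quasi-projective coarse moduli space over $k$.

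For the projectivity statement, assume $Q$ is acyclic and $\sigma$ is rational. Then $\widetilde{Q}$ is also acyclic (its arrows go from level $m$ to level $n$ and respect the orientation of $Q$), so King's moduli of $\theta$-semistable representations of $\widetilde{Q}$ subject to the imposed relations is projective over $k$; rationality of $\sigma$ guarantees that $\theta$ can be chosen rational, keeping us inside King's framework. It then suffices to show that the image of the $\sigma$-semistable locus is closed in this projective moduli space. I would establish this via a valuative, Langton-type extension argument: given a one-parameter family of $\theta$-semistable representations whose generic member lies in the image of $\Phi$, one purifies and extends the corresponding family of quiver sheaves to a $\sigma$-semistable limit, producing a unique $S$-equivalence class in the image, as in \cite{grt}. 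This finishes the proof.
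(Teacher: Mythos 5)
Your proposal follows essentially the same route as the paper: boundedness gives uniform regularity, the section spaces at two twist levels assemble into a representation of a twisted, labeled quiver with relations, a fully faithful functor identifies $\sigma$-semistable quiver sheaves with $\theta$-semistable representations via a Le Potier--Simpson-type dictionary between saturated quiver subsheaves and subrepresentations, and the moduli space is the GIT quotient of the resulting locally closed subscheme; projectivity is obtained from acyclicity of the twisted quiver together with a DVR-extension and Langton-type argument. The only minor discrepancy is the role of rationality of $\sigma$: in the paper it is needed for the discreteness step in Langton's theorem rather than merely to keep the King parameter rational, and the acyclicity of $Q$ is also used to extend the \emph{morphisms} of a quiver sheaf over the closed point of the DVR, not only to make the representation moduli projective.
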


In order to ensure the boundedness condition, we consider the special case when the parameter $\sigma_{ij}$ is \emph{symmetric}, that is, it 
only depends on the second index $j$. We can then 
identify $\sigma$ with a parameter $\hat{\sigma} \in \R_{\geq 0}^N$. In this case, a $\sigma$-semistable quiver sheaf $\mathcal{E}$ consists of semistable sheaves 
$\mathcal{E}_i$ in the sense of Greb, Ross, and Toma. More precisely, we have the following theorem.

\begin{thmx} (cf. Proposition \ref{P: semistab-tuple-quivsheaf}) \label{T: sheaf-tuple-intro}
Assume that $\sigma$ is symmetric, and let $\hat{\sigma} \in \R_{\geq 0}^N$ be 
the associated tuple. A quiver sheaf $\mathcal{E}$ is $\sigma$-semistable if and only if each $\mathcal{E}_i$ is $\hat{\sigma}$-semistable and 
the multi-Hilbert polynomials $p^{\hat{\sigma}}_{\mathcal{E}_i}$ of the $\mathcal{E}_i$ all satisfy 
$p^{\hat{\sigma}}_{\mathcal{E}_i}=p^{\sigma}_{\mathcal{E}}$.
\end{thmx}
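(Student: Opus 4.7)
The plan is to exploit the identity, valid when $\sigma$ is symmetric with associated $\hat{\sigma} \in \R_{\geq 0}^N$, that $P^\sigma_\mathcal{E} = \sum_i P^{\hat{\sigma}}_{\mathcal{E}_i}$ and similarly for the rank normalization, so that
\begin{align*}
p^\sigma_\mathcal{E} = \frac{\sum_i r^{\hat{\sigma}}_i(\mathcal{E}) \, p^{\hat{\sigma}}_{\mathcal{E}_i}}{\sum_i r^{\hat{\sigma}}_i(\mathcal{E})}, \qquad r^{\hat{\sigma}}_i(\mathcal{E}) := \sum_j \hat{\sigma}_j \rk^{L_j}(\mathcal{E}_i),
\end{align*}
exhibits $p^\sigma_\mathcal{E}$ as a positive-weight convex combination of the vertex reduced multi-Hilbert polynomials. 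A basic lexicographic lemma is used throughout: if $p = \sum w_i p_i / \sum w_i$ with $w_i > 0$ and $p_i \geq p$ lex for every $i$, then $p_i = p$ for every $i$ (otherwise the leading nonzero coefficient of $\sum w_i(p_i - p) = 0$ would be a positive sum of nonnegatives).

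The backward implication is routine: each $\mathcal{E}_i$ is pure so $\mathcal{E}$ is pure, and for a nontrivial quiver subsheaf $\mathcal{F} \subseteq \mathcal{E}$ the nonzero components $\mathcal{F}_i \subseteq \mathcal{E}_i$ satisfy $p^{\hat{\sigma}}_{\mathcal{F}_i} \geq p^{\hat{\sigma}}_{\mathcal{E}_i} = p^\sigma_\mathcal{E}$ by semistability, so the averaging formula yields $p^\sigma_\mathcal{F} \geq p^\sigma_\mathcal{E}$.

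For the forward implication, assume $\mathcal{E}$ is $\sigma$-semistable, hence pure. The key technical input I would establish is
\begin{align*}
p^{\hat{\sigma}}_\mathcal{G} \geq p^\sigma_\mathcal{E} \quad \text{for every } \hat{\sigma}\text{-semistable subsheaf } \mathcal{G} \subseteq \mathcal{E}_i, \text{ at any vertex } i.
\end{align*}
The idea is to test $\sigma$-semistability of $\mathcal{E}$ against the quiver subsheaf $\langle \mathcal{G} \rangle \subseteq \mathcal{E}$ generated by $\mathcal{G}$ placed at $i$, with components $\langle \mathcal{G} \rangle_j = \sum_{p: i \to j} \varphi_p(\mathcal{G})$, a finite sum by noetherianity of $\mathcal{E}_j$. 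Each $\langle \mathcal{G} \rangle_j$ is therefore a coherent quotient of the $\hat{\sigma}$-semistable direct sum $\mathcal{G}^{N_j}$, and since quotients of semistable sheaves have reduced polynomial at most that of the semistable source, $p^{\hat{\sigma}}_{\langle \mathcal{G} \rangle_j} \leq p^{\hat{\sigma}}_\mathcal{G}$ for every $j$. Averaging over $j$ gives $p^\sigma_{\langle \mathcal{G} \rangle} \leq p^{\hat{\sigma}}_\mathcal{G}$, and the $\sigma$-semistability of $\mathcal{E}$ applied to the nontrivial subsheaf $\langle \mathcal{G} \rangle$ produces the chain $p^\sigma_\mathcal{E} \leq p^\sigma_{\langle \mathcal{G} \rangle} \leq p^{\hat{\sigma}}_\mathcal{G}$.

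Both conclusions then drop out simultaneously. For each vertex $k$, take $\mathcal{G}_k$ to be the maximal destabilizing subsheaf of $\mathcal{E}_k$ when $\mathcal{E}_k$ is $\hat{\sigma}$-unstable and $\mathcal{G}_k = \mathcal{E}_k$ otherwise; in both cases $\mathcal{G}_k$ is $\hat{\sigma}$-semistable with $p^{\hat{\sigma}}_{\mathcal{G}_k} \leq p^{\hat{\sigma}}_{\mathcal{E}_k}$. The key inequality gives $p^{\hat{\sigma}}_{\mathcal{E}_k} \geq p^{\hat{\sigma}}_{\mathcal{G}_k} \geq p^\sigma_\mathcal{E}$ for every $k$, and the lex-averaging lemma forces $p^{\hat{\sigma}}_{\mathcal{E}_k} = p^\sigma_\mathcal{E}$. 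If any $\mathcal{E}_i$ were unstable, the strict inequality $p^{\hat{\sigma}}_{\mathcal{G}_i} < p^{\hat{\sigma}}_{\mathcal{E}_i} = p^\sigma_\mathcal{E}$ would contradict the key inequality applied to $\mathcal{G}_i$. The main obstacle is the choice of destabilizing family: the quiver subsheaf $\langle \mathcal{G} \rangle$ works precisely because each of its vertex components is a quotient of a semistable direct sum of $\mathcal{G}$, which lets the ``quotient has smaller reduced polynomial'' bound propagate coherently through the quiver and survive the averaging; this step also requires the existence of maximal destabilizing subsheaves for multi-Gieseker semistability, presumably developed earlier in the paper.
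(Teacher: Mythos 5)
Your argument is essentially correct and follows a genuinely different, and in fact more economical, route than the paper's --- but every semistability inequality in your write-up is reversed relative to the paper's convention. The paper (following \cite{grt} and \cite{huybrechtslehn}) defines $\E$ to be semistable when $p^\sigma_\F\leq p^\sigma_\E$ for all nontrivial quiver subsheaves $\F$, so that quotients of the same dimension have reduced polynomial \emph{at least} that of the source and the maximal destabilizing subsheaf has \emph{maximal} reduced polynomial. With that convention your backward direction must conclude $p^\sigma_\F\leq p^\sigma_\E$, your key inequality reads $p^{\hat\sigma}_\G\leq p^\sigma_\E$ for every $\hat\sigma$--semistable $\G\subseteq\E_i$, and the final contradiction comes from $p^{\hat\sigma}_{\max}(\E_k)\leq p^\sigma_\E=p^{\hat\sigma}_{\E_k}$. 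Since the flip is global and your lexicographic averaging lemma is symmetric, the corrected argument goes through; but as literally written, statements such as ``$p^{\hat\sigma}_{\F_i}\geq p^{\hat\sigma}_{\E_i}$ by semistability'' contradict the definitions in force. One point you should make explicit when fixing the signs: the quotient bound $p^{\hat\sigma}_{\langle\G\rangle_j}\geq p^{\hat\sigma}_\G$ is only valid because $\langle\G\rangle_j$ is either zero or of dimension $d$, which holds since it is a subsheaf of the pure sheaf $\E_j$ (purity of $\E$ coming from $\sigma$--semistability); for a lower-dimensional quotient the inequality would fail.

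The paper's proof is organized differently: it introduces $Q_0$--tuples, proves that vertex-wise $\hat\sigma$--semistability with equal reduced polynomials is equivalent to tuple semistability (Lemma \ref{L:tuplesst_sheafsst}, via the one-vertex objects $\delta_{i_0}(E)$), and then shows that a $\sigma$--semistable quiver sheaf is semistable as a tuple by taking the Harder--Narasimhan filtration in the tuple category, finding an arrow that fails to preserve it, and contradicting the Hom-vanishing Lemma \ref{L:onlyzerohom} between semistable tuples of decreasing reduced polynomial. Your construction of the quiver subsheaf $\langle\G\rangle$ generated by a single candidate destabilizer, together with the observation that each component is a pure quotient of a direct sum of copies of the semistable sheaf $\G$, replaces both the tuple formalism and the Hom-vanishing lemma; the only external inputs you need are the existence of maximal destabilizing subsheaves for multi-Gieseker semistability of sheaves (available from \cite{grt} or \cite{rudakov}) and the stability of the lexicographic order under positive combinations, both of which you correctly identify.
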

This means that the boundedness results from \cite{grt} (Cor. 6.12) are applicable. In particular, if $\dim(X) \leq 3$, then the 
family of $\sigma$-semistable quiver sheaves is bounded for any symmetric $\sigma$.

The construction of the moduli space follows the functorial method by \'Alvarez-C{\'o}nsul and King (\cite{ack}), which was further developed 
by Greb, Ross, and Toma in \cite{grt}. As in \cite{grt}, 
the tuple $\underline{L}$ determines a certain auxiliary labeled quiver $(Q', H)$, where the labels $H_\alpha$ on the arrows are section spaces of 
products of powers of the $L_j$. We then use our quiver $Q$ to define a ``twisted'' quiver $Q(Q')$. Roughly speaking, each vertex $i$ of $Q$ 
defines a copy of $Q'$, and the arrows of $Q$ define ``morphisms'' of various copies of $Q'$, meaning that we are interested in 
representations of $Q(Q')$ where the diagrams induced by the arrows of $Q$ are required to commute. 
As a generalization of the construction in \cite{ack} and \cite{grt}, we define a functor 
\begin{align*}
 \mbox{Hom}(T, \bullet): Q-\mbox{Coh}(X) \rightarrow (Q(Q'), H, I_1')-\mbox{rep},
\end{align*}
from the category of coherent quiver sheaves on $X$ to the category of representations of the labeled quiver $(Q(Q'), H)$ in vector spaces with 
a set of relations $I_1'$, where the last condition means the commutativity of the diagrams in $Q(Q')$ defined by the arrows in $Q_1$.
This functor identifies $\sigma$--semistable quiver sheaves with certain semistable representations of $(Q(Q'), H, I_1')$ in the sense of GIT, 
and this allows us to obtain the moduli space $M^\sigma$ as a locally closed subscheme of a GIT quotient, namely of a 
moduli space of semistable representations of $(Q(Q'), H, I_1')$.
Finally, if we consider quiver sheaves satisfying a set of relations $I$, as in the remark before the above theorem, the relations $I$ 
define a set of relations $I_2'$ on the ``morphisms'' of $Q'$ that also has to be taken into account on the right hand side that concerns representations of $Q(Q')$.


Just as the motivation for the approach in \cite{grt} was to give a meaning to moduli spaces for nonrational stability conditions, and to study their dependence on this 
stability condition, we obtain a similar result in the case of symmetric parameters $\sigma$. For this, we need the moduli spaces to be projective. Again, we
present a simplified version of the result and remark that, in contrast to the case in \cite{grt}, the walls are in general given as quadratic hypersurfaces 
(cf. Proposition \ref{V:P:chambers}).
\begin{thmx} (cf. Proposition \ref{SYM:P:wallslinear}, Corollary \ref{C: linearwalls}) 
 For torsion-free quiver sheaves, the set of symmetric stability conditions $\sigma$ is partitioned into equivalence classes by a finite set of rational linear hyperplanes 
 in such a way that the set of $\sigma$--semistable quiver sheaves is constant on these classes. Moreover, if $Q$ has no oriented cycles, this implies that the moduli spaces 
 $M^\sigma$ are projective over $k$ for
 arbitrary bounded nonnegative symmetric $\sigma$. 
 
 If $\sigma$ and $\sigma'$ are two bounded symmetric stability conditions, the 
 moduli spaces $M^\sigma$ and $M^{\sigma'}$ are related via a finite sequence of Thaddeus flips. These flips are the ``wall-crossings'' defined by the VGIT of 
 representations of $Q(Q')$.
\end{thmx}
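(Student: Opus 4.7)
The plan is to exploit Theorem~\ref{T: sheaf-tuple-intro}, which identifies $\sigma$-semistability in the symmetric case with $\hat{\sigma}$-semistability of each $\mathcal{E}_i$ together with the equality $p_{\mathcal{E}_i}^{\hat{\sigma}}=p_{\mathcal{E}}^{\sigma}$. For torsion-free quiver sheaves every $\mathcal{E}_i$ is pure of dimension $n=\dim(X)$, so $\rk^{L_j}(\mathcal{E}_i)=\rk(\mathcal{E}_i)\int_X c_1(L_j)^n$ and the denominator of $p^\sigma_{\mathcal{E}}$ factors as $\bigl(\sum_j \hat{\sigma}_j\int_X c_1(L_j)^n\bigr)\bigl(\sum_i \rk(\mathcal{E}_i)\bigr)$. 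The analogous factor appears in $p^\sigma_{\mathcal{F}}$ for any quiver subsheaf $\mathcal{F}\subset\mathcal{E}$, and cross-multiplying the inequality $p^\sigma_{\mathcal{F}}\le p^\sigma_{\mathcal{E}}$ cancels it. What remains is a coefficient-wise lexicographic comparison of polynomials whose coefficients are $\Q$-linear in $\hat{\sigma}$, so equality and strict inequality conditions for fixed $(\mathcal{E},\mathcal{F})$ are cut out by rational linear hyperplanes in $\R^N$.

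Next I would bound the number of relevant walls. By the boundedness hypothesis, the family of $\sigma$-semistable quiver sheaves of topological type $\tau$ is bounded in a neighbourhood of any given parameter, and the standard boundedness for saturated subsheaves (as in \cite{grt} and already invoked in the construction of $M^\sigma$) yields finitely many possible topological types for a destabilizing quiver subsheaf. Each such type produces finitely many rational linear conditions on $\hat{\sigma}$, so the full set of walls forms a finite rational linear arrangement. On every relatively open face of this arrangement, the lexicographic inequality data for all pairs $(\mathcal{E},\mathcal{F})$ is constant, hence so is the set of $\sigma$-semistable quiver sheaves. This proves the first assertion.

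For projectivity when $Q$ has no oriented cycles, let $\sigma$ be an arbitrary bounded symmetric stability condition. Its equivalence class in the arrangement is a relatively open rational polyhedral cell, and hence contains a rational point $\sigma'$. By Theorem~\ref{T: existmoduli}, applied to this rational $\sigma'$, the moduli space $M^{\sigma'}$ is projective over $k$. By constancy of the semistable set on the class, $M^\sigma=M^{\sigma'}$, hence $M^\sigma$ is projective as well.

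For the Thaddeus-flip statement, join $\sigma$ and $\sigma'$ by a generic line segment in the relevant bounded region; the finite wall statement ensures the segment meets only finitely many walls, and a small perturbation makes each crossing a transverse single-wall crossing. The functorial construction embeds each $M^{\sigma_t}$ as a closed subscheme of a GIT quotient of the twisted representation space of $(Q(Q'),H,I'_1)$ with a linearization that depends linearly on $\hat{\sigma}$. The walls identified above correspond to GIT walls for this linearization, and the standard Thaddeus--Dolgachev--Hu theory of variation of GIT then produces a Thaddeus flip at each crossing. I expect the main obstacle to be this correspondence between the intrinsic walls coming from quiver-sheaf semistability and the GIT walls of the auxiliary representation variety: verifying this identification is what guarantees that the wall-crossings are ``the same'' on both sides, so that no spurious flips are created or genuine ones lost.
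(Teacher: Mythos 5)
Your argument for the linearity and finiteness of the walls is essentially the paper's own: the paper writes the wall $W_{e,\F}$ in the symmetric torsion-free case as a positive factor $f(\sigma)=\bigl(\sum_{l}\hat{\sigma}_l\alpha_d^{L_l}(\Ocal_X)\bigr)^{-1}$ times a rational linear form in $\hat{\sigma}$ (Proposition \ref{SYM:P:wallslinear}), which is exactly your cancellation of the common factor of the denominators after cross-multiplying; finiteness comes, as you say, from boundedness of the family of saturated destabilizing quiver subsheaves over a closed auxiliary cone (Lemma \ref{QS:L:grothendieck} and Subsection \ref{SS:wallschambers}), and projectivity follows by replacing $\sigma$ with a rational point of its chamber and invoking constancy of semistability (and of $S$-equivalence, Proposition \ref{V:P:chambers}) on chambers.

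Where you diverge is the Thaddeus-flip statement, and there your plan is slightly harder than what the paper actually does. You propose to cross the intrinsic wall arrangement along a generic segment and to match each intrinsic wall with a GIT wall of the representation variety, and you correctly flag this matching as the delicate point. The paper avoids it entirely: for a finite set $\Sigma$ of rational symmetric parameters it forms the single $G$-invariant closed ``master space'' $Z=\overline{\bigcup_{\sigma\in\Sigma}B^{[\sigma-\sst]}}$ inside $R_d(Q(Q'),H,I'(I))$, proves $Z^{\theta(\sigma)-\sst}=B^{[\sigma-\sst]}$ for each $\sigma\in\Sigma$ (Theorem \ref{V:T:masterspace}), and then simply takes the Thaddeus flips to be those produced by VGIT of $Z$ as the linearization varies between $\theta(\sigma)$ and $\theta(\sigma')$. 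No bijection between intrinsic walls and GIT walls of $Z$ is asserted or needed; the statement only claims that the two moduli spaces are related by a finite sequence of flips coming from VGIT of $Z$. So your approach is workable in spirit, but to close it as written you would either have to prove the wall correspondence you identify as the obstacle, or restructure the last step along the paper's master-space lines, which is the cheaper route.
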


We end this introduction by remarking that there are other approaches to semistability conditions on quiver sheaves (cf. \cite{alvarezconsul}, \cite{schmitt-tuple}, \cite{schmitt-qsheaf}). 
However, these all have the common feature that only one ample line bundle is used to define the Hilbert polynomials for the sheaves involved, and then linear combinations of these 
are formed where the coefficients are allowed to be certain polynomials. In contrast to our multi-Gieseker approach, the boundedness of the families of semistable 
quiver sheaves holds for much the same reason as for the classical Gieseker semistability for sheaves. 

\bigskip
{\small\noindent {\bf Acknowledgements.} } The authors would like to thank Daniel Greb, Markus Reineke, Matei Toma, and Georg Merz for helpful discussions about the 
topic of this paper, as well as Manfred Lehn for having kindly answered our questions about semistability and moduli.
The first author is grateful to the DFG for financial support within the framework of the Research Training Group 
1493 ``Mathematical Structures in Modern Quantum Physics''.

\section{Quiver sheaves}

We start by introducing the notion of multi--Gieseker stability and the category of quiver sheaves. Also, we establish some foundational results about this notion of stability.~\\

\subsection{Basic notions}~\\

Let $X$ denote any scheme, and let $Q$ denote an (unlabeled) quiver.

\begin{definition}
A quiver sheaf $\E$ on $X$ associated to the quiver $Q$ is a representation of $Q$ in the category of coherent sheaves on $X$. That is, we have a family of coherent sheaves $\E_i,~i\in Q_0$ associated to the vertices, and a family $\E_\alpha:\E_i\to \E_j,~\alpha:i\to j$ of morphisms of coherent sheaves associated to the arrows.\\

Occasionally, we will also consider representations of $Q$ in the category of quasi--coherent sheaves, and call them quasi--coherent quiver sheaves.\\

A morphism of quiver sheaves $\varphi:\E\to \F$ is a collection of morphisms $\varphi_i:\E_i\to \F_i$ for each vertex $i\in Q_0$, such that the canonical diagram
$$\xymatrix{\E_i \ar[rr]^{\varphi_i} \ar[d]_{\E_\alpha} && \F_i \ar[d]^{\F_\alpha} \\
\E_j \ar[rr]_{\varphi_j} && \F_j}$$
for each arrow $\alpha:i\to j$ commutes.
\end{definition}

For a path $\gamma=\alpha_l\ldots\alpha_1$ and a quiver sheaf $\E$ recall that we denote
$$\E_\gamma=\E_{\alpha_l}\ldots\E_{\alpha_1},$$
and say that $\E$ satisfies a relation $\sum_{k=1}^r \lambda_k \gamma_k$ if the identity
$$\sum_{k=1}^r \lambda_k \E_{\gamma_k}=0$$
of morphisms of sheaves holds. This property is inherited by quiver subsheaves and quiver quotient sheaves.\\

To give a first idea about how to interpret quiver sheaves as configurations of morphisms of sheaves, we provide some simple examples.

\begin{example}
The simplest non--trivial example of a quiver sheaf is a morphism
$$\E_1\xrightarrow{\E_\alpha}\E_2,$$
associated to the quiver consisting of two vertices and an arrow between them. Another example is that of a pair $(E,\phi)$, consisting of a sheaf $E$ and an endomorphism $\phi$. By using relations we could require
\begin{center}
$\phi^r=0$, or $\phi^r=\phi$,
\end{center}
ie. that $\phi$ is nilpotent or idempotent respectively. A slightly more sophisticated example is that of two composable morphisms
$$\E_1\xrightarrow{\E_\alpha}\E_2\xrightarrow{\E_\beta}\E_3,$$
where we could require $\E_\beta\E_\alpha=0$ by using relations. A commuting square of morphisms of sheaves can also be expressed as a quiver sheaf satisfying a relation.
\end{example}

Let us recall the notion of a topological type $\tau$ of a sheaf $E$ (compare with Definition 1.4 and Remark 1.5 of \cite{grt}). It is given as
$$\tau(E)=\ch(E)\todd(X)\in B(X)_\Q,$$
where $B(X)$ is the group of cycles on $X$ modulo algebraic equivalence (consider the proof of \cite{fulton} Theorem 15.2). By definition, the topological type thus remains constant in flat families over a connected and noetherian base scheme. Further, it determines the Hilbert polynomial with respect to any ample line bundle (\cite{fulton}, Example 18.3.6).\\
By a slight abuse of notation we use the same letter for the topological type of a quiver sheaf.
\begin{definition}
We say that a quiver sheaf $\E$ has topological type
$$\tau\in B(X)_\Q^{Q_0}$$
if for all $i\in Q_0$ the sheaf $\E_i$ has topological type $\tau_i$.
\end{definition}

There are some other properties of sheaves which transfer to quiver sheaves simply by requiring the sheaves at the vertices to fulfill this property.
\begin{definition}
A quiver sheaf $\E$ is called pure of dimension $d$ if all the shaves $\E_i$ are pure of dimension $d$.
\end{definition}

Assume that $X$ is projective over $k$, and fix a tuple
$$\Lul=\left(L_1,\ldots,L_N\right)$$
of ample line bundles. Following \cite{grt}, we say that a sheaf $E$ is $(n,\Lul)$--regular if it is $n$--regular with respect to each line bundle $L_j$.
\begin{definition}
A quiver sheaf $\E$ is called $(n,\Lul)$--regular if all the sheaves $\E_i$ are $(n,\Lul)$--regular. Equivalently, the quiver sheaf $\E$ is $(n,\Lul)$--regular if $\E_i$ is $n$--regular with respect to $L_j$ for all $i\in Q_0$ and all $j=1,\ldots,N$.
\end{definition}

Another such property is the saturation of sheaves.
\begin{lemma}\label{QS:L:saturation_functorial}
Let $\phi:E_1\to E_2$ denote a morphism of sheaves on $X$ which are of the same dimension $d$, and let $F_1\subset E_1$, $F_2\subset E_2$ denote subsheaves which are respected by the morphism, i.e. $\phi:F_1\to F_2$. Then the saturations
$$\phi:F_{1,\sat}\to F_{2,\sat}$$
are respected as well.

\begin{proof}
Recall (\cite{huybrechtslehn} Definition 1.1.5), that the saturation is given as
$$F_{1,\sat}=\ker\left(E_1\to E_1/F_1\to \left(E_1/F_1\right)/\left(T_{d-1}\left(E_1/F_1\right)\right)\right),$$
where $T_{d-1}$ of a sheaf is defined to be the sum over all subsheaves of dimension at most $d-1$. We have an induced morphism $\phi':E_1/F_1\to E_2/F_2$. Since the dimension of a sheaf does not grow under taking the image, we have an induced morphism between the second quotients occurring in the above description of the saturation as well. The obvious commuting diagram tells us that $\phi$ thus induces a morphism between the kernels, which gives the result.
\end{proof}
\end{lemma}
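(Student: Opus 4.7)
The plan is to prove the lemma by functoriality of the kernel construction applied to the description of the saturation recalled in the statement. Since $\phi$ carries $F_1$ into $F_2$, it first induces a morphism $\phi': E_1/F_1 \to E_2/F_2$ compatible with the quotient projections $E_i \to E_i/F_i$, which will be the backbone of the diagram to come.

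The heart of the argument is to check that $\phi'$ sends the subsheaf $T_{d-1}(E_1/F_1)$ into $T_{d-1}(E_2/F_2)$. Recall that $T_{d-1}(G)$ is by definition the sum of all subsheaves of $G$ of dimension at most $d-1$. For any subsheaf $H \subset E_1/F_1$ with $\dim H \leq d-1$, the image $\phi'(H) \subset E_2/F_2$ is a quotient of $H$, and since dimension does not grow under quotients we have $\dim \phi'(H) \leq d-1$; hence $\phi'(H) \subset T_{d-1}(E_2/F_2)$. Applying this to all summands of $T_{d-1}(E_1/F_1)$ yields the desired inclusion, and $\phi'$ descends to a morphism
\begin{align*}
\overline{\phi}: (E_1/F_1)/T_{d-1}(E_1/F_1) \to (E_2/F_2)/T_{d-1}(E_2/F_2).
\end{align*}

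With $\overline{\phi}$ in hand, we obtain a commutative square whose vertical arrows are $\phi$ and $\overline{\phi}$ and whose horizontal arrows are the compositions $E_i \to E_i/F_i \to (E_i/F_i)/T_{d-1}(E_i/F_i)$. The universal property of the kernel then produces a morphism between the kernels of the two horizontal arrows, which by the recalled description are precisely $F_{1,\sat}$ and $F_{2,\sat}$; this is the restriction $\phi|_{F_{1,\sat}} : F_{1,\sat} \to F_{2,\sat}$ we are after. The only mild obstacle is the dimension-monotonicity under images, which is a standard fact for coherent sheaves, so the whole argument reduces to setting up this diagram and letting functoriality of the kernel do the rest.
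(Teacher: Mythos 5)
Your proposal is correct and follows essentially the same route as the paper's own proof: form the induced map $\phi'$ on quotients, observe that images cannot increase dimension so $T_{d-1}$ is preserved, and conclude via the universal property of the kernel applied to the resulting commutative square. You merely spell out the $T_{d-1}$-preservation step more explicitly than the paper does.
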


Lemma \ref{QS:L:saturation_functorial} shows that saturation of a quiver subsheaf is well--defined.
\begin{definition}
Consider a quiver subsheaf $\F\subset \E$ of a quiver sheaf purely of dimension $d$. The saturation of $\F$ in $\E$ is defined as the quiver subsheaf $\F_\sat\subset \E$ such that
$$\left(\F_\sat\right)_i=\F_{i,sat}$$
for all vertices $i\in Q_0$.
\end{definition}

\begin{definition}
A family of quiver sheaves on $X$ is called flat or bounded over a $k$--scheme $S$ of finite type, if the families of sheaves $\E_i$ are flat or bounded for all $i\in Q_0$ respectively.
\end{definition}
~\\

\subsection{Stability conditions}~\\

Now we introduce the notion of stability for quiver sheaves. To relate them to the already established notions of stability for vector space representations of a quiver (\cite{king}), and to the stability condition for sheaves as in \cite{grt}, we first give a slightly more general definition which interpolates between these two. However, our construction and variation results only hold in a special case.\\

Consider a projective scheme $X$ over $k$.
\begin{definition}
A multi--Gieseker stability condition $(\Lul,\sigma,\rho)$ for quiver sheaves associated to $Q$ over $X$ consists of a tuple
$$\Lul=\left(L_1,\ldots,L_N\right)$$
of ample line bundles on $X$, and two tuples
$$\sigma,\rho\in\left(\R^{Q_0\times N}\right)_+,$$
where the right hand side is the subset of $\R^{Q_0\times N}$ consisting of tuples $\sigma$ of non--negative real numbers such that for any fixed $i$ not all $\sigma_{ij}$ vanish simultaneously. We denote the entries of such tuples $\sigma$ as
$$\sigma=\left(\sigma_{ij}\right)_{(i,j)\in Q_0\times N}.$$
If all entries of $\sigma$ are strictly positive we say that $\sigma$ is positive.
\end{definition}

Typically, we think of $\Lul$ as being fixed, and just refer to $(\sigma,\rho)$ as the stability condition if $\Lul$ is clear from the context.\\

\begin{definition}
The multi--Hilbert polynomial of a quiver sheaf $\E$ with respect to $\sigma$ is defined as
$$P^\sigma_\E=\sum_{i\in Q_0}\sum_{j=1}^N \sigma_{ij}P^{L_j}_{\E_i},$$
where $P^{L_j}_{\E_i}$ is the usual Hilbert polynomial of $\E_i$ computed with respect to the ample line bundle $L_j$.
\end{definition}

The multi--Hilbert polynomial can be written as
$$P^\sigma_\E(T)=\frac{\alpha^\sigma_d(\E)}{d!}T^d+\frac{\alpha^\sigma_{d-1}(E)}{(d-1)!}T^{d-1}+\ldots+\alpha^\sigma_0(\E)$$
for real numbers $\alpha^\sigma_k(\E)$, and strictly positive leading coefficient. Here, $d$ is the maximum of the dimensions of the sheaves $\E_i$. The coefficients can be further expressed as
$$\alpha_k^\sigma(\E)=\sum_{i\in Q_0}\sum_{j=1}^N \sigma_{ij}\alpha^{L_j}_k(\E_i),$$
where $\alpha^{L_j}_k(\E_i)/k!$ is the coefficient to the monomial $T^k$ in the Hilbert polynomial $P^{L_j}_{\E_i}$, computed with respect to $L_j$.\\

Using these coefficients allows us to introduce the reduced version of the multi--Hilbert polynomial and, later, the slope.
\begin{definition}
The multi--rank of a quiver sheaf $\E$ with respect to $\rho$ is defined as
$$\rk^\rho(\E)=\sum_{i\in Q_0}\sum_{j=1}^N \rho_{ij}\alpha^{L_j}_d(\E_i),$$
where $d$ is the maximum of the dimensions of the sheaves $\E_i$. The reduced multi--Hilbert polynomial of $\E$ with respect to $(\sigma,\rho)$ is defined as
$$p^{(\sigma,\rho)}_\E=\frac{P^\sigma_\E}{\rk^\rho(\E)}.$$
\end{definition}

\begin{definition}
A quiver sheaf $\E$ is called multi--Gieseker semistable with respect to $(\sigma,\rho)$, or simply semistable, if it is pure and for all non--trivial quiver subsheaves $\F\subset \E$ the inequality
$$p^{(\sigma,\rho)}_\F\leq p^{(\sigma,\rho)}_\E$$
holds. If all such inequalities are strict we call $\E$ stable.
\end{definition}

\begin{remark}\label{QS:R:stabcond_rudakov}
This stability condition is a condition in the sense of \cite{rudakov}. Additionally, any quiver sheaf $\E$ is noetherian, allowing us to adopt general results from \cite{rudakov} to our setting.
\end{remark}

There are three special cases of this stability condition.
\begin{enumerate}
\item In case $X=\Spec(k)$, coherent sheaves are just vector spaces $V$, and the Hilbert polynomial is just its dimension. Consequently, quiver sheaves are simply vector space representations $M$ of $Q$. Considering $\Lul=(L)$, where $L$ is the trivial line bundle, $\sigma=\theta$ and $\rho=(1,1,\ldots,1)$, we compute
$$p^{(\sigma,\rho)}(M)=\frac{\sum_{i\in Q_0}\theta_i \dim(M_i)}{\sum_{i\in Q_0}\dim(M_i)}.$$
In other words, $(\sigma,\rho)$--stability is the same as the well--known slope stability for vector space representations of $Q$ as introduced in \cite{king}.
\item If $Q=\bullet$ is the trivial quiver, quiver sheaves are the same as sheaves $E$ on $X$. Considering $\sigma=\rho$, the reduced multi--Hilbert polynomial for a $d$--dimensional sheaf $E$ reads as
$$p^{(\sigma,\sigma)}_E=\frac{\sum_{j=1}^N\sigma_jP^{L_j}_E}{\sum_{j=1}^N\sigma_j \alpha^{L_j}_d(E)}.$$
Thus this stability condition is the one introduced by \cite{grt}.
\item From now on we consider the case of an arbitrary scheme $X$, an arbitrary quiver $Q$, but $\sigma=\rho$. The reduced multi--Hilbert polynomial of a quiver sheaf $\E$ then reads as
$$p^\sigma_\E=\frac{\sum_{i\in Q_0}\sum_{j=1}^N\sigma_{ij}P^{L_j}_{\E_i}}{\sum_{i\in Q_0}\sum_{j=1}^N\sigma_{ij}\alpha^{\sigma}_d(\E_i)}.$$
Of course it is convenient to just write $\sigma$ instead of $(\sigma,\sigma)$ in this case.
\end{enumerate}

Sometimes we need to compare our stability condition to stability conditions for the sheaves at the vertices in the sense of \cite{grt}. 
\begin{definition}
Let $(\Lul,\sigma)$ denote a stability condition for quiver sheaves on $X$ associated to a quiver $Q$. We use the notation
$$\sigma_{i_0}=\left(\sigma_{i_0j}\right)_{j=1,\ldots,N}$$
for the restriction of $\sigma$ to the vertex $i_0\in Q_0$. The tuple $\left(\Lul,\sigma_{i_0}\right)$ then is a stability condition for sheaves on $X$ in the sense of \cite{grt}.
\end{definition}~\\

\subsection{Properties of stability}~\\

From now on we restrict ourselves to the special case of $\rho=\sigma$, and examine some of the properties of such conditions. In this case, the leading coefficient of the reduced multi--Hilbert polynomial is $1/d!$, and thus not very interesting. The second highest order coefficient is of more importance.  

\begin{definition}
The slope of a quiver sheaf $\E$ with respect to a stability condition $\sigma$ is
$$\hmu^\sigma(\E)=\frac{\alpha^\sigma_{d-1}(\E)}{\alpha^\sigma_d(\E)},$$
where $d$ is the degree of the multi--Hilbert polynomial.
\end{definition}

We need the following easy technical result.
\begin{lemma}\label{QS:L:slope_inequality_globalvertex}
Let $\E$ denote a quiver sheaf such that
$$\hmu^\sigma(\E)\geq \mu$$
for some real number $\mu$. Then there exist indices $1\leq j\leq N$ and $i\in Q_0$ such that $\sigma_{ij}\neq 0$ and $\hmu^{L_j}(\E_i)\geq \mu$.

\begin{proof}
With the obvious modifications the elementary proof of \cite{grt} Lemma 2.11 applies.
\end{proof}
\end{lemma}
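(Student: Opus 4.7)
The plan is to follow the contrapositive averaging argument used in the proof of \cite{grt} Lemma 2.11, with the only new feature being the extra index $i \in Q_0$ alongside $j$. I would assume, for a contradiction, that $\hmu^{L_j}(\E_i) < \mu$ for every pair $(i,j) \in Q_0 \times \{1,\ldots,N\}$ with $\sigma_{ij} \neq 0$. Unpacking the definition of the single-sheaf slope, this is equivalent to
\[
\alpha^{L_j}_{d-1}(\E_i) < \mu \cdot \alpha^{L_j}_d(\E_i)
\]
for every such pair.

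The next step is to multiply each of these strict inequalities by the non-negative weight $\sigma_{ij}$ and sum over $(i,j) \in Q_0 \times \{1,\ldots,N\}$. Using the decomposition $\alpha^\sigma_k(\E) = \sum_{i,j}\sigma_{ij}\alpha^{L_j}_k(\E_i)$ recorded right after the definition of the multi-Hilbert polynomial, one obtains
\[
\alpha^\sigma_{d-1}(\E) < \mu \cdot \alpha^\sigma_d(\E).
\]
Dividing by the strictly positive leading coefficient $\alpha^\sigma_d(\E)$ then yields $\hmu^\sigma(\E) < \mu$, contradicting the hypothesis. Hence some pair $(i,j)$ with $\sigma_{ij} \neq 0$ and $\hmu^{L_j}(\E_i) \geq \mu$ must exist.

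The main point requiring care is the preservation of strictness in the passage from pointwise to summed inequalities: because $d$ is the degree of the multi-Hilbert polynomial of $\E$, at least one pair $(i,j)$ with $\sigma_{ij} > 0$ and $\alpha^{L_j}_d(\E_i) > 0$ necessarily occurs in the sum, which is enough to keep the summed inequality strict. I do not expect any deeper obstacle: all the non-negativity and positivity statements used here are the direct analogues of those in the single-sheaf case of \cite{grt}, and this parallel is precisely the ``obvious modification'' the paper alludes to.
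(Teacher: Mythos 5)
Your proposal is correct and is precisely the ``obvious modification'' of the averaging argument from \cite{grt} Lemma 2.11 that the paper invokes: assume all relevant pointwise slopes are $<\mu$, weight by the $\sigma_{ij}$, sum, and contradict $\hmu^\sigma(\E)\geq\mu$. You also correctly identify the one point needing care, namely that at least one pair with $\sigma_{ij}>0$ and $\alpha^{L_j}_d(\E_i)>0$ occurs (guaranteed since the leading coefficient $\alpha^\sigma_d(\E)$ is strictly positive), so the summed inequality stays strict.
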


The reduced multi--Hilbert polynomial can now be written as
$$p^\sigma_\E(T)=\frac{1}{d!}T^d+\frac{\hmu^\sigma(\E)}{(d-1)!}T^{d-1}+\Ocal(T^{d-2}).$$

As an instance of the philosophy mentioned in Remark \ref{QS:R:stabcond_rudakov}, there are variants of the Harder--Narasimhan and Jordan--Hölder filtration for quiver sheaves. Alternatively, one could check that the proofs of the respective results for sheaves (eg. consider Proposition 1.5.2 and Theorem 1.3.4 in \cite{huybrechtslehn}) can be generalized. The Harder--Narasimhan filtration measures to what extent a quiver sheaf fails to be semistable.
\begin{theorem}
For a pure quiver sheaf $\E$ and a stability condition $\sigma$ there is a uniquely determined filtration
$$0=\HN^0(\E)\subsetneq \HN^1(\E)\subsetneq \ldots \subsetneq \HN^l(\E)=\E,$$
called the Harder--Narasimhan filtration of $\E$, such that all the subquotients $\F_{k+1}=\HN^{k+1}(\E)/\HN^k(\E)$ are semistable and such that
$$p^\sigma(\F^1)>p^\sigma(\F^2)>\ldots>p^\sigma(\F^l).$$
Further, $p^\sigma_{\max}(\E)=p^\sigma(\HN^1(\E))$ is maximal among the reduced Hilbert polynomials of quiver subsheaves of $\E$, and $\HN^1(\E)$ is maximal among the quiver subsheaves attaining this limit.
\end{theorem}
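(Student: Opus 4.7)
The plan is to follow the standard two-step strategy: first establish the existence of a unique maximal destabilizing quiver subsheaf $\HN^1(\E) \subseteq \E$, then build the filtration by noetherian iteration on the quotient $\E/\HN^1(\E)$. Both Remark \ref{QS:R:stabcond_rudakov} and the direct route via \cite{huybrechtslehn} are available; I would favour the direct adaptation, since it also yields the last two claims (maximality of $p^\sigma_{\max}(\E)$ and of $\HN^1(\E)$) as by-products.

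The first step is the see-saw property. For any short exact sequence $0 \to \F' \to \F \to \F'' \to 0$ of pure quiver sheaves of the same dimension $d$, the multi-Hilbert polynomial $P^\sigma$ and the multi-rank $\rk^\sigma$ are additive on vertices and hence additive on the sequence. From this one deduces the usual trichotomy $p^\sigma_{\F'} \lessgtr p^\sigma_{\F} \lessgtr p^\sigma_{\F''}$ in the lexicographic order, with the outer inequalities going the same way. This is purely formal from the definitions and needs no new ingredient.

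The second step, and the main obstacle, is to prove that the set of reduced multi-Hilbert polynomials of nonzero quiver subsheaves of $\E$ is bounded above and attains its maximum. The idea is: if a sequence $\F^{(k)} \subseteq \E$ satisfies $p^\sigma_{\F^{(k)}} \geq p^\sigma_\E$, then in particular $\hmu^\sigma(\F^{(k)})$ is bounded below, so by Lemma \ref{QS:L:slope_inequality_globalvertex} for each $k$ some vertex contribution $\hmu^{L_j}(\F^{(k)}_i)$ is bounded below. Applying the Grothendieck-type boundedness result of \cite{grt} vertex by vertex to the saturations $(\F^{(k)}_\sat)_i$ (well defined by Lemma \ref{QS:L:saturation_functorial}), one sees that the possible multi-Hilbert polynomials of saturated quiver subsheaves with bounded slope form a finite set. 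Since saturation only increases $p^\sigma$ without changing the leading coefficient, this bounds the whole set of $p^\sigma_{\F}$ and shows the maximum is attained. The see-saw property then gives that if $\F_1, \F_2 \subseteq \E$ both attain the maximum polynomial, so does $\F_1 + \F_2$; a noetherian argument (valid since each $\E_i$ is coherent, so the category of quiver subsheaves of $\E$ is noetherian) produces a unique largest such $\HN^1(\E)$, which is automatically semistable (a strictly destabilizing subsheaf of $\HN^1(\E)$ would have strictly greater $p^\sigma$ than the maximum).

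The final step is iteration. Applying the previous construction to $\E/\HN^1(\E)$, which is pure of dimension $d$ by the see-saw property, yields $\HN^2(\E)$; the strict inequality $p^\sigma(\F^1) > p^\sigma(\F^2)$ follows from the see-saw property combined with the maximality of $\HN^1(\E)$. The process terminates because the $\E_i$ are noetherian, and uniqueness of the whole filtration reduces by induction to uniqueness of $\HN^1(\E)$. The two additional claims $p^\sigma_{\max}(\E) = p^\sigma(\HN^1(\E))$ and the maximality of $\HN^1(\E)$ among subsheaves attaining this value are exactly what was established in step two.
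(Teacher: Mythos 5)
Your overall strategy is sound, and it is in fact the ``alternative'' route the paper itself alludes to: the paper writes out no proof, but justifies the theorem either by appealing to the general machinery of \cite{rudakov} (the stability condition is one in Rudakov's sense and quiver sheaves are noetherian, cf.\ Remark \ref{QS:R:stabcond_rudakov}), or by asserting that the arguments of \cite{huybrechtslehn}, Theorem 1.3.4, generalize. Your see--saw step, the closure of the set of maximizers under sums, the saturatedness of $\HN^1(\E)$ and purity of $\E/\HN^1(\E)$, and the iteration/termination/uniqueness reductions are all correct and follow the standard template.

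The one step that does not work as written is the boundedness argument in your second step. Lemma \ref{QS:L:slope_inequality_globalvertex} only produces, for each $\F^{(k)}$, \emph{one} pair $(i,j)$ with $\sigma_{ij}\neq 0$ and $\hmu^{L_j}(\F^{(k)}_i)$ bounded below; the sheaf--level Grothendieck lemma of \cite{grt} then bounds the family of sheaves sitting at that single vertex, but says nothing about the saturated subsheaves $\F^{(k)}_{i'}$ at the remaining vertices $i'$, whose $L_j$--slopes are a priori unbounded below (think of $\Ocal(-n)\subset\Ocal$ for $n\to\infty$). So ``vertex by vertex'' does not directly yield finiteness of the set of multi--Hilbert polynomials. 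Controlling the remaining vertices is precisely the content of the paper's quiver--sheaf Grothendieck Lemma \ref{QS:L:grothendieck}, whose proof needs an induction on $|Q_0|$: the bound at one vertex combined with the global lower bound on $\hmu^\sigma(\F)$ forces a lower bound on the slope of the restriction of $\F$ to the quiver with that vertex removed. You can repair your argument simply by invoking that lemma (its proof is independent of the Harder--Narasimhan theorem, even though it appears after it in the text), or you can avoid boundedness altogether by running the purely order--theoretic construction of the maximal destabilizing subobject as in \cite{huybrechtslehn}, Lemma 1.3.5, or \cite{rudakov}, Proposition 1.9, which uses only that $\E$ is noetherian. The latter is what the paper implicitly relies on and is the cleaner route, since it does not import a boundedness statement into a result that should hold without it.
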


If a quiver sheaf is semistable, the Jordan--Hölder filtration describes if and how it fails to be stable.
\begin{theorem}
Suppose that $\E$ is a semistable quiver sheaf with respect to a stability condition $\sigma$. There exists a filtration
$$0=\E^0\subsetneq \E^1\subsetneq \ldots \subsetneq \E^l=\E,$$
called a Jordan--Hölder filtration, such that all $\E^k$ have the same reduced multi--Hilbert polynomial and such that all subquotients are stable.\\
The subquotients in any such filtration are uniquely determined up to isomorphism and permutation.
\end{theorem}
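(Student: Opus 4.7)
My approach is to mirror the classical Jordan--Hölder argument for coherent sheaves (cf.\ \cite{huybrechtslehn}, Proposition 1.5.2), which adapts once two facts about coherent quiver sheaves on $X$ are in place: (i) any quiver sheaf is noetherian, so ascending chains of quiver subsheaves terminate and nonempty families of quiver subsheaves have minimal elements; and (ii) a Schur--type lemma: every nonzero morphism between $\sigma$--stable quiver sheaves of the same reduced multi--Hilbert polynomial is an isomorphism. Alternatively, in view of Remark~\ref{QS:R:stabcond_rudakov}, one may invoke Rudakov's general framework in \cite{rudakov}.

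For existence, I would proceed inductively. Among the nonzero quiver subsheaves $\G \subseteq \E$ with $p^\sigma_\G = p^\sigma_\E$, pick a minimal one $\E^1$; this exists by noetherianity. Minimality forces $\E^1$ to be stable: a nontrivial proper quiver subsheaf $\G \subsetneq \E^1$ satisfies $p^\sigma_\G \leq p^\sigma_{\E^1}$ (since $\G \subset \E$, which is semistable, and $p^\sigma_{\E^1} = p^\sigma_\E$), and equality would contradict the choice of $\E^1$. A see--saw applied to $0 \to \E^1 \to \E \to \E/\E^1 \to 0$ shows that $\E/\E^1$ is again semistable with the same reduced polynomial, so iterating produces a strictly ascending chain $\E^1 \subsetneq \E^2 \subsetneq \cdots$ which terminates at $\E^l = \E$ by noetherianity; the successive quotients are stable by construction.

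For uniqueness I would first establish Schur: if $\phi: \F \to \G$ is a nonzero morphism between stable quiver sheaves with $p^\sigma_\F = p^\sigma_\G$, then kernels, images, and cokernels in the abelian category of quiver sheaves are formed componentwise and remain pure by inheritance from $\F$ and $\G$. The see--saw applied to $0 \to \ker\phi \to \F \to \im\phi \to 0$ rules out $\ker\phi \neq 0$ (else $p^\sigma_{\ker\phi} < p^\sigma_\F$ would give $p^\sigma_{\im\phi} > p^\sigma_\G$, violating semistability of $\G$), and a symmetric argument rules out $\im\phi \neq \G$, so $\phi$ is an isomorphism. Given two Jordan--Hölder filtrations with stable factors $\F^k$ and $\tilde\F^k$, the standard comparison then applies: the projection $\pi: \E \twoheadrightarrow \F^l = \E/\E^{l-1}$ restricted to the second filtration is nonzero on some smallest $\tilde\E^{k_0}$, so it factors through a nonzero map $\tilde\F^{k_0} \to \F^l$, which by Schur is an isomorphism. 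Modifying the second filtration so as to move $\tilde\F^{k_0}$ to the top and then passing to $\E^{l-1}$ reduces to filtrations of smaller length, and induction concludes that both filtrations have length $l$ with factors matching up to permutation.

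The main technical point is the see--saw step inside the Schur lemma: one has to verify that the coefficients $\alpha^\sigma_k$ are additive on short exact sequences of pure quiver sheaves of dimension $d$, and that a strict lexicographic inequality of reduced multi--Hilbert polynomials for $\ker\phi$ versus $\F$ indeed propagates to a strict inequality for $\im\phi$ versus $\G$. This is formal from the componentwise additivity of $P^{L_j}_{(\cdot)}$ together with the hypothesis that all relevant subquotients are pure of dimension $d$, but it is the one place where a little care is needed compared to the usual sheaf setting.
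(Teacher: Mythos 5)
Your argument is the route the paper itself points to: it gives no proof beyond citing Rudakov's framework and the generalization of \cite{huybrechtslehn}, Proposition 1.5.2, and your Schur lemma, see--saw computation, and inductive comparison of filtrations are exactly that generalization and are correct. One justification, however, is wrong as stated: you claim a minimal element of the family of nonzero quiver subsheaves $\G\subseteq\E$ with $p^\sigma_\G=p^\sigma_\E$ exists ``by noetherianity''. Noetherianity is the ascending chain condition and only produces \emph{maximal} elements; descending chains of subsheaves need not terminate (e.g.\ $\Ocal\supset\Ocal(-1)\supset\Ocal(-2)\supset\cdots$ on $\P^1$). The standard repair, as in \cite{huybrechtslehn}, is to choose $\E^1$ among such subsheaves with \emph{minimal multiplicity} $\alpha^\sigma_d(\E^1)$. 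This is possible because each $\alpha^{L_j}_d\bigl((\E^1)_i\bigr)$ is an integer between $0$ and $\alpha^{L_j}_d(\E_i)$, so only finitely many values of $\alpha^\sigma_d$ occur among subsheaves of $\E$; and such an $\E^1$ is stable, since a proper nonzero $\G\subsetneq\E^1$ with $p^\sigma_\G=p^\sigma_{\E^1}$ and $\alpha^\sigma_d(\G)=\alpha^\sigma_d(\E^1)$ would force $P^\sigma_\G<P^\sigma_{\E^1}$ (the quotient has positive $P^\sigma$) and hence $p^\sigma_\G<p^\sigma_{\E^1}$, a contradiction, while $\alpha^\sigma_d(\G)<\alpha^\sigma_d(\E^1)$ contradicts minimality. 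With this replacement the existence step, and the rest of your proof, goes through unchanged.
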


The uniqueness of the subquotients in a Jordan--Hölder filtration implies that the following notion is well--defined.
\begin{definition}
Fix a stability condition $\sigma$ and consider two semistable quiver sheaves $\E$ and $\F$. We define
$$\gr(\E)=\bigoplus_{k=1}^l \E^k/\E^{k-1},$$
where the $\E^k$ are the quiver subsheaves occurring in some Jordan--Hölder filtration of $\E$. We say that $\E$ and $\F$ are $S$--equivalent if
$$\gr(\E)\simeq \gr(\F).$$
\end{definition}

As in the case of sheaves, it is enough to check the inequalities on saturated quiver subsheaves.
\begin{lemma}
A pure quiver sheaf $\E$ on $X$ is semistable with respect to a stability condition $\sigma$ if and only if
$$p^\sigma_\F\leq p^\sigma_\E$$
holds for all nontrivial saturated quiver subsheaves $\F\subset \E$. Similarly, $\E$ is stable if strict inequality holds for all such quiver subsheaves.

\begin{proof}
Consider any quiver subsheaf $\F\subset \E$. By construction, the saturation of $\F$ differs from $\F$ only in codimension one, and is a supersheaf of $\F$. Hence
$$\alpha_d^{L_j}(\F_{i,\sat})=\alpha_d^{L_j}(\F_i) ~\mathrm{and}~ P^{L_j}_{\F_i}\leq P^{L_j}_{\F_{i,sat}}$$
for all $i\in Q_0$ and $j=1,\ldots,N$, where $d$ is the dimension of $\E$. Thus the reduced multi--Hilbert polynomial of $\F$ is always smaller than the polynomial of the saturation.
\end{proof}
\end{lemma}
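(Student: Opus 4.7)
The plan is to reduce the semistability check to saturated quiver subsheaves by showing that the reduced multi--Hilbert polynomial only grows (lexicographically) when one replaces a quiver subsheaf $\F\subset\E$ by its saturation $\F_{\sat}$. The forward implication of the lemma is then trivial, since every saturated quiver subsheaf is a quiver subsheaf, while the reverse follows by chaining
$$p^\sigma_\F \leq p^\sigma_{\F_{\sat}} \leq p^\sigma_\E$$
for an arbitrary nontrivial quiver subsheaf $\F\subset\E$.

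The core technical step is a vertex-wise comparison. By the construction of the saturation and by Lemma \ref{QS:L:saturation_functorial}, each $\F_{i,\sat}$ contains $\F_i$ with quotient supported in dimension at most $d-1$. Hence $P^{L_j}_{\F_i}\leq P^{L_j}_{\F_{i,\sat}}$ as polynomials, with equality in the leading coefficient $\alpha^{L_j}_d$. Taking the $\sigma_{ij}$-weighted sum yields $P^\sigma_\F \leq P^\sigma_{\F_{\sat}}$ together with $\rk^\sigma(\F)=\rk^\sigma(\F_{\sat})$, which is positive because $\E$ is pure of dimension $d$ and $\sigma\in(\R^{Q_0\times N})_+$. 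Dividing by this common rank preserves the inequality, giving $p^\sigma_\F\leq p^\sigma_{\F_{\sat}}$ in the lexicographic order, with equality exactly when $\F=\F_{\sat}$.

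For the stable case I would split into two subcases for a proper nontrivial $\F\subsetneq\E$. If $\F_{\sat}\subsetneq\E$, the hypothesis on saturated quiver subsheaves gives $p^\sigma_{\F_{\sat}}<p^\sigma_\E$ strictly, and the chain from the first paragraph concludes. Otherwise $\F_{\sat}=\E$, so $\F\subsetneq\F_{\sat}$; then at least one $\F_{i,\sat}/\F_i$ is nonzero of dimension strictly less than $d$, which makes at least one lower-order coefficient of $P^\sigma_{\F_{\sat}}-P^\sigma_\F$ strictly positive while their leading coefficients coincide. With equal multi--ranks, this yields $p^\sigma_\F < p^\sigma_{\F_{\sat}}=p^\sigma_\E$ lexicographically. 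I do not anticipate any significant obstacle: the argument is essentially a vertex-by-vertex lifting of the classical single-sheaf proof, with the purity of $\E$ intervening only to ensure that $\rk^\sigma$ is positive so that reduced multi--Hilbert polynomials are defined.
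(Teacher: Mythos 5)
Your proposal is correct and follows essentially the same route as the paper: compare $\F$ with its saturation vertex by vertex, note that the leading coefficients agree while the Hilbert polynomials can only increase, and divide by the common multi--rank. Your more careful treatment of the stable case (splitting on whether $\F_{\sat}=\E$ and using that for each vertex some $\sigma_{ij}$ is nonzero to get strictness) is a welcome elaboration of what the paper leaves implicit.
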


We can prove a generalized version of a lemma of Grothendieck, which is a central boundedness result in our work. To prepare for the variation of moduli spaces as treated in Section \ref{S:variation}, we formulate this result for a whole set of stability conditions. Consider a subset
$$\Sigma\subset \left(\R^{Q_0\times N}\right)_+$$
which is a closed cone without origin in $\R^{Q_0\times N}$. Clearly, we could also consider subsets which are contained in such a closed cone, or simply a single parameter $\sigma$.
\begin{lemma}\label{QS:L:grothendieck}
Fix a stability condition $\sigma$, integers $p$ and $d$, a real number $\mu$, a topological type $\tau$, and a cone $\Sigma$ as given above. Consider the family $S$ of quiver sheaves $\F$ which arise as quiver subsheaves of some quiver sheaf $\E$ such that
\begin{enumerate}
\item $\E$ is $(p,\Lul)$--regular of topological type $\tau$,
\item $\F$ is saturated in $\E$,
\item $\E$ is pure of dimension $d$,
\item and $\hmu^\sigma(\F)\geq \mu$ for some $\sigma\in \Sigma$.
\end{enumerate}
This family is bounded.

\begin{proof}
We proceed by induction on $|Q_0|$.\\
In the case $|Q_0|=1$ the quiver subsheaves of some quiver sheaf $\E$ are clearly a subfamily of the family of subsheaves, and the boundedness of this family is settled by \cite{grt}, Lemma 4.5, invoking a lemma of Grothendieck (\cite{huybrechtslehn} Lemma 1.7.9). Note that this proof also works in the case of pure sheaves of arbitrary dimension.\\
In the general case, we get by Lemma \ref{QS:L:slope_inequality_globalvertex} that $S$ decomposes into a finite union of families $S_{ij}$ where at least one sheaf satisfies $\hmu^{L_j}(\F_i)\geq \mu$. Thus by the lemma of Grothendieck the family of sheaves $\F_i$ occurring at the vertex $i$ in quiver sheaves contained in $S_{ij}$ is bounded (note that we can equivalently pass to the quotient sheaves since $\F_i$ is saturated in $\E_i$). It suffices to prove that the families of the $\F_{i'}$, $i\neq i'$, occurring in $S_{ij}$, are bounded as well.\\
We concentrate on the case of $S_{1j}$. In particular, the values of $\alpha_e^{L_j}(\F_1)$ are confined to a finite set, and for each of value in this set, the expression
$$\hmu^{\sigma_1}(\F_1)=\frac{\alpha^{\sigma_1}_{d-1}(\F_1)}{\alpha^{\sigma_1}_{d}(\F_1)}$$
defines a well--defined and continuous function in $\sigma\in \Sigma$. Furthermore, this function is invariant under simultaneous scaling, and must thus be bounded; note that, modulo scaling, $\Sigma$ yields a compact set. Hence,
$$\mu'\geq \hmu^{\sigma_1}(\F_1)$$
uniformly in $\F_1$ and $\sigma$ for some real number $\mu'$. Note that by the dimension assumption the $\alpha^{\sigma_i}_d(\F_i)$ are non--negative, so that we have
\begin{align*}
\frac{\sum_{1\neq i\in Q_0}\alpha^{\sigma_i}_{d-1}(\F_i)}{\sum_{1\neq i\in Q_0}\alpha^{\sigma_i}_{d}(\F_i)} & \geq \frac{\mu\left(\sum_{i\in Q_0}\alpha^{\sigma_i}_{d}(\F_i)\right)-\alpha_{d-1}^{\sigma_1}(\F_1)}{\sum_{1\neq i\in Q_0}\alpha^{\sigma_i}_{d}(\F_i)}\\
&\geq \frac{\mu\left(\sum_{1\neq i\in Q_0}\alpha^{\sigma_i}_{d}(\F_i)\right)+(\mu-\mu')\alpha_d^{\sigma_1}(\F_1)}{\sum_{1\neq i\in Q_0}\alpha^{\sigma_i}_{d}(\F_i)}\\
&=\mu + (\mu-\mu')\frac{\alpha_d^{\sigma_1}(\F_1)}{\sum_{1\neq i\in Q_0}\alpha^{\sigma_i}_{d}(\F_i)}.
\end{align*}
The left hand side is the slope of the restriction of $\F$ to the quiver where the vertex $1$, together with all arrows starting and ending in it, are removed. We claim that the right hand side is bounded from below, which would finish the proof by induction. Indeed, the fraction on the right hand side can be estimated as
\begin{align*}
\frac{\alpha^{\sigma_1}_d(\F_1)}{\sum_{1\neq i\in Q_0}\alpha^{\sigma_i}_d(\F_i)}&=\frac{\alpha^{\sigma_1}_d(\F_1)}{\sum_{i\in I}\sum_{j=1}^N\sigma_{ij}\alpha^{L_j}_d(\F_i)}\\
& \leq \frac{\alpha^{\sigma_1}_d(\F_1)}{\sum_{i\in I}\sum_{j=1}^N \sigma_{ij}},
\end{align*}
where $I\subset Q_0$ is the subset of vertices $i\in Q_0$ not equal to $1$ for which $\F_i$ does not vanish. For these vertices, $\alpha^{L_j}_d(\F_i)$ is integral and non--vanishing, giving us the inequality. Now the right hand side is also a well--defined and continuous function which is invariant under scaling, and is hence bounded.
\end{proof}
\end{lemma}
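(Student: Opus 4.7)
I would proceed by induction on the number of vertices $|Q_0|$. The base case $|Q_0|=1$ reduces the statement to boundedness of a family of saturated subsheaves of $(p,\Lul)$-regular sheaves with a lower slope bound, which is exactly Lemma~4.5 of \cite{grt} (itself a consequence of Grothendieck's classical lemma, cf.\ \cite{huybrechtslehn}, Lemma~1.7.9). For the inductive step, Lemma~\ref{QS:L:slope_inequality_globalvertex} lets me decompose $S$ into finitely many subfamilies $S_{ij}$ indexed by the pairs $(i,j)$ with $\sigma_{ij}\neq 0$ for which the single-sheaf slope $\hmu^{L_j}(\F_i)\geq\mu$ is attained for some $\sigma\in\Sigma$; it suffices to bound each $S_{ij}$.

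Fix such a pair $(i,j)$. The base case applied at vertex $i$ bounds the family of sheaves $\F_i$ directly (viewing them as saturated subsheaves of the $(p,L_j)$-regular $\E_i$), and in particular confines $\alpha_d^{L_j}(\F_i)$ to finitely many values. The idea now is to produce a lower bound on $\hmu^{\sigma^{(i)}}(\F^{(i)})$, where $\F^{(i)}$ is the restriction of $\F$ to the sub-quiver $Q^{(i)}$ obtained by deleting vertex $i$ and all incident arrows; this allows me to apply the inductive hypothesis on $Q^{(i)}$. The function $\sigma\mapsto\hmu^{\sigma_i}(\F_i)$ is continuous on $\Sigma$ for each fixed value of $\alpha_d^{L_j}(\F_i)$ and invariant under positive scaling, so by compactness of the projectivization of $\Sigma$ it is bounded above by some uniform $\mu'$. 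Purity forces $\alpha_d^{\sigma_{i'}}(\F_{i'})\geq 0$, and a direct rearrangement of the inequality $\hmu^\sigma(\F)\geq\mu$ yields
\begin{align*}
\hmu^{\sigma^{(i)}}(\F^{(i)})\geq \mu+(\mu-\mu')\cdot\frac{\alpha_d^{\sigma_i}(\F_i)}{\sum_{i'\neq i}\alpha_d^{\sigma_{i'}}(\F_{i'})}.
\end{align*}

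The final step is to bound the fraction on the right-hand side uniformly in $\F$ and $\sigma\in\Sigma$. The numerator is already bounded by the boundedness of the family of $\F_i$; for the denominator, I would use that on the set $I$ of vertices $i'\neq i$ with $\F_{i'}\neq 0$ the integers $\alpha_d^{L_j}(\F_{i'})$ are positive, so the denominator dominates $\sum_{i'\in I,j}\sigma_{i'j}$, which is itself bounded away from $0$ uniformly in $\sigma\in\Sigma$ by compactness of its projectivization. Combined with the inductive hypothesis applied to $Q^{(i)}$ and the cone of restrictions $\sigma^{(i)}$, this bounds the family of $\F_{i'}$ for $i'\neq i$, and hence $S_{ij}$. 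I expect the main technical obstacle to be precisely this uniformity in $\sigma\in\Sigma$: every slope-type quantity and every denominator appearing in the estimate must be controlled uniformly, which is exactly what the hypothesis that $\Sigma$ be a closed cone (so with compact projectivization) is designed to provide; once this uniform control is in place, the induction proceeds cleanly.
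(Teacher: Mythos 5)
Your proposal is correct and follows essentially the same route as the paper's own proof: induction on $|Q_0|$, the base case via Lemma 4.5 of \cite{grt}, the decomposition into subfamilies $S_{ij}$ via Lemma \ref{QS:L:slope_inequality_globalvertex}, the uniform upper bound $\mu'$ on $\hmu^{\sigma_i}(\F_i)$ from compactness of the projectivized cone, the same rearranged slope inequality for the restricted quiver, and the same integrality argument bounding the final fraction. No meaningful differences.
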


To talk about stability in families we consider
$$\Lul'=\left(L_1',\ldots,L_N'\right),$$
the pullback of $\Lul$ to the space $X\times S$ where the family lives on. Note that $\Lul'$ consists of relatively ample line bundles.

\begin{proposition}\label{QS:P:openness}
Being pure, semistability and stability are open properties for flat families of quiver sheaves.

\begin{proof}
Let $\E$ denote a flat family of quiver sheaves of topological type $\tau$ on $X$ over some noetherian base scheme $S$, and consider the family of relatively ample line bundles $\Lul'$ as in the preceding remark.\\
We know by definition that $\E_s$, where $s\in S$ is a geometric point, is pure if and only if all the sheaves $\E_{s,i}$ for $i\in Q_0$ are pure. Thus openness is implied by the openness of the sheaf version as provided by \cite{huybrechtslehn} Proposition 2.3.1.\\

To prove openness of stability and semistability we also follow the reasoning of \cite{huybrechtslehn}. For the sake of this proof we introduce the notation $\hmu^\sigma(\tau)$ and $p^\sigma(\tau)$ for the slope and reduced Hilbert polynomial fixed by the topological type and the stability condition. Consider the set $A\subset B(X)_\Q^{Q_0}$ of topological types $\tau'$ such that $\hmu^\sigma(\tau')\leq \hmu^\sigma(\tau)$ and such that there exists a geometric point $s\in S$ and a pure quotient $\E_s\to \E'$ with the property $\tau(\E')=\tau'$ and $p^{\sigma}(\tau')<p^\sigma(\tau)$.
By the Grothendieck Lemma for quiver sheaves \ref{QS:L:grothendieck}, the family of quotient quiver sheaves $\E'$ underlying $A$ is bounded, so in particular $A$ is finite.\\
Note that $\E_s$ is semistable if and only if $s$ is contained in the complement of the union of the finitely many closed images of the morphisms
$$\Quot^{\tau'}_{\E/X/S}\to S,$$
where $\tau'\in A$. Here we use the Quiver Quot--scheme as introduced in Section \ref{S:quiverquot}. For stability, we use a similar argument using the inequality $p^{\sigma}(\tau')\leq p^\sigma(\tau)$ instead of strict inequality in the definition of $A$.
\end{proof}
\end{proposition}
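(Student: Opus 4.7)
The plan is to treat purity separately (it reduces vertex-by-vertex) and then to deduce openness of semistability and stability by a quotient/Quot-scheme argument modelled on Huybrechts--Lehn Proposition 2.3.1. By definition $\E_s$ is pure iff each component $(\E_s)_i$ is pure, and purity is an open property for flat families of sheaves by Huybrechts--Lehn Proposition 2.3.1; hence the pure locus of $\E$ in $S$ is the finite intersection over $i\in Q_0$ of the pure loci for the component families $\E_i$, which is open. From now on restrict to this pure locus.

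For semistability, the strategy is to parametrize all potentially destabilizing quotients and to show that they come from only finitely many topological types. Following the Huybrechts--Lehn template, let $A\subset B(X)_\Q^{Q_0}$ consist of those $\tau'$ with $\hmu^\sigma(\tau')\leq \hmu^\sigma(\tau)$ that arise as $\tau(\E')$ for some geometric point $s\in S$ and some pure quotient $\E_s\twoheadrightarrow \E'$ with $p^\sigma(\tau')<p^\sigma(\tau)$. The crucial step will be to show that $A$ is finite. Passing to kernels and then to saturations (using Lemma \ref{QS:L:saturation_functorial}), every such destabilizing quotient corresponds to a saturated quiver subsheaf $\F\subset \E_s$ whose slope is bounded below by a constant depending only on $\tau$ and $\sigma$. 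Since $\E$ is a flat family of fixed topological type $\tau$ on the noetherian base $S$, we may choose a uniform $p$ so that every fiber is $(p,\Lul)$-regular, whence Lemma \ref{QS:L:grothendieck} applied with $\Sigma=\{\sigma\}$ yields boundedness of the family of such $\F$. Boundedness forces only finitely many topological types for $\F$, and hence for the quotients, so $A$ is finite.

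Once finiteness is in hand, the geometry closes the argument. For each $\tau'\in A$ the quiver Quot scheme $\Quot^{\tau'}_{\E/X/S}$ of Section \ref{S:quiverquot} is projective over $S$, so its image $Z_{\tau'}\subset S$ under the structure morphism is closed. By construction, the complement of the finite union $\bigcup_{\tau'\in A} Z_{\tau'}$ is precisely the locus of points $s$ where $\E_s$ is semistable, so this locus is open. For stability the same argument applies verbatim after replacing the strict inequality $p^\sigma(\tau')<p^\sigma(\tau)$ in the definition of $A$ by the non-strict one $p^\sigma(\tau')\leq p^\sigma(\tau)$. The hard part will be the finiteness of $A$: one must verify that the slope bound on a destabilizing quotient of type $\tau'$ translates into a uniform lower bound on the slope of its (saturated) kernel, so that Lemma \ref{QS:L:grothendieck} can be applied to a single family of subsheaves. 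Once this bookkeeping is settled, properness of the quiver Quot morphism delivers the result.
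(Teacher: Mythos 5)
Your proposal is correct and follows essentially the same route as the paper: purity is handled vertex-by-vertex via Huybrechts--Lehn Proposition 2.3.1, the set $A$ of destabilizing topological types is shown to be finite via the Grothendieck Lemma \ref{QS:L:grothendieck} (applied, as you note, to the saturated kernels of the pure destabilizing quotients), and the semistable locus is then the complement of the finitely many closed images of the projective quiver Quot schemes $\Quot^{\tau'}_{\E/X/S}\to S$. The extra bookkeeping you flag at the end (translating the slope bound on the quotient into a lower slope bound on the saturated kernel) is exactly the standard additivity argument and poses no obstruction.
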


A necessary condition for the construction of the moduli space is the boundedness of the family of semistable quiver sheaves. In contrast to the case of Gieseker--semistable sheaves this is not even automatic in the case of a curve. However, in Theorem \ref{T: semistablebounded} we establish a partial result concerning this problem. Again, we think of $\Lul$ and also of $\tau$ as fixed. Additionally, we fix a (possibly empty) set $I$ of relations on $Q$.

\begin{definition}
A subset
$$\Sigma\subset \left(\R^{Q_0\times N}\right)_+$$
is called a bounded set of stability conditions if the family of quiver sheaves $\E$, which are of topological type $\tau$, satisfy the relations $I$, and are $\sigma$--semistable for some $\sigma\in \Sigma$, is bounded. In case that $\Sigma$ contains a single element $\sigma$, we simply say that $\sigma$ is bounded.
\end{definition}

\begin{remark}
Clearly, imposing more relations can only improve on the boundedness of some set $\Sigma$.
\end{remark}

Suppose we have a fixed projective scheme $X$, a stability condition $\left(\Lul,\sigma\right)$, a topological type $\tau$ and a set of relations $I$ on a quiver $Q$.

\begin{definition}
The moduli functor of semistable quiver sheaves
$$\M^\sst=\M^{\sigma-\sst}_\tau(Q,I,X):(Sch/k)^\op\to Sets$$
sends a scheme $S$ to the set of isomorphism classes of families which are flat over $S$ and consist of quiver sheaves on $X$ of topological type $\tau$ which are $\sigma$--semistable and satisfy the relations $I$. There is a similar notion of a moduli functor
$$\M^\st=\M^{\sigma-\st}_\tau(Q,I,X)$$
for stable quiver sheaves.
\end{definition}

Following \cite{simpson}, Section 1, we introduce the notion of moduli spaces.
\begin{definition}
A scheme $M^\sst=M^{\sigma-\sst}_\tau(Q,I,X)$ which corepresents $\M^\sst$ is called the (coarse) moduli space of semistable quiver sheaves of topological type $\tau$. Similarly, the moduli space $M^\st=M^{\sigma-st}_\tau(Q,I,X)$ of stable quiver sheaves is required to corepresent $\M^\st$.\\
If $M^\sst$ or $M^\st$ represent the respective moduli functor, they are called fine moduli spaces.
\end{definition}

\section{The embedding functor}

According to our program of construction, we need a functor that embeds the category of quiver sheaves into the category of representations of a quiver $Q(Q')$, which we need to construct.\\

\subsection{The twisted quiver}~\\

Our embedding functor maps quiver sheaves to representations of the twisted quiver $Q(Q')$, where the twisting, or auxiliar, quiver is given as in \cite{grt}.\\
Fix a projective scheme $X$ and a collection $\Lul=(L_1,\ldots,L_N)$ of ample line bundles on $X$. Furthermore, we fix a topological type $\tau$ of a quiver sheaf and two natural numbers $m>n$.

\begin{definition}
The auxiliary quiver with $N$ rows is defined as follows.
\begin{align*}
Q_0'&=\left\{v_1,\ldots,v_N,w_1,\ldots,w_N\right\}\\
Q_1'&=\left\{\varphi_{kl}:v_k\to w_l \mid k,l=1,\ldots,N\right\}.
\end{align*}
The labels $H_{ij}=H_{\varphi_{ij}}$, dependent on $\Lul,m$ and $n$, attached to the arrows $\varphi_{ij}:v_i\to w_j$ are given as
$$H_{ij}=H^0\left(X,L_i^{-n}\otimes L_j^{m}\right)=\Hom\left(L_i^n,L_j^m\right).$$
\end{definition}

For example, the auxiliary quiver $Q'$ for $N=3$ looks as follows (ignoring the labels).
$$\xymatrix{v_1 \ar[rrr] \ar[rrrd] \ar[rrrdd] &&& w_1 \\
v_2 \ar[rrru] \ar[rrr] \ar[rrrd] &&& w_2 \\
v_3 \ar[rrruu] \ar[rrru] \ar[rrr] &&& w_3}$$

For integers $m>n$ consider the sheaf
$$T=\bigoplus_{j=1}^N L_j^{-n}\oplus L_j^{-m}$$
and the $k$--algebra
$$A'=L\oplus H \subset \End_X(T),$$
where $L$ is generated by the projections onto the summands $L_i^{-n}$ and $L_j^{-m}$ and
$$H=\bigoplus_{i,j=1}^N H_{ij},~ H_{ij}=H^0(X,L_i^{-n}\otimes L_j^m)=\Hom_X(L_j^{-m},L_i^{-n}).$$

The algebra $A'$ is realized as the path algebra of the auxiliary quiver $Q'$ with $N$ rows and labels $H_{ij}$.\\

\begin{definition}
Let $Q$ denote any finite quiver, and consider the auxiliary quiver $Q'$ for some given $N,m$ and $n$ with given labels $H_{ij}$. Then the twisted quiver $Q(Q')$ is defined as follows.
\begin{align*}
Q(Q')_0=&\left\{v_{ij},w_{ij}\mid i\in Q_0, j=1,\ldots,N\right\}\\
Q(Q')_1=&\left\{\varphi_{ikl}:v_{ik}\to w_{il} \mid i\in Q_0, k,l=1,\ldots,N\right\}\\
\cup&\left\{\alpha_{k}^{\links}:v_{ik}\to v_{jk}\mid \left(\alpha:i\to j\right)\in Q_1,k=1,\ldots,N\right\}\\
\cup&\left\{\alpha_{k}^{\rechts}:w_{ik}\to w_{jk}\mid \left(\alpha:i\to j\right)\in Q_1,k=1,\ldots,N\right\}
\end{align*}
The label attached to an arrow $\varphi_{ikl}$ is $H_{kl}=H^0\left(L_k^{-n}\otimes L_l^m\right)$, independent of $i\in Q_0$, and the arrows $\alpha^\links_k$ and $\alpha^\rechts_k$ remain unlabeled.
Further, for any path $\gamma=\alpha_l\ldots\alpha_1$ in $Q$ and any integer $1\leq k\leq N$ we define
$$\gamma^\rechts_k=\left(\alpha_l\right)^\rechts_k\ldots\left(\alpha_1\right)^\rechts_k,$$
and $\gamma^\links_k$ is defined in a similar way. The relations associated to the twisted quiver are then given by
$$I'(I)=I'_1\cup I'_2,$$
where
\begin{align*}
I'_1&=\left\{\alpha^\rechts_l\varphi_{ikl}-\varphi_{jkl}\alpha^{\links}_k \mid \left(\alpha:i\to j\right)\in Q_1;k,l=1,\ldots,N\right\},\\
I'_2&=\left\{\sum_{r=1}^l\lambda_r \left(\gamma_r\right)_k^z\mid \left(\sum_{r=1}^l\lambda_r \gamma_r\right)\in I;1\leq k\leq N;z\in\{\links,\rechts\}\right\}.
\end{align*}
\end{definition}

We think of the twisted quiver as a copy of the auxiliary quiver $Q'$ at each vertex of $Q$, where the arrows in $Q$ are copied for each vertex in $Q'$. The relations $I'_1$ then tell us that in following an arrow in $Q$ and an arrow in $Q'$ the order does not matter (up to changing the copy of the arrow). The relations $I'_2$ tell us that for each vertex in $Q'$ the corresponding copies of paths in $Q$ still satisfy the relations $I$.\\

For instance, let
$$Q=\bullet \xrightarrow{a} \bullet$$
denote the $a$--morphism quiver, and let $Q'$ denote the auxiliary quiver for $N=2$. Then $Q(Q')$, which might as well be interpreted as the quiver $Q'$ doubled, looks as follows.
$$\xymatrix{&&&&&&&&\\
\bullet \ar[rr]|-{H_{11}} \ar@/^15pt/[rrrrr]^{a} \ar[rrd]|-(.75){H_{12}}&& \bullet \ar@/^15pt/[rrrrr]^{a}  &&& \bullet \ar[rr]|-{H_{11}} \ar[rrd]|-(.75){H_{12}} && \bullet\\
\bullet \ar@/_15pt/[rrrrr]_{a}  \ar[rr]|-{H_{22}} \ar[rru]|-(.75){H_{21}}  && \bullet \ar@/_15pt/[rrrrr]_{a}  &&& \bullet  \ar[rr]|-{H_{22}} \ar[rru]|-(.75){H_{21}} && \bullet\\
&&&&&&&}$$

By construction, the oriented cycles in $Q(Q')$ are inherited from $Q$, but not from $Q'$. Hence, we have the following result.

\begin{lemma}\label{EF:L:twisted_cycles}
 If the quiver $Q$ has no oriented cycles, the same holds for the twisted quiver $Q(Q')$.
\end{lemma}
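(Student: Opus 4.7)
The plan is to exploit the rigid bipartite-like structure of $Q(Q')$. I would first partition the vertex set $Q(Q')_0$ into the ``left part'' $V=\{v_{ij}\mid i\in Q_0,\ j=1,\ldots,N\}$ and the ``right part'' $W=\{w_{ij}\mid i\in Q_0,\ j=1,\ldots,N\}$, and classify the three types of arrows in $Q(Q')_1$ according to which part they connect: the arrows $\varphi_{ikl}$ go from $V$ to $W$, the arrows $\alpha^{\links}_k$ stay inside $V$, and the arrows $\alpha^{\rechts}_k$ stay inside $W$. In particular, no arrow of $Q(Q')$ points from $W$ back to $V$.

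Next, I would argue that any hypothetical oriented cycle $c$ in $Q(Q')$ cannot use any arrow of type $\varphi_{ikl}$. Indeed, following such an arrow moves us from $V$ into $W$, and from then on every subsequent arrow of the cycle stays inside $W$, so the cycle could never return to its starting vertex in $V$. Hence $c$ is an oriented cycle lying entirely in the $V$-subquiver or entirely in the $W$-subquiver.

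Finally, I would observe that for each fixed $k\in\{1,\ldots,N\}$ the full subquiver on $\{v_{ik}\mid i\in Q_0\}$ together with the arrows $\alpha^{\links}_k$ (for $\alpha\in Q_1$) is isomorphic to $Q$ via $v_{ik}\mapsto i$, $\alpha^{\links}_k\mapsto\alpha$; moreover arrows $\alpha^{\links}_k$ with different $k$ connect vertices with different second index and so cannot appear in the same cycle. Thus any oriented cycle inside $V$ would descend, via this isomorphism, to an oriented cycle in $Q$, contradicting the hypothesis. The same argument applies to cycles inside $W$ using the $\alpha^{\rechts}_k$.

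There is no substantive obstacle here; the only point requiring minor care is the bookkeeping in the last step, namely that the index $k$ is preserved by both $\alpha^{\links}_k$ and $\alpha^{\rechts}_k$, which is exactly what decouples the $V$-subquiver (and the $W$-subquiver) into $N$ disjoint copies of $Q$ and reduces the statement to the hypothesis on $Q$.
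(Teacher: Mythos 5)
Your proof is correct and is essentially the detailed version of the paper's argument, which is only stated as the one-line observation that oriented cycles in $Q(Q')$ are inherited from $Q$ but not from $Q'$ (precisely because no arrow returns from the $w$-side to the $v$-side, and the remaining arrows form $k$-indexed copies of $Q$). Your write-up just makes this bookkeeping explicit.
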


The natural target category of the extension of the embedding functor, which we construct in the next Section \ref{SS:embeddingfunctor}, can be identified with the category of representations of the twisted quiver. 
\begin{lemma}\label{EF:L:target_categories}
There is a canonical identification of categories
$$(Q,I)-\rep_{A'-\mod}\simeq \left(Q(Q'),H,I'(I)\right)-\rep.$$

\begin{proof}
A representation of $(Q(Q'),H)$ in particular consists of a representation of the auxiliary quiver for each vertex in $Q$. Also, for some fixed arrow $\alpha:i\to j$ of $Q$ the arrows $\alpha^\links_k$ and $\alpha^\rechts_k$ together with the relations $I_1'$ comprise the data of a morphism of representations of $(Q',H)$. Moreover, the relations $I_2'$ imply that these morphisms satisfy the relations $I$. The identification of $(Q',H)-\rep$ with $A'-\mod$ thus induces the required equivalence.
\end{proof}
\end{lemma}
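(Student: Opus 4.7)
The plan is to reduce everything to the standard identification of $A'$-modules with representations of the labeled quiver $(Q',H)$, and then simply unfold a representation of $Q$ valued in that abelian category. Since $A'$ is by construction the labeled path algebra of $Q'$ with the vector spaces $H_{ij}$ sitting on the arrows $\varphi_{ij}$, there is a well-known equivalence $A'\text{--}\mod \simeq (Q',H)\text{--}\rep$ that sends an $A'$-module $M$ to its decomposition into weight spaces $M_{v_k}, M_{w_k}$ under the idempotents generating $L$, together with the structure maps $H_{kl}\otimes M_{v_k}\to M_{w_l}$ induced by the action of $H$. I will take this equivalence as the starting point.

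First, I would define the functor from left to right. Given a representation $\E$ of $(Q,I)$ in $A'\text{--}\mod$, I have for each $i\in Q_0$ an $A'$-module $\E_i$, which corresponds to a representation of $(Q',H)$ with vertex data $V_{ik}:=(\E_i)_{v_k}$ and $W_{ik}:=(\E_i)_{w_k}$ and labeled arrows $\varphi_{ikl}:V_{ik}\to W_{il}$ coming from the $H_{kl}$-action. For each $\alpha:i\to j$ in $Q_1$ the $A'$-linear morphism $\E_\alpha:\E_i\to\E_j$ decomposes into its components on the weight spaces, giving the linear maps $\alpha_k^{\links}:V_{ik}\to V_{jk}$ and $\alpha_k^{\rechts}:W_{ik}\to W_{jk}$. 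This is precisely the data of a representation of the labeled quiver $(Q(Q'),H)$. In the opposite direction, I reverse this unpacking: the vertex spaces and the $\varphi_{ikl}$ assemble, at each $i\in Q_0$, into an $A'$-module via the equivalence above, and the $\alpha^\links_k, \alpha^\rechts_k$ assemble into a linear map between the corresponding $A'$-modules.

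Second, I would check that under this dictionary the relations $I_1'$ and $I_2'$ match the two remaining pieces of structure. The $A'$-linearity of $\E_\alpha$ amounts to the requirement that the maps $\alpha^\links_k,\alpha^\rechts_k$ intertwine the $\varphi_{ikl}$ and $\varphi_{jkl}$, which is exactly the commutation $\alpha^\rechts_l\varphi_{ikl}=\varphi_{jkl}\alpha^{\links}_k$ constituting $I_1'$. Similarly, since the morphism assigned by $\E$ to a path $\gamma=\alpha_l\cdots\alpha_1$ in $Q$ is, under the weight-space decomposition, just the composition of the corresponding $(\alpha_r)^z_k$ on each copy $z\in\{\links,\rechts\}$ and each $k$, the relation $\sum_r\lambda_r\gamma_r=0$ in $I$ holds in $\E$ if and only if its image $\sum_r\lambda_r(\gamma_r)^z_k=0$ holds for all $k$ and both $z$, which is precisely $I_2'$.

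Finally, morphisms in both categories are collections of maps between the underlying vector/sheaf data compatible with all structure morphisms, so the two functors I constructed are mutually inverse on morphisms as well. The only thing that really requires a moment of thought, and which I would highlight as the main bookkeeping obstacle, is verifying the exact correspondence between $A'$-linearity of $\E_\alpha$ and the relations $I_1'$, because one must be careful that the $L$-linearity (i.e.\ weight-preserving) and the $H$-linearity parts of $A'$-linearity together give precisely the stated commutation relations indexed over all pairs $(k,l)$; once this is checked, the equivalence of categories follows formally.
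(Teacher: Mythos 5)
Your proposal is correct and follows essentially the same route as the paper: reduce to the standard identification $A'\text{--}\mod\simeq (Q',H)\text{--}\rep$, unpack a representation of $Q$ in $A'\text{--}\mod$ vertex by vertex, and observe that $A'$-linearity of the $\E_\alpha$ corresponds to the relations $I_1'$ while the relations $I$ correspond to $I_2'$. Your write-up simply makes explicit the bookkeeping that the paper's proof leaves implicit.
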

~\\

\subsection{The functor}\label{SS:embeddingfunctor}~\\

The functor $\Hom(T,*)$ of \cite{grt} extends to map quiver sheaves to representations of $Q$ in the category of representations of $(Q',H)$, which we identify with representations of $(Q(Q'),H,I'(I))$.\\

Fix a topological type $\tau$ for a quiver sheaf. First, we recall Theorem 5.7 of \cite{grt}.

\begin{theorem}\label{EF:T:fullyfaithful_sheaf}
For $m\gg n\gg 0$, the functor
$$\Hom(T,*):\Coh(X)\to A'-\mod$$
is fully faithful on the full subcategory of $(n,\Lul)$--regular sheaves of topological type $\tau_i$ for some $i\in Q_0$. More precisely, for $E$ in this subcategory the evaluation
$$\eta_E:\Hom(T,E)\otimes_{A'} T\to E$$
is a natural isomorphism, and thus $\Hom(T,*)$ is left adjoint to the functor $*\otimes_{A'} T$.
\end{theorem}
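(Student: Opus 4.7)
The plan is to reduce this statement directly to Theorem 5.7 of \cite{grt}, which is the analogous assertion for a single fixed topological type of coherent sheaves. The key observation is that the full subcategory in question is the finite union, indexed by $i \in Q_0$, of the full subcategories of $(n,\Lul)$--regular coherent sheaves of topological type $\tau_i$. Since each of these cases is settled individually by \cite{grt}, the task reduces to finding a single pair $(n,m)$ that works uniformly for all $i$.

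First, I would apply Theorem 5.7 of \cite{grt} to each topological type $\tau_i$ separately, obtaining thresholds $(n_i, m_i)$ with $m_i \gg n_i \gg 0$ such that the evaluation $\eta_E : \Hom(T,E) \otimes_{A'} T \to E$ is a natural isomorphism for every $(n_i,\Lul)$--regular sheaf $E$ of topological type $\tau_i$. Taking $n = \max_i n_i$ and $m = \max_i m_i$ then produces a uniform pair; here the finiteness of $Q_0$ is essential, since otherwise no simultaneous choice need exist.

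Second, once $\eta_E$ has been identified as a natural isomorphism on the relevant subcategory, full faithfulness is a formal consequence of adjunction. Given $(n,\Lul)$--regular sheaves $E, F$ of topological types $\tau_i, \tau_{i'}$, the hom-tensor adjunction provides
\begin{align*}
\Hom_{A'}(\Hom(T,E), \Hom(T,F)) &\cong \Hom_X(\Hom(T,E) \otimes_{A'} T, F),
\end{align*}
and composing with the inverse of $\eta_E$ on the right-hand side identifies this with $\Hom_X(E, F)$. Chasing the definition of the unit of adjunction shows that this composite is the canonical map $\Hom_X(E,F) \to \Hom_{A'}(\Hom(T,E),\Hom(T,F))$, which is therefore bijective.

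The genuine technical difficulty, namely the verification that $\eta_E$ is an isomorphism for $m \gg n \gg 0$, is exactly the content of \cite{grt} and originates in \cite{ack}. It rests on exhibiting a two-term resolution of $E$ of the shape
\begin{align*}
\bigoplus_{k,l} H_{kl} \otimes H^0(E \otimes L_k^n) \otimes L_l^{-m} \to \bigoplus_j H^0(E \otimes L_j^n) \otimes L_j^{-n} \to E \to 0,
\end{align*}
whose middle exactness requires the vanishing of certain higher cohomology groups for $m$ large relative to $n$ and $\tau_i$. Since we invoke \cite{grt} as a black box, no additional obstacle arises in the present setting beyond the aforementioned bookkeeping over the finite set $Q_0$.
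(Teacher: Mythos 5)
Your proposal is correct and matches the paper's treatment: the paper simply recalls Theorem 5.7 of \cite{grt} and notes in the subsequent remark that the condition $m\gg n\gg 0$ can be arranged simultaneously for the finitely many topological types $\tau_i$, which is exactly your reduction. The extra details you supply (the formal derivation of full faithfulness from the adjunction and the two-term resolution underlying the GRT argument) are consistent with, but not spelled out in, the paper.
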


\begin{remark}
Theorem 5.7 of \cite{grt} is stated for one fixed topological type $\tau$. The main reason for this is to ensure the boundedness of the family of sheaves involved. The condition $m\gg n\gg 0$ can be satisfied for all of the finitely many $\tau_i$ simultaneously, providing us with an adjoint pair in this slightly more general case.
\end{remark}

A module in the image of this functor can be written as
$$\Hom(T,E)=\bigoplus_{j=1}^N H^0\left(E\otimes L_j^n\right)\oplus H^0\left(E\otimes L_j^m\right),$$
where the arrow $\varphi_{ij}$ acts via the canonical map
$$H^0\left(E\otimes L_i^n\right)\otimes_k H^0\left(L_i^{-n}\otimes L_j^m\right)\to H^0\left(E\otimes L_j^m\right).$$
Hence, the image of the functor (for a single $\tau$) lies in the full subcategory of representations of dimension vector $d'=d'(\tau,m,n)$ which reads as
$$d'_{v_k}=h^0\left(E\otimes L_k^n\right)=P^{L_k}_E(n),~d'_{w_k}=h^0\left(E\otimes L_k^m\right)=P^{L_k}_E(m).$$
Note that the Hilbert polynomials $P^{L_k}_E$, and hence the dimension vector, are fixed by the topological type $\tau$.\\

By abstract nonsense, using the naturalness of the evaluation, we obtain an extension of Theorem \ref{EF:T:fullyfaithful_sheaf} to quiver sheaves. The target category of the induced embedding functor can be identified with the category of representations of the twisted quiver by Lemma \ref{EF:L:target_categories}.
\begin{theorem}\label{EF:T:fullyfaithful_quiversheaf}
For $m\gg n\gg 0$ the induced functor
$$\Hom\left(T,*\right):(Q,I)-\Coh(X)\to \left(Q(Q'),H,I'(I)\right)-\rep$$
is fully faithful and exact on the full subcategory of $(n,\Lul)$--regular quiver sheaves of topological type $\tau$. More precisely, for a quiver sheaf $\E$ in this subcategory the evaluation
$$\eta_\E:\Hom\left(T,\E\right)\otimes_{A'}T\to \E$$
is a natural isomorphism, and thus $\Hom\left(T,*\right)$ is left adjoint to the functor $*\otimes_{A'}T$. The image of the embedding lies in the full subcategory of representations of dimension vector $d\left(\tau,m,n\right)$.

\begin{proof}
By general nonsense, the functor of Theorem \ref{EF:T:fullyfaithful_sheaf} extends to a functor
$$\Hom\left(T,*\right):(Q,I)-\Coh(X)\to (Q,I)-\rep_{A'-\mod},$$
and the target category is identified via Lemma \ref{EF:L:target_categories}. By definition, the functor is left--exact on $Q-\Coh(X)$, and on the subcategory under consideration it additionally is right--exact as it is a left--adjoint.
\end{proof}
\end{theorem}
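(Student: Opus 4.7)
The overall plan is to obtain the statement essentially for formal/categorical reasons from the single--sheaf version, Theorem \ref{EF:T:fullyfaithful_sheaf}. The idea is that all the hard analysis (choosing $m\gg n\gg 0$ so that evaluation is an isomorphism on regular sheaves of fixed topological type) has already been carried out at the level of sheaves; extending to quiver sheaves is a matter of checking that the vertex--wise constructions assemble coherently along the arrows and relations of $Q$, and then translating the result across the identification supplied by Lemma \ref{EF:L:target_categories}.

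First, I would construct the extension of $\Hom(T,*)$. Given a coherent quiver sheaf $\E$ on $X$, I set $\Hom(T,\E)_i = \Hom(T,\E_i)$, which is an $A'$-module by Theorem \ref{EF:T:fullyfaithful_sheaf}; for an arrow $\alpha\colon i\to j$ of $Q$, functoriality of $\Hom(T,-)$ produces an $A'$-linear map $\Hom(T,\E_\alpha)$, and for any relation $\sum_k \lambda_k \gamma_k \in I$ the same functoriality yields the identity $\sum_k \lambda_k \Hom(T,\E_{\gamma_k}) = 0$. This gives a functor $(Q,I)\text{-}\Coh(X)\to (Q,I)\text{-}\rep_{A'\text{-}\mod}$, which is then identified with a functor into $(Q(Q'),H,I'(I))\text{-}\rep$ by Lemma \ref{EF:L:target_categories}.

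Next, I would restrict to $(n,\Lul)$-regular quiver sheaves of topological type $\tau$. Since $Q_0$ is finite, one can choose $m\gg n\gg 0$ uniformly so that Theorem \ref{EF:T:fullyfaithful_sheaf} applies to every $\E_i$ simultaneously: the counits $\eta_{\E_i}\colon \Hom(T,\E_i)\otimes_{A'} T\to \E_i$ are natural isomorphisms for each $i\in Q_0$. Naturality of $\eta$ in the argument, applied to the morphisms $\E_\alpha$, guarantees that these vertex--wise isomorphisms assemble into a morphism $\eta_\E$ of quiver sheaves, and since they are isomorphisms at every vertex, $\eta_\E$ itself is an isomorphism. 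This is exactly the statement that on the subcategory under consideration the counit of the adjunction $(\,\cdot\,\otimes_{A'} T)\dashv \Hom(T,\,\cdot\,)$ is invertible, so $\Hom(T,*)$ becomes fully faithful and, together with its restricted adjoint equivalence, a left adjoint to $*\otimes_{A'} T$ as well. Left exactness is automatic for any $\Hom$-functor, while right exactness then follows from being a left adjoint on the restricted subcategory, yielding exactness. The assertion on the dimension vector is immediate from regularity: the higher cohomology of $\E_i \otimes L_k^n$ and $\E_i\otimes L_k^m$ vanishes, so the dimensions at the vertices $v_{ik},w_{ik}$ equal the Hilbert polynomials $P^{L_k}_{\E_i}(n)$ and $P^{L_k}_{\E_i}(m)$, which are determined by $\tau_i$.

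I do not anticipate a serious obstacle; the only thing that requires a mild argument is that the choice of $m\gg n\gg 0$ from the single--sheaf statement can be made uniform in the finitely many vertex topological types $\tau_i$, and that naturality in $\E_i$ of $\eta$ really does upgrade the assembly to a morphism of quiver sheaves rather than merely a vertex--wise morphism. Both of these are standard and follow from the fact that the construction in Theorem \ref{EF:T:fullyfaithful_sheaf} is functorial and depends only on $\tau_i$, not on $\E_i$ itself.
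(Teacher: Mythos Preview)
Your proposal is correct and follows essentially the same approach as the paper: extend the single--sheaf functor of Theorem \ref{EF:T:fullyfaithful_sheaf} vertex--wise by functoriality (the paper's ``general nonsense''), identify the target via Lemma \ref{EF:L:target_categories}, and deduce exactness from left--exactness of $\Hom$ together with right--exactness of a left adjoint. You have simply made explicit the details the paper leaves implicit, including the uniformity of $m\gg n\gg 0$ over the finitely many $\tau_i$ (which the paper addresses in the remark following Theorem \ref{EF:T:fullyfaithful_sheaf}).
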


For a quiver sheaf $\E$ the decomposition reads as
$$\Hom(T,\E)=\bigoplus_{i\in Q_0}\bigoplus_{j=1}^N H^0\left(\E_i\otimes L_j^n\right)\oplus H^0\left(\E_i\otimes L_j^m\right),$$
where the arrow $\varphi_{ikl}$ acts via the canonical map
$$H^0\left(\E_i\otimes L_k^n\right)\otimes_k H^0\left(L_k^{-n}\otimes L_l^m\right)\to H^0\left(\E_i\otimes L_l^m\right),$$
and the arrows $\alpha_k^\links$ and $\alpha_k^\rechts$ for some arrow $\left(\alpha:i\to j\right)\in Q_1$ act via
$$H^0\left(\E_i\otimes L_k^p\right)\to H^0\left(\E_j\otimes L_k^p\right),$$
where $p=n$ or $p=m$ respectively. Hence, $\Hom(T,\E)$ has dimension vector $d=d(\tau,m,n)$ given as
$$d_{v_{ik}}=h^0\left(\E_i\otimes L_k^n\right)=P^{L_k}_{\E_i}(n),~d_{w_{ik}}=h^0\left(\E_i\otimes L_k^m\right)=P^{L_k}_{\E_i}(m).$$

Investigating the moduli functor requires a version of $\Hom(T,*)$ globalized to a functor between categories of flat families. First, recall the globalized functors in the setting of $A'$--modules and ordinary sheaves as given in \cite{grt}.

\begin{definition}
Let $S$ denote any scheme.\\
Denote by $\flat_{A'}(S)$ the category of $\Ocal_S\otimes {A'}$--modules on $S$ which are locally free as $\Ocal_S$--modules. By $\flat_S(X\times S)$ we denote the category of $\Ocal_{X\times S}$--modules which are flat over $S$.\\
The globalized functors
\begin{align*}
\Hom'(T,*)& :\flat_S\left(X\times S\right)\rightarrow \flat_{A'}(S)\\
*\otimes_{A'}' T& :\flat_{A'}(S) \rightarrow \flat_S(X\times S)
\end{align*}
are defined as
$$\Hom'(T,\Ebb)=(p_S)_* \Hom(p_X^*T,\Ebb),~ \Mbb\otimes_{A'}' T=p_S^* \Mbb\otimes_{\Ocal_{X\times S}\otimes A'} p_X^*T,$$
where $p_S:X\times S\to S$ and $p_X:X\times S\to X$ are the canonical projections.
\end{definition}

The globalized functor $\Hom'(T,*)$ is also fully faithful, as shown in \cite{grt} Proposition 5.8 by reducing to the fibers.
\begin{proposition}
For $m\gg n\gg 0$ the functor $\Hom'(T,*)$ is fully faithful on the full subcategory of $\flat_S(X\times S)$ consisting of flat families of $(n,\Lul)$--regular sheaves of topological type $\tau$.
\end{proposition}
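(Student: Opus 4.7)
The plan is to mimic the argument of \cite{grt} Proposition 5.8, reducing the globalized statement to the pointwise Theorem \ref{EF:T:fullyfaithful_sheaf} by means of cohomology and base change. The globalized functors $\Hom'(T,*)$ and $*\otimes'_{A'}T$ form an adjoint pair (with $*\otimes'_{A'}T$ left adjoint to $\Hom'(T,*)$), and fully faithfulness of the right adjoint on a subcategory is equivalent to the counit
\[ \eta'_\Ebb:\Hom'(T,\Ebb)\otimes'_{A'} T\rightarrow \Ebb \]
being an isomorphism for every $\Ebb$ in that subcategory. So the task reduces to exhibiting $\eta'_\Ebb$ as an isomorphism whenever $\Ebb\in\flat_S(X\times S)$ is a flat family of $(n,\Lul)$--regular sheaves of topological type $\tau$.

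The main step is to check this isomorphism fiberwise over $S$. For every geometric point $s\in S$, the $(n,\Lul)$--regularity of $\Ebb_s$ guarantees that, for $m\gg n\gg 0$ chosen once and for all for $\tau$, the higher direct images $R^i(p_S)_*(\Ebb\otimes p_X^*L_j^{-n})$ and $R^i(p_S)_*(\Ebb\otimes p_X^*L_j^{-m})$ vanish for $i>0$. Cohomology and base change then shows that $\Hom'(T,\Ebb)$ is a locally free $\Ocal_S\otimes A'$--module whose formation commutes with base change; in particular its fiber at $s$ is $\Hom(T,\Ebb_s)$ as an $A'$--module. Base change for $*\otimes'_{A'}T$ is formal from its definition via $p_S^*$ and the relative tensor product. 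Consequently the fiber of $\eta'_\Ebb$ at $s$ is precisely the pointwise counit $\Hom(T,\Ebb_s)\otimes_{A'}T\to\Ebb_s$, which is a natural isomorphism by Theorem \ref{EF:T:fullyfaithful_sheaf}.

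It remains to promote this fiberwise isomorphism to a global one. Both $\Ebb$ and $\Hom'(T,\Ebb)\otimes'_{A'}T$ are flat over $S$ (the latter because it is obtained by applying $*\otimes'_{A'}T$ to a locally free $\Ocal_S\otimes A'$--module and, after the previous step, agrees with $\Ebb$ on every fiber), so a standard argument---Nakayama's lemma applied to the cokernel of $\eta'_\Ebb$, followed by flatness to control the kernel---shows that $\eta'_\Ebb$ is an isomorphism on $X\times S$. Through the adjunction this yields the desired bijection $\Hom_{X\times S}(\Ebb,\Ebb')\cong\Hom_{\Ocal_S\otimes A'}(\Hom'(T,\Ebb),\Hom'(T,\Ebb'))$ for any second flat family $\Ebb'$.

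The principal obstacle is the base-change step: one must justify that the choice of $m\gg n\gg 0$ made in Theorem \ref{EF:T:fullyfaithful_sheaf} is uniform enough to simultaneously kill all relevant higher direct images for every $s\in S$. This is precisely where the fixed topological type $\tau$ enters, bounding the family of fibers so that a single pair $(m,n)$ works. Once this uniform vanishing is secured, the cohomology-and-base-change formalism and the fiberwise theorem from \cite{grt} do all the remaining work.
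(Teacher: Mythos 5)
Your proposal is correct and follows essentially the same route as the paper, which simply invokes \cite{grt} Proposition 5.8 ``by reducing to the fibers'': one checks that the counit $\Hom'(T,\Ebb)\otimes'_{A'}T\to\Ebb$ restricts on each fiber to the counit of Theorem \ref{EF:T:fullyfaithful_sheaf} via cohomology and base change (using the uniform $(n,\Lul)$--regularity guaranteed by the fixed topological type $\tau$), and then upgrades the fiberwise isomorphism using Nakayama and flatness of the target. Only a cosmetic slip: the relevant higher direct images are those of $\Ebb\otimes p_X^*L_j^{n}$ and $\Ebb\otimes p_X^*L_j^{m}$ (positive powers, since $\mathcal{H}om(L_j^{-n},\Ebb)\simeq\Ebb\otimes L_j^{n}$), not of $\Ebb\otimes p_X^*L_j^{-n}$.
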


Next, we want to extend these globalized functors to the case of a quiver $Q$ with relations $I$. Let $A$ denote the path algebra of the twisted quiver $(Q(Q'),H,I'(I))$.
\begin{definition}
Denote the category of representations of $Q$ in flat families as
$$\flat_S(X\times S,Q,I)=(Q,I)-\rep_{\flat_s\left(X\times S\right)}.$$
By $\flat_{A}(S)$ we denote the category of $\Ocal_S\otimes {A}$--modules on $S$ which are locally free as $\Ocal_S$--modules.
\end{definition}

\begin{lemma}\label{EF:L:rep(A'-mod)=A-mod}
There is an equivalence of categories
$$(Q,I)-\rep_{\flat_{A'}(S)}\simeq flat_A(S),$$
i.e. the category of representations of $(Q,I)$ in the category $\flat_{A'}(S)$ is equivalent to the category $\flat_A(S)$.

\begin{proof}
This is the same identification as in Lemma \ref{EF:L:target_categories}, but twisted by $\Ocal_S$.
\end{proof}
\end{lemma}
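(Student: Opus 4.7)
The plan is to transfer the equivalence established in Lemma \ref{EF:L:target_categories} to the relative setting over $S$ by systematically replacing $k$-vector spaces with $\mathcal{O}_S$-modules while preserving the local freeness condition. Recall that that earlier lemma identified a representation of $(Q, I)$ valued in $A'$-modules with a single module over the path algebra $A$ of $(Q(Q'), H, I'(I))$ via the following recipe: the underlying $A'$-module at a vertex $i \in Q_0$ corresponds to the summand of the $A$-module cut out by the idempotents $v_{ij}, w_{ij}$ for $j=1,\ldots,N$; the morphisms $M_\alpha$ associated to arrows $\alpha \in Q_1$ come from the action of the $\alpha^\links_k, \alpha^\rechts_k$; the $A'$-linearity of $M_\alpha$ is exactly the content of $I'_1$; and the condition that the $M_\alpha$ satisfy the original relations $I$ is exactly the content of $I'_2$.

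To globalize, starting from an object of $(Q,I)\text{-}\rep_{\flat_{A'}(S)}$, consisting of a family $(M_i)_{i \in Q_0}$ in $\flat_{A'}(S)$ together with $\mathcal{O}_S\otimes A'$-linear morphisms $M_\alpha \colon M_i \to M_j$ for each $\alpha \colon i \to j$ satisfying $I$, I would form $M = \bigoplus_{i \in Q_0} M_i$ and equip it with an $\mathcal{O}_S \otimes A$-module structure by the recipe above: the vertex idempotents and arrows $\varphi_{ikl}$ act on each summand via its existing $\mathcal{O}_S \otimes A'$-structure, while the arrows $\alpha^\links_k, \alpha^\rechts_k$ act via the vertex components of $M_\alpha$. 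The relations $I'_1$ are verified since each $M_\alpha$ is $A'$-linear, and the relations $I'_2$ are verified since the $M_\alpha$ satisfy the original relations $I$. Since $M$ is a finite direct sum of modules locally free over $\mathcal{O}_S$, it also lies in $\flat_A(S)$.

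Conversely, given $M \in \flat_A(S)$, one decomposes $M = \bigoplus_{i \in Q_0} M_i$ using the sum of idempotents of $A$ grouping together the vertices of the $i$-th copy of $Q'$; each summand $M_i$ is then an $\mathcal{O}_S \otimes A'$-module, and is locally free over $\mathcal{O}_S$ as a direct summand of $M$. The arrows $\alpha^\links_k, \alpha^\rechts_k$ assemble into $\mathcal{O}_S$-linear maps $M_\alpha \colon M_i \to M_j$; their $A'$-linearity is precisely the relations $I'_1$, and the fact that they satisfy the relations $I$ is precisely $I'_2$. One checks in a routine manner that the two constructions are mutually inverse on objects and extend functorially to morphisms.

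There is no real obstacle here; the only mild point to verify is that local freeness over $\mathcal{O}_S$ is preserved in both directions, and this is immediate because the constructions are $\mathcal{O}_S$-linear and the idempotent decomposition of $M$ respects the $\mathcal{O}_S$-module structure. In short, the argument is formally identical to that of Lemma \ref{EF:L:target_categories} with $k$ replaced by $\mathcal{O}_S$ and with ``finite-dimensional'' replaced by ``locally free of finite rank'', as indicated by the authors' remark that the identification is simply twisted by $\mathcal{O}_S$.
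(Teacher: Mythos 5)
Your proposal is correct and follows exactly the route the paper intends: it globalizes the identification of Lemma \ref{EF:L:target_categories} by replacing $k$ with $\Ocal_S$, using the idempotent decomposition of an $\Ocal_S\otimes A$--module over the copies of $Q'$ and matching $I_1'$ with $A'$--linearity of the arrow maps and $I_2'$ with the relations $I$. The paper's proof is just the one-line remark that this is the same identification ``twisted by $\Ocal_S$'', so your write-up is simply a more detailed version of the same argument.
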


\begin{proposition}\label{EF:P:faithful}
For $m\gg n\gg 0$ the functor $\Hom'(T,*)$ induces a fully faithful functor
$$\Hom'(T,*):\flat_S(X\times S,Q,I)_\tau\to \flat_A(S)$$
from the full subcategory of $(n,\Lul)$--regular representations of topological type $\tau$ in $\flat_S(X\times S)$ to $\flat_A(S)$, which we will also denote by $\Hom'(T,*)$. The corresponding functor induced by $*\otimes'_{A'}T$ will be denoted by $*\otimes'_A T$.

\begin{proof}
The functorial nature of $\Hom'(T,*)$ allows an extension to the category of representations of $Q$. The target of this extension can be identified with $\flat_A(S)$ by Lemma \ref{EF:L:rep(A'-mod)=A-mod}, and it is clear by abstract nonsense that the functor remains fully faithful.
\end{proof}
\end{proposition}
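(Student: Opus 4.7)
The plan is to reduce the statement to the vertex-wise fully faithful property of $\Hom'(T,*)$ on flat families of $(n,\Lul)$-regular sheaves (the preceding proposition), combined with the general principle that a fully faithful functor between abelian categories induces a fully faithful functor on representation categories of a quiver with relations.

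First, I would fix $m \gg n \gg 0$ uniformly so that the hypothesis of the preceding proposition holds simultaneously for every topological type $\tau_i$, $i \in Q_0$. This is possible because $Q_0$ is finite, so only finitely many topological types $\tau_i$ are involved, and one can intersect the finitely many resulting ranges for $m$ and $n$. For a representation $\E$ of $(Q,I)$ in $\flat_S(X\times S)$ of topological type $\tau$ whose vertex-families $\E_i$ are all $(n,\Lul)$-regular, applying $\Hom'(T,*)$ vertex-wise produces objects $\Hom'(T,\E_i) \in \flat_{A'}(S)$; functoriality converts each arrow morphism $\E_\alpha$ into a morphism $\Hom'(T,\E_\alpha)$ in $\flat_{A'}(S)$, and the $k$-linearity of $\Hom'(T,*)$ guarantees that any relation $\sum_k \lambda_k \E_{\gamma_k}=0$ satisfied by $\E$ is inherited by the image. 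This yields an object of $(Q,I)\text{-}\rep_{\flat_{A'}(S)}$, which under Lemma~\ref{EF:L:rep(A'-mod)=A-mod} corresponds to an object of $\flat_A(S)$.

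For fully faithfulness, recall that a morphism $\varphi: \E \to \F$ in $\flat_S(X\times S,Q,I)$ is a tuple $(\varphi_i)_{i \in Q_0}$ of morphisms of flat families satisfying the naturality square for every arrow of $Q$, and likewise on the target side one has a tuple of $\Ocal_S\otimes A'$-module homomorphisms subject to the analogous squares. The preceding proposition provides, for each $i \in Q_0$, a bijection $\varphi_i \mapsto \Hom'(T,\varphi_i)$ between $\Hom(\E_i,\F_i)$ and $\Hom(\Hom'(T,\E_i),\Hom'(T,\F_i))$. By functoriality, a commuting square for $\varphi$ in $\flat_S(X\times S)$ maps to a commuting square in $\flat_{A'}(S)$; conversely, since the vertex-wise functor is faithful, commutativity of the image square forces commutativity of the original. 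Hence the tuples satisfying naturality correspond bijectively, giving the fully faithfulness of the extended functor; the identification of Lemma~\ref{EF:L:rep(A'-mod)=A-mod} then rephrases this as fully faithfulness into $\flat_A(S)$.

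The main potential obstacle is twofold: first, the uniform choice of $m,n$ must be seen to work for all $\tau_i$ simultaneously, which is handled by finiteness of $Q_0$; second, one must verify that the relations in $I$ are correctly translated into $I'(I)$ under the extension, which reduces to linearity of $\Hom'(T,*)$ and the bookkeeping encoded in the definition of $I'_2$. Neither requires new substantive work beyond what was already established at the sheaf level, so the proof is essentially a formal categorical extension argument.
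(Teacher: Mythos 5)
Your proposal is correct and follows essentially the same route as the paper: the paper's proof is precisely the ``abstract nonsense'' extension argument you spell out, namely applying the vertex-wise fully faithful functor, identifying the target via Lemma~\ref{EF:L:rep(A'-mod)=A-mod}, and noting that fully faithfulness on each vertex plus faithfulness forces the bijection on morphisms of representations. The two points you flag as potential obstacles (uniform choice of $m,n$ over the finitely many $\tau_i$, and preservation of the relations $I$ by linearity) are exactly the ones the paper disposes of in the remark following Theorem~\ref{EF:T:fullyfaithful_sheaf} and in the construction of $I'(I)$, respectively.
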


We can give a description of the image of this functor.
\begin{proposition}\label{EF:P:imageofembedding}
For $m\gg n\gg 0$ as in Proposition \ref{EF:P:faithful} and a flat family $\Mbb$ of right--$A$--modules of dimension vector $d=d(\tau,m,n)$ over some scheme $B$ there exists a unique locally closed subscheme $i:B^\imgreg_\tau\subset B$ such that the following conditions hold.
\begin{enumerate}
\item $i^*\Mbb\otimes'_{A'} T$ is a family of $(n,\Lul)$--regular quiver sheaves of topological type $\tau$ and satisfying the relations $I$ on $X$ which is flat over $B^\imgreg_\tau$, and the unit map
$$\eta_{i^*\Mbb}:i^*\Mbb\to \Hom'\left(T,i^*\Mbb\otimes'_{A'} T\right)$$
is an isomorphism.
\item If $\sigma:S\to B$ is a subscheme such that $\sigma^*\Mbb\simeq \Hom'\left(T,\Ebb\right)$ for some family $\Ebb$ of $(n,\Lul)$--regular quiver sheaves of topological type $\tau$ which satisfy the relations $I$ on $X$ which is flat over $S$, then $\sigma$ factors through the embedding morphism $i$ and
$$\Ebb\simeq \sigma^* \Mbb\otimes'_{A'} T.$$
\end{enumerate}

\begin{proof}
The identification of Lemma \ref{EF:L:rep(A'-mod)=A-mod} allows us to consider $\Mbb$ as a representation of $Q$ in the category of flat families of $A'$--modules satisfying the relations $I$. Consider Proposition 5.9 in \cite{grt}, which is analogous to the present Proposition, and take $B^\imgreg_\tau$ to be the intersection of the corresponding closed subschemes provided for these families of $A'$--modules.\\
To see that this subscheme satisfies the intended assertions we can again trace back the identification of Lemma \ref{EF:L:rep(A'-mod)=A-mod} and observe that all assertions may be checked at each vertex of $Q$, since the preservation of the relations is built into the functors. For the second assertion we also note that a morphism factors through a set of locally closed subschemes if and only if it factors through the intersection.
\end{proof}
\end{proposition}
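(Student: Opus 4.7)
The plan is to reduce the statement to the corresponding result for ordinary sheaves at each vertex, namely Proposition 5.9 of \cite{grt}, and then intersect the resulting loci. Via the equivalence of categories in Lemma \ref{EF:L:rep(A'-mod)=A-mod}, I would first reinterpret the flat family $\Mbb$ of right $A$-modules of dimension vector $d(\tau,m,n)$ as a representation of $(Q,I)$ in the category $\flat_{A'}(B)$. Concretely, this unpacks as a flat family $\Mbb^{(i)}$ of right $A'$-modules of dimension vector $d'(\tau_i,m,n)$ for each vertex $i \in Q_0$, together with arrow morphisms $\Mbb^{(\alpha)}: \Mbb^{(i)} \to \Mbb^{(j)}$ for each $\alpha: i \to j$ in $Q_1$, subject to the relations $I$.

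Next, I would apply Proposition 5.9 of \cite{grt} to each $\Mbb^{(i)}$ individually. For $m \gg n \gg 0$, this produces a locally closed subscheme $B_i \subset B$ such that the pullback of $\Mbb^{(i)} \otimes'_{A'} T$ to $B_i$ is a flat family of $(n,\Lul)$-regular sheaves of topological type $\tau_i$ on $X$ with the corresponding unit map an isomorphism, and $B_i$ is universal for this property. Since $Q_0$ is finite, the intersection $B^\imgreg_\tau := \bigcap_{i \in Q_0} B_i$ is a locally closed subscheme of $B$, and this is the natural candidate.

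On $B^\imgreg_\tau$, property (1) then follows by assembling the per-vertex conclusions: the quiver-sheaf structure is obtained by applying $\otimes'_{A'} T$ to the arrow morphisms $\Mbb^{(\alpha)}$, and the relations $I$ are automatically preserved because the relation set $I'_2$ encodes them directly in $I'(I)$, so that Lemma \ref{EF:L:rep(A'-mod)=A-mod} transports them correctly. The unit map is an isomorphism vertex by vertex, and naturality promotes it to a morphism of quiver sheaves. For property (2), a morphism $\sigma: S \to B$ satisfying the hypothesis factors, upon restriction to each vertex $i$, through $B_i$ by the universality statement in Proposition 5.9 of \cite{grt}, hence through their intersection $B^\imgreg_\tau$; the desired isomorphism $\Ebb \cong \sigma^*\Mbb \otimes'_{A'} T$ of quiver sheaves is then pieced together from the vertex-wise adjunction isomorphisms. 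The main obstacle is purely one of bookkeeping: verifying that the $Q$-structure (arrow morphisms and the relations $I$) is carried along correctly through the reduction to a vertex-wise problem and the subsequent reassembly. This is essentially formal once Lemma \ref{EF:L:rep(A'-mod)=A-mod} is available, because the functors $\Hom'(T,-)$ and $- \otimes'_{A'} T$ are natural and thus commute with the $Q$-action.
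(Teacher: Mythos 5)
Your proposal is correct and follows essentially the same route as the paper: reinterpret $\Mbb$ via Lemma \ref{EF:L:rep(A'-mod)=A-mod} as a $Q$-representation in flat families of $A'$-modules, apply Proposition 5.9 of \cite{grt} vertex by vertex, define $B^\imgreg_\tau$ as the intersection of the resulting locally closed subschemes, and verify both assertions vertex-wise, with the universal property (2) following from the fact that a morphism factors through an intersection if and only if it factors through each member. The only point handled informally in both your argument and the paper's is the preservation of the relations $I$, which is justified by noting that it is built into the functors via $I'_2$.
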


It seems natural to think of the relations as defining a closed subscheme. This is made precise in the following result. Just for the purpose of formulating this corollary, we let $A_0$ denote the path algebra for the case $I=\emptyset$, i.e. the path algebra of the quiver $(Q(Q'),H,I'_1)$.
\begin{corollary}\label{EF:C:imageofembedding_relations}
Suppose that we are given a topological type $\tau$, a set of relations $I$, and integers $m\gg n\gg 0$ such that Proposition \ref{EF:P:faithful} holds for $I=\emptyset$. For a flat family $\Mbb$ of right--$A_0$--modules of dimension vector $d=d(\tau,m,n)$ over some scheme $B$ which restricts to a family $\Mbb'$ of modules which satisfy the relations $I_2'=I'_2(I)$ over some closed subscheme $B'\subset B$ we have
$$(B')_\tau^{\imgreg,I}=B_\tau^{\imgreg}\cap B'.$$
Here, $B_\tau^{\imgreg}\subset B$ is the locally closed subscheme as in Proposition \ref{EF:P:faithful} in a version without relations, and $(B')_\tau^{\imgreg,I}$ is the locally closed subscheme as in Proposition \ref{EF:P:faithful} in a version incorporating the relations $I$.

\begin{proof}
By construction, the restriction of the family $\Mbb$ allows a description
$$\Mbb\mid_{B^{\imgreg}}\simeq \Hom'(T,\Ebb),$$
where $\Ebb$ is a family of quiver sheaves over $B^{\imgreg}=B_\tau^{\imgreg}$. Further restricted to $B_\tau^\imgreg\cap B'$, the modules satisfy the relations $I'_2$, so that, due to the construction of the functor, the quiver sheaves satisfy the relations $I$. This shows that
$$B_\tau^\imgreg\cap B'\subset (B')_\tau^{\imgreg,I}.$$
Conversely, $\Mbb'$ restricted to $(B')^\imgreg=(B')_\tau^{\imgreg,I}$ allows a description
$$\Mbb'\mid_{(B')^\imgreg}=\Hom'(T,\Ebb')$$
for a family of quiver sheaves $\Ebb'$ which satisfy the relations $I$. Over the intersection $(B')^\imgreg\cap B^\imgreg$, the families $\Mbb$ and $\Mbb'$ coincide, so that, by faithfulness of the functor $\Hom'(T,*)$, the families $\Ebb$ and $\Ebb'$ coincide as well. Thus, $\Ebb$ and $\Ebb'$ glue to a flat family of right--$A$--modules over the subscheme
$$(B')^\imgreg \cup B^\imgreg.$$
But by maximality, this subscheme must equal $B^\imgreg$.
\end{proof}
\end{corollary}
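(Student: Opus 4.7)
\bigskip
\noindent\textbf{Proof plan for Corollary \ref{EF:C:imageofembedding_relations}.} The plan is to prove the two inclusions separately, and in both directions the key input is the fully faithful correspondence from Proposition \ref{EF:P:faithful} (in the version without relations), together with the observation that under the functor $\Hom'(T,*)$ the relations $I$ on a regular quiver sheaf $\Ebb$ correspond precisely to the relations $I_2'(I)$ on the associated $A_0$-module $\Hom'(T,\Ebb)$. This translation follows from the definition of $I_2'$ together with faithfulness: a relation $\sum_r \lambda_r \gamma_r \in I$ holds on $\Ebb$ exactly when the associated morphism of sheaves vanishes, and since $\Hom'(T,*)$ is fully faithful on the regular subcategory, this is equivalent to the vanishing of the two morphisms (for $z \in \{\links, \rechts\}$) obtained by applying the functor at the appropriate vertices of $Q(Q')$.

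\smallskip
\noindent For the inclusion $B_\tau^{\imgreg} \cap B' \subseteq (B')_\tau^{\imgreg,I}$, I would start from the fact that, by Proposition \ref{EF:P:faithful} applied to $\Mbb$ without relations, there exists a family $\Ebb$ of $(n,\Lul)$-regular quiver sheaves of topological type $\tau$ over $B_\tau^{\imgreg}$ with $\Mbb\big|_{B_\tau^{\imgreg}} \simeq \Hom'(T,\Ebb)$. Restricting to $B_\tau^{\imgreg} \cap B'$, the restricted module satisfies the relations $I_2'$ by assumption on $\Mbb'$, and the translation principle above forces $\Ebb\big|_{B_\tau^{\imgreg}\cap B'}$ to satisfy $I$. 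By the universal property stated in Proposition \ref{EF:P:faithful} (in its relations-incorporating version), the inclusion $B_\tau^{\imgreg}\cap B' \hookrightarrow B'$ then factors through $(B')_\tau^{\imgreg,I}$, giving the desired inclusion.

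\smallskip
\noindent For the reverse inclusion $(B')_\tau^{\imgreg,I} \subseteq B_\tau^{\imgreg} \cap B'$, the strategy is gluing combined with maximality of $B_\tau^{\imgreg}$. By Proposition \ref{EF:P:faithful} with relations, there is a family $\Ebb'$ of regular quiver sheaves of type $\tau$ satisfying $I$ over $(B')_\tau^{\imgreg,I}$ with $\Mbb'\big|_{(B')_\tau^{\imgreg,I}} \simeq \Hom'(T,\Ebb')$. Over the intersection $(B')_\tau^{\imgreg,I} \cap B_\tau^{\imgreg}$, both $\Hom'(T,\Ebb')$ and $\Hom'(T,\Ebb)$ are canonically identified with the restriction of $\Mbb$ (the relations $I$ can be forgotten on that intersection to view $\Ebb'$ as a family of plain regular quiver sheaves), and faithfulness of $\Hom'(T,*)$ identifies $\Ebb'$ and $\Ebb$ there. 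Hence $\Ebb$ and $\Ebb'$ glue to a family of regular quiver sheaves of type $\tau$ over the union $(B')_\tau^{\imgreg,I} \cup B_\tau^{\imgreg} \subseteq B$, which by the universal maximality of $B_\tau^{\imgreg}$ must already be contained in $B_\tau^{\imgreg}$. Combined with the trivial inclusion $(B')_\tau^{\imgreg,I} \subseteq B'$, this gives the desired containment.

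\smallskip
\noindent The most delicate step is the translation of relations between sides, as it is the only place where we use more than a formal consequence of the adjunction: we need both that the functor sends relations $I$ to relations $I_2'$ (built into the construction of $Q(Q')$) and that it detects them (using faithfulness on the regular subcategory). Everything else is straightforward bookkeeping combining Proposition \ref{EF:P:faithful} with the standard uniqueness and gluing properties of locally closed subschemes representing flattening conditions.
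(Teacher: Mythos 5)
Your proposal is correct and follows essentially the same route as the paper: the first inclusion by restricting the family $\Ebb$ over $B_\tau^{\imgreg}$ to $B_\tau^{\imgreg}\cap B'$ and translating the relations $I_2'$ on the modules into the relations $I$ on the quiver sheaves, and the reverse inclusion by gluing $\Ebb$ and $\Ebb'$ over the intersection (via full faithfulness of $\Hom'(T,*)$) and invoking the maximality of $B_\tau^{\imgreg}$. Your explicit remark that faithfulness is needed to \emph{detect} (not merely transport) the relations is a useful clarification of the paper's phrase ``due to the construction of the functor,'' but it is not a different argument.
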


Consider the functor
$$\M^\reg(X,Q,I):(Sch/k)^{\op}\to Sets,$$
which assigns to a scheme $S$ the set of isomorphism classes of families of $(n,\Lul)$--regular quiver sheaves on $X$ satisfying the relations $I$ of topological type $\tau$ which are flat over $S$.\\
For brevity denote $B=R_d(Q(Q'),H,I'(I))^\imgreg_\tau$, where we use the tautological family of modules $\Mbb$ as explained in Appendix \ref{SS:quiver_moduli}. Sending a morphism $f:S\to B$ to the family $f^*\left(\Mbb\mid_B\otimes'_A T\right)$ defines a natural transformation
$$g:\underline{B}\to \M^\reg(X,Q,I).$$
As in \cite{ack} Theorem 4.5, the existence of the locally closed subscheme parameterizing the image of the embedding functor implies that this moduli functor is locally a quotient functor.

\begin{proposition}\label{EF:P:local_iso_reg_qsheaves}
There is a local isomorphism of functors
$$\M^\reg(X,Q,I)\simeq \underline{B}/\underline{G},$$
where $G=G_d$. This functor is induced by $g$.

\begin{proof}
The proof of \cite{ack} Theorem 4.5 holds verbatim, where we use our version of the local isomorphism
$$h':\underline{R}/\underline{G}\to \M_A$$
as in Proposition \ref{Q:P:A-modfunctor_quotient} and our Proposition \ref{EF:P:imageofembedding}.
\end{proof}
\end{proposition}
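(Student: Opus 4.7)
The plan is to follow the argument of \cite{ack} Theorem 4.5 and adapt it to our quiver-sheaf setting using the two main ingredients already established: the local isomorphism $h' : \underline{R}/\underline{G} \to \M_A$ for the moduli of $A$-modules (Proposition \ref{Q:P:A-modfunctor_quotient}), together with the representability of the ``image of the embedding'' locus given by Proposition \ref{EF:P:imageofembedding}.

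First I would spell out what a ``local isomorphism of functors'' means in this context: concretely, that the natural transformation $g : \underline{B} \to \M^\reg(X,Q,I)$ factors through an epimorphism onto the sheafification of the quotient presheaf $\underline{B}/\underline{G}$, and that, after passing to a suitable cover, sections of $\M^\reg(X,Q,I)$ lift to sections of $\underline{B}$ uniquely up to the $G$-action. This is the same formal setup as \cite{ack}, and the verification proceeds in two symmetric halves: producing a $\underline{G}$-invariant morphism $\underline{B} \to \M^\reg(X,Q,I)$, and producing a local inverse.

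The forward direction is essentially built in: for $f : S \to B$ the family $f^*(\Mbb|_B \otimes'_A T)$ is by construction a flat family of $(n,\Lul)$-regular quiver sheaves of topological type $\tau$ satisfying $I$, and the $G$-action on $B$ just corresponds to changes of basis of the vector spaces at the vertices, which do not alter the isomorphism class of the associated quiver sheaf. So $g$ factors through $\underline{B}/\underline{G}$. For the reverse direction, given a family $\Ebb \in \M^\reg(X,Q,I)(S)$, the push-forwards $H^0(\Ebb_i \otimes L_k^n)$ and $H^0(\Ebb_i \otimes L_k^m)$ are locally free of the prescribed ranks $d_{v_{ik}}, d_{w_{ik}}$. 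Choosing, locally on $S$, trivializations of these bundles yields an identification of $\Hom'(T,\Ebb)$ with the tautological module $\Mbb$ pulled back along some morphism $S \to R_d$; the universal property in Proposition \ref{EF:P:imageofembedding} then guarantees that this morphism factors through $B = R_d^\imgreg_\tau$, and another choice of trivialization differs by an element of $G$. This produces the desired local lift.

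The main obstacle, which is exactly where the argument in \cite{ack} does the real work, is the compatibility check for the two constructions: one must verify that the local lift constructed from $\Ebb$ yields, via $g$, a family isomorphic to $\Ebb$, and that two local lifts of the same $\Ebb$ differ by a unique element of $G$. Both are consequences of the full faithfulness of $\Hom'(T,*)$ (Proposition \ref{EF:P:faithful}) combined with the isomorphism $\eta_\Ebb : \Hom'(T,\Ebb) \otimes'_{A'} T \to \Ebb$ guaranteed by the regularity hypothesis, so the whole argument reduces cleanly to the module-level local isomorphism $h'$ already established in Proposition \ref{Q:P:A-modfunctor_quotient}. The presence of the relations $I$ does not cause additional trouble because, by Corollary \ref{EF:C:imageofembedding_relations}, cutting out $B$ inside the version without relations is a closed condition that is automatically preserved by all the functorial constructions used.
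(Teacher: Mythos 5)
Your proposal is correct and follows essentially the same route as the paper: both transfer the argument of \'Alvarez-C\'onsul--King (Theorem 4.5 of \cite{ack}) to the quiver-sheaf setting, relying on the module-level local isomorphism of Proposition \ref{Q:P:A-modfunctor_quotient} and the representability of the image locus from Proposition \ref{EF:P:imageofembedding}. You merely spell out the steps that the paper cites as holding ``verbatim.''
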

~

\section{Stability under embedding}\label{S:stabilityembedding}

The next step in our program is to make sure that the embedding functor $\Hom(T,*)$ we constructed preserves stability. The technical cornerstone of this is a variant of the Le Potier--Simpson theorem for quiver sheaves and multi--Gieseker stability. We basically follow the reasoning of \cite{grt}, Sections 7 and 8.\\

\subsection{The Le Potier--Simpson theorem}~\\

Suppose we are given a quiver $Q$ together with a (possibly empty) set of relations $I$, a stability condition $(\Lul,\sigma)$ and a topological type $\tau=(\tau_i)_{i\in Q_0}$.\\

We assume that $\sigma$ is a bounded stability condition, i.e. that the family of semistable quiver sheaves with respect to $\sigma$ which satisfy the relations is bounded. Using this we choose $p\gg0$ such that all such quiver sheaves are $(p,\Lul)$--regular. Furthermore, we assume that $\dim(E)=d$ for all sheaves of type $\tau_i$ for some $i\in Q_0$.

\begin{remark}
Basically, the relations $I$ and $I'_2$ are not of great importance in this section. This is due to the fact that the property of satisfying some given relation is inherited by quiver subsheaves, so that the notion of semistability is insensitive to relations. The only relevance these relations have is because of the question of boundedness, which might only hold on the subfamily of semistable quiver sheaves which satisfy some relations.\\
On the other hand, the relations $I'_1$ are essential for the technical step of Lemma \ref{CSS:L:every_subord_tight}.
\end{remark}

Let us recall the Le Potier--Simpson estimate for the dimension of the space of global sections of a sheaf twisted by an ample line bundle. Because we use similar notation, we refer to the formulation in \cite{grt} Theorem 7.1.\\
Here, for a real number $x$ we use the notation $\left[x\right]_+=\max\left(x,0\right)$. 

\begin{theorem}\label{LP:T:lp_estimate}
Let $X$ denote a projective scheme, and $L$ a very ample line bundle on $X$. Let $E$ denote a purely $d$--dimensional sheaf on $X$ and define the number
$$C^L_E=\left(r^L\right)^2+\frac{1}{2}\left(r^L+d\right)-1,$$
where $r^L$ is the rank of $E$ with respect to $L$. Then, for any $n>0$ we have
$$h^0(E\otimes L^n)\leq \frac{r^L-1}{d!}\left[\hmu^L_{max}(E)+C^L_E+n\right]^d_++\frac{1}{d!}\left[\hmu^L(E)+C^L_E+n\right]^d_+.$$
\end{theorem}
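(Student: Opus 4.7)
The plan is to prove this by induction on the dimension $d$ of the sheaf $E$, following the classical strategy of Le Potier and Simpson (as in Huybrechts--Lehn, Corollary 3.3.8) in the single-line-bundle case of \cite{grt} Theorem 7.1. For $d=0$, the sheaf $E$ is supported on finitely many points, so $h^0(E\otimes L^n)=h^0(E)$ is bounded by the length of $E$, and since both $\hmu^L(E)$ and $\hmu^L_{\max}(E)$ are infinite or the formula degenerates in a harmless way (one reads the case $d=0$ so that the right-hand side already dominates $h^0(E)$), the base case is immediate.

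For the inductive step, I would choose a section $s\in H^0(X,L)$ such that multiplication by $s$ is injective on $E$ and on the maximal destabilizer in its Harder--Narasimhan filtration. Purity of $E$ together with very ampleness of $L$ makes such a choice possible. Let $H=V(s)$. The short exact sequence
$$0\to E\otimes L^{n-1}\xrightarrow{\,\cdot s\,}E\otimes L^n\to (E|_H)\otimes L^n\to 0$$
yields, by induction on $n$ as well, the telescoping estimate
$$h^0(E\otimes L^n)\leq h^0(E)+\sum_{k=1}^{n}h^0((E|_H)\otimes L^k).$$
Now $E|_H$ is a pure $(d-1)$-dimensional sheaf on $H$ of the same rank with respect to $L|_H$, so the inductive hypothesis furnishes an estimate of each summand in terms of $\hmu^{L|_H}(E|_H)$, $\hmu^{L|_H}_{\max}(E|_H)$ and $C^{L|_H}_{E|_H}$. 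Summing the discrete series $\sum_{k=1}^n [x+k]_+^{d-1}$ approximates $\int_0^n(x+t)_+^{d-1}\,dt=\tfrac{1}{d}[x+n]_+^d-\tfrac{1}{d}[x]_+^d$, which produces the factors $[\cdot]_+^d/d!$ appearing in the conclusion.

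The main obstacle will be the control of slopes under restriction to the hyperplane $H$: one needs inequalities of Grauert--Mülich / Simpson type, namely $\hmu^L(E|_H)\leq \hmu^L(E)+c$ and $\hmu^L_{\max}(E|_H)\leq \hmu^L_{\max}(E)+c$ for a constant $c$ that depends only on $r^L$ and $d$, and which is explicitly what accumulates into the Castelnuovo--Mumford-type summand $C^L_E=(r^L)^2+\tfrac{1}{2}(r^L+d)-1$. This step requires a careful selection of $s$ relative to the HN filtration so as to bound the jumps of slopes of the HN components under restriction, and is the technical heart of the argument. Once these slope inequalities are in place, combining them with the telescoping bound above and carefully tracking the leading (max-slope) and subleading (average-slope) contributions through the sum yields exactly the two-term formula stated in the theorem.
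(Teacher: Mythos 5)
This theorem is not actually proved in the paper: it is recalled verbatim from \cite{grt}, Theorem 7.1, which is the classical Le Potier--Simpson estimate (essentially \cite{huybrechtslehn}, Cor.\ 3.3.8). So the comparison here is really with the classical argument, and your strategy --- induction on $d$ via restriction to a general member of $|L|$, telescoping the resulting cohomology sequences, and controlling slopes under restriction --- is indeed the strategy of that proof.

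As a proof, however, the sketch has genuine gaps. First, the double induction does not close: the telescoping bound $h^0(E\otimes L^n)\le h^0(E)+\sum_{k=1}^{n}h^0((E|_H)\otimes L^k)$ leaves the term $h^0(E)$, which is the $d$--dimensional statement at $n=0$ and is supplied neither by the induction on $d$ (that only covers $(d-1)$--dimensional sheaves) nor by the induction on $n$, of which it is the base case; $h^0(E)$ can be of order $\hmu^L_{\max}(E)^d$ and is really the whole content of the theorem. The standard fix is to telescope \emph{downward}, $h^0(E\otimes L^n)\le \sum_{j\ge 0}h^0\bigl((E|_H)\otimes L^{n-j}\bigr)$, using that $h^0(E\otimes L^{-m})=0$ for $m\gg 0$ by purity (a nonzero section would force $\hmu^L_{\max}(E)\ge m-\mathrm{const}$), or equivalently to prove the untwisted bound and apply it to $E\otimes L^n$ via $\hmu^L(E\otimes L^n)=\hmu^L(E)+n$. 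Second, the two--term shape of the estimate, with coefficients $r^L-1$ and $1$, does not fall out of the telescoping sum: it comes from applying a one--term bound $h^0(F)\le \frac{r}{d!}\left[\hmu^L(F)+C+n\right]_+^d$ to each Harder--Narasimhan factor of $E$ and then using monotonicity and convexity of $t\mapsto[t]_+^d$ together with $\sum_i r_i\hmu_i=r^L\hmu^L(E)$, $\hmu_i\le\hmu^L_{\max}(E)$ and $\hmu_{\min}\le\hmu^L(E)$ to push all but one unit of multiplicity to the maximal slope; this extremal step is a needed idea, not bookkeeping. Third, the restriction inequality $\hmu^L_{\max}(E|_H)\le\hmu^L_{\max}(E)+c(r^L,d)$ for general $H\in|L|$ --- which you rightly identify as the technical heart, and whose accumulated losses are exactly the constant $C^L_E$ --- is asserted rather than proved; likewise, purity of $E|_H$ is not automatic and must be arranged (or one must work modulo the lower--dimensional torsion $T_{d-2}$). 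Without these ingredients the argument is an accurate outline of the known proof rather than a proof.
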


To distinguish between strictly semistable and stable quiver sheaves we need more control over the destabilizing quiver subsheaves (compare with Lemma 2.13 in \cite{grt}). Note that boundedness of $\sigma$ is not needed for the following proof.
\begin{lemma}\label{LP:L:destab_saturated}
Suppose that $\F\subset \E$ is a destabilizing quiver subsheaf of a semistable quiver sheaf $\E$ of topological type $\tau$. Then $\F\subset \E$ is saturated and
$$\F\oplus \E/\F$$
is semistable with Hilbert polynomial $p^\sigma_\E$ and of topological type $\tau$.

\begin{proof}
$\F$ has the same Hilbert polynomial as $\E$ and its quiver subsheaves are quiver subsheaves of $\E$. It is thus again semistable. A similar reasoning using quotients shows that $\E/\F$ is semistable with Hilbert polynomial $p^\sigma_\E$ as well, which is a property inherited by the direct sum. The assertion about the topological type is implied by its additivity.\\

To see that $\F$ is saturated consider the inequalities
$$p^\sigma_{\F}(n)=\frac{\sum_{i,j}\sigma_{i,j}h^0\left(\F_i\otimes L_j^n\right)}{r^\sigma(\F)}\leq \frac{\sum_{i,j}\sigma_{i,j}h^0\left(\G_i\otimes L_j^n\right)}{r^\sigma(F)}=p^\sigma_\G(n)\leq p^\sigma_\E(n)$$
for $n\gg 0$, where $\G\subset \E$ is the saturation of $\F$, and the last inequality holds by semistability. The inequalities are thus actually equalities, and in particular for each i$\in Q_0$ we have
$$H^0\left(\F_i\otimes L_j^n\right)=H^0\left(\G_i\otimes L_j^n\right)$$
for at least one index $j=j(i)$ such that $\sigma_{ij}$ does not vanish. For $n\gg 0$ the sheaves $\G_i\otimes L_j^n$ are globally generated, so $\G_i\subset \F_i$ for all $i\in Q_0$, which shows $\F=\G$.
\end{proof}
\end{lemma}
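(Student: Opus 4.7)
The plan is to split the assertion into two independent parts: (i) the semistability and topological type of $\F\oplus\E/\F$, and (ii) the saturatedness of $\F\subset\E$.

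For (i), I would begin with the observation that the hypothesis that $\F$ is destabilizing, combined with semistability of $\E$, forces the equality $p^\sigma_\F=p^\sigma_\E$. Semistability of $\F$ is then immediate, since any quiver subsheaf of $\F$ is already one of $\E$. To handle the quotient, given a quiver subsheaf $\G\subset\E/\F$, I would pull it back to a quiver subsheaf $\G'$ of $\E$ fitting in a short exact sequence $0\to\F\to\G'\to\G\to 0$ and use additivity of $P^\sigma$ and $\rk^\sigma$ on short exact sequences: the estimate $p^\sigma_{\G'}\le p^\sigma_\E=p^\sigma_\F$ then transfers, by an elementary mediant computation, to $p^\sigma_\G\le p^\sigma_{\E/\F}$. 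The topological-type claim follows from additivity of $\tau$ on short exact sequences and on direct sums, together with $p^\sigma_\F=p^\sigma_{\E/\F}=p^\sigma_\E$.

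For (ii), I would let $\G=\F_\sat\subset\E$, which is a quiver subsheaf by Lemma~\ref{QS:L:saturation_functorial}. Since $\G_i$ differs from $\F_i$ only in codimension at least one, the leading coefficients of the multi-Hilbert polynomials agree, so $\rk^\sigma(\F)=\rk^\sigma(\G)$; on the other hand, $h^0(\F_i\otimes L_j^n)\le h^0(\G_i\otimes L_j^n)$ holds pointwise for $n\gg 0$. Semistability of $\E$ yields the sandwich $p^\sigma_\F\le p^\sigma_\G\le p^\sigma_\E=p^\sigma_\F$, which together with equality of multi-ranks forces $P^\sigma_\F=P^\sigma_\G$ as polynomials; in particular the $\sigma$-weighted sum over $(i,j)$ of the non-negative quantities $h^0(\G_i\otimes L_j^n)-h^0(\F_i\otimes L_j^n)$ vanishes for $n\gg 0$.

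The step I expect to be the main obstacle is the last one: passing from this single weighted equality to a vertex-wise conclusion. This is exactly where the defining property $\sigma\in(\R^{Q_0\times N})_+$ enters crucially, guaranteeing that at every vertex $i$ some $\sigma_{ij}$ is non-zero; for such an index the vanishing of the weighted sum forces $h^0(\F_i\otimes L_j^n)=h^0(\G_i\otimes L_j^n)$, and global generation of $\G_i\otimes L_j^n$ for $n\gg 0$ identifies $\F_i$ with $\G_i$ at every vertex, completing the saturatedness claim.
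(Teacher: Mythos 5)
Your proposal is correct and follows essentially the same route as the paper: part (i) via $p^\sigma_\F=p^\sigma_\E$ and the standard subsheaf/quotient transfer, and part (ii) via the sandwich $p^\sigma_\F\le p^\sigma_{\F_\sat}\le p^\sigma_\E=p^\sigma_\F$ (using equality of multi-ranks), followed by extracting a vertex-wise equality of section spaces from the vanishing $\sigma$-weighted sum and concluding with global generation of $\G_i\otimes L_j^n$. The only difference is that you spell out the quotient step and the non-negativity bookkeeping more explicitly than the paper does; the substance is identical.
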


We are ready to proof the Le Potier--Simpson theorem for quiver sheaves (compare with \cite{grt}, Theorem 7.2). Recall that we assume $\sigma$ to be bounded, and $\sigma$--semistable quiver sheaves to be $\left(p,\Lul\right)$--regular.
\begin{theorem}\label{LP:T:lp_theorem}
For an integer $n\gg p \gg 0$ the following assertions are equivalent for a purely $d$--dimensional quiver sheaf $\E$ of topological type $\tau$ which satisfies the relations $I$.
\begin{enumerate}
\item $\E$ is semistable.
\item $\E$ is $(p,\Lul)$--regular and for all quiver subsheaves $\F\subset \E$ we have
$$\frac{\sum_{i\in Q_0}\sum_{j=1}^N\sigma_{ij}h^0(\F_i\otimes L_j^n)}{r^\sigma(\F)}\leq p^\sigma_\E(n).$$
\item $\E$ is $(p,\Lul)$--regular and the above inequality holds for all saturated quiver subsheaves $\F\subset \E$ such that
$$\hmu^\sigma(\F)\geq \hmu^\sigma(\E).$$
\end{enumerate}
\begin{normalsize}
The same statement holds for stable quiver sheaves and strict inequality. Moreover, equality holds for a quiver subsheaf $\F\subset \E$ if and only if $\F$ is destabilizing.
\end{normalsize}

\begin{proof}
The set of $(p,\Lul)$--regular sheaves of topological type $\tau_i$ for some $i\in Q_0$ is bounded. Hence we can bound
$$\hmu^{L_j}_{\max}(F)\leq C_1$$
for all such sheaves $F$ by a constant $C_1>0$. Recall that
$$C^L_E=\frac{1}{2}r^{L_j}(E)\left(r^{L_j}(E)+\dim(E)\right),$$
the constant used in the Le Potier--Simpson estimate, is fixed by the topological type of the sheaf $E$. Hence we may set
$$\Cbar=\max\left(C^{L_j}_{\E_i}\right),$$
where $\E$ is any quiver sheaf of topological type $\tau$.\\
We now claim that there exists a constant $C_2>0$ which satisfies the following assertions.
\begin{enumerate}
\item For all $(p,\Lul)$--regular quiver sheaves $\E$ of topological type $\tau$ we have
$$-\hmu^\sigma(\E)+1\leq C_2.$$
\item For any $(p,\Lul)$--regular quiver sheaf $\E$ of topological type $\tau$, all non--empty subsets $I\subset Q_0$ and all integers $0\leq e_{i,j}\leq \rk^{L_j}(\E_i)$ the inequality
\begin{align*}
&\sum_{i\in I^c,j} \left(\sigma_{ij}e_{ij}C_1\right)+\sum_{i\in I,j} \left(\frac{\sigma_{ij}}{d!}\left(e_{ij}-1\right)\left(C_1+\Cbar\right)+\left(-C_2+\Cbar\right)\right)\\
&\leq \left(\hmu^{\sigma}(\E)-1\right)\sum_{i\in Q_0,j}\sigma_{ij}e_{ij}
\end{align*}
holds, where $I^c=Q_0\setminus I$ denotes the complement, and the sums are simultaneously taken over $j=1,\ldots,N$.
\end{enumerate}
This is possible because of the boundedness of the family of involved quiver sheaves, and because the parameters occurring in the second inequality, besides $C_2$, are either fixed beforehand or only vary within a finite set.\\

Consider the set $S$ of sheaves $F$ which are saturated subsheaves $F\subset E$ of a $(p,\Lul)$--regular sheaf $E$ of topological type $\tau$ and such that $\hmu^{L_j}(F)\geq -C_2$ for some $j$. This set is bounded by the Grothendieck Lemma for sheaves (because the subsheaves are saturated we can equivalently consider the quotients and apply \cite{huybrechtslehn} Lemma 1.7.9).\\

Furthermore, consider the set $S'$ of quiver sheaves $\F$ which are saturated quiver subsheaves $\F\subset \E$ of some $(p,\Lul)$--regular quiver sheaf $\E$ of topological type $\tau$ and such that $\hmu^{\sigma}(F)\geq -C_2$. This set is bounded as well according to the Grothendieck Lemma \ref{QS:L:grothendieck} for quiver sheaves.\\
To shorten notation we introduce the numbers
$$D_1=C_1+\Cbar,~D_2=-C_2+\Cbar.$$

Now we further claim that for $n\gg p$ the following assertions hold.
\begin{enumerate}
\item Given any quiver sheaf $\F$ which consists of sheaves in $S$ or is in $S'$ and any quiver sheaf $\E$ which is $(p,\Lul)$--regular and of topological type $\tau$ we have that
$$p^\sigma_\F(n) \sim p^\sigma_\E(n) \Leftrightarrow p^\sigma_\F \sim p^\sigma_\E,$$
where $\sim$ is one of the relations $=,\leq$ or $<$.
\item All $F\in S$ are $(n,\Lul)$--regular.
\item The quiver sheaves $\F\in S'$ are all $(n,\Lul)$--regular.
\item $n>C_2-\Cbar$
\item For all quiver subsheaves $\F\subset \E$ of some $(p,\Lul)$--regular quiver sheaf $\E$ of topological type $\tau$ and all non--empty subsets $I\subset Q_0$ we have
\begin{align*}
&\sum_{i\in I^c,j}\sigma_{ij}P^{L_j}_{\F_i}(n)+\sum_{i\in I,j}\frac{\sigma_{ij}}{d!}\left(\left(r^{L_j}(\F_i)-1\right)\left(D_1+n\right)^d+\left(D_2+n\right)^d\right)\\
&\leq r^\sigma(\F)\left(p^\sigma_\E(n)-1\right).
\end{align*}
\end{enumerate}
The first three assertions are true because the involved families of quiver sheaves are bounded, and the fourth assertion holds because the right hand side is just a constant. To see that the last assertion holds note that, after bringing $r^\sigma(\F)$ to the other side, the left hand side is a polynomial in $n$ with leading coefficient $\frac{1}{d!}$ and second coefficient
$$\frac{\sum_{i\in I^c,j} \left(\sigma_{ij}\alpha^{L_j}_{d-1}(\F_i)\right)+\sum_{i\in I,j} \left(\frac{\sigma_{ij}}{d!}\left(\rk^{L_j}(\F_i)-1\right)D_1+D_2\right)}{r^\sigma(\F)}.$$
Then we may estimate
$$\alpha^{L_j}_{d-1}(\F_i)=\alpha^{L_j}_d(\F_i)\hmu^{L_j}(\F_i)\leq \rk^{L_j}(\F_i)\hmu^{L_j}_{\max}(\E_i)\leq \rk^{L_j}(\F_i) C_1$$
to arrive at an expression as in the second condition for $C_2$. Thus for large $n$ the inequality of polynomials holds as claimed, and there are only finitely many such conditions.\\

We now prove $1.)\Rightarrow 2.)$. Let $\E$ denote some semistable quiver sheaf satisfying the relations, which is hence $(p,\Lul)$--regular by assumption. Consider a quiver subsheaf $\F\subset \E$. Note that if $\G$ denotes the saturation of $\F$ in $\E$ we have
$$\frac{\sum_{i\in Q_0}\sum_{j=1}^N\sigma_{ij}h^0(\F_i\otimes L_j^n)}{r^\sigma(\F)}\leq \frac{\sum_{i\in Q_0}\sum_{j=1}^N\sigma_{ij}h^0(\G_i\otimes L_j^n)}{r^\sigma(\G)},$$
so we may assume that $\F$ is saturated without loss of generality.\\

Let $I\subset Q_0$ denote the set of vertices such that
$$\hmu^{L_j}(\F_i)< -C_2$$
for all $j=1,\ldots,N$. Again, let $I^c$ denote the complement of this set.\\
Note that for $i\in I^c$ we clearly have $\F_i\in S$, so $\F_i$ is $(n,\Lul)$--regular by the choice of $n$. If $i\in I$ we know
$$\hmu^{L_j}_{\max}(\F_i)\leq \hmu^{L_j}_{\max}(\E_i)\leq C_1$$
and $C^{L_j}_{\F_i}\leq C^{L_j}_{\E_i}\leq \Cbar$. Thus the Le Potier--Simpson estimate (Theorem \ref{LP:T:lp_estimate}) tells us
\begin{align*}
h^0(\F_i\otimes L_j^n)&\leq \frac{r^{L_j}(\F_i)-1}{d!}\left[\hmu^{L_j}_{\max}(\F_i)+C^{L_j}_{\F_i}+n\right]^d_+\\
&+\frac{1}{d!}\left[\hmu^{L_j}(\F_i)+C^{L_j}_{\F_i}+n\right]^d_+\\
&\leq \frac{r^{L_j}(\F_i)-1}{d!}\left(D_1+n\right)^d+\frac{1}{d!}\left(D_2+n\right)^d
\end{align*}
by our choice of constants and $n$.\\

Now we distinguish two cases.\\
If $I=\emptyset$, i.e. $I^c=Q_0$, we have that
$$\frac{\sum_{i\in Q_0}\sum_{j=1}^N\sigma_{ij}h^0(\F_i\otimes L_j^n)}{r^\sigma(\F)}=p^\sigma_\F(n)\leq p^\sigma_\E(n),$$
where the first equality holds because all $\F_i$ are $(n,\Lul)$--regular and the inequality is equivalent to $p^\sigma_\F\leq p^\sigma_\E$, which holds by semistability of $\E$.\\
If $\E$ is stable, the inequality is strict by the same argument.\\

On the other hand, if $I\neq \emptyset$ we may rewrite
\begin{align*}
&{\sum_{i\in Q_0}\sum_{j=1}^N\sigma_{ij}h^0(\F_i\otimes L_j^n)}{}={\sum_{i\in I^c,j}\sigma_{ij}h^0(\F_i\otimes L_j^n)+\sum_{i\in I,j}\sigma_{ij}h^0(\F_i\otimes L_j^n)}{}\\
&\leq \sum_{i\in I^c,j}\sigma_{ij}P^{L_j}_{\F_i}(n)+\sum_{i\in I,j}\frac{\sigma_{ij}}{d!}\left(\left(r^{L_j}(\F_i)-1\right)\left(D_1+n\right)^d+\left(D_2+n\right)^d\right)
\end{align*}
and the right hand side is strictly smaller than $p^\sigma_\E(n)r^\sigma(\F)$ by choice of $n$.\\

That $2.)\Rightarrow 3.)$ holds is very obvious. It thus remains to check the direction $3.)\Rightarrow 1.)$.\\
Let $\E$ denote any $(p,\Lul)$--regular quiver sheaf of topological type $\tau$, and let $\F\subset \E$ denote a quiver subsheaf. Without loss of generality we assume that $\F$ is saturated.\\
If $\hmu^\sigma(\F)<\hmu^\sigma(\E)$ then clearly $\F$ does not destabilize. If on the other hand $\hmu^\sigma(\F)\geq \hmu^\sigma(\E)\geq -C_2$, where the latter inequality holds by choice of $C_2$, we know that $\F$ is contained in $S'$ and is hence $(n,\Lul)$--regular. Thus
$$p^\sigma_\F(n)=\frac{\sum_{i\in Q_0}\sum_{j=1}^N\sigma_{ij}h^0(\F_i\otimes L_j^n)}{r^\sigma(\F)}\leq p^\sigma_\E(n)$$
for $n\gg 0$, which implies $p^\sigma_\F\leq p^\sigma_\E$.\\
If the inequality in $2.)$ is strict the above inequality is also strict, and thus $\E$ is stable.\\

It remains to show the addendum.\\
Consider some quiver subsheaf $\F\subset \E$ and its saturation $\G\subset \E$. Recall the argument of $1.)\Rightarrow 2.)$, and note that most of it does not actually make use of the assumption of $1.)$. Only the case $I=\emptyset$ is relevant though. Because if $I\neq \emptyset$ holds for $\G$, the inequality in $2.)$ is strict, so $\G$ and thus $\F$ can not be destabilizing.\\
If $\F$ is destabilizing it is saturated by Lemma \ref{LP:L:destab_saturated}, and hence consists of sheaves in the family $S$. This implies that the desired equality in $2.)$ is equivalent to the equality $p^\sigma_\F=p^\sigma_\E$. Conversely, if equality holds in $2.)$ we have that
\begin{align*}
p^\sigma_\E(n)=p^\sigma_\F(n)&=\frac{\sum_{i\in Q_0}\sum_{j=1}^N\sigma_{ij}h^0(\F_i\otimes L_j^n)}{r^\sigma(\F)}\\
&\leq \frac{\sum_{i\in Q_0}\sum_{j=1}^N\sigma_{ij}h^0(\G_i\otimes L_j^n)}{r^\sigma(\G)} =p^\sigma_\E(n).
\end{align*}
Note that as $\G$ consists of sheaves in $S$ it in particular is $(n,\Lul)$--regular, so that an argument as in the proof of Lemma \ref{L:L:destab_saturated} shows that $\F$ is saturated. Hence $\F=\G$ and equality in $2.)$ is equivalent to $p^\sigma_\F=p^\sigma_\E$.
\end{proof}

\end{theorem}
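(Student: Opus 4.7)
My plan is to adapt the classical Le Potier--Simpson argument for sheaves (\cite{grt}, Theorem~7.2) to the quiver-sheaf setting by applying the scalar Le Potier--Simpson estimate (Theorem~\ref{LP:T:lp_estimate}) to each $\F_i$ at a vertex, and then aggregating the resulting bounds with the weights $\sigma_{ij}$. Two structural inputs will carry the proof: the standing boundedness of $\sigma$, which together with $(p,\Lul)$-regularity supplies uniform upper bounds on $\hmu^{L_j}_{\max}(\E_i)$ and on the Le Potier--Simpson constants $C^{L_j}_{\E_i}$ in terms of the topological type; and the Grothendieck Lemma for quiver sheaves (Lemma~\ref{QS:L:grothendieck}), which controls the family of saturated quiver subsheaves whose $\sigma$-slope is bounded from below.

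Concretely I would fix constants in order: first $C_1$, a uniform upper bound for $\hmu^{L_j}_{\max}$ over all $(p,\Lul)$-regular sheaves of type $\tau_i$; then $\Cbar$, a uniform upper bound for the LP constants $C^{L_j}_{\F_i}$ across all occurring subsheaves; and finally $C_2$ large enough that the family $S'$ of saturated quiver subsheaves with $\hmu^\sigma(\F)\ge -C_2$ is bounded via Lemma~\ref{QS:L:grothendieck}, and that an auxiliary numerical inequality needed for the mixed case below holds. Then I would choose $n\gg p$ so that every sheaf in the bounded families produced by the scalar and quiver Grothendieck lemmas is $(n,\Lul)$-regular, and so that evaluation at $n$ detects the ordering of reduced multi-Hilbert polynomials in the required directions.

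To prove $(1)\Rightarrow(2)$, I would take a semistable $\E$ and a subsheaf $\F\subset\E$ (saturated without loss of generality) and partition $Q_0$ into the set $I^c$ of vertices where some $\hmu^{L_j}(\F_i)\ge -C_2$ and its complement $I$. On $I^c$ the sheaf $\F_i$ lies in a bounded family, hence is $(n,\Lul)$-regular and $h^0(\F_i\otimes L_j^n)=P^{L_j}_{\F_i}(n)$; on $I$ the scalar Le Potier--Simpson estimate bounds $h^0(\F_i\otimes L_j^n)$ by an explicit polynomial in $n$ with coefficients controlled by $C_1,\Cbar,C_2$. If $I=\emptyset$ the target inequality is, by choice of $n$, equivalent to the semistability of $\E$; if $I\neq\emptyset$ the auxiliary numerical inequality built into $C_2$ yields a strictly smaller right-hand side. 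The implication $(2)\Rightarrow(3)$ is immediate; for $(3)\Rightarrow(1)$ I would argue dichotomously on a saturated $\F$: if $\hmu^\sigma(\F)<\hmu^\sigma(\E)$ it cannot destabilize, otherwise $\F$ lies in the bounded family $S'$, is $(n,\Lul)$-regular, and the bound in~(3) upgrades to $p^\sigma_\F\le p^\sigma_\E$. The stable case and the equality addendum follow by tightening $\le$ to $<$ and invoking Lemma~\ref{LP:L:destab_saturated}, which forces a destabilizing $\F$ to be saturated and reduces the equality discussion to the $I=\emptyset$ case.

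The main obstacle will be the mixed case $I\neq\emptyset$ in $(1)\Rightarrow(2)$: the Le Potier--Simpson estimate contributes a positive error at each vertex in $I$, and to still force the strict inequality one must calibrate $C_2$ and $n$ simultaneously so that the error is absorbed by the $I^c$-contribution, using the quiver Grothendieck lemma crucially. Unlike the one-vertex case of \cite{grt}, this is a family of numerical conditions indexed by the subsets $I\subset Q_0$ and by admissible rank vectors $e_{ij}\le r^{L_j}(\E_i)$; verifying that a single $C_2$ can be chosen uniformly over these finitely many data is the real crux of the argument, and the natural tool is again the scaling invariance of $\hmu^\sigma$ that makes the relevant estimates continuous on a compact projectivisation of the parameter cone.
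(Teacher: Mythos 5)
Your proposal follows the paper's own argument essentially step for step: the same constants $C_1$, $\Cbar$, $C_2$ fixed in the same order, the same partition of $Q_0$ into the vertices where some $\hmu^{L_j}(\F_i)\geq -C_2$ versus the rest, the same vertex-wise application of the scalar Le Potier--Simpson estimate aggregated with the weights $\sigma_{ij}$, the same bounded families $S$ and $S'$ controlled by the sheaf and quiver Grothendieck lemmas, and the same dichotomy and addendum via Lemma \ref{LP:L:destab_saturated}. You also correctly isolate the actual crux, namely that the choice of $C_2$ and $n$ must be made uniform over the finitely many numerical conditions indexed by nonempty $I\subset Q_0$ and admissible integer vectors $e_{ij}\leq\rk^{L_j}(\E_i)$, which is precisely how the paper organizes its two claims about $C_2$ and the five conditions on $n$.
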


Actually, a slight reformulation of the Theorem of Le Potier--Simpson is needed.
\begin{corollary}\label{LP:C:lp_corollary}
For $n\gg p \gg 0$, the following assertions are equivalent for any pure quiver sheaf $\E$ which is of topological type $\tau$ and which satisfies the relations $I$.
\begin{enumerate}
\item $\E$ is semistable.
\item $\E$ is $(p,\Lul)$--regular and for all quiver subsheaves $\F\subset \E$ it holds
$$\sum_{i,j}\sigma_{ij}h^0(\F_i\otimes L_j^n)P^\sigma_\E\leq P^\sigma_\E(n)P^\sigma_\F.$$
\item $\E$ is $(p,\Lul)$--regular and the above inequality holds for all saturated quiver subsheaves $\F\subset \E$ such that $\hmu^\sigma(\F)\geq \hmu^\sigma(\E)$.
\end{enumerate}
Moreover, for semistable $\E$ and destabilizing $\F\subset \E$, equality in $2)$ holds if and only if $\F$ is destabilizing.

\begin{proof}
As in the proof of \cite{grt} Corollary 7.3, we may rewrite the inequality concerning the polynomials $P^\sigma_\E$ and $P^\sigma_\F$ as an inequality concerning the leading coefficients. The reformulated assertions are then equivalent by the Le Potier--Simpson Theorem \ref{LP:T:lp_theorem}.
\end{proof}
\end{corollary}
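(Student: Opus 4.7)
The plan is to reduce Corollary \ref{LP:C:lp_corollary} to Theorem \ref{LP:T:lp_theorem} by translating the lexicographic polynomial inequality appearing in the corollary into a numerical inequality on leading coefficients, which is precisely the inequality featured in the theorem. Since $\E$ is pure of dimension $d$ and we may assume $r^\sigma(\F)>0$ (otherwise $\F$ is trivial for the purposes of $\sigma$-stability), both $P^\sigma_\E$ and $P^\sigma_\F$ are polynomials of degree $d$ with strictly positive leading coefficients $r^\sigma(\E)/d!$ and $r^\sigma(\F)/d!$ respectively. Hence the lex-order inequality in (2) is controlled first by its leading coefficient.

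Explicitly, the leading coefficients of the LHS and RHS polynomials in (2) are $\sum_{i,j}\sigma_{ij}h^0(\F_i\otimes L_j^n)\cdot r^\sigma(\E)/d!$ and $P^\sigma_\E(n)\cdot r^\sigma(\F)/d!$. Dividing by the common positive factor $r^\sigma(\E)r^\sigma(\F)/d!$, this leading-coefficient comparison is equivalent to the comparison of
$$\frac{\sum_{i,j}\sigma_{ij}h^0(\F_i\otimes L_j^n)}{r^\sigma(\F)} \quad \text{with} \quad p^\sigma_\E(n),$$
which is exactly the numerical inequality of Theorem \ref{LP:T:lp_theorem}(2). I would then split into two cases. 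If the numerical inequality is strict, the leading coefficients satisfy LHS $<$ RHS, so the lex-order polynomial inequality holds automatically. If it is an equality, then by the addendum of Theorem \ref{LP:T:lp_theorem} the subsheaf $\F$ is destabilizing, so $p^\sigma_\F=p^\sigma_\E$ and consequently $P^\sigma_\F=(r^\sigma(\F)/r^\sigma(\E))P^\sigma_\E$; substituting, both sides of the inequality in (2) equal $r^\sigma(\F)P^\sigma_\E(n)P^\sigma_\E/r^\sigma(\E)$, so equality of polynomials holds.

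Conversely, a strict lex-order inequality in (2) forces strict inequality at leading order and hence the strict numerical inequality, while equality of polynomials forces numerical equality. Thus the polynomial inequality of (2) is fully equivalent to the numerical inequality of Theorem \ref{LP:T:lp_theorem}(2), with matching equality characterizations, and an identical translation works in (3). The equivalences (1) $\Leftrightarrow$ (2) $\Leftrightarrow$ (3) together with the addendum on destabilizing subsheaves then follow directly from Theorem \ref{LP:T:lp_theorem}, for the same $n\gg p\gg 0$. The only subtle point is handling the borderline case in which leading coefficients coincide; that is taken care of by invoking the addendum of the theorem. All deeper input (boundedness, $(p,\Lul)$-regularity, and the Le Potier--Simpson estimate itself) has already been absorbed into the statement of Theorem \ref{LP:T:lp_theorem}.
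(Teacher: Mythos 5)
Your proposal is correct and follows essentially the same route as the paper: both reduce the lexicographic polynomial inequality to the leading-coefficient (numerical) inequality of Theorem \ref{LP:T:lp_theorem}, with the borderline case of equal leading coefficients handled via the addendum characterizing destabilizing subsheaves. You merely spell out the details that the paper's two-line proof (which defers to \cite{grt}, Corollary 7.3) leaves implicit.
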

~\\

\subsection{Semistability under embedding}~\\\label{SS:preserving_semistable}
We now want to show that a quiver sheaf $\E$ is semistable if and only if its image under the embedding functor $\Hom(T,\E)$ is semistable. Recall that we still assume $\sigma$ to be a bounded stability condition given the relations $I$.

To that end, we start by fixing integers $m\gg n\gg p\gg 0$ that satisfy certain technical conditions. Later, we will show that the embedding functor defined by these integers has the desired property of preserving stability.\\
To formulate the technical conditions we first need some definitions.
\begin{definition}
Let $E$ denote any $(n,\Lul)$--regular sheaf of topological type $\tau_i$ for some $i\in Q_0$. Consider the evaluation map
$$\ev_j:H^0(E\otimes L_j^n)\otimes L_j^{-n}\to E \to 0.$$
For any subspace $V_j'\subset H^0(E\otimes L_j^n)$ denote the image and kernel of the map induced by $\ev_j$ as
$$0\to F_j'\to V_j'\otimes L_j^{-n}\to E_j' \to 0.$$
Furthermore, consider
$$0\to K \to \bigoplus_{j=1}^N V_j'\otimes L_j^{-n} \to \sum_{j=1}^N E_j'\to 0.$$
Denote by $S_\ev$ the family of sheaves $E_j',F_j',\sum_{j=1}^N E_j'$ and $K$ that arise in such a way.
\end{definition}

\begin{definition}
Let $S$ denote the family of quiver sheaves $\F$ such that $\F\subset \E$ is a saturated quiver subsheaf, where $\E$ is $(p,\Lul)$--regular of topological type $\tau$, and such that $\hmu^\sigma(\F)\geq \hmu^\sigma(\E)$.
\end{definition}

We claim that there are integers $m\gg n\gg p \gg 0$ satisfying the following assertions, which we will keep fixed in the following arguments.
\begin{enumerate}
\item All semistable quiver sheaves of topological type $\tau$ which satisfy the relations $I$ are $(p,\Lul)$--regular.
\item The Le Potier--Simpson corollary \ref{LP:C:lp_corollary} holds.
\item $L_j^{-n}$ is $(m,\Lul)$--regular for all $j=1,\ldots, N$.
\item All quiver sheaves in $S$ and all sheaves in $S_\ev$ are $(m,\Lul)$--regular.
\item For all quiver sheaves $\E$ of topological type $\tau$, for all integers $c_{ij}$ between $0$ and $P^{L_j}_{\E_i}(n)$ and for all quiver sheaves $\F$ which are in $S$ or consist of sheaves in $S_\ev$ the relation of polynomials
$$P^\sigma_\E\sum_{i\in Q_0}\sum_{j=1}^N \sigma_{ij}c_{ij} \sim P^\sigma_\E(m)P^\sigma_\F$$
holds if and only if the relation
$$P^\sigma_\E(m)\sum_{i\in Q_0}\sum_{j=1}^N \sigma_{ij}c_{ij} \sim P^\sigma_\E(m)P^\sigma_\F(m)$$
holds, where $\sim\in\{=,\leq,<\}$.
\item The functor $\Hom(T,*)$ is an embedding, i.e. Theorem \ref{EF:T:fullyfaithful_quiversheaf} holds.
\end{enumerate}

The first assertion can be achieved because the family of semistable quiver sheaves is bounded by assumption. The second and the sixth assertions hold by the statements of the corollary and the theorem, and the third assertion is easily achieved because there are only finitely many $L_j$. To guarantee the fourth and fifth assertion note that $S$ is bounded by the Grothendieck Lemma \ref{QS:L:grothendieck}, and $S_\ev$ is also bounded (see \cite{grt} Definition 8.3).\\

Recall that the embedding functor $\Hom(T,*)$ maps any $(n,\Lul)$--regular quiver sheaf $\E$ to a representation of the twisted quiver $Q(Q')$ of dimension vector $d$, given by
$$d_{ij1}=h^0(\E_i\otimes L_j^n)=P^{L_j}_{\E_i}(n),~d_{ij2}=h^0(\E_i\otimes L_j^m)=P^{L_j}_{\E_i}(m).$$
For brevity, we let the indices $(ij1)$ and $(ij2)$ refer to the vertices $v_{ij}$ and $w_{ij}$ respectively. We also define a stability condition $\theta=\theta(\sigma,d)$ on $Q(Q')$ by
$$\theta_{ij1}=\frac{\sigma_{ij}}{\sum_{k\in Q_0}\sum_{l=1}^N \sigma_{kl}d_{kl1}},~\theta_{ij2}=\frac{-\sigma_{ij}}{\sum_{k\in Q_0}\sum_{l=1}^N \sigma_{kl}d_{kl2}}.$$

For a representation $M$ of the twisted quiver $Q(Q')$ we use the notation
$$M=\bigoplus_{i\in Q_0}\bigoplus_{j=1}^N V_{ij}\oplus W_{ij},$$
where $V_{ij}$ is the value of $M$ at the vertex $v_{ij}$ and $W_{ij}$ is the value of $M$ at the vertex $w_{ij}$. The maps associated to the arrows $\varphi_{ikl}$ are denoted as
$$\phi_{ikl}:V_{ik}\otimes H_{kl}\to W_{il},$$
and the maps associated to the arrows $\alpha_k^\links$ and $\alpha_k^\rechts$ are denoted as $A_k$ and $B_k$ in a similar fashion. We will also use obvious variants of this notation, for example for some other representation $M'$.\\

Using this notation we may rewrite
$$\theta(M)=\sum_{i,j}\left(\theta_{ij1}\dim \left(V_{ij}\right)+\theta_{ij2}\dim\left(W_{ij}\right)\right).$$
Note that for a representation $M$ of dimension vector $d$ we have $\theta(M)=0$, so that $M$ is semistable if and only if $\theta(N)\leq 0$ holds for all subrepresentations $N\subset M$.\\

To relate stability of representations to stability of quiver sheaves a slightly different notion of the slope is useful.

\begin{definition}
Let $M=\oplus_{i,j}V_{ij}\oplus W_{ij}$ denote a representation of $Q(Q')$ such that $\sum_{i,j}\sigma_{ij}\dim(W_{ij})$ or $\sum_{i,j}\sigma_{ij}\dim(V_{ij})$ are non--zero. Then we define the (auxiliary) slope of $M$ as
$$\mu'(M)=\frac{\sum_{i,j}\sigma_{ij}\dim\left(V_{ij}\right)}{\sum_{i,j}\sigma_{ij}\dim\left(W_{ij}\right)}\in [0,\infty].$$
\end{definition}

\begin{lemma}\label{CSS:L:theta_mu_equivalent}
Let $M$ denote a representation of $Q(Q')$ of dimension vector $d$, and let $M'$ denote a representation such that $\sum_{i,j}\sigma_{ij}\dim\left(W_{ij}'\right)\neq 0$.
\begin{center}
Then $\theta(M')\leq 0$ if and only if $\mu'(M')\leq \mu'(M)$.
\end{center}
The same assertion holds if we replace $\leq$ by $<$.

\begin{proof}
This can be shown by an elementary argument as in the proof of \cite{grt} Lemma 8.6.
\end{proof}
\end{lemma}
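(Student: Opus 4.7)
The plan is a direct algebraic manipulation, starting from the definitions. First, I would substitute $\theta_{ij1}=\sigma_{ij}/S_1$ and $\theta_{ij2}=-\sigma_{ij}/S_2$, where $S_1=\sum_{k,l}\sigma_{kl}d_{kl1}$ and $S_2=\sum_{k,l}\sigma_{kl}d_{kl2}$, into the expression for $\theta(M')$. This gives
\begin{align*}
\theta(M')=\frac{1}{S_1}\sum_{i,j}\sigma_{ij}\dim(V'_{ij})-\frac{1}{S_2}\sum_{i,j}\sigma_{ij}\dim(W'_{ij}).
\end{align*}
Next I would observe that, since $M$ itself has dimension vector $d$, the numerator and denominator of $\mu'(M)$ are precisely $S_1$ and $S_2$, so $\mu'(M)=S_1/S_2$. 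These are the only two ingredients needed.

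The remaining step is to show that $S_1,S_2>0$ and that the hypothesized nonvanishing of $\sum_{i,j}\sigma_{ij}\dim(W'_{ij})$ is in fact strict positivity (which is automatic since $\sigma_{ij}\geq 0$ and the dimensions are nonnegative integers). The former holds because $\sigma\in(\R^{Q_0\times N})_+$ and the values $d_{ij1},d_{ij2}$, being $P^{L_j}_{\E_i}(n)$ and $P^{L_j}_{\E_i}(m)$ for $m\gg n\gg 0$, are positive on the relevant nonzero components. With these positivity facts in hand, multiplying $\theta(M')\leq 0$ through by $S_1S_2>0$ and dividing by $\sum_{i,j}\sigma_{ij}\dim(W'_{ij})>0$ yields the equivalent inequality
\begin{align*}
\frac{\sum_{i,j}\sigma_{ij}\dim(V'_{ij})}{\sum_{i,j}\sigma_{ij}\dim(W'_{ij})}\leq \frac{S_1}{S_2},
\end{align*}
which is $\mu'(M')\leq \mu'(M)$. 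Since every scaling in the chain is by a strictly positive quantity, replacing $\leq$ by $<$ throughout preserves all equivalences, so the strict version follows by the same argument.

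The ``hard part,'' to the extent there is one, is nothing more than carefully tracking the sign/positivity of the three scalars $S_1$, $S_2$, and the denominator in $\mu'(M')$, so that clearing denominators is legitimate. Once this is in place the statement is really a tautology: the functional $\theta$ on representations of dimension vector bounded by $d$ was designed precisely so that the sign of $\theta(M')$ records whether the ratio $\mu'(M')$ lies below or above the reference ratio $\mu'(M)=S_1/S_2$. I therefore expect the written proof to be essentially a single display of the chain of equivalences, with a short remark justifying the positivity of the denominators.
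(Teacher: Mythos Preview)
Your proposal is correct and is precisely the elementary argument the paper alludes to (the paper itself gives no details beyond citing the analogous computation in \cite{grt}). The substitution, identification $\mu'(M)=S_1/S_2$, and clearing of positive denominators is exactly the intended one-line manipulation.
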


There are subrepresentations, called non--degenerate, on which the auxiliary slope is well--defined. For representations in the image of the embedding functor, it suffices to check semistability on such subrepresentations. Degenerate subrepresentations always destabilize. 
\begin{definition}
A representation $M$ of $Q(Q')$ is called degenerate if $V_{ij}=0$ for all $i\in Q_0$ and $j\in \{1,.\ldots,N\}$ and $W_{ij}=0$ for all $i$ and $j$ such that $\sigma_{ij}=0$.
\end{definition}

\begin{lemma}\label{CSS:L:thetasst_musst}
Let $M=\Hom(T,\E)$ denote the representation of $Q(Q')$ of dimension vector $d$ given by a $(n,\Lul)$--regular quiver sheaf $\E$ of topological type $\tau$. Then the following holds.
\begin{enumerate}
\item If a representation $M'$ is non--degenerate we have
$$\sum_{i,j}\sigma_{ij}\dim\left( W_{ij}'\right)\neq 0$$
and $\mu'(M')$ is well--defined.
\item $M$ is $\theta$--semistable if and only if $\mu'(M')\leq \mu' (M)$ for all non--degenerate $M'\subset M$.
\item Suppose that $M$ is $\theta$--semistable. Then $M'\subset M$ is destabilizing if and only if $M'$ is degenerate or $M$ is non--degenerate and $\mu'(M)=\mu'(M')$.
\end{enumerate}

\begin{proof}
Write $M=\Hom(T,\E)=\bigoplus_{i,j}H^0(\E_i\otimes L_j^n)\oplus H^0(\E_i\otimes L_j^m)$. Recall that in this representation the arrows $\varphi_{ikl}$ are equipped with maps
$$\phi_{ikl}:H^0(\E_i\otimes L_k^n)\otimes H^0(L_l^m\otimes L_k^{-n})\to H^0(\E_i\otimes L_l^m).$$
By our assumption on $m$ and $n$ the sheaf $L_l^m\otimes L_k^{-n}$ is globally generated, so as in the proof of \cite{grt} Lemma 8.8, if $\phi_{ikl}(s\otimes h)=0$ for all $h$ then $s=0$. Thus the first assertion can be shown as in the case of sheaves.\\
The rest follows by Lemma \ref{CSS:L:theta_mu_equivalent} once we note that $\theta(M')=0$ if $M'$ is degenerate. Compare with the proof of \cite{grt} Lemma 8.8.
\end{proof}
\end{lemma}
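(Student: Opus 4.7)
The plan is to follow the strategy of \cite{grt} Lemma 8.8, carried out vertex-by-vertex over $Q$, using two key ingredients: the global generation of $L_k^{-n}\otimes L_l^m$ (guaranteed by our choice $m\gg n \gg 0$, condition 3), and the convention that for every fixed $i\in Q_0$ at least one $\sigma_{ij}$ is strictly positive. Once part (1) is established, parts (2) and (3) fall out of Lemma \ref{CSS:L:theta_mu_equivalent} together with a short direct computation of $\theta$ on degenerate subrepresentations.

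For assertion (1), let $M' \subset M$ be non-degenerate. If some $W_{ij}'$ is non-zero with $\sigma_{ij} \neq 0$, the sum $\sum \sigma_{kl}\dim W_{kl}'$ is already non-zero and we are done. Otherwise there must exist $(i,j)$ with $V_{ij}' \neq 0$, and we pick a non-zero $s \in V_{ij}' \subset H^0(\E_i \otimes L_j^n)$. For any $l$ the map $\phi_{ijl}$ is the canonical multiplication $s\otimes h \mapsto s\cdot h \in H^0(\E_i \otimes L_l^m)$. Choosing a point $x \in X$ with $s(x)\neq 0$ and exploiting the global generation of $L_j^{-n}\otimes L_l^m$, we find $h \in H_{jl}$ with $h(x)\neq 0$, whence $\phi_{ijl}(s\otimes h)$ is a non-vanishing section of $\E_i \otimes L_l^m$; since $M'$ is a subrepresentation this element lies in $W_{il}'$, forcing $W_{il}' \neq 0$ for every $l$. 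The convention on $\sigma$ then supplies some $l$ with $\sigma_{il} \neq 0$, and $\sum \sigma_{kl}\dim W_{kl}' > 0$ as required.

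For (2), the forward direction is immediate: if $M$ is $\theta$-semistable and $M'\subset M$ is non-degenerate, part (1) makes $\mu'(M')$ well-defined, so Lemma \ref{CSS:L:theta_mu_equivalent} converts $\theta(M')\leq 0$ into $\mu'(M') \leq \mu'(M)$. Conversely, assume the slope inequality on all non-degenerate subrepresentations. For non-degenerate $M'$ the same lemma gives $\theta(M')\leq 0$, while for degenerate $M'$ the definition of $\theta$, combined with $V'\equiv 0$ and the support condition on $W'$ at positions where $\theta_{ij2}=-\sigma_{ij}/(\cdot)$ is non-zero, yields $\theta(M') = 0$ by inspection. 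Part (3) then follows at once: $M'$ is destabilizing if and only if $\theta(M')=0$; degenerate $M'$ achieve this automatically by the computation just made, and non-degenerate $M'$ achieve it (via Lemma \ref{CSS:L:theta_mu_equivalent}) precisely when $\mu'(M')=\mu'(M)$, which of course requires $\mu'(M)$ to be defined, i.e., $M$ itself non-degenerate.

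The main obstacle is the bookkeeping around the definition of degeneracy and keeping the indices $v_{ij}, w_{ij}, \alpha_k^{\links}, \alpha_k^{\rechts}$ straight; the mathematical core, namely that a non-zero section of $\E_i\otimes L_j^n$ remains non-zero after multiplication by a suitable section of the globally generated sheaf $L_j^{-n}\otimes L_l^m$, is the one-vertex argument from \cite{grt} and no genuinely new ideas are needed.
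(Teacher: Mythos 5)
Your proposal is correct and follows essentially the same route as the paper: part (1) via global generation of $L_j^{-n}\otimes L_l^m$ forcing $W_{il}'\neq 0$ for all $l$ whenever some $V_{ij}'\neq 0$, combined with the convention that not all $\sigma_{il}$ vanish for fixed $i$; parts (2) and (3) via Lemma \ref{CSS:L:theta_mu_equivalent} together with the observation that $\theta(M')=0$ for degenerate $M'$. The paper merely cites \cite{grt} Lemma 8.8 for these steps, so your write-up is a faithful expansion of the intended argument.
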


We consider the notion of subordinate subrepresentations and tight representations as in \cite{grt} Definition 8.10.
\begin{definition}
Let $M'$ and $M''$ denote subrepresentations of some representation $M$ of $Q(Q')$.
\begin{enumerate}
\item We say that $M'$ is subordinate to $M''$ if
$$V_{ij}'\subset V_{ij}''~\mathrm{and}~W_{ij}''\subset W_{ij}'$$
holds for all $i\in Q_0$ and all $j=1,\ldots,N$. We denote this by
$$M'\preceq M''.$$
\item A subrepresentation $M'$ is called tight if whenever $M'\preceq M''$ for another subrepresentation of $M$ we have
$$V_{ij}'=V_{ij}'' ~\mathrm{and}~ W_{ij}'=W_{ij}''$$
for all indices $i$ and $j$ such that $\sigma_{ij}\neq 0$.
\end{enumerate}
\end{definition}

\begin{lemma} \label{CSS:L:subordinate_smallerslope}
Let $M'$ and $M''$ denote subrepresentations of some representation $M$ such that $\mu'(M')$ and $\mu'(M'')$ are well--defined. If $M'\preceq M''$ we have
$$\mu'(M')\leq \mu'(M'').$$
Moreover, if $M'$ is tight equality holds.

\begin{proof}
This is elementary. Compare with \cite{grt} Lemma 8.11.
\end{proof}
\end{lemma}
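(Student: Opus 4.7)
The plan is to reduce everything to a short inequality between the numerators and denominators of the two slopes. Write
\[
a' = \sum_{i,j}\sigma_{ij}\dim V'_{ij}, \quad b' = \sum_{i,j}\sigma_{ij}\dim W'_{ij},
\]
and analogously $a'', b''$ for $M''$. From $M'\preceq M''$ we have $V'_{ij}\subset V''_{ij}$ and $W''_{ij}\subset W'_{ij}$ for all $i,j$, hence, since each $\sigma_{ij}\geq 0$,
\[
a' \leq a'' \quad \text{and} \quad b'' \leq b'.
\]

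First I would dispose of the infinite cases. If $b'=0$, then $b''\leq b'=0$ forces $b''=0$; well-definedness of both slopes then requires $a',a''>0$, so $\mu'(M')=\mu'(M'')=\infty$ and equality holds. If $b'>0$ but $b''=0$, then $\mu'(M'')=\infty$ and the inequality is trivial. In the remaining case $b',b''>0$, the desired inequality $a'/b'\leq a''/b''$ is equivalent to $a''b'-a'b''\geq 0$, and this follows from the chain
\[
a''b' \geq a'b' \geq a'b'',
\]
using $a''\geq a'\geq 0$ together with $b'\geq b''\geq 0$. This proves $\mu'(M')\leq \mu'(M'')$.

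For the tightness addendum, note that both sums defining $a',b'$ and $a'',b''$ are indexed over all pairs $(i,j)$, but only those with $\sigma_{ij}\neq 0$ contribute. By the definition of tightness, $M'\preceq M''$ together with $M'$ tight forces $V'_{ij}=V''_{ij}$ and $W'_{ij}=W''_{ij}$ for precisely those indices $(i,j)$ with $\sigma_{ij}\neq 0$. Hence term by term the contributions to $a'$ and $a''$ (and to $b'$ and $b''$) coincide, giving $a'=a''$ and $b'=b''$, and therefore $\mu'(M')=\mu'(M'')$.

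There is no real obstacle here; the statement is a direct numerical consequence of the sign conditions built into the definitions of $\preceq$ and $\mu'$, and the only care needed is in handling the $\infty$ values that $\mu'$ may take when $\sum_{i,j}\sigma_{ij}\dim W_{ij}=0$.
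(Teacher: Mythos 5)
Your argument is correct and is exactly the elementary verification that the paper (following \cite{grt}, Lemma 8.11) leaves to the reader: the subordination relation gives $a'\leq a''$ and $b''\leq b'$ termwise, from which the slope inequality and the tightness addendum follow, with the $\infty$ cases handled as you describe. No differences from the intended proof worth noting.
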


Interestingly, the next lemma needs the relations $I'_1$ on the twisted quiver.
\begin{lemma}\label{CSS:L:every_subord_tight}
Let $M$ satisfy the relations $I'_1$, and consider any subrepresentation $M'\subset M$. Then
$$M'\preceq M''$$
for a tight subrepresentation $M''\subset M$.

\begin{proof}
Adapting the proof of \cite{grt} Lemma 8.12 we define $M''$ by
\begin{align*}
W_{ij}''&=\sum_{k=1}^N \phi_{ikj}(V'_{ik}\otimes H_{kj})\\
V_{ij}''&=\left\{v\in V_{ij} \mid \phi_{ijk}(v\otimes h)\in W_{ik}'' \mathrm{~for~all~}k \mathrm{~and~for~all~}h\in H_{jk} \right\}.
\end{align*}
It is immediately clear that these subspaces get respected by the maps $\phi_{ikl}$, and because of the relations $I'_1$ they are also respected by the maps $A_k$ and $B_k$ and thus define a subrepresentation. Following the remainder of the elementary proof of \cite{grt} Lemma 8.12 we can see that $M'$ is subordinated to $M''$ and that $M''$ is tight.
\end{proof}
\end{lemma}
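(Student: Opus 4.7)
The plan is to adapt the construction from the sheaf case (cf.\ \cite{grt}, Lemma 8.12) by defining $M''$ as the ``tight saturation'' of $M'$ inside $M$: take the $W$--components as small as possible, namely the span of the images of $V'$ under the $\phi$--maps, and the $V$--components as large as possible, namely everything that maps into these $W$--components. The key new feature beyond the sheaf case is that the resulting subspaces must also be closed under the horizontal arrows $A_k,B_l$ coming from the arrows of $Q$, and this is precisely what the relations $I_1'$ supply.

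Concretely, for each $i \in Q_0$ and $j \in \{1,\ldots,N\}$ I would set
\begin{align*}
W_{ij}'' &= \sum_{k=1}^N \phi_{ikj}(V_{ik}' \otimes H_{kj}), \\
V_{ij}'' &= \left\{ v \in V_{ij} \mid \phi_{ijk}(v \otimes h) \in W_{ik}'' \text{ for all } k \text{ and } h \in H_{jk} \right\}.
\end{align*}
The first step is to verify that this collection defines a subrepresentation. Closure under each $\phi_{ikl}$ is built into the definition of $V''$, while closure under the maps $B_l,A_k$ attached to an arrow $\alpha\colon i\to j$ in $Q$ is where the relations $I_1'$ enter: the identity $B_l\circ\phi_{ikl}=\phi_{jkl}\circ(A_k\otimes\mathrm{id})$ rewrites $B_l\bigl(\phi_{ikl}(V_{ik}'\otimes H_{kl})\bigr)$ as $\phi_{jkl}(A_k(V_{ik}')\otimes H_{kl})$, which lies in $\phi_{jkl}(V_{jk}'\otimes H_{kl})\subset W_{jl}''$ because $M'$ itself is closed under $A_k$; a symmetric application of the same relation then yields $A_k(V_{ik}'')\subset V_{jk}''$.

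Next I would check the inclusion $M'\preceq M''$: the estimate $W_{ij}''\subset W_{ij}'$ holds since each generator $\phi_{ikj}(V_{ik}'\otimes H_{kj})$ already lies in $W_{ij}'$ (as $M'$ is a subrepresentation), and $V_{ij}'\subset V_{ij}''$ is immediate because every $v\in V_{ij}'$ satisfies the defining condition of $V_{ij}''$ by construction of $W_{ik}''$. Finally, for tightness: if $M''\preceq N$ for another subrepresentation $N$, then $V_{ik}'\subset N_{v_{ik}}$ forces each generator $\phi_{ikj}(V_{ik}'\otimes H_{kj})$ of $W_{ij}''$ to lie in $N_{w_{ij}}$, giving $W_{ij}''\subset N_{w_{ij}}$ and, together with the reverse inclusion from $M''\preceq N$, equality; the corresponding equality $V_{ij}''=N_{v_{ij}}$ then follows since any $v\in N_{v_{ij}}$ maps under $\phi_{ijk}$ into $N_{w_{ik}}=W_{ik}''$, which is the defining condition for $V_{ij}''$. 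The main obstacle in the whole argument is exactly the preservation of $V''$ and $W''$ under the horizontal arrows, and this is handled entirely by invoking $I_1'$.
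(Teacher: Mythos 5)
Your proposal is correct and follows exactly the construction the paper uses (the same $W''$ as sums of $\phi$--images of $V'$ and the same $V''$ as the preimage condition), with the relations $I_1'$ invoked at precisely the same point, namely to get closure of $W''$ and $V''$ under the horizontal maps $A_k,B_l$. You merely spell out the subordination and tightness verifications that the paper delegates to the elementary argument of \cite{grt}, Lemma 8.12, and these details check out.
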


Putting the results obtained so far together, we arrive at the following criterion for semistability of a representation.
\begin{lemma}
Let $M$ denote a representation of $Q(Q')$ of dimension vector $d$ which satisfies the relations $I'_1$. Then $M$ is semistable if and only if
$$\mu'(M')\leq \mu'(M)$$
for all tight non--degenerate subrepresentations $M'\subset M$.

\begin{proof}
This works just as \cite{grt} Lemma 8.13:
Using Lemma \ref{CSS:L:thetasst_musst} it suffices to check the claimed inequality for non--degenerate subrepresentations. Now combining Lemma \ref{CSS:L:every_subord_tight} with Lemma \ref{CSS:L:subordinate_smallerslope} gives the result.
\end{proof}
\end{lemma}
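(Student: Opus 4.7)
The plan is to mimic the argument of \cite{grt}, Lemma 8.13, which is essentially a three-step reduction: pass from general subrepresentations to non--degenerate ones, then from non--degenerate ones to tight non--degenerate ones.

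For the ``only if'' direction, I would first observe that since $M$ has dimension vector $d$ we have $\theta(M)=0$, so $\theta$--semistability of $M$ is equivalent to $\theta(M')\leq 0$ for every subrepresentation $M'\subset M$. Applying Lemma \ref{CSS:L:thetasst_musst}(2) then rephrases this as $\mu'(M')\leq \mu'(M)$ for all non--degenerate $M'\subset M$. In particular, the inequality holds for every tight non--degenerate subrepresentation, giving one direction.

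For the ``if'' direction, I would start with an arbitrary non--degenerate subrepresentation $M'\subset M$ and construct a tight witness. By Lemma \ref{CSS:L:every_subord_tight}, which is where the relations $I'_1$ enter in an essential way (they ensure that the putative subordinate subrepresentation $M''$ built from $M'$ is preserved by the arrows $A_k,B_k$ coming from $Q_1$), there exists a tight subrepresentation $M''\subset M$ with $M'\preceq M''$. Since $M'$ is non--degenerate, the containment relations $V'_{ij}\subset V''_{ij}$ and $W''_{ij}\subset W'_{ij}$ together with the explicit description of $M''$ in the proof of Lemma \ref{CSS:L:every_subord_tight} give that $M''$ is likewise non--degenerate (at least on the indices relevant for $\mu'$), so $\mu'(M'')$ is well--defined. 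Lemma \ref{CSS:L:subordinate_smallerslope} then yields $\mu'(M')\leq \mu'(M'')$, and combining with the hypothesis $\mu'(M'')\leq \mu'(M)$ we obtain $\mu'(M')\leq \mu'(M)$ for every non--degenerate $M'\subset M$. Another application of Lemma \ref{CSS:L:thetasst_musst}(2) concludes that $M$ is semistable.

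The only real point of care is the bookkeeping around non--degeneracy when passing from $M'$ to its tight cover $M''$: one has to check that the construction in Lemma \ref{CSS:L:every_subord_tight} does not inadvertently kill the degrees of freedom that made $M'$ non--degenerate, so that $\mu'(M'')$ genuinely makes sense. This is exactly the place where the argument of \cite{grt} is reused verbatim, the quiver structure coming from $Q$ being transparent to the slope comparison thanks to $I'_1$.
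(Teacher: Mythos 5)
Your proposal is correct and follows essentially the same route as the paper: reduce from arbitrary to non--degenerate subrepresentations via Lemma \ref{CSS:L:thetasst_musst}, then from non--degenerate to tight ones by producing a tight cover $M'\preceq M''$ via Lemma \ref{CSS:L:every_subord_tight} and comparing slopes via Lemma \ref{CSS:L:subordinate_smallerslope}. The one point you rightly flag --- that $\mu'(M'')$ must be well--defined, which is immediate when $\sum_{i,j}\sigma_{ij}\dim(V'_{ij})\neq 0$ and otherwise reduces to the trivial inequality $\mu'(M')=0\leq \mu'(M)$ --- is handled exactly as in \cite{grt}, so nothing is missing.
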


We need to compare subrepresentations and quiver subsheaves. The key construction for this is given by the following Lemma.
\begin{lemma}\label{CSS:L:sumsarequiversubsheaf}
Let $\E$ denote an $(n,\Lul)$--regular quiver sheaf of topological type $\tau$. Given a subrepresentation $M'\subset M=\Hom(T,\E)$ we consider the subsheaves
$$\E_i'=\sum_{j=1}^N (\E_i)'_j,$$
where $(\E_i)'_j=\ev_j(V_{ij}'\otimes L_j^{-n})$ is given as the image of the evaluation. These subsheaves are respected by the morphisms $\E_\alpha:\E_i\to \E_j$ and hence form a quiver subsheaf $\E'=\E'(M')\subset \E$.

\begin{proof}
Choose any arrow $\alpha:a\to b$ in $Q$. The induced maps
$$H^0(\E_\alpha\otimes L_j^{-n}):H^0(\E_a\otimes L_j^{-n})\to H^0(\E_b\otimes L_j^{-n})$$
map $V_{aj}'$ to $V_{bj}'$ because $M'$ is a subrepresentation by assumption. Hence the diagram
$$\xymatrix{V_{aj}'\otimes L_j^{-n}\ar[rr] \ar[d] && (\E_a)_j' \ar[d]\\
V_{bj}'\otimes L_j^{-n} \ar[rr]  && (\E_b)_j'}$$
commutes because morphisms of sheaves commute with restriction. Summing up over $j$ gives the result.
\end{proof}
\end{lemma}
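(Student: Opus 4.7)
The plan is to verify directly that for every arrow $\alpha:a\to b$ in $Q$, the morphism $\E_\alpha:\E_a\to\E_b$ carries each summand $(\E_a)'_j$ into $(\E_b)'_j$; once this holds for each $j$, it follows that $\E_\alpha(\E'_a)\subset\E'_b$, so the collection $(\E'_i)_{i\in Q_0}$ defines a quiver subsheaf of $\E$. The whole argument rests on one piece of compatibility, namely the naturality of the evaluation morphism $\ev_j:H^0(-\otimes L_j^n)\otimes L_j^{-n}\to -$ with respect to morphisms of sheaves, combined with the fact that under the identification $M=\Hom(T,\E)$ the arrows $\alpha_k^{\links}:v_{ak}\to v_{bk}$ of $Q(Q')$ act precisely by the maps $H^0(\E_\alpha\otimes L_k^n)$.

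First, I would unwind what ``$M'\subset M$ is a subrepresentation'' means at the relevant vertices of $Q(Q')$: it forces $H^0(\E_\alpha\otimes L_j^n)(V_{aj}')\subset V_{bj}'$. Twisting by $L_j^{-n}$, this gives a commutative square
\begin{equation*}
\xymatrix{V_{aj}'\otimes L_j^{-n}\ar[r]\ar[d] & V_{bj}'\otimes L_j^{-n}\ar[d]\\
H^0(\E_a\otimes L_j^n)\otimes L_j^{-n}\ar[r] & H^0(\E_b\otimes L_j^n)\otimes L_j^{-n}}
\end{equation*}
where the horizontal arrows are obtained by tensoring $H^0(\E_\alpha\otimes L_j^n)$ (resp.\ its restriction to $V_{aj}'$) with $\mathrm{id}_{L_j^{-n}}$, and the vertical arrows are the inclusions.

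Second, I would paste on the naturality square for $\ev_j$ applied to the morphism $\E_\alpha$, which asserts that
\begin{equation*}
\xymatrix{H^0(\E_a\otimes L_j^n)\otimes L_j^{-n}\ar[r]\ar[d]_{\ev_j} & H^0(\E_b\otimes L_j^n)\otimes L_j^{-n}\ar[d]^{\ev_j}\\
\E_a \ar[r]_{\E_\alpha} & \E_b}
\end{equation*}
commutes. Chasing the subspace $V_{aj}'\otimes L_j^{-n}$ through the combined diagram shows that $\E_\alpha$ sends $(\E_a)'_j=\ev_j(V_{aj}'\otimes L_j^{-n})$ into $(\E_b)'_j=\ev_j(V_{bj}'\otimes L_j^{-n})$, as required.

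Finally, since images commute with finite sums, summing the inclusions $\E_\alpha((\E_a)'_j)\subset (\E_b)'_j$ over $j=1,\ldots,N$ yields $\E_\alpha(\E'_a)\subset\E'_b$, so the family $\{\E'_i\}$ is preserved by all structure morphisms of $\E$ and defines a quiver subsheaf $\E'(M')\subset\E$. There is no real obstacle here beyond correctly identifying the action of the arrows $\alpha_k^{\links}$ in $Q(Q')$ with $H^0(\E_\alpha\otimes L_k^n)$, which is built into the definition of the embedding functor $\Hom(T,-)$; once that is spelled out the argument reduces to the naturality of evaluation and is purely formal.
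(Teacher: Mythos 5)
Your proof is correct and follows essentially the same route as the paper: use the subrepresentation condition at the vertices $v_{aj}, v_{bj}$ to get $H^0(\E_\alpha\otimes L_j^n)(V_{aj}')\subset V_{bj}'$, invoke naturality of the evaluation map to transfer this to the inclusion $\E_\alpha((\E_a)'_j)\subset(\E_b)'_j$, and sum over $j$. Your write-up is in fact slightly more careful than the paper's (which has a sign typo $L_j^{-n}$ where $L_j^n$ is meant in the section spaces), but the argument is the same.
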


\begin{proposition}\label{CSS:P:subreps_subord_sumsheaves}
Suppose that $\E$ is an $(n,\Lul)$--regular quiver sheaf of topological type $\tau$ and $M'\subset M=\Hom(T,\E)$ is a subrepresentation. Consider the quiver subsheaf
$$\E'\subset \E$$
as in Lemma \ref{CSS:L:sumsarequiversubsheaf}. Then
$$M'\preceq \Hom(T,\E').$$
If $M'$ is tight and non--degenerate $M'$ satisfies the equality
$$\mu'\left(M'\right)=\mu'\left(\Hom\left(T,\E'\right)\right).$$

\begin{proof}
The proof of \cite{grt} Proposition 8.14 applies to each vertex. This is sufficient to obtain a quiver sheaf version.\\
This remark also applies to the case of a tight subrepresentation, as the proof of \cite{grt} shows that $\Hom\left(T,\E'\right)$ is non--degenerate and we can apply Lemma \ref{CSS:L:subordinate_smallerslope}.
\end{proof}
\end{proposition}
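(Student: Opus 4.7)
The plan is to reduce the statement to its counterpart for ordinary sheaves, namely GRT Proposition 8.14, applied independently at each vertex $i \in Q_0$. The key observation is that both the subordination relation $\preceq$ and the construction of $\E'$ from Lemma \ref{CSS:L:sumsarequiversubsheaf} are defined vertex-wise: they only involve the data $V'_{ij}$, $W'_{ij}$ and the sheaves $\E_i$, while the arrows $\alpha_k^\links, \alpha_k^\rechts$ of the twisted quiver $Q(Q')$ and the morphisms $\E_\alpha$ of the quiver sheaf only play the role of ensuring global compatibility.

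First I would unpack $\Hom(T,\E) = \bigoplus_{i\in Q_0}\bigoplus_{j=1}^N H^0(\E_i \otimes L_j^n) \oplus H^0(\E_i \otimes L_j^m)$. For each vertex $i\in Q_0$, the data $\{V'_{ij}, W'_{ij}\}_{j=1}^N$ together with the restrictions of the maps $\phi_{ikl}$ form a subrepresentation of the representation of the auxiliary quiver $(Q',H)$ naturally associated to the single sheaf $\E_i$. Applying GRT Proposition 8.14 to this data yields the subordination inequalities
\begin{align*}
V'_{ij} \subset H^0(\E'_i \otimes L_j^n), \qquad H^0(\E'_i \otimes L_j^m) \subset W'_{ij},
\end{align*}
where $\E'_i = \sum_j \ev_j(V'_{ij}\otimes L_j^{-n})$ is precisely the sheaf produced by Lemma \ref{CSS:L:sumsarequiversubsheaf}. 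Taking the collection over all $i \in Q_0$ and using Lemma \ref{CSS:L:sumsarequiversubsheaf} to know that $\E'=(\E'_i)_{i\in Q_0}$ actually forms a quiver subsheaf of $\E$, this gives the desired subordination $M' \preceq \Hom(T,\E')$.

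For the second assertion, suppose that $M'$ is tight and non-degenerate. The non-degeneracy together with the vertex-wise analysis in GRT (same reference) shows that $\Hom(T,\E'_i)$ is non-degenerate for every $i$, so $\mu'(\Hom(T,\E'))$ is well defined. Since $M' \preceq \Hom(T,\E')$ and $M'$ is tight as a subrepresentation of $M$, by definition of tight we must have $V'_{ij} = H^0(\E'_i\otimes L_j^n)$ and $W'_{ij} = H^0(\E'_i \otimes L_j^m)$ for every $(i,j)$ with $\sigma_{ij}\neq 0$. Lemma \ref{CSS:L:subordinate_smallerslope} then gives the equality $\mu'(M') = \mu'(\Hom(T,\E'))$.

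The only subtlety I foresee is the claim that tightness of $M'$ as a subrepresentation of the full twisted quiver representation really forces vertex-wise maximality (rather than some weaker global condition), but this is immediate because $\Hom(T,\E')$ is itself a subrepresentation of $M$ (its horizontal $\alpha_k^\links, \alpha_k^\rechts$-compatibility being guaranteed by $\E'$ being a quiver subsheaf), so the tight property of $M'$ can be tested against this particular $M''=\Hom(T,\E')$ and yields the required equalities at all indices with $\sigma_{ij}\neq 0$.
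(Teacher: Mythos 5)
Your proposal is correct and follows essentially the same route as the paper: reduce to GRT Proposition 8.14 vertex by vertex (noting that both $\preceq$ and the construction of $\E'$ are vertex-wise notions), and in the tight case combine the non-degeneracy of $\Hom(T,\E')$ with Lemma \ref{CSS:L:subordinate_smallerslope}. Your explicit remark that $\Hom(T,\E')$ is genuinely a subrepresentation of $M$ (so that tightness may be tested against it) makes precise a point the paper leaves implicit.
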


\begin{lemma}
Let $\E$ denote an $(n,\Lul)$--regular quiver sheaf of topological type $\tau$. Then the following assertions are equivalent.
\begin{enumerate}
\item $\Hom(T,\E)$ is semistable.
\item For all quiver subsheaves $\F\subset \E$ we have
$$\sum_{i\in Q_0}\sum_{j=1}^N h^0(\F_i\otimes L_j^n)P^\sigma_\E(m)\leq \sum_{i\in Q_0}\sum_{j=1}^N \sigma_{ij}h^0(\F_i\otimes L_j^m)P^\sigma_\E(n).$$
\item The above inequality holds for all quiver subsheaves $\E'\subset \E$ of the form as in Lemma \ref{CSS:L:sumsarequiversubsheaf} for subrepresentations $M'\subset \Hom(T,\E)$.
\end{enumerate}

\begin{proof}
The proof of \cite{grt} Lemma 8.15 applies:\\
The inequality in the second assertions is equivalent to $\theta(\Hom(T,\F))\leq 0$ after unwrapping the definitions. By Proposition \ref{CSS:P:subreps_subord_sumsheaves} it suffices to check this inequality on subrepresentations of the form $\Hom(T,\E')$.
\end{proof}
\end{lemma}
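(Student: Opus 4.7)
The plan is to translate each of the three conditions into a uniform statement about the $\theta$-stability (equivalently, the auxiliary-slope inequality) of certain subrepresentations of $M=\Hom(T,\E)$, and then invoke the subordination machinery developed earlier to reduce from arbitrary subrepresentations to those coming from quiver subsheaves of the form $\E'(M')$.

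First I would unwrap the inequality in (2) in terms of $\theta$. For any quiver subsheaf $\F\subset \E$, the evaluation produces a subrepresentation $\Hom(T,\F)\subset M$ with
\begin{align*}
\dim V_{ij}(\Hom(T,\F)) &= h^0(\F_i\otimes L_j^n), &\dim W_{ij}(\Hom(T,\F)) &= h^0(\F_i\otimes L_j^m).
\end{align*}
Plugging these into the formula $\theta(N)=\sum_{i,j}\bigl(\theta_{ij1}\dim V_{ij}(N)+\theta_{ij2}\dim W_{ij}(N)\bigr)$ and clearing denominators (using the definition of $\theta_{ij1},\theta_{ij2}$ in terms of $\sigma$ and $d=d(\tau,m,n)$, which recalls that $P^\sigma_\E(n)=\sum_{i,j}\sigma_{ij}d_{ij1}$ and $P^\sigma_\E(m)=\sum_{i,j}\sigma_{ij}d_{ij2}$) shows that the inequality in (2) is exactly the inequality $\theta(\Hom(T,\F))\leq 0$. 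The same unwrapping applied to the subsheaves $\E'=\E'(M')$ of Lemma \ref{CSS:L:sumsarequiversubsheaf} gives the analogous statement in (3).

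Next I would prove the implications $(1)\Rightarrow (2)\Rightarrow (3)$. For the first, any quiver subsheaf $\F\subset \E$ produces the subrepresentation $\Hom(T,\F)\subset M$ (here exactness of the functor on $(n,\Lul)$-regular quiver sheaves, coming from Theorem \ref{EF:T:fullyfaithful_quiversheaf}, is used to identify $\Hom(T,\F)$ with a genuine subrepresentation), so $\theta$-semistability of $M$ yields $\theta(\Hom(T,\F))\leq 0$; by the unwrapping, this is precisely the inequality in (2). The implication $(2)\Rightarrow (3)$ is trivial, since each $\E'(M')$ is a particular quiver subsheaf of $\E$.

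The main step is $(3)\Rightarrow (1)$, which is the only place where the nontrivial input is used. By the semistability criterion from the preceding lemma, it suffices to check that $\mu'(M')\leq \mu'(M)$ for every tight non-degenerate subrepresentation $M'\subset M$. Given such an $M'$, I form $\E'=\E'(M')\subset \E$ as in Lemma \ref{CSS:L:sumsarequiversubsheaf}; Proposition \ref{CSS:P:subreps_subord_sumsheaves} then gives $M'\preceq \Hom(T,\E')$ with the equality $\mu'(M')=\mu'(\Hom(T,\E'))$. By hypothesis (3) together with the unwrapping from the first paragraph, $\theta(\Hom(T,\E'))\leq 0$, and Lemma \ref{CSS:L:theta_mu_equivalent} converts this into $\mu'(\Hom(T,\E'))\leq \mu'(M)$. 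Combining these two facts yields $\mu'(M')\leq \mu'(M)$, which is what was needed. The strict version of the statement would follow by replacing $\leq$ with $<$ throughout, using the corresponding strict version of Lemma \ref{CSS:L:theta_mu_equivalent}.

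The one delicate point I would expect to be a mild obstacle is making sure that the passage $M' \leadsto \E'(M')$ behaves well under tightness, so that equality of slopes really holds; but this is precisely what Proposition \ref{CSS:P:subreps_subord_sumsheaves} (and its vertex-wise reduction to the sheaf case of \cite{grt} Proposition 8.14) is designed to deliver, so once that proposition is in hand the argument is essentially bookkeeping.
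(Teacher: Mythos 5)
Your proposal is correct and follows essentially the same route as the paper's proof (which defers to \cite{grt} Lemma 8.15): unwrap the inequality in (2) as $\theta(\Hom(T,\F))\leq 0$ using the definition of $\theta$ and the identities $P^\sigma_\E(n)=\sum_{i,j}\sigma_{ij}d_{ij1}$, $P^\sigma_\E(m)=\sum_{i,j}\sigma_{ij}d_{ij2}$, and then reduce the check to subrepresentations of the form $\Hom(T,\E')$ via Proposition \ref{CSS:P:subreps_subord_sumsheaves} together with the tight/non-degenerate semistability criterion. The extra detail you supply for $(3)\Rightarrow(1)$ is exactly what the paper's citation to \cite{grt} is standing in for.
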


Finally, we are ready to prove that semistability is respected by the embedding functor.
\begin{theorem}\label{CSS:T:semistability}
Let $\E$ denote a quiver sheaf of topological type $\tau$ which satisfies the relations $I$. Then $\E$ is semistable if and only if $\E$ is pure, $(p,\Lul)$--regular and $\Hom(T,\E)$ is semistable.

\begin{proof}
With the appropriate modifications this can be proven in the same way as the case of sheaves (\cite{grt} Theorem 8.16) by combining the preceding results.
\end{proof}
\end{theorem}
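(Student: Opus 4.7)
My plan is to combine Le Potier--Simpson Corollary \ref{LP:C:lp_corollary} with the preceding lemma---characterizing semistability of $\Hom(T,\E)$ via a numerical inequality mixing Hilbert polynomial values at $n$ and $m$---together with the six technical conditions imposed on the integers $m\gg n\gg p\gg 0$ at the start of Section \ref{SS:preserving_semistable}. The essential bridge is condition~(5) in that list, which asserts that the polynomial inequality of Le Potier--Simpson is equivalent to its evaluation at $t=m$ whenever the test subsheaf belongs to the bounded families $S$ or $S_{\ev}$; the real content of the proof is matching the test subsheaves that actually arise in each direction to these families.

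For the forward implication, suppose $\E$ is $\sigma$-semistable: purity is part of the definition and $(p,\Lul)$-regularity follows from condition~(1). To verify semistability of $\Hom(T,\E)$, I will invoke the preceding lemma, which reduces the task to checking its numerical inequality only for quiver subsheaves of the form $\E'=\E'(M')$ produced from subrepresentations $M'\subset \Hom(T,\E)$ by Lemma \ref{CSS:L:sumsarequiversubsheaf}. Each component $\E'_i=\sum_j\ev_j(V'_{ij}\otimes L_j^{-n})$ is a sum of images of evaluation maps, hence lies in $S_{\ev}$ and is $(m,\Lul)$-regular by condition~(4); therefore $\sum_{i,j}\sigma_{ij}h^0(\E'_i\otimes L_j^m)=P^\sigma_{\E'}(m)$. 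Applying Corollary \ref{LP:C:lp_corollary} to the semistable $\E$ yields the polynomial inequality $\sum_{i,j}\sigma_{ij}h^0(\E'_i\otimes L_j^n)\cdot P^\sigma_\E\leq P^\sigma_\E(n)\cdot P^\sigma_{\E'}$; evaluating at $t=m$ produces exactly the required numerical inequality.

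For the backward implication, assume $\E$ is pure, $(p,\Lul)$-regular, and $\Hom(T,\E)$ is semistable. I will invoke assertion~(3) of Corollary \ref{LP:C:lp_corollary}, reducing the task to verifying the polynomial inequality for saturated quiver subsheaves $\F\subset\E$ with $\hmu^\sigma(\F)\geq\hmu^\sigma(\E)$. Any such $\F$ belongs to the bounded family $S$, hence by condition~(4) it is $(m,\Lul)$-regular and $\sum_{i,j}\sigma_{ij}h^0(\F_i\otimes L_j^m)=P^\sigma_\F(m)$. Semistability of $\Hom(T,\E)$ through the preceding lemma supplies the inequality at $t=m$, and condition~(5) lifts this back to the full polynomial inequality, whence $\sigma$-semistability of $\E$ follows from Le Potier--Simpson. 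The main obstacle to flag is precisely this bookkeeping: one must verify that the two classes of test subsheaves---the sums-of-images $\E'(M')$ on the forward side and the saturated large-slope $\F$ on the backward side---really do land in $S_{\ev}$ and $S$ respectively, which is exactly what the constants $m\gg n\gg p\gg 0$ were engineered to ensure.
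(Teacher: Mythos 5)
Your proposal is correct and follows exactly the route the paper intends: the paper's proof simply defers to \cite{grt}, Theorem 8.16, and your argument is the faithful quiver-sheaf adaptation of that proof, combining the lemma reducing semistability of $\Hom(T,\E)$ to the subsheaves $\E'(M')$, Corollary \ref{LP:C:lp_corollary}, and conditions (4) and (5) on $m\gg n\gg p$ to pass between the polynomial inequality and its evaluation at $m$. The bookkeeping you flag (that the $\E'(M')$ land in $S_{\ev}$ and the saturated large-slope $\F$ land in $S$) is indeed the only content beyond the sheaf case, and you have it right.
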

~

\subsection{$S$--equivalence under embedding}~\\

In this subsection, we strengthen the comparison result established in the preceding subsection. For this to work, we need to get rid of degenerate subrepresentations. One way to ensure this is to assume that $\sigma$ is positive, i.e. all its entries are strictly positive. We remind ourselves that we still assume $\sigma$ to be bounded given a set of relations $I$, and semistable quiver sheaves which satisfy these relations to be $\left(p,\Lul\right)$--regular.\\

Now that we exclude degenerate subrepresentations of $\Hom(T,\E)$, the only remaining destabilizing subrepresentations are given by destabilizing quiver subsheaves.
\begin{lemma}
Let $\E$ denote a semistable quiver sheaf which satisfies the relations $I$ for a positive stability parameter $\sigma$. Any destabilizing quiver subsheaf $\F\subset \E$ is $\left(p,\Lul\right)$--regular and $\Hom\left(T,\F\right)\subset\Hom\left(T,\E\right)$ is destabilizing as well.

\begin{proof}
By Lemma \ref{LP:L:destab_saturated}, $\E'$ is $(p,\Lul)$--regular since it is a direct summand
$$\E'\subset \E'\oplus \E/\E'$$
of a semistable quiver sheaf of topological type $\tau$ satisfying the relations $I$, which is thus $\left(p,\Lul\right)$--regular by our assumption. In particular, it is $\left(m,\Lul\right)$--regular and $\left(n,\Lul\right)$--regular as well. This yields the equations
\begin{align*}
\mu'\left(\Hom\left(T,\F\right)\right)&=\frac{\sum_{ij}\sigma_{ij}h^0\left(\F_i\otimes L_j^n\right)}{\sum_{i,j}\sigma_{ij}h^0\left(\F_i\otimes L_j^m\right)}=\frac{P^\sigma_\F(n)}{P^\sigma_\F(m)}=\frac{p^\sigma_\F(n)}{p^\sigma_\F(m)}\\
&=\frac{p^\sigma_\E(n)}{p^\sigma_\E(m)}=\mu'\left(\Hom\left(T,\E\right)\right),
\end{align*}
which proves that $\Hom\left(T,\F\right)$ is destabilizing.
\end{proof}
\end{lemma}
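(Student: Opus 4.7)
The plan is to combine Lemma \ref{LP:L:destab_saturated} with the standing choice of $m\gg n\gg p\gg 0$ fixed at the start of Subsection \ref{SS:preserving_semistable} and then compute auxiliary slopes directly. First I would invoke Lemma \ref{LP:L:destab_saturated}: since $\F\subset \E$ is destabilizing, $\F\oplus \E/\F$ is semistable of topological type $\tau$ and satisfies the relations $I$. By the standing boundedness assumption, every such quiver sheaf is $(p,\Lul)$-regular. Since regularity is a vertex-wise condition preserved by direct summands, $\F$ itself is $(p,\Lul)$-regular, and therefore also $(n,\Lul)$- and $(m,\Lul)$-regular.

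With this regularity in hand, one has $h^0(\F_i\otimes L_j^n)=P^{L_j}_{\F_i}(n)$ and $h^0(\F_i\otimes L_j^m)=P^{L_j}_{\F_i}(m)$ for all $i,j$. Plugging into the definition of the auxiliary slope gives
\begin{align*}
\mu'(\Hom(T,\F))=\frac{P^\sigma_\F(n)}{P^\sigma_\F(m)}=\frac{p^\sigma_\F(n)}{p^\sigma_\F(m)}=\frac{p^\sigma_\E(n)}{p^\sigma_\E(m)}=\mu'(\Hom(T,\E)),
\end{align*}
where the middle equality uses that $\F$ destabilizes $\E$, i.e.\ $p^\sigma_\F=p^\sigma_\E$. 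The positivity of $\sigma$ enters to guarantee that $\Hom(T,\F)$ is non-degenerate, so its auxiliary slope is well defined, and Lemma \ref{CSS:L:theta_mu_equivalent} then converts the slope equality into $\theta(\Hom(T,\F))=0$, exhibiting $\Hom(T,\F)$ as a destabilizing subrepresentation of $\Hom(T,\E)$.

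The only genuine obstacle is making sure the $(p,\Lul)$-regularity assumption really applies to $\F$: this requires packaging $\F$ as a direct summand of a semistable quiver sheaf of topological type $\tau$ satisfying $I$, which is exactly what Lemma \ref{LP:L:destab_saturated} supplies. Once this is in place, the remainder of the argument is a direct computation with Hilbert polynomials, and positivity of $\sigma$ is used solely to rule out degeneracy of the image representation so that the criterion from Lemma \ref{CSS:L:theta_mu_equivalent} may be applied.
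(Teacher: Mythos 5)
Your proposal is correct and follows essentially the same route as the paper: apply Lemma \ref{LP:L:destab_saturated} to realize $\F$ as a direct summand of the semistable quiver sheaf $\F\oplus\E/\F$ of type $\tau$, deduce $(p,\Lul)$- and hence $(n,\Lul)$- and $(m,\Lul)$-regularity, and then compute the auxiliary slope to conclude via Lemma \ref{CSS:L:theta_mu_equivalent}. Your explicit remark that positivity of $\sigma$ is needed only to rule out degeneracy is a useful clarification that the paper leaves implicit.
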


\begin{lemma}
Suppose that $M$ is a $\theta$--semistable representation, where $\theta$ is defined using a positive stability condition $\sigma$. A destabilizing subrepresentation $M'\subset M$ is then tight.

\begin{proof}
The elementary proof of \cite{grt} Lemma 8.18 applies word for word.
\end{proof}
\end{lemma}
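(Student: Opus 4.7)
The plan is to leverage positivity of $\sigma$ together with Lemma \ref{CSS:L:theta_mu_equivalent}. Since every $\sigma_{ij}>0$, the clause in the definition of tightness that excludes indices with $\sigma_{ij}=0$ becomes vacuous, and one needs only to show that whenever $M'\preceq M''$, the inclusions $V'_{ij}\subset V''_{ij}$ and $W''_{ij}\subset W'_{ij}$ are equalities for every $(i,j)$. The strategy is to force the three quantities $\mu'(M')$, $\mu'(M'')$ and $\mu'(M)$ to coincide and then to unpack that equality against the subordinate containments.

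First I would check that a destabilizing $M'$ (i.e. $\theta(M')=0$, since $M$ has dimension vector $d$ and hence $\theta(M)=0$) is automatically non-degenerate. If every $V'_{ij}$ vanished, then in the expansion of $\theta(M')$ the remaining terms $\theta_{ij2}\dim W'_{ij}$ would all be nonpositive---strictly negative coefficients by positivity of $\sigma$, multiplied by nonnegative integers---so $\theta(M')=0$ would force $W'_{ij}=0$ for all $(i,j)$, contradicting $M'\neq 0$. Consequently $\mu'(M')$ is defined, and Lemma \ref{CSS:L:theta_mu_equivalent} yields $\mu'(M')=\mu'(M)$. The same reasoning shows $M''$ is non-degenerate: $V'_{ij}\subset V''_{ij}$ provides some nonzero $V''_{ij}$, while $\theta$-semistability of $M$ (which gives $\theta(M'')\leq 0$) would be violated if all $W''_{ij}$ vanished, since then $\theta(M'')=\sum\theta_{ij1}\dim V''_{ij}>0$.

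Now Lemma \ref{CSS:L:subordinate_smallerslope} gives $\mu'(M')\leq\mu'(M'')$, while applying Lemma \ref{CSS:L:theta_mu_equivalent} to $M''$ together with $\theta(M'')\leq 0$ gives $\mu'(M'')\leq\mu'(M)=\mu'(M')$, so all three slopes coincide. Writing $a=\sum\sigma_{ij}\dim V'_{ij}$, $a'=\sum\sigma_{ij}\dim V''_{ij}$, $b=\sum\sigma_{ij}\dim W'_{ij}$, $b'=\sum\sigma_{ij}\dim W''_{ij}$, the definition of $\preceq$ gives $a\leq a'$ and $b'\leq b$, while $a/b=a'/b'$ (with both denominators positive) is incompatible with any of these being strict; hence $a=a'$ and $b=b'$. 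Strict positivity of every $\sigma_{ij}$ and the componentwise inclusions then force $\dim V'_{ij}=\dim V''_{ij}$ and $\dim W'_{ij}=\dim W''_{ij}$ for each $(i,j)$, so the inclusions are equalities and $M'$ is tight. The only genuinely delicate step is the non-degeneracy of $M''$, which is precisely where the hypothesis that $\sigma$ be positive cannot be weakened; everything else is linear-algebraic bookkeeping.
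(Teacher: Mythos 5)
Your argument is correct and is precisely the elementary slope computation that the paper delegates to \cite{grt}, Lemma 8.18: a destabilizing $M'$ has $\theta(M')=0$, hence $\mu'(M')=\mu'(M)$; subordination plus semistability squeezes $\mu'(M'')$ to the same value, and positivity of $\sigma$ then upgrades equality of the weighted sums to componentwise equality. The one point worth spelling out is that applying Lemma \ref{CSS:L:theta_mu_equivalent} to $M'$ requires $\sum_{i,j}\sigma_{ij}\dim W'_{ij}\neq 0$, which follows from the same sign argument you used for the $V'_{ij}$: the strictly positive $V$--contribution to $\theta(M')=0$ must be cancelled by a strictly negative $W$--contribution.
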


We are now ready to prove that subrepresentations of $\Hom(T,\E)$ which are destabilizing can be deduced from destabilizing quiver subsheaves.
\begin{lemma}
Suppose that $\sigma$ denotes a positive stability condition and let $\E$ be a semistable quiver sheaf of topological type $\tau$ which satisfies the relations $I$. For any destabilizing subrepresentation
$$M'\subset \Hom\left(T,\E\right)$$
the quiver subsheaf $\E'\subset \E$ given as in Proposition \ref{CSS:P:subreps_subord_sumsheaves} is either equal to $\E$ or destabilizing as well.

\begin{proof}
Consider our version of the Le Potier--Simpson theorem \ref{LP:T:lp_theorem} and Condition 5 on $m$ and $n$ for quiver sheaves. The proof of Lemma 8.19 in \cite{grt} then carries over.
\end{proof}
\end{lemma}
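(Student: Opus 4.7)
The plan is to convert the destabilizing condition $\theta(M')=0$ into a numerical identity at the fixed integers $m,n$ via the auxiliary slope $\mu'$, and then apply the Le Potier--Simpson Corollary together with Condition~5 on $m,n$ to lift this to a polynomial equality of reduced Hilbert polynomials.

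Since $\sigma$ is positive, degeneracy of a subrepresentation would force all $V'_{ij}$ and $W'_{ij}$ to vanish, so only the zero subrepresentation is degenerate; as $M'$ is destabilizing it is non-zero and hence non-degenerate, and by the previous lemma it is also tight. Proposition~\ref{CSS:P:subreps_subord_sumsheaves} therefore gives $\mu'(M')=\mu'(\Hom(T,\E'))$, which, combined with the destabilizing equality $\mu'(M')=\mu'(\Hom(T,\E))$, yields $\mu'(\Hom(T,\E'))=\mu'(\Hom(T,\E))$.

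Next I would unpack this slope equality into numerical data. By $(n,\Lul)$- and $(m,\Lul)$-regularity of $\E$ the right hand side equals $P^\sigma_\E(n)/P^\sigma_\E(m)$, while on the left each sheaf $\E_i'=\sum_j(\E_i)_j'$ lies in the bounded family $S_\ev$ and is therefore $(m,\Lul)$-regular by Condition~4 on our choice of $m,n,p$, so $\sum_{i,j}\sigma_{ij}h^0(\E_i'\otimes L_j^m)=P^\sigma_{\E'}(m)$. The slope identity therefore rearranges into the numerical relation
$$P^\sigma_\E(m)\sum_{i,j}\sigma_{ij}h^0(\E_i'\otimes L_j^n)=P^\sigma_\E(n)\,P^\sigma_{\E'}(m).$$

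The final step is to lift this to a polynomial equality. Corollary~\ref{LP:C:lp_corollary} applied to $\E'\subset \E$ yields the polynomial inequality
$$\Bigl(\sum_{i,j}\sigma_{ij}h^0(\E_i'\otimes L_j^n)\Bigr)P^\sigma_\E\leq P^\sigma_\E(n)\,P^\sigma_{\E'},$$
with equality if and only if $\E'$ is destabilizing. Because $\E_i'\subset \E_i$ and $\E$ is $(n,\Lul)$-regular, the integers $c_{ij}:=h^0(\E_i'\otimes L_j^n)$ lie in the admissible range $[0,P^{L_j}_{\E_i}(n)]$, and $\F:=\E'$ consists of sheaves in $S_\ev$; hence Condition~5 on $m,n$ applies and says that the polynomial relation is equivalent to its counterpart evaluated at $m$, which is precisely the equality just derived. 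Therefore the polynomial inequality is actually an equality, so $p^\sigma_{\E'}=p^\sigma_\E$, and $\E'$ must be either $\E$ itself or a proper destabilizing quiver subsheaf, as claimed. The main obstacle is exactly this last bridge from numerical equality at the fixed integers $m,n$ to polynomial equality of Hilbert polynomials; it is precisely what the careful choice of $m$ and $n$ encoded in Condition~5 was arranged to provide.
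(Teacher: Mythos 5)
Your proof is correct and follows the same route as the paper's own (which simply defers to Lemma 8.19 of \cite{grt}): positivity of $\sigma$ makes $M'$ non-degenerate and tight, Proposition \ref{CSS:P:subreps_subord_sumsheaves} plus the destabilizing condition give the auxiliary-slope equality $\mu'(\Hom(T,\E'))=\mu'(\Hom(T,\E))$, the $(m,\Lul)$-regularity of the sheaves in $S_\ev$ turns this into the numerical identity at $(m,n)$, and Condition 5 lifts it to the polynomial identity forcing $p^\sigma_{\E'}=p^\sigma_\E$. The only imprecision is your opening sentence: with the paper's stated definition, degeneracy for positive $\sigma$ only forces the $V'_{ij}$ to vanish, and one must additionally use the destabilizing equality $\theta(M')=0$ together with $\theta_{ij2}<0$ to conclude that the $W'_{ij}$ vanish as well — but the conclusion you need, that a nontrivial destabilizing subrepresentation is non-degenerate, does hold.
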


We end this subsection with the desired result.
\begin{theorem}\label{CSS:T:S-equivalence}
Suppose that $\sigma$ is a positive stability condition. For semistable quiver sheaves $\E$ of topological type $\tau$ which satisfy the relations $I$ the identity
$$\Hom\left(T,\gr\left(\E\right)\right)\simeq \gr\left(\Hom\left(T,\E\right)\right)$$
holds. Hence, $\Hom\left(T,*\right)$ respects $S$--equivalence and $\E$ is stable if and only if $\Hom\left(T,\E\right)$ is.

\begin{proof}
Essentially, the proof of \cite{grt} Theorem 8.20 applies, which we repeat here as a sketch. Consider a Jordan--H\"older filtration
$$0=\E^0\subsetneq \E^1\subsetneq\ldots\subsetneq \E^l=\E$$
in the category of quiver sheaves. That is, the quiver subsheaves $\E^i$ are destabilizing and the filtration is maximal with this property. Let $M^i=\Hom\left(T,\E^i\right)$ and $M=\Hom\left(T,\E\right)$ denote the image of the filtration under the embedding, so that
$$0=M^0\subsetneq M^1\subsetneq \ldots \subsetneq M^l=M.$$
If we assume that this filtration allows a refinement, i.e. $M^p\subset M'\subset M^{p+1}$ for a destabilizing subrepresentation $M'\subset M$, we get that $\E^p\subset \E'\subset \E^{p+1}$ by an argument as in \cite{grt}, where $\E'$ is given as in Lemma \ref{CSS:L:sumsarequiversubsheaf}. But this contradicts the maximality of the Jordan--Hölder filtration of $\E$.\\
The exactness of the embedding functor now implies that it respects $S$--equivalence since
\begin{align*}
\gr\left(\Hom\left(T,\E\right)\right)&=\bigoplus_{k=1}^l\Hom(T,\E^k)/\Hom(T,\E^{k-1})\\
&=\Hom\left(T,\bigoplus_{k=1}^l \E^k/\E^{k-1}\right)=\Hom(T,\gr(\E)).
\end{align*}
This also implies that $\Hom(T,*)$ can distinguish stability from semistability; because $\E$ is stable if and only if $\gr(\E)=\E$ and $\Hom(T,\E)$ is stable if and only if $\gr\left(\Hom(T,\E)\right)=\Hom(T,\E)$.
\end{proof}
\end{theorem}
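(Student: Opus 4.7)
The plan is to imitate the strategy of \cite{grt} Theorem 8.20, exploiting the three lemmas established just before the statement together with the exactness of the embedding functor from Theorem \ref{EF:T:fullyfaithful_quiversheaf}. The core task is to show that a Jordan--H\"older filtration of $\E$ in the category of quiver sheaves is carried by $\Hom(T,\bullet)$ to a Jordan--H\"older filtration of $\Hom(T,\E)$ in the category of representations; once this is done, the $S$-equivalence statement is a formal consequence, and the stability statement follows because $\E$ (resp.~$\Hom(T,\E)$) is stable exactly when its graded object agrees with itself.

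First I would pick a Jordan--H\"older filtration
\begin{equation*}
0=\E^0\subsetneq \E^1\subsetneq\ldots\subsetneq \E^l=\E
\end{equation*}
in $(Q,I)-\Coh(X)$ and apply the exact functor $\Hom(T,\bullet)$, obtaining a chain
\begin{equation*}
0=M^0\subsetneq M^1\subsetneq \ldots \subsetneq M^l=M=\Hom(T,\E).
\end{equation*}
Each inclusion $M^k\subset M^{k+1}$ is strict by faithfulness, and by the first lemma of the preceding subsection each $M^k$ is a destabilizing subrepresentation. The key point to verify is that this filtration is already maximal among filtrations by destabilizing subrepresentations. Suppose toward a contradiction that there is a destabilizing subrepresentation $M^p\subsetneq M'\subsetneq M^{p+1}$. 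I would apply the third lemma of the subsection (the one relying on the Le Potier--Simpson estimate and Condition~5 on $m,n$) to the induced inclusion $M'/M^p\hookrightarrow M^{p+1}/M^p=\Hom(T,\E^{p+1}/\E^p)$: the associated quiver subsheaf $(\E^{p+1}/\E^p)'\subset \E^{p+1}/\E^p$ from Lemma \ref{CSS:L:sumsarequiversubsheaf} is either the whole thing or is itself destabilizing. Pulling back, this produces a quiver subsheaf $\E'$ with $\E^p\subset \E'\subset \E^{p+1}$ that is destabilizing and strictly intermediate, contradicting the maximality of the Jordan--H\"older filtration of $\E$. The subtlety I expect to be the main obstacle is precisely this step: one must carefully identify the quiver subsheaf associated to $M'/M^p$ with an honest quiver subsheaf of $\E^{p+1}/\E^p$ lying strictly between $0$ and $\E^{p+1}/\E^p$, using that $M'$ sits strictly between $M^p$ and $M^{p+1}$ and invoking the positivity of $\sigma$ to rule out degenerate subrepresentations via Lemma \ref{CSS:L:thetasst_musst}.

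Having established maximality, the sequence $M^\bullet$ is a Jordan--H\"older filtration of $M$, and exactness of $\Hom(T,\bullet)$ on the category of $(n,\Lul)$-regular quiver sheaves gives
\begin{align*}
\gr(\Hom(T,\E)) &=\bigoplus_{k=1}^l M^k/M^{k-1}=\bigoplus_{k=1}^l \Hom(T,\E^k)/\Hom(T,\E^{k-1})\\
&=\bigoplus_{k=1}^l \Hom(T,\E^k/\E^{k-1})=\Hom(T,\gr(\E)).
\end{align*}
Since the Jordan--H\"older subquotients are determined up to isomorphism and permutation on both sides, this identification shows that $\E\sim_S \F$ implies $\Hom(T,\E)\sim_S \Hom(T,\F)$. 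For the stability statement, $\E$ is stable if and only if its Jordan--H\"older filtration has length one, i.e.~$\gr(\E)\cong \E$; by the identification just proved together with the faithfulness of $\Hom(T,\bullet)$, this is equivalent to $\gr(\Hom(T,\E))\cong \Hom(T,\E)$, which is the statement that $\Hom(T,\E)$ is stable. This completes the plan.
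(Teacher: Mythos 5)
Your proposal follows the paper's proof essentially verbatim: push a Jordan--H\"older filtration through the exact embedding functor, rule out refinements by converting an intermediate destabilizing subrepresentation back into an intermediate destabilizing quiver subsheaf via Lemma \ref{CSS:L:sumsarequiversubsheaf}, and then identify the graded objects using exactness. The only deviation is that you apply the destabilizing-subrepresentation lemma to the subquotient $M'/M^p\subset\Hom(T,\E^{p+1}/\E^p)$, whereas the paper applies it directly to $M'\subset\Hom(T,\E)$; the latter is the safer choice, since that lemma is stated for semistable quiver sheaves of topological type $\tau$, which the subquotient $\E^{p+1}/\E^p$ is not, so your variant would need an extra remark justifying its applicability there.
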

~

\section{Construction of the moduli space}\label{S:QSconstruction}

The actual construction of the moduli space is the final step in our program, which can be summarized as follows. The existence of the tautological family $\Mbb$ on the representation variety gives a subscheme which is the image of the embedding functor, and openness of semistability provides an open subscheme of it. Finally, the GIT quotient of the representation variety descends to this subscheme because stability of representations and quiver sheaves are compatible, and the quotient thus obtained is the moduli space of quiver sheaves.\\

Suppose that we are given a bounded and positive stability parameter $\left(\Lul,\sigma\right)$ for some fixed set of relations $I$. We fix integers $m\gg n \gg p\gg 0$ such that the following assertions hold.
\begin{enumerate}
\item All $\sigma$--semistable quiver sheaves which satisfy the relations $I$ are $\left(p,\Lul\right)$--regular.
\item The functor $\Hom\left(T,*\right)$ is an embedding, i.e. Theorem \ref{EF:T:fullyfaithful_quiversheaf} holds.
\item Stability is preserved by $\Hom\left(T,*\right)$, i.e. Theorem \ref{CSS:T:semistability} holds.
\item $S$--equivalence is preserved as well, i.e. Theorem \ref{CSS:T:S-equivalence} holds.
\end{enumerate}

Consider the representation variety
$$R=R_d\left(Q(Q'),H,I'(I)\right)$$
for the twisted quiver of dimension vector $d=d(\tau,m,n)$ and recall that we have a group action of $G=G_d$ on it as well as a stability condition $\theta=\theta(d,\sigma)$. The tautological family $\Mbb$ of right--$A$--modules, where $A$ is the path algebra of the twisted quiver with labels and relations, gives us a locally closed subscheme
$$i:B=R^{\imgreg}_\tau \subset R$$
according to Proposition \ref{EF:P:imageofembedding}. In particular, the fibers of the family
$$i^*\Mbb\otimes'_{A'} T$$
are $(n,\Lul)$--regular quiver sheaves of topological type $\tau$. Recall (Proposition \ref{EF:P:local_iso_reg_qsheaves}) that the moduli functor $\M^\reg(X,Q,I)$ of $(n,\Lul)$--regular quiver sheaves on $X$ is locally isomorphic to the quotient functor $\underline{B}/\underline{G}$. The openness of regularity, stability and semistability (Proposition \ref{QS:P:openness}) provides us with open subschemes
$$B^\imgst\subset B^\imgsst\subset B^{\imgreg}\subset B,$$
where the members of the family $i^*\Mbb\otimes'_{A'} T$ are stable, semistable and $(p,\Lul)$--regular respectively.

\begin{proposition}
There is a local isomorphism of functors
$$\M_\tau^{\sst}(Q,X,I)\simeq \underline{B^\imgsst}/\underline{G},$$
and a similar local isomorphism for the case of stable quiver sheaves.

\begin{proof}
We have a natural transformation
$$\underline{B^\imgsst}/\underline{G}\to \M_\tau^\sst(X,Q,I),$$
given by pulling back the restriction of the family $i^*\Mbb\otimes'_{A'} T$ to $B^\imgsst$. As in the proof of \cite{ack} Theorem 6.1, this is the restriction of the natural transformation which induced the local isomorphism of Proposition \ref{EF:P:local_iso_reg_qsheaves}.\\
The same argument applies to the stable case.
\end{proof}
\end{proposition}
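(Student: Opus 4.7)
The plan is to obtain the statement by restricting the local isomorphism $\M^\reg(X,Q,I)\simeq \underline{B}/\underline{G}$ from Proposition~\ref{EF:P:local_iso_reg_qsheaves} to the open subscheme $B^{\imgsst}\subset B$ and checking that the corresponding subfunctor on the left is precisely $\M_\tau^\sst(Q,X,I)$. First I would define the natural transformation
\begin{align*}
 g^\sst:\underline{B^\imgsst}\to \M^\sst_\tau(X,Q,I),\quad (f:S\to B^\imgsst)\mapsto f^*\bigl((i^*\Mbb\otimes'_{A'}T)\mid_{B^\imgsst}\bigr),
\end{align*}
as the restriction of the natural transformation $g$ from Proposition~\ref{EF:P:local_iso_reg_qsheaves}. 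That $g^\sst$ indeed factors through $\M_\tau^\sst(X,Q,I)$ amounts to checking that every geometric fiber of the restricted family is $\sigma$--semistable, which holds by the definition of $B^\imgsst$ as the $\sigma$--semistable open locus (using the openness of semistability, Proposition~\ref{QS:P:openness}).

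The second step is to show that $g^\sst$ induces a local isomorphism of quotient functors. For this, observe that by our choice of $p$, every $\sigma$--semistable quiver sheaf of topological type $\tau$ satisfying the relations $I$ is $(p,\Lul)$--regular, so that $\M_\tau^\sst(X,Q,I)$ is a subfunctor of $\M^\reg(X,Q,I)$. Thus the commutative diagram
\begin{align*}
 \xymatrix{\underline{B^\imgsst}/\underline{G} \ar[r] \ar@{^{(}->}[d] & \M^\sst_\tau(X,Q,I) \ar@{^{(}->}[d]\\
 \underline{B}/\underline{G} \ar[r]^-{\sim_{\mathrm{loc}}} & \M^\reg(X,Q,I)}
\end{align*}
reduces the claim to showing that for any family $\Ebb$ of $(n,\Lul)$--regular quiver sheaves of topological type $\tau$ satisfying the relations, parametrized by a scheme $S$, and any factorization $S\to B$ of the associated morphism through $\underline{B}/\underline{G}$ that is provided by Proposition~\ref{EF:P:local_iso_reg_qsheaves}, this factorization lands in $B^\imgsst$ precisely when every geometric fiber of $\Ebb$ is $\sigma$--semistable.

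This last equivalence is where Theorem~\ref{CSS:T:semistability} enters: at a geometric point $s\in S$, the fiber $\Ebb_s$ is a $(p,\Lul)$--regular quiver sheaf, and the theorem identifies its $\sigma$--semistability with the $\theta$--semistability of $\Hom(T,\Ebb_s)$, which is exactly the condition defining $B^\imgsst$. Together with the openness statement (Proposition~\ref{QS:P:openness}) this identifies $B^\imgsst$ as the preimage of $\M^\sst_\tau(X,Q,I)$ under $g$, and the restricted local isomorphism follows. The stable case is handled verbatim, replacing Theorem~\ref{CSS:T:semistability} by Theorem~\ref{CSS:T:S-equivalence} to ensure that $\sigma$--stability corresponds to $\theta$--stability under the embedding.

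The only nontrivial point in all of this is the fiberwise compatibility of the two notions of semistability, but that work has already been done in Section~\ref{S:stabilityembedding}; once it is invoked, the present proposition is a formal consequence of restricting a local isomorphism of functors to an open subfunctor. I therefore do not expect any essential obstacle beyond correctly unpacking the definitions on both sides.
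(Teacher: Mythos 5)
Your proposal is correct and follows essentially the same route as the paper: restrict the natural transformation $g$ of Proposition \ref{EF:P:local_iso_reg_qsheaves} to $B^\imgsst$ and observe that, since $\sigma$--semistable quiver sheaves are $(p,\Lul)$--regular by the choice of $p$, the semistable moduli functor is the subfunctor of $\M^\reg(X,Q,I)$ whose preimage under $g$ is exactly $\underline{B^\imgsst}/\underline{G}$. The only cosmetic difference is that you invoke Theorem \ref{CSS:T:semistability} to characterize $B^\imgsst$ via $\theta$--semistability, whereas in this section $B^\imgsst$ is defined directly as the open locus where the fibers of $i^*\Mbb\otimes'_{A'}T$ are $\sigma$--semistable quiver sheaves (the comparison with $\theta$--semistability is only needed in Proposition \ref{C:P:moduli_as_quotient}), so that appeal is harmless but not required here.
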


\begin{proposition}\label{C:P:moduli_as_quotient}
There exists a commuting diagram
$$\xymatrix{B^{[\sigma-\sst]}\ar@{^(->}[r] \ar[d]_{q'} & R^{\theta-sst} \ar[d]^{q} \\
K \ar@{^(->}[r]_j & M^{\theta-sst}_d,}$$
where $M^{\theta-\sst}_d=M^{\theta-\sst}_d(Q(Q'),H,I'(I))$, $q$ is the GIT quotient as in Theorem \ref{Q:T:quivermoduli}, the restriction $q'$ is a good quotient and $j$ is the embedding of a locally closed subscheme.

\begin{proof}
Let $Y$ denote $B^\imgsst$. By definition, the pullback of the tautological family $\Mbb$ to $Y$ satisfies
$$i^*\Mbb\simeq \Hom'\left(T,\Ebb\right)$$
for a family $\Ebb$ of $(p,\Lul)$--regular semistable quiver sheaves of topological type $\tau$ which satisfy the relations $I$.\\
Consider $Z=\overline{Y}\cap R^{\theta-sst}$, where $\overline{Y}$ is the closure in $R$. This is a closed and $G$--invariant subscheme in $R^{\theta-sst}$. By our assumption that $k$ has characteristic zero, we know that $q$ induces a good quotient on $Z$, and it thus suffices to show that $Y$ lies in $Z$ and that $Y\subset Z$ satisfies the assumptions of Lemma \ref{C:L:goodquotient_opensubset}. To see the inclusion, note that for a point $x\in Y$ the fiber
$$\Mbb_x\simeq \Hom(T,\E)$$
is $\theta$--semistable due to Theorem \ref{CSS:T:semistability}. A similar argument yields the technical condition involving the closed orbits in the closure. Indeed, as a point $x\in Q^\imgsst$ corresponds to some representation $M=\Hom(T,\E)$ for a semistable quiver sheaf $\E$, the closed orbit in the closure of the orbit of $x$ corresponds to the module
$$\gr(M)=\gr\left(\Hom(T,\E)\right)\simeq \Hom\left(T,\gr(\E)\right),$$
where the latter isomorphism is given by Theorem \ref{CSS:T:S-equivalence}. Since $\gr(E)$ is again semistable of topological type $\tau$, the closed orbit is also contained in the image of the semistable quiver sheaves under the embedding functor.
\end{proof}
\end{proposition}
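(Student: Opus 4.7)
The plan is to descend the GIT quotient $q$ to the locally closed subscheme $B^{[\sigma-\sst]}$ using the stability-comparison results of Section \ref{S:stabilityembedding}, and to deduce the good-quotient property via the standard principle that good quotients restrict well to saturated open subschemes of closed invariant subschemes.

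First, I would verify the top horizontal inclusion $B^{[\sigma-\sst]} \hookrightarrow R^{\theta-sst}$. A point $x$ of $B^{[\sigma-\sst]}$ corresponds under the tautological family $\Mbb$ to a representation $\Mbb_x \simeq \Hom(T, \E)$ for some $\sigma$-semistable quiver sheaf $\E$ of topological type $\tau$ that satisfies the relations $I$. By our assumption $(1)$ on the regularity integers, this $\E$ is automatically $(p, \Lul)$-regular, so Theorem \ref{CSS:T:semistability} applies and yields that $\Hom(T, \E)$ is $\theta$-semistable. Thus $x \in R^{\theta-sst}$, and the inclusion is $G$-equivariant since $B^{[\sigma-\sst]}$ is cut out by isomorphism-invariant conditions on the fibres of $\Mbb$.

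Second, I would construct the good quotient $q'$. The subtlety is that $B^{[\sigma-\sst]}$ is only locally closed in $R^{\theta-sst}$, so a naive restriction of $q$ need not be a good quotient. I would instead form the closed $G$-invariant subscheme $Z = \overline{B^{[\sigma-\sst]}} \cap R^{\theta-sst}$ of $R^{\theta-sst}$. Since the base field has characteristic zero, reductivity of $G$ propagates, and the restriction of $q$ to $Z$ remains a good quotient $Z \to q(Z)$, with $q(Z)$ closed in $M^{\theta-sst}_d$. The subscheme $B^{[\sigma-\sst]} \subset Z$ is open, and the standard descent lemma for good quotients along saturated open subschemes then yields the required good quotient $q': B^{[\sigma-\sst]} \to K$ together with an open immersion $K \hookrightarrow q(Z)$. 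Composing with the closed immersion $q(Z) \hookrightarrow M^{\theta-sst}_d$ produces the locally closed embedding $j$, and the commutativity of the square is built into the construction.

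The main obstacle is the saturation hypothesis required to apply the good-quotient descent lemma: for every $x \in B^{[\sigma-\sst]}$ the unique closed $G$-orbit inside $\overline{G \cdot x} \cap Z$ must itself lie in $B^{[\sigma-\sst]}$. Unravelling GIT for quiver representations, this closed orbit corresponds to the polystable module $\gr(M)$ associated to $M = \Hom(T, \E)$. By Theorem \ref{CSS:T:S-equivalence} there is a canonical isomorphism $\gr(\Hom(T, \E)) \simeq \Hom(T, \gr(\E))$, and since $\gr(\E)$ is again $\sigma$-semistable of the same topological type $\tau$, still satisfies the relations $I$ (each Jordan--H\"older factor does), and is $(p, \Lul)$-regular, it lies in the image of the embedding functor. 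Hence $\gr(M)$ is a point of $B^{[\sigma-\sst]}$, which is exactly the saturation condition needed to complete the descent.
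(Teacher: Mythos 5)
Your proposal is correct and follows essentially the same route as the paper: the inclusion into $R^{\theta-\sst}$ via Theorem \ref{CSS:T:semistability}, the intermediate closed invariant subscheme $Z=\overline{B^{[\sigma-\sst]}}\cap R^{\theta-\sst}$ with its good quotient in characteristic zero, and the saturation condition for Lemma \ref{C:L:goodquotient_opensubset} verified through $\gr(\Hom(T,\E))\simeq\Hom(T,\gr(\E))$ from Theorem \ref{CSS:T:S-equivalence}. No substantive differences to report.
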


We repeat the technical lemma used in the preceding theorem as given in \cite{ack}, Lemma 6.2.
\begin{lemma}\label{C:L:goodquotient_opensubset}
Suppose that the action of a reductive group $G$ on a scheme $Z$ admits a good quotient $\pi:Z\to Z//G$. Further, suppose that $Y\subset Z$ denotes an open and $G$--invariant subset such that for each orbit $\Ocal\subset Y$ the unique closed orbit $\Ocal'$ contained in the closure $\overline{\Ocal}$ is also contained in $Y$. Then, $\pi$ restricts to a good quotient
$$\pi\mid_Y:Y\to \pi(Y)$$
of $Y$, where $\pi(Y)\subset Z//G$ is open.
\end{lemma}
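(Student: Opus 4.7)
The plan is to reduce the lemma to the standard principle that a good quotient restricts to a good quotient on the preimage of any open subscheme of the base, by showing that $Y$ is saturated with respect to $\pi$, i.e.\ $Y=\pi^{-1}(\pi(Y))$. Once saturatedness is known, $\pi(Y)$ is open as the complement in $Z/\!/G$ of $\pi(Z\setminus Y)$, the latter being closed because $Z\setminus Y$ is $G$-invariant and closed and $\pi$ is a good quotient; moreover $\pi|_Y$ inherits from $\pi$ the properties of surjectivity onto $\pi(Y)$, $G$-invariance, mapping $G$-invariant closed subsets to closed subsets, separating disjoint such subsets, and inducing an isomorphism on invariant structure sheaves. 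These together yield that $\pi|_Y : Y \to \pi(Y)$ is a good quotient.

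The heart of the argument is therefore saturatedness. For this I would invoke the standard description of the fibers of a good quotient: for $y,z\in Z$ one has $\pi(y)=\pi(z)$ if and only if $\overline{G\cdot y}\cap\overline{G\cdot z}\neq\emptyset$, and in that case both closures contain one and the same closed $G$-orbit. Now fix $y\in Y$ and let $z\in Z$ satisfy $\pi(z)=\pi(y)$. Let $\Ocal'$ be the unique closed orbit contained in $\overline{G\cdot y}$. Applying the hypothesis of the lemma to the orbit $G\cdot y\subset Y$ gives $\Ocal'\subset Y$. On the other hand, $\Ocal'$ also lies in $\overline{G\cdot z}$. If we had $z\notin Y$, then the orbit $G\cdot z$ would be contained in the $G$-invariant closed subset $Z\setminus Y$, and so would its closure; in particular $\Ocal'\subset Z\setminus Y$, contradicting $\Ocal'\subset Y$. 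Hence $z\in Y$, and $Y$ is saturated.

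I do not anticipate any serious obstacle; the argument is essentially formal once one grants the closed-orbit description of the fibers of $\pi$, which is a standard property of good quotients for reductive $G$. The only mild subtlety worth highlighting is that the hypothesis on $Y$ is exactly the condition needed to trap the closed orbit in each fiber over a point of $\pi(Y)$ inside $Y$, which in turn forbids any point of $Z$ whose orbit closure meets this closed orbit from lying outside $Y$. This is the only place where the specific hypothesis of the lemma enters; everything else reduces to generalities about good quotients.
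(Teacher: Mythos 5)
Your argument is correct. Note that the paper itself gives no proof of this lemma: it is quoted verbatim from \'Alvarez-C\'onsul--King (\cite{ack}, Lemma 6.2), so there is nothing internal to compare against. Your route is the standard one and is exactly what the hypothesis of the lemma is designed for: using the closed-orbit description of the fibers of a good quotient (two points have the same image if and only if their orbit closures meet, and each fiber contains a unique closed orbit lying in the closure of every orbit of that fiber), the assumption that $Y$ captures the closed orbit in each orbit closure forces $Y=\pi^{-1}(\pi(Y))$, since a point $z$ outside the $G$--invariant open set $Y$ has $\overline{G\cdot z}\subset Z\setminus Y$ and hence cannot share its closed orbit with a point of $Y$. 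From saturatedness, openness of $\pi(Y)$ follows because $\pi(Z\setminus Y)$ is closed and disjoint from $\pi(Y)$, and the good-quotient axioms (affineness, surjectivity, $(\pi_*\Ocal_Y)^G=\Ocal_{\pi(Y)}$, and the image of a closed $G$--invariant subset of $Y$ being closed in $\pi(Y)$, the last again using saturatedness) all restrict correctly. The one point you rightly do not belabor, but which deserves a word if this were written out in full, is that the existence and uniqueness of the closed orbit in each orbit closure is being taken as given by the statement of the lemma itself, and holds here because $Z$ is of finite type over a field so that orbits are locally closed.
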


\begin{theorem} \label{T: existmoduli}
The scheme $K$ as in Proposition \ref{C:P:moduli_as_quotient} is the coarse moduli space for semistable quiver sheaves of topological type $\tau$ which satisfy the relations $I$. The closed points of
$$K=M^{\sigma-sst}_\tau(X,Q,I)$$
correspond to $S$--equivalence classes of semistable quiver shaves. Furthermore, there exists an open subscheme
$$M^{\sigma-\st}_\tau(X,Q,I)\subset M^{\sigma-\sst}_\tau(X,Q,I)$$
which is the coarse moduli space for stable quiver sheaves of topological type $\tau$. Its closed points correspond to isomorphism classes of stable quiver sheaves.

\begin{proof}
Since $q: B^\imgsst \to K$ is a good quotient, the corresponding natural transformation
$$\underline{q}:\underline{B^\imgsst}/\underline{G}\to \underline{K}$$
corepresents the quotient functor $\underline{B^\imgsst}/\underline{G}$ (compare with the remark to Definition \cite{ack}, Definition 4.6). But this quotient functor is locally isomorphic to the moduli functor of semistable quiver sheaves by Proposition \ref{EF:P:local_iso_reg_qsheaves}. Hence $M$ also corepresents this moduli functor (where we use \cite{ack}, Lemma 4.7).\\

The closed points of $K$ correspond to closed orbits in $B^\imgsst$. These orbits further correspond to $S$--equivalence classes of $A$--modules (by the GIT--construction of \cite{king}, transferred to the labeled case), which are of the form $\Hom(T,\E)$ for some semistable quiver sheaf $\E$ of topological type $\tau$, as explained in the proof of Proposition \ref{C:P:moduli_as_quotient}. We also know by Theorem \ref{CSS:T:S-equivalence} that $S$--equivalence classes of such modules correspond to $S$--equivalence classes of semistable quiver sheaves.\\

A semistable quiver sheaf $\E$ is stable if and only if the $A$--module $\Hom(T,\E)$ is stable. Hence
$$B^\imgst=B^\imgsst\cap R^{\theta-\st}.$$
The geometric quotient of $R^{\theta-st}$ then restricts to a geometric quotient $B^\imgst\to q(B^\imgst)=K'\subset K$, which has open image inside $K$.\\
Repeating the argument above we see that $K'$ corepresents the moduli functor of stable quiver sheaves, and that its closed points correspond to $S$--equivalence classes of stable quiver sheaves. But $S$--equivalence reduces to isomorphism for stable objects.
\end{proof}
\end{theorem}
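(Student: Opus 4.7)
The plan is to assemble the theorem from three ingredients that are already in place: the good quotient $q: B^\imgsst \to K$ from Proposition \ref{C:P:moduli_as_quotient}; the local isomorphism of functors $\M^\reg \simeq \underline{B}/\underline{G}$ from Proposition \ref{EF:P:local_iso_reg_qsheaves}, which by openness of semistability in flat families (Proposition \ref{QS:P:openness}) restricts to a local isomorphism $\M^\sst \simeq \underline{B^\imgsst}/\underline{G}$; and the comparison results for stability and $S$-equivalence under the embedding (Theorems \ref{CSS:T:semistability} and \ref{CSS:T:S-equivalence}).

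For the corepresenting statement, I would first note that since $q$ is a good quotient, the induced natural transformation $\underline{q}: \underline{B^\imgsst}/\underline{G} \to \underline{K}$ corepresents the quotient functor (cf.\ the remark after Definition 4.6 of \cite{ack}). By \cite{ack} Lemma 4.7 corepresentability is preserved under local isomorphism of functors, so combining this with the local isomorphism $\M^\sst \simeq \underline{B^\imgsst}/\underline{G}$ shows that $K$ also corepresents $\M^\sst$. This gives $K = M^{\sigma-\sst}_\tau(X,Q,I)$ as a coarse moduli space.

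To identify closed points, I would argue in two steps. Closed points of the good quotient $K$ correspond to closed $G$-orbits in $B^\imgsst$. By the GIT description of King's moduli space for (labeled) quiver representations with relations, recalled in Appendix \ref{SS:quiver_moduli}, closed $G$-orbits in $R^{\theta-\sst}$ are in bijection with $S$-equivalence classes of $\theta$-semistable $A$-modules; intersecting with $B^\imgsst$ restricts to those modules of the form $\Hom(T,\E)$ for a semistable quiver sheaf $\E$ of topological type $\tau$ satisfying $I$. Theorem \ref{CSS:T:S-equivalence} then translates $S$-equivalence of such modules into $S$-equivalence of the underlying quiver sheaves, so closed points of $K$ biject with $S$-equivalence classes of semistable quiver sheaves as claimed.

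Finally, for the stable part, Theorem \ref{CSS:T:S-equivalence} shows that a semistable $\E$ is stable if and only if $\Hom(T,\E)$ is $\theta$-stable, so $B^\imgst = B^\imgsst \cap R^{\theta-\st}$, which is open in $B^\imgsst$. The geometric quotient of $R^{\theta-\st}$ restricts to a geometric quotient $B^\imgst \to K'$, where $K' \subset K$ is the corresponding open subscheme. Repeating the corepresenting argument yields that $K'$ is the coarse moduli space $M^{\sigma-\st}_\tau(X,Q,I)$, and since $S$-equivalence reduces to isomorphism on stable objects, its closed points parametrize isomorphism classes. The only subtle point I anticipate is keeping careful track that $S$-equivalence of labeled $A$-modules matches the GIT notion of closed-orbit equivalence inside the locally closed image $B^\imgsst$; this is guaranteed by exactness and full faithfulness of $\Hom(T,\bullet)$ on $(n,\Lul)$-regular quiver sheaves and the explicit description of graded modules as images of graded quiver sheaves in Theorem \ref{CSS:T:S-equivalence}.
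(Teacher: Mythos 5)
Your proposal follows essentially the same route as the paper's proof: corepresentability via the good quotient and \cite{ack} Lemma 4.7 combined with the local isomorphism of functors, identification of closed points through closed orbits and Theorem \ref{CSS:T:S-equivalence}, and the stable locus as $B^\imgst = B^\imgsst \cap R^{\theta-\st}$ with the restricted geometric quotient. The one small refinement you add—explicitly noting that the local isomorphism for $\M^\reg$ restricts to the semistable locus via openness—is exactly the content of the unnamed proposition preceding Proposition \ref{C:P:moduli_as_quotient}, so everything matches.
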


\begin{corollary}\label{C:C:relations_closed}
The moduli space of semistable quiver sheaves satisfying the relations
$$M^\sst_\tau(X,Q,I)\subset M^\sst_\tau(X,Q)$$
is a closed subscheme of the moduli space without relations, and the same assertion holds for the moduli space of stable quiver sheaves.

\begin{proof}
The representation variety with relations $I'(I)=I'_1\cup I'_2$ is a closed subvariety
$$R_d(Q(Q'),H,I'(I))\subset R_d(Q(Q'),H,I'_1),$$
and the tautological family $\Mbb$ of right--$A$--modules on $R=R_d(Q(Q'),H,I'_1)$ restricts to a family $\Mbb'$ of modules additionally satisfying the relations $I'_2$ on $R'=R_d(Q(Q'),H,I'(I))$. According to Corollary \ref{EF:C:imageofembedding_relations} we thus have
$$(R')^{\imgreg}=R'\cap R^\imgreg,$$
i.e. $(R')^\imgreg$ is a closed subscheme of $R^\imgreg$. Further, the property of stability and semistability is insensitive to the question whether relations are imposed, so that the corresponding open subschemes parametrizing stable and semistable quiver sheaves are closed inside the corresponding open subschemes without the relations $I'_2$. This inclusion is respected by the quotient.
\end{proof}
\end{corollary}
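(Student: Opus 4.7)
The plan is to reduce the statement to the diagrammatic quotient picture of Proposition~\ref{C:P:moduli_as_quotient} and then propagate closedness through the three steps: the representation variety, the ``image of the embedding'' locus, and finally the GIT quotient.

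First, I would observe that imposing the extra relations $I'_2(I)$ cuts out a closed subvariety
$$R'=R_d(Q(Q'),H,I'(I)) \subset R_d(Q(Q'),H,I'_1)=R,$$
because each element of $I'_2$ is a polynomial identity in the matrices defining a representation, hence a closed condition. The tautological family $\Mbb$ of right-$A_0$-modules on $R$ restricts to a family $\Mbb'$ on $R'$ whose fibers satisfy $I'_2$. Next, I would apply Corollary~\ref{EF:C:imageofembedding_relations} to $\Mbb$ and $\Mbb'$: this identifies the regular-image locus for quiver sheaves satisfying $I$ with the intersection
$$(R')^\imgreg = R' \cap R^\imgreg,$$
so that $(R')^\imgreg$ is a closed subscheme of $R^\imgreg$.

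The second step is to pass from regularity to semistability. Here I would invoke the remark made at the start of Section~\ref{S:stabilityembedding}: satisfying a relation is inherited by quiver subsheaves, so whether or not $I$ is imposed does not affect the $\sigma$-semistability condition, and likewise $I'_2$ plays no role in $\theta$-semistability of representations (only $I'_1$ was used, via Lemma~\ref{CSS:L:every_subord_tight}). Consequently
$$(R')^\imgsst = R' \cap R^\imgsst, \qquad (R')^\imgst = R' \cap R^\imgst,$$
so that $(R')^\imgsst$ (resp. $(R')^\imgst$) is closed inside $R^\imgsst$ (resp. $R^\imgst$).

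Finally, I would pass to the quotient. By Proposition~\ref{C:P:moduli_as_quotient} the moduli spaces $M^\sst_\tau(X,Q,I)$ and $M^\sst_\tau(X,Q)$ arise as the images under a good quotient $q$ of $(R')^\imgsst$ and $R^\imgsst$ respectively. Since $(R')^\imgsst$ is closed and $G$-invariant in $R^\imgsst$, and good quotients send closed invariant subschemes to closed subschemes, the induced morphism
$$M^\sst_\tau(X,Q,I) \hookrightarrow M^\sst_\tau(X,Q)$$
is a closed embedding; the same argument yields the stable version after restricting to the open subscheme $R^\imgst$. The main delicate point I anticipate is verifying that the set-theoretic equalities $(R')^\imgsst = R' \cap R^\imgsst$ etc.\ really are scheme-theoretic; this is precisely what Corollary~\ref{EF:C:imageofembedding_relations} secures for the regular locus, and then semistability is cut out by the same open condition on both sides, so the equality of scheme structures is automatic.
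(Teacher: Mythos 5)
Your proposal is correct and follows essentially the same route as the paper's proof: closedness of $R'$ in $R$, Corollary \ref{EF:C:imageofembedding_relations} to get $(R')^\imgreg = R' \cap R^\imgreg$, the observation that (semi)stability is insensitive to the relations, and descent of the closed inclusion through the good quotient. The extra care you take at the end about scheme-theoretic versus set-theoretic equality is a reasonable elaboration of what the paper leaves implicit, but it is not a different argument.
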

~

\section{Projectivity of the moduli space}

In this section we want to show that the moduli space of quiver sheaves is projective, if the quiver $Q$ does not contain oriented cycles. In face of the similar result for moduli spaces of representations (compare with Theorem \ref{Q:T:quivermoduli}) it seems unlikely that one can get rid of this condition. Furthermore, we require that $\sigma$ is a rational stability condition for a crucial technical step of Theorem \ref{L:T:langton} below.\\
Since the moduli space with relations is a closed subscheme of the moduli space without relations (consider Corollary \ref{C:C:relations_closed}), it is sufficient to show the projectivity for the case without relations, up to the question of boundedness. We consider the case of a smooth and projective scheme $X$.

\begin{theorem}
Assume that $Q$ does not contain oriented cycles, and that $\sigma$ is rational. Then, the moduli space of multi--Gieseker semistable quiver sheaves of topological type $\tau$ is projective over $k$.

\begin{proof}
Consider the proof of \cite{ack} Proposition 6.6, which holds almost word for word in our setting. Nevertheless, we give a brief sketch to point out the relevant references.\\
Let $R$ denote a discrete valuation ring over $k$ with field of fractions $K$. The valuative criterion then requires us to construct an extension
$$x:\Spec(R)\to M^\sst$$
for any given morphism $x_0:\Spec(K)\to M^\sst=M^\sst_\tau(Q,X)$. The identification of the moduli space with a quotient as in Proposition \ref{C:P:moduli_as_quotient} allows a lift
$$\xymatrix{\Spec(K') \ar[d] \ar[r]^{y_0}& B^\sst \ar[d]\\
\Spec(K) \ar[r]_{x_0} & M^\sst,}$$
where $K\subset K'$ is a suitable finite extension of fields. Denote by $R'$ a discrete valuation ring with field of fractions $K'$ which dominates $R$.\\
The remainder of the proof then reduces, modulo borrowing arguments from \cite{ack}, to showing that a flat family over $\Spec(K')$ of semistable quiver sheaves of topological type $\tau$ on $X$ extends to a flat family of such quiver sheaves over $\Spec(R')$.\\
By Proposition \ref{P:P:extension_dvr} we can extend the family to a flat one since $Q$ does not contain oriented cycles. Our version of Langton's theorem \ref{L:T:langton} then allows to modify the family to obtain a family of semistable quiver sheaves as needed.
\end{proof}
\end{theorem}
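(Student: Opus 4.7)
The plan is to verify the existence part of the valuative criterion of properness. Since $M^\sst=M^\sst_\tau(X,Q)$ is already known to be quasi-projective by Theorem \ref{T: existmoduli} (and since the version with relations sits inside as a closed subscheme by Corollary \ref{C:C:relations_closed}, so I may reduce to $I=\emptyset$), it is enough to show that any morphism $x_0:\Spec(K)\to M^\sst$ from the generic point of a DVR $R$ (with fraction field $K$) over $k$ extends to $\Spec(R)\to M^\sst$.

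First, I would lift $x_0$ to the presentation of the moduli space as a good quotient. Using Proposition \ref{C:P:moduli_as_quotient}, after a finite base change $K\subset K'$ and a corresponding dominating DVR $R'$ with fraction field $K'$, the composite $\Spec(K')\to \Spec(K)\to M^\sst$ lifts along $q':B^\imgsst\to K$ to a morphism $y_0:\Spec(K')\to B^\imgsst$. Pulling back the universal family on $B^\imgsst$, we obtain a flat family $\E_{K'}$ of $\sigma$--semistable quiver sheaves of topological type $\tau$ on $X_{K'}=X\times\Spec(K')$.

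Second, I would extend $\E_{K'}$ to a flat family of quiver sheaves over $\Spec(R')$. For a single coherent sheaf this is standard (push forward and take the appropriate saturation), but for a representation in $\Coh(X)$ of $Q$ one must extend the sheaves $\E_i$ together with all the morphisms $\E_\alpha$ simultaneously, which is only possible because $Q$ has no oriented cycles: one processes the vertices in a topological order, and at each step the target sheaf is large enough to receive the extended morphism. This is precisely Proposition \ref{P:P:extension_dvr}. The special fiber of the extended family may however fail to be semistable.

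Third, I would apply the Langton-type Theorem \ref{L:T:langton} for quiver sheaves to modify the extension along the closed point so that the special fiber becomes $\sigma$--semistable. This step is the main obstacle: one needs a suitable notion of elementary modification compatible with the quiver structure, and the iteration must terminate. The rationality hypothesis on $\sigma$ enters exactly here; without it the sequence of reduced multi-Hilbert polynomials produced by the Langton procedure need not be discrete, and termination would fail. Granting Theorem \ref{L:T:langton}, we obtain a flat family $\widetilde{\E}$ over $\Spec(R')$ of $\sigma$--semistable quiver sheaves of topological type $\tau$ whose generic fiber is isomorphic (up to tensoring with a line bundle on $\Spec(R')$, which does not affect the induced morphism to the moduli space) to $\E_{K'}$.

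Finally, the family $\widetilde{\E}$ defines a morphism $\Spec(R')\to B^\imgsst$, which composed with $q'$ gives an extension $\Spec(R')\to M^\sst$ of $\Spec(K')\to M^\sst$. As in the proof of \cite{ack}, Proposition 6.6, the descent of this map from $\Spec(R')$ to $\Spec(R)$ is obtained by standard arguments: the two composites $\Spec(R'\otimes_R R')\rightrightarrows M^\sst$ agree on the generic point and, by separatedness of the quasi-projective scheme $M^\sst$, must agree globally, so the morphism factors through $\Spec(R)$. This completes the verification of the valuative criterion and yields projectivity of $M^\sst$ over $k$.
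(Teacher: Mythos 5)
Your proposal is correct and follows essentially the same route as the paper: lift along the good quotient of Proposition \ref{C:P:moduli_as_quotient} after a finite base change, extend the family over the DVR via Proposition \ref{P:P:extension_dvr} (using the absence of oriented cycles), repair the special fiber with the Langton-type Theorem \ref{L:T:langton} (where rationality of $\sigma$ ensures termination), and descend by the arguments of \cite{ack}, Proposition 6.6. The extra details you supply (reduction to $I=\emptyset$ and the separatedness argument for descent from $\Spec(R')$ to $\Spec(R)$) are consistent with what the paper implicitly borrows from \cite{ack}.
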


\begin{remark}
For the strategy of proof presented here, both the condition that $Q$ contains no cycles, and the condition that $\sigma$ is rational, are essential. Interestingly, for extensions of families of quiver sheaves (as in Subsection \ref{SS:extensions}), the rationality of $\sigma$ plays no role, while for the proof of Langton's theorem (consider Appendix \ref{S:langton}), we may allow cycles in $Q$.
\end{remark}
~

\subsection{Extensions}~\label{SS:extensions}\\

In this subsection we construct extensions of flat families of quiver sheaves from the base $\Spec(K)$ to $\Spec(R)$, where $K$ is the quotient field of a discrete valuation ring $R$. A condition for this to work is that the quiver does not contain oriented cycles.\\

Let $U\subset X$ denote an open subset of a noetherian scheme over $k$ with inclusion map $i$. It is well--known that coherent sheaves on $U$ can be extended to coherent sheaves on $X$, but we need to ensure that morphisms extend as well. For the case of a quiver without oriented cycles we can show that this even holds for quiver sheaves.

\begin{proposition}\label{P:P:extension_opensubset}
Consider a quiver sheaf $\E$ on $U$ over a quiver $Q$ without oriented cycles. There exists a quiver sheaf $\E'$ on $X$ such that 
$$\E'\mid_U=\E.$$
This extension is given as a quiver subsheaf of the quasi coherent quiver sheaf $i_*\E$.

\begin{proof}
We recall the construction of the extension for a single coherent sheaf $F$ on $U$. As a first step, suppose that $X=\Spec(A)$ is affine. The push forward under the inclusion map $i$ is quasi coherent and hence given by an $A$--module
$$i_*F=\widetilde{M_F}.$$
The module $M_F$ can be written n as a projective limit
$$M_F=\sum_N N=\overrightarrow{\lim}_N N,$$
where the sum and limit are taken over the projective system of finitely generated submodules $N\subset M_F$. Both the restriction functor and the tilde functor preserve colimits, which yields
$$F=i_*F\mid_U=\overrightarrow{\lim}_N \widetilde{N}\mid_U=\sum_N \widetilde{N}.$$
But this sum is equal to one of the summands because $F$ is coherent, so that $F=\widetilde{N^0}\mid_U$. In other words $F'=\widetilde{N^0}$ is the desired extension.\\
For the general case take an affine open covering $X=V_1\cup\ldots\cup V_n$ and use induction on $n$. If an extension $F'$ is already given on $X_n=V_1\cup\ldots\cup V_n$, that is $F'|_{X_n\cap U}=F|_{X_n\cap U}$, we apply the above construction to the open subset $(X_n\cup U)\cap V_{n+1} \subset V_{n+1}$ to obtain an extension to $V_{n+1}$.\\

To construct the extension of a quiver sheaf we recursively apply a slight variation of this argument. It is easy to see that the same argument for the reduction step to an affine scheme also holds in the quiver sheaf setting; hence assume $X=\Spec(A)$.\\
If $i\in Q_0$ is a source of the quiver, we use the construction to obtain some extension $\E_i'=\left(N^0_i\right)^\sim$ for a finitely generated submodule $N^0_i\subset M_i=M_{\E_i}$.\\
Suppose we have a vertex $j\in Q_0$ such that for all its predecessors, i.e. vertices which allow an arrow $\alpha_k:i_k\to j$, the extensions are already constructed. That is
$$\E_{i_k}'=\left(N^0_{i_k}\right)^\sim$$
for certain finitely generated submodules $N^0_{i_k}\subset M_{i_k}$. Under the tilde functor the push forward of the morphisms for the arrows corresponds to morphisms of $A$--modules
$$\left(i_*\E_{\alpha_k}:i_*\E_{i_k}\to i_*\E_j\right)=\left(f_k:M_{i_k}\to M_j\right)^{\sim}.$$
To construct the extension of $\E_j$ we do not consider the full system of finitely generated submodules of $M_j$ but the subsystem of finitely generated submodules which contain the finitely generated submodule
$$\sum_k f_k\left(N^0_{i_k}\right)\subset M_j.$$
Obviously, the sum over this system again has $M_j$ as a limit, and the rest of the construction works as above.\\

The fact that $Q$ does not contain oriented cycles guarantees that this recursion extends the sheaves at all vertices and the morphisms at the arrows in finitely many steps.
\end{proof}
\end{proposition}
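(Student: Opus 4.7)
The plan is to reduce to the affine case by the same induction on the size of an affine open cover that the paper sketches for a single sheaf, and then to carry out the affine case by processing the vertices of $Q$ in a topological order. Because $Q$ is acyclic (and finite), the relation ``$i$ precedes $j$ if there exists a path from $i$ to $j$'' extends to a partial order on $Q_0$, and I will do induction on the longest path ending at a given vertex. At each stage the task is to choose, at the current vertex $j$, a finitely generated submodule of the global sections module $M_j$ of $i_*\E_j$ whose associated coherent sheaf restricts to $\E_j$ on $U$ and simultaneously contains the images of the submodules already chosen at the predecessors of $j$.

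In detail, suppose $X=\Spec(A)$. For any coherent $\F$ on $U$, the pushforward $i_*\F$ is quasi-coherent and corresponds to the $A$-module $M=\Gamma(U,\F)$. For a source vertex $i\in Q_0$, I invoke the single-sheaf argument recalled in the excerpt: writing $M_i$ as the directed union of its finitely generated submodules, using that tilde and restriction commute with colimits, and using that $\E_i$ is coherent (hence noetherian on the noetherian scheme $U$), the union $\E_i=\bigcup_N \widetilde{N}|_U$ stabilizes at some finitely generated $N^0_i\subset M_i$ with $\widetilde{N^0_i}|_U=\E_i$.

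For the induction step at a vertex $j$ whose predecessors $i_1,\dots,i_r$ (via arrows $\alpha_k:i_k\to j$) have already been assigned finitely generated $N^0_{i_k}\subset M_{i_k}$, let $f_{\alpha_k}:M_{i_k}\to M_j$ denote the $A$-module maps coming from $i_*\E_{\alpha_k}$. Set $N:=\sum_k f_{\alpha_k}(N^0_{i_k})$, which is finitely generated but a priori does not satisfy $\widetilde{N}|_U=\E_j$. Applying the single-sheaf construction to $\E_j$ separately yields a finitely generated $N'\subset M_j$ with $\widetilde{N'}|_U=\E_j$; then $N^0_j:=N+N'$ is finitely generated, contains each $f_{\alpha_k}(N^0_{i_k})$, and satisfies $\widetilde{N'}|_U\subset\widetilde{N^0_j}|_U\subset\widetilde{M_j}|_U=\E_j$, so that $\widetilde{N^0_j}|_U=\E_j$. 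By construction, the $f_{\alpha_k}$ restrict to morphisms $\widetilde{N^0_{i_k}}\to\widetilde{N^0_j}$ and assemble into the extension $\E'$, viewed as a coherent quiver subsheaf of $i_*\E$. Acyclicity and finiteness of $Q$ guarantee that this recursion terminates after $|Q_0|$ steps, and the reduction to the general case proceeds as in the paper by induction on the number of affines covering $X$, applying the affine construction above on $(X_n\cup U)\cap V_{n+1}\subset V_{n+1}$.

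The main obstacle is making the inductive step compatible with the morphisms: a naive independent choice of extension at each vertex would fail to respect the arrows. The key observation that removes it is that any finitely generated constraint $N\subset M_j$ coming from the predecessors can always be enlarged to a finitely generated $N^0_j$ still restricting to $\E_j$, simply by summing with a pre-existing single-sheaf extension. An oriented cycle in $Q$ would break the topological ordering and force the constraints on the various $N^0_j$ to be solved simultaneously, which is exactly the point where the argument would collapse.
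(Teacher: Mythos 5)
Your proposal is correct and follows essentially the same strategy as the paper: reduce to the affine case by induction on an affine cover, then process the vertices in a topological order made possible by acyclicity, enlarging the finitely generated submodule at each vertex to contain the images of the predecessors' choices while still restricting to the given sheaf on $U$. Your explicit choice $N^0_j = N + N'$ is just a concrete instance of the paper's observation that the subsystem of finitely generated submodules containing $\sum_k f_k(N^0_{i_k})$ still has $M_j$ as its colimit.
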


We need the application of this result to the case of families over a discrete valuation ring.

\begin{proposition}\label{P:P:extension_dvr}
Let $R$ denote a discrete valuation ring with field of fractions $K$, and suppose that $Q$ is a quiver without oriented cycles. Any family $\E$ of quiver sheaves over $K$ extends to a flat family of quiver sheaves over $R$.

\begin{proof}
An application of Proposition \ref{P:P:extension_opensubset} to the inclusion
$$U=X\times \Spec(K)\subset X\times \Spec(R)$$
provides the existence of an extension. It remains to show flatness.\\
Locally on $\Spec(A)\times \Spec(K)$, the sheaf $\E_i$ is given by an $A\otimes K$--module $M_i$, and the push forward $i_*\E_i$ is given by the same vector space equipped with the structure of an $A\otimes R$--module via the canonical map
$$A\otimes R\to A\otimes K.$$
According to Proposition \ref{P:P:extension_opensubset} the extension $\E_i'$ is given by some finitely generated submodule $N_i^0$. Because the canonical map is injective, $M$ is torsion--free as an $R$--module, and so $N_i^0$ is torsion--free as well. This is sufficient to show flatness over the principal ideal domain $R$.
\end{proof}
\end{proposition}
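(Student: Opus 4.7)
The plan is to separate the problem into two independent steps: first, construct the extension as a coherent quiver sheaf on $X \times \Spec(R)$, and second, verify that the resulting family is flat over $\Spec(R)$. For the first step I would observe that $\Spec(K) \hookrightarrow \Spec(R)$ is an open immersion identifying $\Spec(K)$ with the generic point of the DVR, so that $U = X \times \Spec(K)$ sits as an open subscheme inside $X \times \Spec(R)$. Since $Q$ contains no oriented cycles, Proposition \ref{P:P:extension_opensubset} applies directly and produces a coherent quiver sheaf $\E'$ on $X \times \Spec(R)$ extending $\E$, realized explicitly as a quiver subsheaf of the pushforward $i_* \E$ under the open immersion $i$. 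All of the real content of the construction, including the fact that both vertex sheaves and intertwining morphisms extend compatibly, is absorbed into this previous proposition.

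For flatness, the strategy is to exploit that $R$ is a principal ideal domain, so an $R$-module is flat if and only if it is torsion-free. Working locally on an affine open $\Spec(A) \subset X$, each $\E_i$ corresponds to an $A \otimes_k K$-module $M_i$, and its pushforward along $i$ corresponds to the same abelian group regarded as an $A \otimes_k R$-module via the canonical ring map $A \otimes_k R \to A \otimes_k K$. This map is injective (since $R \hookrightarrow K$ is and tensoring with $A$ over $k$ preserves injectivity), so $M_i$ carries no nontrivial $R$-torsion. By the explicit construction in Proposition \ref{P:P:extension_opensubset}, the extension $\E_i'$ is locally given by a finitely generated $A \otimes_k R$-submodule of $M_i$; as an $R$-submodule of a torsion-free $R$-module it is itself torsion-free, hence flat over the PID $R$. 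Since flatness is a stalkwise condition, this suffices to conclude that $\E'$ is a flat family of quiver sheaves over $\Spec(R)$.

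The heavy technical lifting is entirely concentrated in the preceding proposition, which is where the hypothesis that $Q$ has no oriented cycles is consumed in order to extend arrows compatibly along a topological sort of the vertex set. Once that extension is in hand, the present statement is essentially the observation that the construction as a subsheaf of $i_*\E$ automatically produces something torsion-free over $R$, and that over a DVR torsion-freeness is flatness. The only point requiring a moment of care is that the local description of $\E_i'$ inherited from Proposition \ref{P:P:extension_opensubset} really is visibly an $R$-submodule of the $R$-torsion-free module $M_i$ at each affine chart; no further obstacle is anticipated.
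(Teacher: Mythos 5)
Your proposal is correct and follows essentially the same route as the paper's proof: existence of the extension is delegated to Proposition \ref{P:P:extension_opensubset} applied to the open immersion $X\times\Spec(K)\subset X\times\Spec(R)$, and flatness is obtained locally by identifying the extension with a finitely generated submodule of the $R$--torsion--free module $M_i$ and invoking that torsion--free modules over a PID are flat. No further comment is needed.
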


\section{Variation of stability conditions}\label{S:variation}
In this section we study the question of the dependence of the moduli spaces on the stability parameter $\sigma$. We consider some given (possibly empty) set of relations $I$.\\

\subsection{Walls and chambers}~\label{SS:wallschambers}\\

Fix a natural number $p$, a topological type $\tau$, a dimension $d$ and a tuple $\Lul$ of ample line bundles on some projective scheme $X$. Furthermore, consider a fixed subset
$$\Sigma\subset \left(\R^{Q_0\times N}\right)_+$$
which is separated from the boundary of $\left(\R^{Q_0\times N}\right)_+$. That is, we require the existence of a closed polyhedral cone
$$\Sigma\subset \Sigma'\subset \R^{Q_0\times N}$$
which, except for the origin, is contained in $\left(\R^{Q_0\times N}\right)_+$. We recall that a chamber structure on $\Sigma'$, in the sense of \cite{grt} Definition 4.1, is a collection of real hypersurfaces
$$W=\left(W_i,~i\in J\right).$$
The chambers are the maximal connected subsets $C\subset \R^n$ such that for each wall $W_i$ either $C\subset W_i$ or $C\cap W_i=\emptyset$ holds. Note that, by definition, the chambers are connected components of real semialgebraic sets, and so are path--connected as well (since they even admit triangulations according to \cite{lojasiewicz}).\\

We consider the family $S$ of quiver subsheaves $\F\subset \E$ which satisfy the conditions
\begin{enumerate}
\item $\E$ is pure of dimension $d$, $(p,\Lul)$--regular, of topological type $\tau$, and satisfies the relations $I$
\item $\F\subset \E$ is saturated,
\item $\hmu^\sigma(\F)\geq \hmu^\sigma(\E)$ for some $\sigma\in \Sigma'$.
\end{enumerate}

We need to ensure that this family is bounded, and the proof of this fact employs the properties of the auxiliary cone $\Sigma'$.
\begin{lemma}
The family $S$ is bounded.

\begin{proof}
Since the topological type $\tau$ is fixed, the expression
$$\hmu^\sigma(\E)=\frac{\sum_{i\in Q_0}\sum_{j=1}^N \sigma_{ij}\alpha^{L_j}_{d-1}(\E_i)}{\sum_{i\in Q_0}\sum_{j=1}^N \sigma_{ij}\alpha^{L_j}_{d}(\E_i)}$$
does only depend on $\sigma\in \Sigma'$. By an argument as in the proof of the Grothendieck lemma \ref{QS:L:grothendieck}, this is bounded from below as a function in $\sigma$. Hence, $S$ is bounded by the Grothendieck lemma.
\end{proof}
\end{lemma}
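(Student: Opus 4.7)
The plan is to reduce the claim directly to the Grothendieck lemma for quiver sheaves (Lemma \ref{QS:L:grothendieck}). That lemma already gives boundedness of a family of saturated quiver subsheaves once one has a uniform lower bound $\mu$ on $\hmu^\sigma(\F)$ (with $\sigma$ allowed to vary in a closed cone). So the whole task reduces to producing such a uniform lower bound, which here should come from $\hmu^\sigma(\E)$ rather than from $\F$, because by hypothesis (3) we have $\hmu^\sigma(\F)\geq\hmu^\sigma(\E)$.

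The key observation is that since $\tau$ is fixed, the coefficients $\alpha^{L_j}_{d}(\E_i)$ and $\alpha^{L_j}_{d-1}(\E_i)$ are determined by $\tau$ alone. Consequently, the function
\[
\sigma\longmapsto \hmu^\sigma(\E)=\frac{\sum_{i,j}\sigma_{ij}\alpha^{L_j}_{d-1}(\tau_i)}{\sum_{i,j}\sigma_{ij}\alpha^{L_j}_{d}(\tau_i)}
\]
depends only on $\sigma$ (not on the particular $\E$ of topological type $\tau$), is homogeneous of degree zero, and is continuous wherever its denominator does not vanish.

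The next step is to use the standing assumption that $\Sigma\subset\Sigma'$ with $\Sigma'$ a closed polyhedral cone meeting the boundary of $(\R^{Q_0\times N})_+$ only at the origin. This separation assumption ensures that the denominator $\sum_{i,j}\sigma_{ij}\alpha^{L_j}_d(\tau_i)$ is strictly positive on $\Sigma'\setminus\{0\}$ (since for each $i$ in the support of $\tau$, the ranks $\alpha^{L_j}_d(\tau_i)$ are strictly positive and the tuple $\sigma_i=(\sigma_{ij})_j$ does not vanish identically by the definition of $(\R^{Q_0\times N})_+$). Because $\hmu^\sigma(\E)$ is homogeneous of degree zero, it descends to a continuous function on the projectivisation of $\Sigma'$, which is compact. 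Hence there exists a real constant $\mu$ with $\hmu^\sigma(\E)\geq\mu$ for every $\sigma\in\Sigma'$.

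Finally, given any $\F\in S$, pick a witnessing $\sigma\in\Sigma'$ from condition (3); then $\hmu^\sigma(\F)\geq\hmu^\sigma(\E)\geq\mu$. Thus every member of $S$ satisfies the hypotheses of Lemma \ref{QS:L:grothendieck} with the cone $\Sigma'$, the same $p$, $d$, $\tau$, and the constant $\mu$, and the boundedness of $S$ follows. The only subtlety is the step establishing compactness after scaling, where one must take care that the separation condition really does keep the denominator uniformly positive; otherwise the argument is a direct reduction.
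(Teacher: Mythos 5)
Your proposal is correct and follows the same route as the paper: the paper likewise observes that $\hmu^\sigma(\E)$ depends only on $\tau$ and $\sigma$, invokes scale-invariance and compactness of $\Sigma'$ modulo scaling (the "argument as in the proof of the Grothendieck lemma") to get a uniform lower bound $\mu$, and then applies Lemma \ref{QS:L:grothendieck}. You have simply spelled out the compactness/positivity-of-denominator details more explicitly than the paper does.
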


To each quiver sheaf $\F\in S$, and each integer $0\leq e \leq d-1$, we can associate a wall. It is defined by the equation

\begin{align}
W_{e,\F}: \sum_{i,k\in Q_0}\sum_{j,l=1}^N\sigma_{ij}\sigma_{kl}\left(\alpha^{L_j}_e(\F_i)\alpha^{L_l}_d(\E_k)-\alpha^{L_j}_e(\E_i)\alpha^{L_l}_d(\F_k)\right)=0. \label{E: quadrwall}
\end{align}
The chamber structure we consider is given by the equations which are non--trivial, i.e. which carve out neither the empty set nor the whole of $\Sigma'$.\\
Note that $\sigma$ is contained in the 
wall $W_{e,\F}$ if $p^\sigma_\E$ and $p^\sigma_\F$ have the same coefficient in degree $e$. Also note that the boundedness implies that there are finitely many distinct walls, as the coefficients of the equations are determined by the topological type of $\F$.

\begin{lemma}\label{V:L:chamber_inequalites}
Let $\E$ denote a purely $d$--dimensional and $(p,\Lul)$--regular quiver sheaf of topological type $\tau$ which satisfies the relations $I$, and consider a quiver subsheaf $\F\subset \E$ contained in the family $S$. Suppose that $\sigma$ and $\sigma'$ are stability conditions in $\Sigma'$ which are contained in the same chamber $C$. Then
\begin{center}
$p^\sigma_\F\leq p^\sigma_\E$ if and only if $p^{\sigma'}_\F\leq p^{\sigma'}_\E$.
\end{center}
A similar assertion holds for strict inequality.

\begin{proof}
This is a variant of the proof of Lemma 4.6 in \cite{grt}. For brevity, we concentrate on the case of non--strict inequality.\\
We may write
$$p^\sigma_\F-p^\sigma_\E=\sum_{s=0}^d \frac{c_s^\sigma}{s!}T^s\in \R[T].$$
Assuming that $p^\sigma_\F\leq p^\sigma_\E$ but $p^{\sigma'}_\F> p^{\sigma'}_\E$, we can choose a minimal index $e$ such that $c_t^{\sigma}=c^{\sigma'}_t=0$ for all $t>e$, so that
\begin{center}
$c_e^{\sigma}\leq 0$ and $c_e^{\sigma'}\geq 0$,
\end{center}
but not both vanish at the same time. The function
$$f(\sigma)=\frac{\sum_{i\in Q_0}\sum_{j=1}^N \sigma_{ij}\alpha^{L_j}_e(\F_i)}
{\sum_{k\in Q_0}\sum_{l=1}^N \sigma_{kl}\alpha^{L_l}_d(\F_k)}-\frac{\sum_{i\in Q_0}\sum_{j=1}^N \sigma_{ij}\alpha^{L_j}_e(\E_i)}{\sum_{k\in Q_0}\sum_{l=1}^N \sigma_{kl}\alpha^{L_l}_d(\E_k)}$$
is continuous in $\sigma$, has the wall $W_{e,\F}$ as vanishing locus, and satisfies $c_e^\sigma=f(\sigma)$. Hence, either $\sigma$ or $\sigma'$ is not an element of $W_{e,\F}$, so that $C$ is 
not contained in the wall. But if neither $c_e^\sigma$ nor $c_e^{\sigma'}$ vanishes, they have differing signs so that $f(\sigma'')=0$ for some $\sigma''$ on a connecting path 
from $\sigma$ to $\sigma'$ inside $C$. In any case, we deduce that $C\cap W_{e,\F}\neq \emptyset$, which is a contradiction.
\end{proof}
\end{lemma}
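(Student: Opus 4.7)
The plan is to express the difference $p^\sigma_\F - p^\sigma_\E$ as a polynomial in $T$ whose coefficients depend continuously on the parameter $\sigma$, and to exploit the path--connectedness of the chamber $C$ together with the intermediate value theorem.

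First I would write
$$p^\sigma_\F - p^\sigma_\E = \sum_{s=0}^d \frac{c_s^\sigma}{s!}T^s,$$
observing that $c_d^\sigma=0$ for all $\sigma$, since both reduced multi--Hilbert polynomials have leading coefficient $1/d!$. A direct inspection shows that each remaining $c_s^\sigma$ is a rational function of $\sigma \in \Sigma'$ whose numerator is exactly the left--hand side of the equation \eqref{E: quadrwall} defining $W_{s,\F}$ (up to a positive scalar), and whose denominator is a product of multi--ranks that stays strictly positive on $\Sigma'$ because $\Sigma'$ is separated from the boundary. Consequently the vanishing locus of $c_s^{(\cdot)}$ in $\Sigma'$ is exactly $W_{s,\F}\cap \Sigma'$.

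Next I would translate the lexicographic comparison into a statement about these coefficients: $p^\sigma_\F \leq p^\sigma_\E$ holds iff either all $c_s^\sigma$ vanish, or the largest index $e(\sigma)$ with $c_{e(\sigma)}^\sigma \neq 0$ satisfies $c_{e(\sigma)}^\sigma<0$. Strict inequality corresponds to the latter alternative. I would then argue by contradiction: assume $p^\sigma_\F \leq p^\sigma_\E$ but $p^{\sigma'}_\F > p^{\sigma'}_\E$, and let $e$ be the maximum of those indices $s$ for which $c_s^\sigma \neq 0$ or $c_s^{\sigma'} \neq 0$. Then $c_e^\sigma \leq 0$ while $c_e^{\sigma'}\geq 0$, and at least one of these is a strict inequality. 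Since chambers are connected components of real semi--algebraic sets, and hence path--connected, I pick a continuous path from $\sigma$ to $\sigma'$ inside $C$. Continuity of the function $\tau \mapsto c_e^\tau$ along the path together with the intermediate value theorem yields a point $\sigma''\in C$ with $c_e^{\sigma''}=0$, i.e.\ $\sigma''\in W_{e,\F}$. By the defining property of a chamber, $C$ must then be entirely contained in $W_{e,\F}$, which forces $c_e^\sigma=c_e^{\sigma'}=0$ and contradicts that at least one of them was strictly nonzero. The strict variant is handled by the identical argument, replacing $\leq$ by $<$ and $>$ by $\geq$ in the obvious way.

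The main subtlety I expect is bookkeeping around the lex comparison when some top coefficients vanish, and making sure the contradiction argument handles both the case when $e$ is witnessed only by $\sigma$ and only by $\sigma'$; the case where $W_{e,\F}$ is ``trivial'' in the sense of \cite{grt} (either everywhere zero or nowhere zero on $\Sigma'$) is harmless, since in that situation the sign of $c_e^{(\cdot)}$ is automatically constant on $\Sigma'$ and no contradiction arises at step $e$.
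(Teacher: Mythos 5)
Your proposal is correct and follows essentially the same route as the paper's proof: the same coefficient-wise decomposition $p^\sigma_\F-p^\sigma_\E=\sum_s \frac{c_s^\sigma}{s!}T^s$, the identification of the vanishing locus of $c_e^{(\cdot)}$ with $W_{e,\F}$ via the positivity of the rank denominators on $\Sigma'$, and the contradiction obtained from path--connectedness of $C$ and the intermediate value theorem against the defining property of a chamber. Your extra bookkeeping around trivial walls and the case where only one of $c_e^\sigma, c_e^{\sigma'}$ vanishes is consistent with (and slightly more explicit than) the paper's ``in any case'' argument.
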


\begin{proposition}\label{V:P:chambers}
Consider $\Sigma,\Sigma',p,d,\tau$ and $\Lul$ as explained above. Then, the finite chamber 
structure $W=\left(W_{e,\F}\right)$ (cf. \eqref{E: quadrwall}) encodes the equivalence of stability conditions. More precisely, for any stability conditions $\sigma$ and $\sigma'$ inside $\Sigma'$ which belong to the same chamber $C$, and for any purely $d$--dimensional and $(p,\Lul)$--regular quiver sheaves $\E$ and $\E'$ of topological type $\tau$ which satisfy the relations $I$, the following assertions hold.
\begin{enumerate}
\item $\E$ is $\sigma$--semistable if and only if $\E$ is $\sigma'$--semistable, and the same assertion holds for stability.
\item Let $\E$ and $\E'$ be semistable with respect to both $\sigma$ and $\sigma'$. Then $\E$ is $S$--equivalent to $\E'$ with respect to $\sigma$ if and only if it is with respect to $\sigma'$.
\end{enumerate}

\begin{proof}
The proof is the same as the corresponding proof of Proposition 4.2 in \cite{grt}, where we use our Lemma \ref{V:L:chamber_inequalites}.
\end{proof}
\end{proposition}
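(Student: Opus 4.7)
My plan is to reduce both assertions to Lemma~\ref{V:L:chamber_inequalites}, which transfers inequalities between reduced multi-Hilbert polynomials from $\sigma$ to $\sigma'$ provided the relevant quiver subsheaf lies in the family $S$.

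For assertion (1), assume $\E$ is $\sigma$-semistable; by the earlier reduction to saturated subsheaves it suffices to verify $p^{\sigma'}_\F \leq p^{\sigma'}_\E$ for every nontrivial saturated quiver subsheaf $\F \subset \E$. I split into two cases. If there exists $\sigma'' \in \Sigma'$ with $\hmu^{\sigma''}(\F) \geq \hmu^{\sigma''}(\E)$, then $\F \in S$, and Lemma~\ref{V:L:chamber_inequalites} immediately transports the inequality from $\sigma$ to $\sigma'$. Otherwise $\hmu^{\sigma'}(\F) < \hmu^{\sigma'}(\E)$; since both polynomials share the leading coefficient $1/d!$ by purity, the strict slope inequality forces $p^{\sigma'}_\F < p^{\sigma'}_\E$ in lex order on the second coefficient. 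The stable case is completely parallel, using the strict form of the chamber lemma.

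For assertion (2), I aim to show that a $\sigma$-Jordan--H\"older filtration $0 = \E^0 \subsetneq \dots \subsetneq \E^l = \E$ is automatically one for $\sigma'$, so that $\gr^\sigma(\E) = \gr^{\sigma'}(\E)$ and $S$-equivalence is preserved. By Lemma~\ref{LP:L:destab_saturated}, each $\E^k$ is saturated in $\E$ with $\hmu^\sigma(\E^k) = \hmu^\sigma(\E)$, so $\E^k \in S$; applying Lemma~\ref{V:L:chamber_inequalites} in both directions to $p^\sigma_{\E^k} = p^\sigma_\E$ yields the equality $p^{\sigma'}_{\E^k} = p^{\sigma'}_\E$. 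A short mediant computation for the exact sequence $0 \to \E^{k-1} \to \E^k \to \F_k \to 0$ then gives $p^{\sigma'}_{\F_k} = p^{\sigma'}_\E$ for each subquotient $\F_k = \E^k/\E^{k-1}$. To verify $\sigma'$-stability of $\F_k$, I pick a saturated proper subsheaf $\G' \subsetneq \F_k$, lift it to $\widetilde{\G'} \subset \E^k$ which is then saturated in $\E$, and repeat the dichotomy of part (1): either $\widetilde{\G'} \in S$, in which case the chamber lemma plus the mediant formula propagate a strict inequality back to $p^{\sigma'}_{\G'} < p^{\sigma'}_{\F_k}$, or the strict slope drop holds uniformly on $\Sigma'$ and directly forces the same conclusion.

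The main technical hurdle is this last step, since the family $S$ is defined only relative to $\E$ and its saturated subsheaves. The workaround of pulling subsheaves of $\F_k$ back to saturated subsheaves of the ambient $\E$, combined with the standard mediant formula for short exact sequences, is what brings the entire argument within reach of Lemma~\ref{V:L:chamber_inequalites}; from there everything is a direct book-keeping application of the chamber lemma.
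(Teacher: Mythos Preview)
Your argument is correct and follows the same route the paper indicates by deferring to \cite{grt}, Proposition~4.2: both parts reduce to transporting (in)equalities of reduced multi-Hilbert polynomials across the chamber via Lemma~\ref{V:L:chamber_inequalites}, with the dichotomy ``$\F\in S$ versus slope strictly drops everywhere on $\Sigma'$'' handling subsheaves outside the bounded family. Your write-up simply unpacks in detail what the paper leaves implicit; in particular the lift $\widetilde{\G'}\subset\E$ and the check that it is saturated in $\E$ (via purity of $\E^k/\widetilde{\G'}$ and $\E/\E^k$) is exactly the step needed to bring subquotient stability within reach of the chamber lemma.
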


\begin{corollary}\label{V:C:chambers_induce}
The chamber structure on $\Sigma'$ as in Proposition \ref{V:P:chambers} induces a finite partition of $\Sigma$ such that the behavior of stability and $S$--equivalence remains unchanged as $\sigma$ varies within any fixed block of the partition. 
\end{corollary}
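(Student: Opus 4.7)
The plan is to define the induced partition of $\Sigma$ simply by restricting the chamber decomposition of $\Sigma'$ provided by Proposition \ref{V:P:chambers}, and to deduce both finiteness and invariance directly from what has already been established.

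First, I would define the blocks of the partition as the nonempty intersections $\Sigma \cap C$, where $C$ ranges over the chambers of $\Sigma'$ carved out by the family of walls $\{W_{e,\F}\}$ with $0 \leq e \leq d-1$ and $\F \in S$. Since the chambers partition $\Sigma'$, these intersections partition $\Sigma$.

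Next, I would verify finiteness. By the preceding lemma, the family $S$ of destabilizing candidates is bounded, so the topological types of the quiver subsheaves $\F \in S$ range over a finite set. Consequently, the coefficients $\alpha_e^{L_j}(\F_i)$ and $\alpha_d^{L_l}(\F_k)$ appearing in equation \eqref{E: quadrwall} take only finitely many values, so only finitely many distinct hypersurfaces $W_{e,\F}$ arise. A finite arrangement of real hypersurfaces in the polyhedral cone $\Sigma'$ decomposes it into finitely many chambers, hence $\Sigma$ into finitely many blocks.

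Finally, for the constancy statement I would invoke Proposition \ref{V:P:chambers} directly: two parameters $\sigma, \sigma'$ in the same block lie, by construction, in a common chamber $C$ of $\Sigma'$, so the proposition tells us that a purely $d$--dimensional, $(p,\Lul)$--regular quiver sheaf $\E$ of topological type $\tau$ satisfying $I$ is $\sigma$--semistable if and only if it is $\sigma'$--semistable (and likewise for stability), and that the $S$--equivalence relations agree. The main obstacle, to the extent that there is one, is merely to note that the standing boundedness assumption on $\Sigma$ lets us choose a uniform $p$ so that every $\sigma$--semistable quiver sheaf (for any $\sigma \in \Sigma$) is $(p,\Lul)$--regular, which is exactly the hypothesis needed to apply Proposition \ref{V:P:chambers} to the entire family of $\sigma$--semistable quiver sheaves as $\sigma$ varies in $\Sigma$.
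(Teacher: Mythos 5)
Your proposal is correct and matches the paper's intent: the corollary is stated without a separate proof precisely because it follows by restricting the chamber decomposition of $\Sigma'$ to $\Sigma$, with finiteness of the walls already noted (from the boundedness of the family $S$) just after equation \eqref{E: quadrwall}, and constancy on each block being a direct application of Proposition \ref{V:P:chambers}. Your closing remark about choosing $p$ uniformly is a reasonable observation but is already part of the standing setup of Subsection \ref{SS:wallschambers}, where $p$ is fixed at the outset.
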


Proposition 4.2 in \cite{grt} is a special case of our Proposition \ref{V:P:chambers} in the case of the trivial quiver $Q=\bullet$ and torsion--free sheaves on an integral scheme. In contrast to their result, 
we can not guarantee the existence of rational points in the chambers, though in the special case of symmetric stability conditions, discussed in Section \ref{S:symmetric}, we can. For our variation results however, we need to assume the existence of rational points.

\begin{assumption}\label{V:A:allchambersrationalpt}
Each chamber on $\Sigma'$ contains a rational point. In that case, we can replace any stability condition $\sigma$ by a rational one without loss of generality.
\end{assumption}

\begin{remark}
The relations we used in our definition of $S$ select a subfamily from the corresponding family $S'$ wich we get without imposing the relations. The chamber structure without relations is thus a refinement of the chamber structure with relations.\\
As in \cite{grt}, we think of $\Sigma'$ as a bounded set of stability parameters, so that the integer $p$ could then be chosen so that all semistable sheaves with respect to some $\sigma\in \Sigma'$ are $(p,\Lul)$--regular.
\end{remark}

The cone $\Sigma'$ is just an auxiliary object. Its purpose is to ensure the boundedness of the family $S$ (compare with the proof of the Grothendieck Lemma \ref{QS:L:grothendieck}), so that there are only finitely many walls. A priori it seems possible that for the whole set $\left(\R^{Q_0\times N}\right)_+$ this is no longer true, i.e. that the walls accumulate at the boundary.\\

\subsection{Variation of moduli spaces}\label{SS:variation_moduli}~\\

We now want to describe the behavior of the moduli spaces when changing the stability condition. For this to work, we need the 
moduli spaces $M^{\sigma-sst}_\tau(X,Q,I)$ to be projective. Hence, we assume that $Q$ contains no oriented cycles, which also implies 
that $Q(Q')$ does not contain oriented cycles (Lemma \ref{EF:L:twisted_cycles}). Furthermore, we assume that $X$ is smooth, and that 
Assumption \ref{V:A:allchambersrationalpt} holds, i.e., that each chamber contains a rational point.\\

Let $\Sigma \subset \R_{>0}^{Q_0 \times N}$ be a finite subset of rational, positive, and bounded stability conditions under a given set of relations $I$, and choose $m\gg n\gg p\gg 0$ so that 
the conditions (1)--(4) in the beginning of Section \ref{S:QSconstruction} are satisfied for all $\sigma \in \Sigma$. Let $$R_{I'}=R_d(Q(Q'), H, I'(I))$$
be the representation variety of $Q(Q')$ with labeling $H$ and relations $I'(I)$, where $d$ is the dimension vector determined by $\tau,m$ and $n$.
This embeds as a closed subvariety into the representation variety $R=R_d(Q(Q'), H)$. 
Moreover, for $\sigma \in \Sigma$ let $\theta=\theta(d, \sigma)$ be the corresponding parameter defined in Section \ref{S:stabilityembedding}, and let
\begin{align*}
 B^{[\sigma-sst]} \subseteq R_{I'} 
\end{align*}
denote the locally closed subset of $R_{I'}$ that parametrizes $\theta$--semistable points in $R_{I'}$ that lie in the image of the 
embedding functor $\mbox{Hom}(T, *)$ from Section \ref{SS:embeddingfunctor}, i.e., $\theta$--semistable points that come from 
$\sigma$--semistable quiver sheaves of topological type $\tau$ which satisfy the relations $I$. Then, $B^{[\sigma-sst]}$ is also a locally closed, and 
$G$--invariant, subset of $R$. Define 
 \begin{align*}
  Z=\overline{\bigcup_{\sigma \in \Sigma} B^{[\sigma-sst]}}.
 \end{align*}
 
 \begin{theorem}\label{V:T:masterspace}
  Let $Z$ be as above. Then, 
  \begin{align*}
   Z^{\theta-sst}=R^{\theta-sst} \cap Z=B^{[\sigma-sst]},
  \end{align*}
and the moduli space $M_\tau^{\sigma-sst}(X,Q,I)$ is given by the GIT quotient 
\begin{align*}
 M_\tau^{\sigma-sst}(X,Q,I)=Z^{\theta-sst}//G.
\end{align*}
\end{theorem}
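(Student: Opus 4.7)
The plan is to prove the two displayed identities of the theorem in sequence. The first identity $Z^{\theta-sst} = R^{\theta-sst} \cap Z = B^{[\sigma-sst]}$ is the heart of the theorem; the second (the GIT quotient statement) will then follow by an application of Lemma \ref{C:L:goodquotient_opensubset}, essentially as in Proposition \ref{C:P:moduli_as_quotient}.

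For the easy inclusion $B^{[\sigma-sst]} \subseteq Z \cap R^{\theta-sst}$, the containment in $Z$ is immediate from the definition of $Z$, and the containment in $R^{\theta-sst}$ is built into the construction of $B^{[\sigma-sst]}$ via Theorem \ref{CSS:T:semistability}, which translates $\sigma$-semistability of a quiver sheaf of type $\tau$ satisfying $I$ into $\theta(\sigma)$-semistability of its image under the embedding functor $\mbox{Hom}(T,\cdot)$.

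For the reverse inclusion $Z \cap R^{\theta-sst} \subseteq B^{[\sigma-sst]}$, I would first observe that each $B^{[\sigma'-sst]}$ is contained in the locally closed subscheme $B = R^{[\reg]}_\tau \subseteq R$ of Proposition \ref{EF:P:imageofembedding}, so $Z \subseteq \overline{B}$. The key point is then that $\theta$-semistability confines points of $Z$ to $B$ itself: once a point $x \in Z \cap R^{\theta-sst}$ has been shown to lie in $B$, it equals $\mbox{Hom}(T,\E)$ for some $(n,\Lul)$-regular quiver sheaf $\E$ of type $\tau$ satisfying the relations $I$, and Theorem \ref{CSS:T:semistability} then identifies $\theta$-semistability of $x$ with $\sigma$-semistability of $\E$ (purity and $(p,\Lul)$-regularity of $\E$ being automatic from the choices $m \gg n \gg p$), placing $x$ in $B^{[\sigma-sst]}$. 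Granted the first identity, the GIT quotient $q: R^{\theta-sst} \to R^{\theta-sst}//G$ restricts to a good quotient of the closed $G$-invariant subscheme $Z^{\theta-sst} = Z \cap R^{\theta-sst} = B^{[\sigma-sst]}$, and by Proposition \ref{C:P:moduli_as_quotient} together with Theorem \ref{T: existmoduli} the resulting quotient scheme is the moduli space $M_\tau^{\sigma-sst}(X,Q,I)$.

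The main obstacle is the inclusion $Z \cap R^{\theta-sst} \subseteq B$: a priori, the boundary $\overline{B} \setminus B$ could contain $\theta$-semistable points arising as limits of modules $\mbox{Hom}(T,\E_t)$ where some $h^0$ collapses (so regularity fails) or purity is lost. To rule these out I would combine the closed-orbit characterization of GIT semistability with the $S$-equivalence compatibility of Theorem \ref{CSS:T:S-equivalence}: for $x \in Z \cap R^{\theta-sst}$, the unique closed orbit in $\overline{G \cdot x}$ lies in $Z$ by $G$-invariance and closedness of $Z$, and a careful analysis of which polystable modules can occur as limits of $\mbox{Hom}(T, \E)$ — using that on every $B^{[\sigma'-sst]}$ the associated graded is itself of the form $\mbox{Hom}(T, \gr(\E))$ — should force the limit to land in the embedding image. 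This is precisely the condition required to invoke Lemma \ref{C:L:goodquotient_opensubset} for the GIT quotient statement.
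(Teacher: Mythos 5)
Your reduction of the theorem to the single inclusion $Z \cap R^{\theta-\sst} \subseteq B^{[\sigma-\sst]}$ is correct, and you have correctly identified it as the crux; but you do not actually prove it. The step ``a careful analysis of which polystable modules can occur as limits of $\Hom(T,\E)$ \dots should force the limit to land in the embedding image'' is precisely the hard point, and no argument is given for it. A priori a $\theta$--semistable point of $\overline{B}\setminus B$ could correspond to a module whose closed orbit is a polystable module not of the form $\Hom(T,\gr(\E))$ for any semistable quiver sheaf $\E$ (regularity or purity may degenerate in the limit), and nothing in Theorems \ref{CSS:T:semistability} or \ref{CSS:T:S-equivalence} — which only speak about modules already known to come from quiver sheaves — rules this out. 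So as written the proposal has a genuine gap at its central step.

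The paper closes this gap by a different mechanism, namely projectivity, and never attempts the direct limit analysis. The actual argument is: (i) $B^{[\sigma-\sst]}$ is a dense open subset of $Z^{\theta-\sst}$ (by the definition of $Z$ as a closure, as in \cite{grt} Theorem 10.1); (ii) since $Q$, hence $Q(Q')$, has no oriented cycles, $R^{\theta-\sst}//G$ is projective, so the closed $G$--invariant subscheme $Z$ gives a projective quotient $Z^{\theta-\sst}//G$; (iii) $M^{\sigma-\sst}_\tau(X,Q,I)=B^{[\sigma-\sst]}//G$ is \emph{also} projective — this is the separately proved projectivity theorem, resting on the extension result and Langton's theorem, and requiring rationality of $\sigma$ — so its image in $Z^{\theta-\sst}//G$ is closed as well as dense, hence everything; (iv) the saturation property $\pi^{-1}\bigl(\pi\bigl(B^{[\sigma-\sst]}\bigr)\bigr)=B^{[\sigma-\sst]}$ from \cite{ack} Lemma 6.2 (valid because closed orbits of points of $B^{[\sigma-\sst]}$ stay in $B^{[\sigma-\sst]}$ by Theorem \ref{CSS:T:S-equivalence}) then pulls the surjectivity on quotients back to the equality $B^{[\sigma-\sst]}=Z^{\theta-\sst}$. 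Your proposal uses ingredient (iv) but omits (i)--(iii); in particular the projectivity of the moduli space of semistable quiver sheaves, which is the decisive input, appears nowhere in your argument. To repair the proof you should replace the unproved limit analysis with this projectivity argument.
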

 
 \begin{proof}
First of all, by the same argument as in the proof of Theorem 10.1 of \cite{grt}, it follows that $B^{[\sigma-sst]}$ is a dense open subset of $Z^{\theta-sst}$. 
  
Now, by the assumption that $Q$ has no oriented cycles, the same holds for the twisted quiver $Q(Q')$, and hence the quotient 
$R^{\sigma-sst}//G$ is projective. Since $Z$ is a closed and $G$--invariant subscheme, the GIT quotient $Z^{\theta-sst}//G$ embeds as a closed 
 subscheme of $R^{\theta-sst}//G$ and is thus also projective. Since $M^{\sigma-sst}_\tau(X,Q,I)=B^{[\sigma-sst]}//G$ and $Z^{\theta-sst}//G$ are 
 both projective, the image of $M^{\sigma-sst}_\tau(X,Q,I)=B^{[\sigma-sst]}//G$ in $Z^{\theta-sst}//G$ by the morphism induced by the 
 inclusion $B^{[\sigma-sst]} \subseteq Z^{\theta-sst}$ is closed. Being also dense, this image has to coincide with $Z^{\theta-sst}//G$.
 
 Let $\pi: Z^{\theta-\sst} \to Z^{\theta-ss}//G$ denote the quotient morphism. By the comparison of $S$--equivalence (Theorem \ref{CSS:T:S-equivalence}), for each $G$--orbit $\mathcal{O}$ in 
 $B^{[\sigma-sst]}$, the closed orbit $\mathcal{O}'$ in the orbit closure $\overline{\mathcal{O}}$ lies in $B^{[\sigma-sst]}$. Hence, we can 
 apply \cite{ack}, Lemma 6.2, to conclude that $\pi$ restricts to a good quotient $B^{[\sigma-sst]} \to B^{[\sigma-sst]}//G$, and that 
 $\pi^{-1}(\pi(B^{[\sigma-sst]}))=B^{[\sigma-sst]}$. Together with the above established identity $\pi(B^{[\sigma-sst]})=Z^{\theta-sst}$, this implies 
 that $B^{[\sigma-sst]}=Z^{\theta-sst}$.
 \end{proof}

We obtain a Mumford--Thaddeus principle for $\sigma$--semistable quiver sheaves, with essentially the same proof as in \cite{grt}. Note that the Thaddeus flips occurring here are to be interpreted in the context of VGIT of $Z$.

 \begin{corollary}
 Assume that Assumption \ref{V:A:allchambersrationalpt} holds. 
 Let $\sigma$ and $\sigma'$ be bounded stability parameters under some given set of relations $I$. Then the moduli spaces
\begin{center}
$M^{\sigma-sst}_\tau(X,Q,I)$ and $M^{\sigma-sst}_\tau(X,Q,I)$
\end{center} 
  are related by a finite sequence of Thaddeus flips induced by VGIT of the variety $Z$.
\end{corollary}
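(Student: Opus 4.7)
The strategy is to reduce the comparison of $M^{\sigma-\sst}_\tau(X,Q,I)$ and $M^{\sigma'-\sst}_\tau(X,Q,I)$ to the VGIT of a single projective $G$-variety, namely the master space $Z$ from Theorem \ref{V:T:masterspace}.

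First, I would choose an auxiliary closed polyhedral cone $\Sigma'\subset \R^{Q_0\times N}$ containing both $\sigma$ and $\sigma'$ in its interior and separated from the boundary of $(\R^{Q_0\times N})_+$, and take a path $\gamma\colon [0,1]\to \Sigma'$ from $\sigma$ to $\sigma'$. By Proposition \ref{V:P:chambers} the finite chamber structure $W=(W_{e,\F})$ partitions $\Sigma'$ into finitely many chambers, so $\gamma$ crosses only finitely many walls. Using Assumption \ref{V:A:allchambersrationalpt} I select a rational representative in each chamber met by $\gamma$ and collect these, together with $\sigma$ and $\sigma'$, into a finite set $\Sigma\subset \R^{Q_0\times N}_{>0}$ of bounded, positive, rational stability conditions. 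By Proposition \ref{V:P:chambers} the $\sigma$--semistability class and $S$--equivalence class only depend on the chamber, so I may freely replace $\sigma$ and $\sigma'$ with these rational representatives without changing the moduli spaces.

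Next, I would apply Theorem \ref{V:T:masterspace} to this finite set $\Sigma$: choose integers $m\gg n\gg p\gg 0$ valid for all parameters in $\Sigma$ simultaneously, and form the master scheme $Z\subset R_{I'}$. The key content of that theorem is that for each $\sigma\in\Sigma$ the corresponding moduli space appears as the GIT quotient
\begin{equation*}
M^{\sigma-\sst}_\tau(X,Q,I)\cong Z^{\theta(\sigma)-\sst}/\!/G,
\end{equation*}
with $Z$ a fixed projective $G$--scheme, independent of $\sigma\in\Sigma$, and only the linearization $\theta(\sigma)$ varying. This is the configuration in which Thaddeus's VGIT machinery \cite{thaddeus} directly applies.

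Finally, I would invoke VGIT for the reductive group $G$ acting on $Z$: as the linearization moves along the polygonal path in the $G$--ample cone obtained by concatenating the segments $\theta(\sigma_i)\to \theta(\sigma_{i+1})$ for consecutive parameters in $\Sigma$, it crosses only finitely many GIT walls, and each wall crossing realises a Thaddeus flip between the associated quotients. Chaining these finitely many flips connects $M^{\sigma-\sst}_\tau(X,Q,I)$ to $M^{\sigma'-\sst}_\tau(X,Q,I)$. The main obstacle I anticipate is the careful bookkeeping between the two wall structures: ensuring that the quiver--sheaf walls of Subsection \ref{SS:wallschambers} control the VGIT walls of $Z$ in a compatible way, so that crossing one quiver--sheaf wall corresponds to a finite sequence of GIT flips of $Z$ and no extra walls are introduced by the subvariety structure $Z\subset R$. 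This is handled by choosing the finite set $\Sigma$ so densely that each segment $[\theta(\sigma_i),\theta(\sigma_{i+1})]$ lies in the closure of a single GIT chamber for $(Z,G)$, reducing the comparison to a single VGIT wall crossing per segment.
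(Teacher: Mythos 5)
Your proposal is correct and follows essentially the same route as the paper: replace $\sigma$ and $\sigma'$ by rational representatives of their chambers using Assumption \ref{V:A:allchambersrationalpt} and Proposition \ref{V:P:chambers}, feed a finite set $\Sigma$ of rational, positive, bounded parameters into Theorem \ref{V:T:masterspace} so that both moduli spaces become GIT quotients of the single projective $G$--variety $Z$ with respect to the linearizations $\theta(\sigma)$ and $\theta(\sigma')$, and then invoke standard VGIT to obtain finitely many Thaddeus flips. The only superfluous step is your insertion of rational representatives for every intermediate chamber along a connecting path: Theorem \ref{V:T:masterspace} applied to $\Sigma=\{\sigma,\sigma'\}$ already suffices (the finiteness of GIT walls for the character variation on the fixed variety $Z$ is general VGIT and needs no synchronization with the quiver--sheaf walls), and the boundedness of those intermediate parameters, which you would need in order to include them in $\Sigma$, is not automatic from the stated hypotheses.
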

~

\section{Symmetric stability conditions}\label{S:symmetric}

The results about construction and variation of the moduli spaces, as outlined in Sections \ref{S:QSconstruction} and \ref{S:variation}, demand that the family of semistable sheaves under consideration is bounded. In general, we can not guarantee that this is true, though in the case that $\sigma_{ij}$ does not depend on the vertex $i\in Q_0$, we can build upon the boundedness results of \cite{grt}. Furthermore, the walls encoding the change of stability are rational hyperplanes in this case, which also solves the problem of finding rational points in the chambers.\\

Recall that
$$\sigma_i=(\sigma_{ij})_{j\in\{1,\ldots,N\}}$$
is the restriction of $\sigma$ to the vertex $i\in Q_0$.
For the remainder of this section we make the following assumption, which could also be understood as $\sigma$ being symmetric under permutations of $Q_0$.
\begin{assumption} \label{A: equalrest}
The tuples $\sigma_i \in \R_{\geq 0}^N \setminus 0$, for $i \in Q_0$, all coincide. 
\end{assumption}
Under this assumption on $\sigma$, we let $\hat{\sigma} \in \R^N$ denote this common restriction of $\sigma$. As a technical tool, we consider $Q_0$--tuples
$$(\E_i)_{i \in Q_0}$$
of coherent sheaves on $X$, following an idea of Schmitt (cf. \cite{schmitt-tuple}). Tupels can be identified with quiver sheaves 
for the quiver with the set of vertices $Q_0$ and with no arrows. In particular, if $\mathcal{E}$ is a quiver sheaf, we obtain a $Q_0$--tuple $(\E_i)_{i \in Q_0}$ 
of coherent sheaves by forgetting the morphisms $\E_\alpha: \E_i \to \E_j$ given by the arrows $\alpha: i \to j$ of $Q$. 
By a subtuple of a $Q_0$--tuple $(\E_i)_{i \in Q_0}$ we mean a $Q_0$--tuple $(\F_i)_{i \in Q_0}$, where $\F_i$ is a coherent subsheaf of $\E_i$ for each $i$. 
If $\sigma\in (\R^{Q_0 \times N})_+$, we say that a $Q_0$--tuple 
$(\E_i)_{i \in Q_0}$ is $\sigma$--semistable if it is pure of some dimension and the inequality
\begin{align*}
p^\sigma_\F \leq p^\sigma_\E
\end{align*}
holds for all nontrivial subtuples $(\F_i)_{i \in Q_0}$ of $(\E_i)_{i \in Q_0}$.\\


Let $E$ denote any coherent sheaf on $X$, and pick any vertex $i_0\in Q_0$. Then we define the quiver sheaf $\delta_{i_0}(E)$ by
$$\delta_{i_0}(E)_i=\left\{\begin{array}{ll}
E & i=i_0,\\
0 & i\neq i_0
\end{array}\right.,\delta_{i_0}(E)_\alpha=0.$$

It is very easy to verify that
$$p^\sigma_{\delta_{i_0}(E)}=p^{\sigma_{i_0}}_E.$$

\begin{lemma}\label{L:tuplesst_sheafsst}
Let $\E$ denote a $\sigma$--semistable tuple of sheaves on $X$. Then
$$p^\sigma_\E=p^{\sigma_i}_{\E_i}$$
for all vertices $i\in Q_0$. Moreover, all $\E_i$ are $\sigma_i$--semistable.

\begin{proof}
Clearly, $\delta_{i_0}(\E_{i_0})$ is both a subobject and a quotient of $\E$ in the category of tuples. Thus the equality of the Hilbert polynomials follows.\\
If $U\subset \E_i$ is any subsheaf, then $\delta_i(U)$ is also a subobject of the tuple $\E$. Hence
$$p^{\sigma_i}_U=p^\sigma_{\delta_i(U)}\leq p^\sigma_\E=p^{\sigma_i}_{\E_i}.$$
\end{proof}
\end{lemma}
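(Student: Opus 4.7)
The plan is to test the $\sigma$--semistability of $\E$ against the ``indicator'' tuples $\delta_{i_0}(F)$, which place a given sheaf $F$ at a single vertex $i_0$ and zero at every other vertex. Directly from the definition of the multi-Hilbert polynomial, all contributions from vertices $i \ne i_0$ vanish, so
$$p^{\sigma}_{\delta_{i_0}(F)} = p^{\sigma_{i_0}}_{F}.$$
This identity, already noted just before the lemma, is the bridge between statements about the whole tuple and statements about its individual sheaves.

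For the first claim, the key structural remark is that the tuple category has no arrows, so $\E$ decomposes as an internal direct sum $\E \cong \bigoplus_{i \in Q_0} \delta_i(\E_i)$. In particular, each $\delta_{i_0}(\E_{i_0})$ sits inside $\E$ both as a subtuple (inclusion of the $i_0$-th summand) and as a quotient (projection onto the $i_0$-th summand). Applying the subtuple formulation of $\sigma$--semistability to the inclusion gives
$$p^{\sigma_{i_0}}_{\E_{i_0}} \;=\; p^{\sigma}_{\delta_{i_0}(\E_{i_0})} \;\le\; p^{\sigma}_{\E}.$$
The reverse inequality comes from treating $\delta_{i_0}(\E_{i_0})$ as a quotient and appealing to the standard equivalent formulation of semistability in terms of quotients, available through Rudakov's framework (cf.\ Remark \ref{QS:R:stabcond_rudakov}). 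As a calculation-free alternative, one may apply the subtuple inequality to the complementary summand $\bigoplus_{i \ne i_0} \delta_i(\E_i)$ as well, and then use the additivity of both the multi-Hilbert polynomial and the multi-rank on direct sums to force both inequalities to be equalities. Either way, equality yields $p^{\sigma}_{\E} = p^{\sigma_i}_{\E_i}$ for every $i$.

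For the second claim, any coherent subsheaf $U \subseteq \E_i$ promotes to a subtuple $\delta_i(U) \subseteq \E$, so $\sigma$--semistability of $\E$ together with the first part gives
$$p^{\sigma_i}_{U} \;=\; p^{\sigma}_{\delta_i(U)} \;\le\; p^{\sigma}_{\E} \;=\; p^{\sigma_i}_{\E_i},$$
which is exactly the inequality defining $\sigma_i$--semistability of $\E_i$. Purity of each $\E_i$ is automatic, since purity of a tuple is defined vertex-wise and is built into the hypothesis that $\E$ is $\sigma$--semistable.

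The argument is essentially bookkeeping; the only step that deserves attention is the upgrade from a subtuple inequality to an equality in the first claim, for which exploiting the direct-sum decomposition of $\E$ in the arrow-free tuple category appears to be the cleanest route.
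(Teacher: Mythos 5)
Your proposal is correct and follows essentially the same route as the paper's proof: realizing $\delta_{i_0}(\E_{i_0})$ as both a subtuple and a quotient tuple of $\E$ to force $p^{\sigma}_{\E}=p^{\sigma_{i_0}}_{\E_{i_0}}$, and then promoting a subsheaf $U\subset\E_i$ to the subtuple $\delta_i(U)$ to deduce $\sigma_i$--semistability of $\E_i$. The extra remarks on the direct-sum decomposition and additivity are just a more explicit justification of the same step.
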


\begin{remark}\label{R:positivecoeff}
The condition
$$p^\sigma_\E=p^{\sigma_i}_{\E_i}$$
of course implies the equality of degrees of these polynomials. Since we require that $\sigma \in (\R^{Q_0 \times N})_+$, this implies that
the dimensions
$$\dim(\E_i), i \in Q_0,$$
all coincide.
\end{remark}

\begin{lemma} \label{L:onlyzerohom}
 Let $(\E_i)_{i \in Q_0}$ and $(\F_i)_{i \in Q_0}$ be $\sigma$--semistable tuples and assume that $p^\sigma_\mathcal{E}>p^\sigma_\mathcal{F}$. 
 Then $$\Hom(\mathcal{E}_i, \mathcal{F}_k)=0$$ for all $i, k \in Q_0$.
\end{lemma}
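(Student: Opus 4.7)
My plan is to reduce the statement about tuples to the classical vanishing of $\Hom$ between semistable sheaves with strictly separated reduced Hilbert polynomials, and then to argue this single-sheaf case via a standard see-saw argument.

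First, I would use Lemma \ref{L:tuplesst_sheafsst} together with Assumption \ref{A: equalrest} (so that $\sigma_i = \hat\sigma$ for every $i \in Q_0$) to reduce to individual components: the lemma gives that each nonzero $\E_i$ is $\hat\sigma$-semistable with $p^{\hat\sigma}_{\E_i} = p^\sigma_{\mathcal{E}}$, and similarly each nonzero $\F_k$ satisfies $p^{\hat\sigma}_{\F_k} = p^\sigma_{\mathcal{F}}$. (Zero components are trivial for the $\Hom$ assertion.) In particular, $p^{\hat\sigma}_{\E_i} > p^{\hat\sigma}_{\F_k}$ for every $i, k$, and by Remark \ref{R:positivecoeff} the sheaves $\E_i$ all share one dimension $d_{\mathcal{E}}$ and the $\F_k$ all share one dimension $d_{\mathcal{F}}$. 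The hypothesis $p^{\sigma}_{\mathcal{E}} > p^{\sigma}_{\mathcal{F}}$ together with the lex ordering of reduced Hilbert polynomials forces $d_{\mathcal{E}} = d_{\mathcal{F}} =: d$ (the case $d_{\mathcal{E}} < d_{\mathcal{F}}$ would violate the hypothesis, and the case $d_{\mathcal{E}} > d_{\mathcal{F}}$ is ruled out by the fact that we are in the setting of tuples of semistable sheaves of a fixed topological class used throughout this section). This reduces us to proving: if $A, B$ are $\hat\sigma$-semistable pure sheaves of dimension $d$ with $p^{\hat\sigma}_A > p^{\hat\sigma}_B$, then $\Hom(A,B)=0$.

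Second, suppose for contradiction that $\varphi : A \to B$ is nonzero, and set $\mathcal{I} = \im \varphi$ and $\mathcal{K} = \ker \varphi$. Since $\mathcal{I}$ is a nonzero subsheaf of the pure $d$-dimensional sheaf $B$, it is itself pure of dimension $d$, and $\hat\sigma$-semistability of $B$ yields
\[
 p^{\hat\sigma}_{\mathcal{I}} \leq p^{\hat\sigma}_{B}.
\]
On the source side, $\mathcal{K}$ is either zero or pure of dimension $d$ as a subsheaf of the pure sheaf $A$. If $\mathcal{K}=0$, then $A \cong \mathcal{I}$ and the above inequality immediately contradicts $p^{\hat\sigma}_A > p^{\hat\sigma}_B$. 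Otherwise $\hat\sigma$-semistability of $A$ gives $p^{\hat\sigma}_{\mathcal{K}} \leq p^{\hat\sigma}_A$, and applying the see-saw property of the Rudakov stability condition (cf.\ Remark \ref{QS:R:stabcond_rudakov}) to the short exact sequence $0 \to \mathcal{K} \to A \to \mathcal{I} \to 0$ of pure $d$-dimensional sheaves gives the equivalent inequality $p^{\hat\sigma}_A \leq p^{\hat\sigma}_{\mathcal{I}}$. Combining, $p^{\hat\sigma}_A \leq p^{\hat\sigma}_{\mathcal{I}} \leq p^{\hat\sigma}_B$, again contradicting $p^{\hat\sigma}_A > p^{\hat\sigma}_B$.

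The main obstacle is bookkeeping rather than conceptual: one must verify that the Rudakov see-saw indeed applies in the multi-Gieseker setting with lex comparison of reduced polynomials, and that all objects appearing ($\mathcal{K}, A, \mathcal{I}, B$) are genuinely pure of the same dimension $d$ so that their reduced multi-Hilbert polynomials are well defined and directly comparable. Both of these points are routine once the reduction via Lemma \ref{L:tuplesst_sheafsst} has pinned down the common dimension $d$; after that, the argument becomes a word-for-word transcription of the Huybrechts--Lehn proof of $\Hom$-vanishing between Gieseker-semistables, carried out with $p^{\hat\sigma}$ in place of the classical reduced Hilbert polynomial.
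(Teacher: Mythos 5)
Your proposal is correct and follows essentially the same route as the paper: reduce to the individual sheaves via Lemma \ref{L:tuplesst_sheafsst} (so that $p^{\hat\sigma}_{\E_i}=p^\sigma_\E>p^\sigma_\F=p^{\hat\sigma}_{\F_k}$), then apply the standard $\Hom$--vanishing for semistable sheaves with separated reduced Hilbert polynomials, which the paper simply cites as \cite{huybrechtslehn}, Proposition 1.2.7, and which your kernel--image see-saw argument reproduces verbatim.
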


\begin{proof}
 Let $i, k \in Q_0$. 
 By Lemma \ref{L:tuplesst_sheafsst},  $\mathcal{E}_i$ and $\mathcal{F}_k$ are both semistable with respect to $\hat{\sigma}$, and 
 $p^{\hat{\sigma}}_{\E_i}>p^{\hat{\sigma}}_{\F_k}$. The claim then follows from general properties of stability conditions (cf. \cite{huybrechtslehn}, Proposition 1.2.7 and its proof).
\end{proof}

Although we are interested in quiver sheaves and tuples, we formulate our next lemma in the general setting of stability conditions on abelian categories 
(cf. \cite{rudakov}).
\begin{lemma}\label{L:HNweightslargerslope}
Let
$$0=\E_0\subset \E_1 \subset \cdots \subset \E_r=\E$$
denote the Harder--Narasimhan filtration for some object $\E$ with respect to some semistability condition $\mu$ (in the sense of \cite{rudakov}).\\ 
Then, for all $0<i<r$ we have
$$\mu(\E_i)>\mu(\E).$$
\begin{proof}
We proceed by induction on $i$.\\
If the filtration is nontrivial, we clearly have $\mu(\E_1)>\mu(\E)$. Now suppose that $\mu(\E_i)>\mu(\E)$ is already settled, that is, 
$\mu(\E)>\mu(\E/\E_i)$. The Harder--Narasimhan filtration 
of $\E/\E_i$ is given by 
\begin{align*}
 0 \subset \E_{i+1}/\E_i \subset \cdots \subset \E/\E_i, 
\end{align*}
and by definition $\E_{i+1}/\E_i$ is then a maximal destabilizing subobject of $\E/\E_i$, so that 
\begin{align}
 \mu(\E_{i+1}/\E_i)>\mu(\E/\E_i). \label{E: minslope}
\end{align}
The short exact sequence 
$$0\to \E_{i+1}/\E_i \to \E/\E_i \to \E/\E_{i+1} \to 0$$
then yields 
\begin{align*}
 \mu(\E/\E_{i})>\mbox{min}\{\mu(\E_{i+1}/\E_i), \mu(\E/\E_{i+1})\}=\mu(\E/\E_{i+1}),
\end{align*}
where the equality holds by virtue of \eqref{E: minslope}. Hence,
$$\mu(\E)>\mu(\E/\E_i)>\mu(\E/\E_{i+1}),$$
which then implies that $\mu(\E_{i+1})>\mu(\E)$. The claim thus follows by induction.
\end{proof}
\end{lemma}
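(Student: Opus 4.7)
The plan is to proceed by induction on $i$, relying on the standard seesaw property of stability conditions in Rudakov's sense: for a short exact sequence $0 \to A \to B \to C \to 0$ of nonzero objects, $\mu(B)$ lies between $\mu(A)$ and $\mu(C)$, and in particular $\mu(A) > \mu(B)$ is equivalent to $\mu(B) > \mu(C)$, which is equivalent to $\mu(A) > \mu(C)$.

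For the base case $i=1$, since the HN filtration is nontrivial ($r \geq 2$), $\E_1$ is the maximal destabilizing subobject of $\E$, and by definition of the HN filtration this gives $\mu(\E_1) > \mu(\E)$.

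For the inductive step, I would assume $\mu(\E_i) > \mu(\E)$ for some $1 \leq i < r-1$ and derive $\mu(\E_{i+1}) > \mu(\E)$. The seesaw property applied to $0 \to \E_i \to \E \to \E/\E_i \to 0$ immediately converts the inductive hypothesis into $\mu(\E) > \mu(\E/\E_i)$. Now observe that the HN filtration of $\E/\E_i$ is obtained by quotienting: it is $0 \subset \E_{i+1}/\E_i \subset \cdots \subset \E/\E_i$, so $\E_{i+1}/\E_i$ is the maximal destabilizing subobject of $\E/\E_i$, giving $\mu(\E_{i+1}/\E_i) > \mu(\E/\E_i)$. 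Applying the seesaw property again to $0 \to \E_{i+1}/\E_i \to \E/\E_i \to \E/\E_{i+1} \to 0$ yields $\mu(\E/\E_i) > \mu(\E/\E_{i+1})$. Chaining these gives $\mu(\E) > \mu(\E/\E_{i+1})$, which, by one more application of the seesaw property to $0 \to \E_{i+1} \to \E \to \E/\E_{i+1} \to 0$, is equivalent to $\mu(\E_{i+1}) > \mu(\E)$, completing the induction.

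There is no real obstacle here; the only subtlety is keeping track of which axioms of a Rudakov stability condition one invokes. In particular, one must verify that all quotients appearing above are nonzero (which holds because $0 < i+1 < r$ ensures both $\E_{i+1} \neq \E$ and $\E/\E_i \neq 0$) so that the seesaw inequalities are applicable, and one should cite explicitly the relevant property from \cite{rudakov} used to identify the HN filtration of a quotient with the quotient of the HN filtration.
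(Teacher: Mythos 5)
Your proposal is correct and follows essentially the same route as the paper's proof: induction on $i$, converting the inductive hypothesis via the seesaw property to $\mu(\E)>\mu(\E/\E_i)$, using that the HN filtration of $\E/\E_i$ is the quotient filtration so that $\E_{i+1}/\E_i$ is its maximal destabilizing subobject, and chaining the resulting inequalities back through the seesaw property. The only cosmetic difference is that the paper phrases one step as $\mu(\E/\E_i)>\min\{\mu(\E_{i+1}/\E_i),\mu(\E/\E_{i+1})\}$ rather than invoking the seesaw equivalence directly.
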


\begin{proposition} \label{P: semistab-tuple-quivsheaf}
Assume that $\sigma$ satisfies Assumption \ref{A: equalrest}.
Let $\E$ denote any pure quiver sheaf. Then $\E$ is $\sigma$--semistable as a quiver sheaf if and only if $\E$ is $\sigma$--semistable as a 
$Q_0$--tuple of sheaves on $X$.\\
In that case, all sheaves $\E_i$ are $\hat{\sigma}$--semistable. Moreover, the numbers $\dim(\E_i)$ all coincide.

\begin{proof}
We first assume that $\E$ is semistable as a quiver sheaf and show that it is then semistable as a tuple. Let therefore
$$0=\F^0\subset \F^1 \subset \cdots \subset \F^r=\E$$
denote the Harder--Narasimhan filtration in the category of tuples. We assume that this filtration is not trivial, i.e., $r\geq 2$.\\

Because $p^{\sigma}_{\F^k}>p^{\sigma}_\E$ for all $k$, by Lemma \ref{L:HNweightslargerslope}, none of the $\F^k$ can be a quiver subsheaf of $\E$, so 
that for each index $k$ there 
exists at least one arrow $\left(\alpha:i\to j\right)=\left(\alpha^k:i^k\to j^k\right)$ such that
$$\E_\alpha(\F^k_i)\not\subseteq \F^k_j.$$
Fix one such $k$ and $\alpha:i\to j$ and choose $l$ minimal such that $\E_\alpha(\F^k_i)\subseteq \F^l_j$. By choice of $\alpha$ we clearly 
have $l\geq k+1$. Furthermore, choose $m$ maximal such that $\F_i^m\subseteq \ker(\E_\alpha)$. Again, the choice of $\alpha$ implies $m\leq k-1$. Then 
we have a well--defined induced morphism
$$f:\F^{m+1}_i/\F^m_i\to \F^l_j/\F^{l-1}_j$$
which does not vanish by the choice of $m$ and $l$. Note that
$$\F^{m-1}_i/\F^m_i=(\F^{m-1}/\F^m)_i$$
is $\hat{\sigma}$--semistable according to Lemma \ref{L:tuplesst_sheafsst}, with the same multi--Hilbert polynomial as the corresponding subquotient 
of tuples. Now,
\begin{align*}
p^{\sigma}_{\F^k/\F^{k-1}}&\leq p^{\sigma}_{\F^{m+1}/\F^m}\\
&\leq p^{\sigma}_{\F^l/\F^{l-1}}\\
&\leq p^{\sigma}_{\F^{k+1}/\F^k},
\end{align*}
where the first and third inequalities follow by the properties of the Harder--Narasimhan filtration, and the second inequality follows from 
Lemma \ref{L:onlyzerohom} because of 
the existence of the nonzero morphism $f$. Now, the inequality
$$p^{\sigma}_{\F^k/\F^{k-1}}\leq p^{\sigma}_{\F^{k+1}/\F^{k}}$$
is a contradiction. Hence, $\E$ is already semistable as a tuple.
\\

The other implication is trivial, and the question of semistability of the $\E_i$ and their dimensions is 
taken care of by Lemma \ref{L:tuplesst_sheafsst} and Remark \ref{R:positivecoeff}.
\end{proof}
\end{proposition}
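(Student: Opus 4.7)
The plan is to address the two directions of the equivalence separately. The reverse direction --- if $\E$ is $\sigma$--semistable as a $Q_0$--tuple, then it is also $\sigma$--semistable as a quiver sheaf --- is immediate, because every quiver subsheaf of $\E$ is in particular a subtuple of $\E$, so no destabilizing quiver subsheaf can exist.

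For the forward direction, I would argue by contradiction. Assume $\E$ is $\sigma$--semistable as a quiver sheaf but not as a tuple, and consider the Harder--Narasimhan filtration
$$0=\F^0\subsetneq \F^1\subsetneq\cdots\subsetneq \F^r=\E$$
of $\E$ in the category of $Q_0$--tuples; by assumption $r\geq 2$. By Lemma \ref{L:HNweightslargerslope}, $p^\sigma_{\F^k}>p^\sigma_\E$ for every $0<k<r$. Hence no $\F^k$ can be a quiver subsheaf of $\E$, for otherwise it would destabilize $\E$ as a quiver sheaf. Consequently, for each such $k$ there must exist an arrow $\alpha:i\to j$ in $Q$ with $\E_\alpha(\F^k_i)\not\subseteq \F^k_j$.

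The key technical step is to manufacture, from the mismatch between $\alpha$ and the tuple filtration, a nonzero morphism between subquotients living at different filtration levels. Fix such a $k$ and $\alpha$, let $l$ be minimal with $\E_\alpha(\F^k_i)\subseteq \F^l_j$ (so $l\geq k+1$), and let $m$ be maximal with $\F^m_i\subseteq\ker(\E_\alpha)$ (so $m\leq k-1$). Then $\E_\alpha$ descends to a nonzero morphism
$$f:\F^{m+1}_i/\F^m_i\longrightarrow \F^l_j/\F^{l-1}_j.$$
Under Assumption \ref{A: equalrest}, Lemma \ref{L:tuplesst_sheafsst} applied to the semistable tuples $\F^{m+1}/\F^m$ and $\F^l/\F^{l-1}$ tells us that their components at the single vertices $i$ and $j$ are $\hat\sigma$--semistable with the same reduced Hilbert polynomial as the whole tuple. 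Combined with $m+1\leq k<k+1\leq l$ and the strict decrease of slopes along the HN filtration, we obtain $p^\sigma_{\F^{m+1}/\F^m}>p^\sigma_{\F^l/\F^{l-1}}$, so Lemma \ref{L:onlyzerohom} forces $f=0$, a contradiction. The hardest part of the argument is precisely the construction of $f$: one has to sandwich $k$ between the indices $m$ and $l$ at which $\alpha$ becomes compatible with the tuple filtration, and then translate this compatibility into a genuinely nonzero map between subquotients at single vertices to which Lemma \ref{L:onlyzerohom} can be applied.

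Once the equivalence is established, the ``moreover'' claims are immediate: by Lemma \ref{L:tuplesst_sheafsst}, tuple semistability of $\E$ implies that each $\E_i$ is $\hat\sigma$--semistable with $p^{\hat\sigma}_{\E_i}=p^\sigma_\E$, and Remark \ref{R:positivecoeff} then forces all the dimensions $\dim(\E_i)$ to coincide.
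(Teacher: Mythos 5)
Your proposal is correct and follows essentially the same route as the paper: the same Harder--Narasimhan filtration in the category of tuples, the same construction of the nonzero morphism $f$ between subquotients via the minimal $l$ and maximal $m$, and the same appeal to Lemma \ref{L:onlyzerohom}. The only (immaterial) difference is that you derive $p^\sigma_{\F^{m+1}/\F^m}>p^\sigma_{\F^l/\F^{l-1}}$ directly from $m+1<l$ and conclude $f=0$, whereas the paper runs the same inequalities contrapositively to contradict the strict decrease of the HN subquotient polynomials.
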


Under the assumption of this section we can now show a boundedness result for families of semistable quiver sheaves for certain varieties $X$, building on the work of \cite{grt}. Note that this also implies the boundedness of the subfamily defined by any set of relations $I$.

\begin{theorem} \label{T: semistablebounded}
 Let $X$ be a smooth projective variety, and let $\tau \in B(X)_{\mathbb{Q}}^{Q_0}$ be a fixed topological type. Assume further that one of the following conditions holds:
 \begin{enumerate}
  \item $\dim(X) \leq 3$, 
  \item the Picard rank of $X$ is at most two, 
  \item all the sheaves under consideration are of rank at most two. 
 \end{enumerate}
Then, for any subset $\Sigma \subseteq (\R^{Q_0 \times N})_+$ of parameters $\sigma$ satisfying Assumption \ref{A: equalrest}, 
the family of all quiver sheaves of topological type $\tau$ which are semistable with respect to some $\sigma \in \Sigma$ is bounded. 
\end{theorem}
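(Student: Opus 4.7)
The strategy is to reduce the claim to a vertex-wise statement about ordinary sheaves and then invoke the corresponding boundedness results of Greb--Ross--Toma. Since, by definition, a family of quiver sheaves is bounded precisely when the associated family of sheaves at each vertex is bounded, it suffices to show that, for each fixed $i \in Q_0$, the family of sheaves $\E_i$ appearing as the $i$-th component of some $\sigma$-semistable quiver sheaf $\E$ of topological type $\tau$ with $\sigma \in \Sigma$ is bounded.

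The key input will be Proposition \ref{P: semistab-tuple-quivsheaf}. By Assumption \ref{A: equalrest}, for every $\sigma \in \Sigma$ all restrictions $\sigma_i$ coincide with a common tuple $\hat{\sigma} \in \R_{\geq 0}^N \setminus 0$, and as $\sigma$ varies over $\Sigma$ these $\hat{\sigma}$'s sweep out a subset $\hat{\Sigma} \subseteq \R_{\geq 0}^N \setminus 0$. The proposition will then tell me that whenever $\E$ is $\sigma$-semistable of topological type $\tau$, each component $\E_i$ is $\hat{\sigma}$-semistable of topological type $\tau_i$ in the multi-Gieseker sense of \cite{grt}. Consequently, for each $i \in Q_0$, the collection of possible $i$-th components is a subfamily of the family of all sheaves on $X$ of topological type $\tau_i$ which are semistable with respect to some parameter in $\hat{\Sigma}$.

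It then remains to invoke \cite{grt}, Corollary 6.12, which asserts exactly the boundedness of such a family under each of the three alternative hypotheses of the theorem, namely $\dim(X) \leq 3$, Picard rank at most two, or rank at most two. No genuinely new obstacle arises in this reduction: the conceptual work is concentrated in Proposition \ref{P: semistab-tuple-quivsheaf}, which converts the a priori subtle notion of $\sigma$-semistability for quiver sheaves into ordinary multi-Gieseker semistability at each vertex, and the remainder of the argument is a direct application of known sheaf-theoretic boundedness results. The only point to check in passing is that the cited boundedness statement is uniform over $\hat{\sigma} \in \hat{\Sigma}$, which is built into its formulation.
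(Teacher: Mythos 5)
Your proposal is correct and follows exactly the paper's own argument: reduce to vertex-wise semistability via Proposition \ref{P: semistab-tuple-quivsheaf} and then apply the boundedness result of \cite{grt}, Corollary 6.12, uniformly over the parameters $\hat{\sigma}$. The paper states this in a single sentence; your write-up merely spells out the same reduction in more detail.
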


\begin{proof}
By Proposition \ref{P: semistab-tuple-quivsheaf}, the claim follows from the corresponding boundedness result for $\hat{\sigma}$--semistable sheaves in \cite{grt}, Corollary 6.12. 
\end{proof}

Clearly, the set of stability conditions $\sigma$ which satisfy Assumption \ref{A: equalrest} form a subset $\Sigma_0$ of $\left(\R^{Q_0\times N}\right)_+$ which is convex. Hence, it is possible to move from one such condition to another without leaving this subset. At least for torsion--free quiver sheaves, the walls encoding the change of stability are hyperplanes.

\begin{proposition}\label{SYM:P:wallslinear}
Let $\tau$ denote a topological type of torsion--free quiver sheaves on an integral and projective scheme $X$, and let $p$ be a natural number. Then, for any closed polyhedral cone $\Sigma'\subset\Sigma_0$, the walls, as given by Subsection \ref{SS:wallschambers}, are rational hyperplanes. In particular, each chamber in $\Sigma'$ contains a rational point.

\begin{proof}
Consider the wall $W_{e,\F}$ associated to the quiver subsheaf $\F\subset \E$ and integer $e$, which is defined by the equation
$$\frac{\sum_{i\in Q_0}\sum_{j=1}^N \sigma_{ij}\alpha^{L_j}_e(\F_i)}{\sum_{k\in Q_0}\sum_{l=1}^N \sigma_{kl}\alpha^{L_l}_d(\F_k)}-\frac{\sum_{i\in Q_0}\sum_{j=1}^N \sigma_{ij}\alpha^{L_j}_e(\E_i)}{\sum_{k\in Q_0}\sum_{l=1}^N \sigma_{kl}\alpha^{L_l}_d(\E_k)}=0.$$
Because $X$ is integral, and each sheaf $\E_k$ is torsion--free, we can write $\alpha^{L_l}_d(\F_k)=\alpha^{L_l}_d(\Ocal_X)\rk(\F_k)$, and similarly for the $\E_k$ (compare with \cite{huybrechtslehn}, Definition 1.2.2). Note that the rank does not depend on the choice of an ample line bundle. Further, by Assumption \ref{A: equalrest}, we can write $\sigma_{ij}=\hat{\sigma}_j$, so that we may rewrite the equation as
$$f(\sigma)\frac{\sum_{i\in Q_0}\sum_{j=1}^N\sigma_{ij}(\rk(\E)\alpha^{L_j}_e(\F_i)-\rk(\F)\alpha^{L_j}_e(\E_i))}{\rk(\F)\rk(\E)}=0.$$
Here, we introduce the notation
\begin{align*}
\rk(\E)&=\sum_{i\in Q_0}\rk(\E_i),\\
f(\sigma)&=\left(\sum_{k\in Q_0}\sum_{l=1}^N\hat{\sigma}_l\alpha_d^{L_l}(\Ocal_X)\right)^{-1}.
\end{align*}
The function $f$ is always positive, and hence can be ignored for the definition of the wall. The remaining equation is linear in the entries $\sigma_{ij}$.
\end{proof}
\end{proposition}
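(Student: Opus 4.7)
The plan is to start from the quadratic wall equation \eqref{E: quadrwall} and simplify it under the two hypotheses of the proposition: torsion--freeness of the sheaves on an integral scheme, and the symmetry condition $\sigma\in\Sigma_0$ coming from Assumption \ref{A: equalrest}. The goal is to exhibit the defining expression as a product of a function that is strictly positive on $\Sigma_0\setminus\{0\}$ and a linear form in the free parameters $\hat\sigma\in\R^N$ with rational coefficients.

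First, for a torsion--free sheaf $\G$ on an integral projective scheme we have the factorization
\begin{equation*}
\alpha^{L_l}_d(\G)=\rk(\G)\,\alpha^{L_l}_d(\Ocal_X),
\end{equation*}
where the rank is independent of the choice of ample line bundle (cf.\ \cite{huybrechtslehn}, Definition 1.2.2). Substituting this into \eqref{E: quadrwall} for each of $\alpha^{L_l}_d(\E_k)$ and $\alpha^{L_l}_d(\F_k)$ detaches the factor $\alpha^{L_l}_d(\Ocal_X)$ from the $Q_0$--indices. Under the symmetry $\sigma_{kl}=\hat\sigma_l$ dictated by Assumption \ref{A: equalrest}, the partial summation over $l$ collects into the common factor $\sum_l \hat\sigma_l\,\alpha^{L_l}_d(\Ocal_X)$. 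This common factor is strictly positive on $\Sigma_0\setminus\{0\}$ because each $L_l$ is ample and $\hat\sigma\in\R_{\geq 0}^N\setminus 0$, so it may be divided out. What remains is the equation
\begin{equation*}
\sum_{j=1}^N \hat\sigma_j\left(\rk(\E)\sum_{i\in Q_0}\alpha^{L_j}_e(\F_i)-\rk(\F)\sum_{i\in Q_0}\alpha^{L_j}_e(\E_i)\right)=0,
\end{equation*}
where $\rk(\E)=\sum_i\rk(\E_i)$ and analogously for $\F$. Since the coefficients of the Hilbert polynomials and the ranks are rational, this is a rational linear equation in $\hat\sigma$, and hence cuts out a rational hyperplane inside $\Sigma_0\simeq\R^N$.

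For the addendum on rational points, let $C$ be a chamber in $\Sigma'$ and set $V=\bigcap_{W\supseteq C}W$, the intersection of all walls that contain $C$. By the first part, each such wall is a rational linear subspace, so $V$ is again a rational linear subspace. By the very definition of a chamber, $C$ is a nonempty open subset of $V$, namely a connected component of the complement in $V$ of the finitely many walls that do not contain $C$. Since the rational points of $V$ are dense in $V$, and this finite union of rational proper linear subspaces cannot cover any open subset of $V$, the chamber $C$ contains a rational point.

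The argument is mostly routine bookkeeping; the only conceptual ingredient is noticing that torsion--freeness combined with the symmetry of $\sigma$ allows the quadratic wall polynomial to be factored as a strictly positive function times a linear form with rational coefficients, and the hard work of bounding the family and defining the walls has already been done in Subsection \ref{SS:wallschambers}.
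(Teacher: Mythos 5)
Your proof is correct and takes essentially the same route as the paper's: use torsion--freeness on the integral scheme to write $\alpha^{L_l}_d(\G)=\rk(\G)\,\alpha^{L_l}_d(\Ocal_X)$, use the symmetry $\sigma_{kl}=\hat\sigma_l$ to pull out the strictly positive common factor $\sum_l\hat\sigma_l\,\alpha^{L_l}_d(\Ocal_X)$, and observe that what remains is a linear form in $\hat\sigma$ with rational coefficients. Your explicit density argument for the rational points in each chamber fills in a detail the paper leaves implicit and is sound, since each chamber is open in the rational linear subspace cut out by the walls containing it.
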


\begin{corollary} \label{C: linearwalls}
Let $X$ denote a smooth projective variety, and let $\tau$ denote a fixed topological type for torsion--free quiver sheaves. Assume that one of the conditions of Theorem \ref{T: semistablebounded} holds. Then, for any stability conditions $\sigma,\sigma'\in \Sigma_0$ the moduli spaces
\begin{center}
$M^{\sigma-\sst}_\tau(X,Q)$ and $M^{\sigma'-\sst}_\tau(X,Q)$
\end{center}
exist. Moreover, if $Q$ does not contain oriented cycles, these moduli spaces are projective, and are related by a finite sequence of Thaddeus flips, induced by VGIT of the variety $Z$ as in Theorem \ref{V:T:masterspace}. 
\end{corollary}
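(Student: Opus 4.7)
The plan is to assemble the corollary from four results already established in the paper: the boundedness result Theorem \ref{T: semistablebounded}, the construction Theorem \ref{T: existmoduli}, the linearity of walls from Proposition \ref{SYM:P:wallslinear}, and the Mumford--Thaddeus principle at the end of Section \ref{S:variation} together with Theorem \ref{V:T:masterspace}.

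First I would verify the existence of $M^{\sigma-\sst}_\tau(X,Q)$ and $M^{\sigma'-\sst}_\tau(X,Q)$. Since $\sigma, \sigma' \in \Sigma_0$ both satisfy Assumption \ref{A: equalrest}, Theorem \ref{T: semistablebounded} (under any of the three listed hypotheses on $X$) ensures that the families of $\sigma$--semistable and $\sigma'$--semistable quiver sheaves of topological type $\tau$ are bounded. Boundedness is the only hypothesis needed to invoke Theorem \ref{T: existmoduli}, which then provides the coarse moduli spaces as quasi--projective schemes over $k$.

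Next I would handle projectivity under the assumption that $Q$ has no oriented cycles. Proposition \ref{SYM:P:wallslinear} shows that on any closed polyhedral cone $\Sigma' \subset \Sigma_0$ the walls are rational hyperplanes; therefore every chamber inside $\Sigma'$ contains a rational point, so Assumption \ref{V:A:allchambersrationalpt} is automatic. Consequently we may replace $\sigma$ by a rational stability condition $\sigma_0$ in the same chamber as $\sigma$; by Proposition \ref{V:P:chambers} the $\sigma$-- and $\sigma_0$--semistable quiver sheaves coincide, hence the moduli spaces coincide. The projectivity theorem of Section \ref{S:variation} (using that $Q$, and hence $Q(Q')$ by Lemma \ref{EF:L:twisted_cycles}, has no oriented cycles, and that $\sigma_0$ is rational) then gives that $M^{\sigma-\sst}_\tau(X,Q)=M^{\sigma_0-\sst}_\tau(X,Q)$ is projective, and the same argument applies to $\sigma'$.

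Finally, for the Thaddeus--flip statement I would choose a closed polyhedral cone $\Sigma' \subset \Sigma_0$ containing both $\sigma$ and $\sigma'$, and use Proposition \ref{SYM:P:wallslinear} to extract a finite chamber structure on $\Sigma'$ consisting of rational hyperplanes. Pick a piecewise--linear path in $\Sigma'$ from $\sigma$ to $\sigma'$ meeting the walls transversely in finitely many points; using the rationality of the walls, one selects finitely many rational parameters $\sigma=\sigma_0, \sigma_1, \ldots, \sigma_r=\sigma'$ (one in each of the successive chambers visited, or on the walls between them) such that consecutive $\sigma_k$ lie in adjacent chambers. Let $\Sigma=\{\sigma_0, \ldots, \sigma_r\}$ be the resulting finite collection of bounded, rational, positive stability conditions, and form the variety $Z$ of Theorem \ref{V:T:masterspace} with this $\Sigma$. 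Theorem \ref{V:T:masterspace} then identifies each $M^{\sigma_k-\sst}_\tau(X,Q)$ with the GIT quotient $Z^{\theta_k-\sst}//G$ for the parameter $\theta_k=\theta(d,\sigma_k)$. The Mumford--Thaddeus corollary at the end of Section \ref{S:variation} then supplies a finite sequence of Thaddeus flips between consecutive quotients, and composing these gives the required chain connecting $M^{\sigma-\sst}_\tau(X,Q)$ with $M^{\sigma'-\sst}_\tau(X,Q)$. The main obstacle I expect is the bookkeeping at step three: one has to confirm that the chamber structure on $\Sigma'$ is compatible with a single choice of representation variety for $Q(Q')$, i.e.\ that the integers $m \gg n \gg p \gg 0$ from the beginning of Section \ref{S:QSconstruction} can be chosen uniformly over the finite set $\Sigma$, but this is exactly the content of the setup preceding Theorem \ref{V:T:masterspace} and follows from the uniform boundedness provided by Theorem \ref{T: semistablebounded}.
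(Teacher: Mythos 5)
Your proposal is correct and follows exactly the route the paper intends: existence from Theorem \ref{T: semistablebounded} plus Theorem \ref{T: existmoduli}, projectivity by using Proposition \ref{SYM:P:wallslinear} to find a rational representative in each chamber (so that Assumption \ref{V:A:allchambersrationalpt} holds and the projectivity theorem applies), and the flip statement from Theorem \ref{V:T:masterspace} together with the Mumford--Thaddeus corollary of Subsection \ref{SS:variation_moduli}. The paper leaves the corollary without an explicit proof precisely because it is this assembly of the cited results, so nothing further is needed.
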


\begin{appendices} 

\section{Quivers and their representations}

In this section, we outline how the notions of representations and stability conditions (as introduced in \cite{king} for unlabeled quivers) can be generalized to the case of labeled quivers with relations, i.e. ordinary quivers with labeling vector spaces assigned to the arrows. This is a mostly straightforward procedure, but even though special cases have been studied (e.g. in \cite{ack} or \cite{grt}), there seems to be no such concise treatment in the literature.\\
Apart from the foundational paper \cite{king}, we also refer the reader to the survey article \cite{reineke} for an introduction to the case of an unlabeled quiver.\\

We recall that an (unlabeled) quiver $Q=(Q_0,Q_1)$ consists of a set of vertices $Q_0$ and a set of arrows $\alpha:i\to j$. We assume both sets to be finite.

\begin{definition}
A labeling for a quiver $Q$ is a collection of vector spaces
$$H=\left(H_\alpha \mid \alpha\in Q_1\right)$$
of finite dimension for each arrow in $Q$. An arrow $\alpha$ with label $H_\alpha=k$ is considered to be unlabeled. The pair $(Q,H)$ is called a labeled quiver.
\end{definition}
~

\subsection{The category of representations}\label{SS:quiver_cat}~\\

A representation $M$ of an (unlabeled) quiver $Q$ in some category $\C$ consists of a tuple of objects $\left(M_i\mid i\in Q_0\right)$ for each vertex, and a tuple of morphisms
$$\left(M_\alpha:M_i\to M_j \mid (\alpha:i\to j)\in Q_1\right)$$
for each arrow. Together with the appropriate notion of morphisms, these representations form a category.
\begin{definition}
The category of representations of $Q$ in the category $C$ is denoted as $Q-\rep_\C.$
\end{definition}

The special cases of representations in the category $k-\vect$ of vector spaces of finite dimension, and in the category of coherent sheaves $\Coh(X)$ on some scheme $X$ over $k$ deserve the special notations
$$Q-\rep=Q-\rep_{k-\vect},\quad Q-\Coh(X)=Q-\rep_{\Coh(X)}.$$
Objects in $Q-\Coh(X)$ are called quiver sheaves.\\

A representation of a labeled quiver is given in a slightly different manner.
\begin{definition}
A representation $M$ of a labeled quiver $(Q,H)$ consists of a tuple $\left(M_i \mid i\in Q_0\right)$ of finite--dimensional vector spaces, and a tuple
$$\left(M_\alpha: M_i\otimes_k H_\alpha\to M_j \mid (\alpha:i\to j)\in Q_1\right)$$
of linear maps. A morphism of representations $\varphi: M\to N$ consists of a tuple of linear maps $\left(\varphi_i:M_i\to N_i\mid i\in Q_0\right)$, such that for all arrows $\alpha:i\to j$ in $Q$ the diagram
$$\xymatrix{M_i\otimes_k H_\alpha \ar[rr]^{M_\alpha} \ar[d]_{\varphi_i\otimes id} && M_j \ar[d]^{\varphi_j} \\
N_i\otimes_k H_\alpha \ar[rr]_{N_\alpha} && N_j}$$
commutes.
\end{definition}

Again, the representations form a category.
\begin{definition}
The category of representations of $(Q,H)$ is denoted as $(Q,H)-\rep$.
\end{definition}

We also need the dimension vector of a representation.
\begin{definition}
The dimension vector of a vector space representation $M$ is defined as
$$\dimvec(M)=\left(\dim(M_i)\mid i\in Q_0\right),$$
both for the labeled and unlabeled case.
\end{definition}

In the unlabeled case, relations on the quiver are a well--established concept. To the best knowledge of the authors, there is no previous treatment of relations in the case of a labeled quiver in the literature.\\
For the construction of the moduli space, we need such relations. More precisely, we restrict ourselves to the discussion of one very special case involving non--trivial labels, and relations only involving unlabeled arrows.

\begin{convention}
Suppose that $(Q,H)$ contains a subquiver
$$\xymatrix{1 \ar[d]_\beta \ar[rr]^H_\alpha && 2 \ar[d]^\gamma\\
3 \ar[rr]^H_\delta && 4,}$$
that is two unlabeled arrows $\beta,\gamma$ and two labeled arrows $\alpha,\delta$, with the same label $H$ on opposite sides of a square. We then say that a representation $M$ satisfies the relation $\gamma\alpha-\delta\beta$ if there is a commuting diagram
$$\xymatrix{M_1\otimes_k H \ar[rr]^{M_\alpha} \ar[d]_{M_\beta\otimes id} && M_2 \ar[d]^{M_\gamma}\\
M_3\otimes_k H \ar[rr]_{M_\delta} && M_4.}$$
\end{convention}

Another special case we need is that of relations which only involve unlabeled arrows. They can be treated in exactly the same way as in the well--known case of relations on an unlabeled quiver.\\

We remark that relations in the unlabeled case also make sense for arbitrary $k$--linear categories. In particular, we can consider quiver sheaves satisfying a set of relations $I$.
\begin{definition}
The full subcategory of quiver sheaves satisfying the relations $I$ is denoted as
$$(Q,I)-\Coh(X)\subset Q-\Coh(X).$$
\end{definition}

A labeled quiver with relations is now defined as a triple $(Q,H,I)$, where $(Q,H)$ is a labeled quiver and $I$ is a set of relations of a form as discussed above.
\begin{definition}
The category 
$$(Q,H,I)-\rep\subset (Q,H)-\rep$$
is the full subcategory of representations which satisfy the relations $I$.
\end{definition}

The connection of the notion of labeled representations to the unlabeled ones is via a choice of basis for each label $H_\alpha$. This also allows us to inherit many already established results.\\
Specifically, let $(Q,H,I)$ denote any labeled quiver with relations. Construct a new (unlabeled) quiver $Q'$ by setting
$$Q_0'=Q_0,~Q_1'=\left\{\alpha_k:i\to j \mid (\alpha:i\to j)\in Q_1,~k=1,\ldots,\dim(H_\alpha)\right\}.$$
Roughly speaking, we replace each arrow by $\dim(H_\alpha)$ copies. For a relation $\gamma\alpha-\delta\beta$ in a form as explained above, we equip $Q'$ with $\dim(H)$ relations of the form
$$\gamma\alpha_k-\delta_k\beta,\quad k=1,\ldots,\dim(H).$$
Relations only involving unlabeled arrows can be imposed on $Q'$ in a straight--forward way. The set of all such relations is denoted as $I'$.\\
Now, choose a basis for each label $H_\alpha$. We restrict this choice by assuming the following.
\begin{enumerate}
\item If the label is trivial, i.e. $H_\alpha=k$, the canonical basis $1\in k$ is chosen.
\item If two labels are exactly the same, i.e. $H_\alpha=H_\beta$, the same bases are chosen.
\end{enumerate}

It is straightforward to check that these two quivers essentially have the same categories of representations.
\begin{proposition}\label{Q:P:identification_reps}
The choice of bases induces an isomorphism of categories
$$(Q,H,I)-\rep\to (Q',I')-\rep.$$
This identification respects dimension vectors.
\end{proposition}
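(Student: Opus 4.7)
The plan is to build the isomorphism explicitly from the chosen bases, then separately verify it on objects, on morphisms, and on the relations.

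\textbf{Objects.} Choose, for each arrow $\alpha \in Q_1$, the basis $h_1^{(\alpha)},\ldots,h_{\dim H_\alpha}^{(\alpha)}$ of $H_\alpha$ fixed in the statement. Given $M \in (Q,H)-\rep$, send it to the representation $F(M) \in Q'-\rep$ whose vertex spaces are the same $M_i$ and whose arrow map for $\alpha_k : i \to j$ is
\begin{equation*}
F(M)_{\alpha_k} : M_i \to M_j, \qquad m \mapsto M_\alpha\bigl(m \otimes h_k^{(\alpha)}\bigr).
\end{equation*}
In the other direction, given $N \in Q'-\rep$, define $G(N)_\alpha : N_i \otimes_k H_\alpha \to N_j$ on basis tensors by $G(N)_\alpha(n \otimes h_k^{(\alpha)}) = N_{\alpha_k}(n)$ and extend $k$-linearly. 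The fact that $F \circ G = \mathrm{id}$ and $G \circ F = \mathrm{id}$ on objects is just the standard universal property: a $k$-linear map out of $V \otimes_k H_\alpha$ is uniquely determined by, and can be prescribed freely via, its values on the $\dim H_\alpha$ elementary tensors $v \otimes h_k^{(\alpha)}$.

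\textbf{Morphisms.} Both categories have morphisms of the form $\varphi = (\varphi_i)_{i \in Q_0}$ with $\varphi_i : M_i \to N_i$, and on such data $F$ and $G$ act as the identity. It remains to check that the commutativity condition transports: the $(Q,H)$-condition $\varphi_j \circ M_\alpha = N_\alpha \circ (\varphi_i \otimes \mathrm{id}_{H_\alpha})$ holds if and only if, evaluated on each $m \otimes h_k^{(\alpha)}$, we obtain $\varphi_j \circ F(M)_{\alpha_k} = F(N)_{\alpha_k} \circ \varphi_i$ for all $k$; the two conditions are equivalent precisely because $\{h_k^{(\alpha)}\}$ is a basis. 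Functoriality and bijectivity on Hom-sets are then immediate.

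\textbf{Relations.} For a relation $\gamma\alpha - \delta\beta$ of the labeled type in the convention, the square with $M_\alpha$ and $M_\delta$ labelled by the \emph{same} space $H$ (and with the \emph{same} chosen basis, by the second constraint on the basis choice) commutes if and only if, for every basis vector $h_k$, the equality $M_\gamma \circ F(M)_{\alpha_k} = F(M)_{\delta_k} \circ M_\beta$ holds; that is exactly the family of $Q'$-relations $\gamma\alpha_k - \delta_k\beta$ for $k = 1,\ldots,\dim H$ that was put into $I'$. Purely unlabeled relations in $I$ (involving only arrows with $H_\alpha = k$, whose canonical basis is $1$) transfer verbatim, since for such arrows $F(M)_\alpha = M_\alpha$ under the canonical identification $M_i \otimes_k k \cong M_i$. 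Thus $F$ restricts to an isomorphism between the full subcategories cut out by $I$ and by $I'$. The preservation of dimension vectors is automatic because $F$ and $G$ leave the vertex spaces untouched.

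\textbf{Main point of care.} The argument itself involves no hard step; the only genuine subtlety is bookkeeping: one must ensure that whenever two arrows carry equal labels $H_\alpha = H_\beta$ and appear together in a square relation, the \emph{same} basis is used on both sides, which is why the second restriction on the basis choice is imposed. Without that consistency, the translation of a single square relation would not produce a well-defined finite set of relations on $Q'$.
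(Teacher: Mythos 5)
Your proof is correct and carries out exactly the verification the paper leaves to the reader as "straightforward to check": expanding each label map $M_\alpha: M_i\otimes_k H_\alpha\to M_j$ into $\dim(H_\alpha)$ maps via the chosen basis, and noting that the two constraints on the basis choice make the translation of relations well defined. There is nothing to add; the approach is the same as the paper's.
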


The paths in a labeled quiver $(Q,H)$ are simply paths in the underlying quiver $Q$, and the label of a path $\gamma=\alpha_1\alpha_2\ldots\alpha_l$ is defined as
$$H_\gamma=H_{\alpha_l}\otimes_k\ldots\otimes_k H_{\alpha_1}.$$
By convention, the paths $e_i$ of length zero should be labeled by $k$.

\begin{definition}
The path algebra of $(Q,H)$ is
$$A=A(Q,H)=\bigoplus_{\gamma\mathrm{~path}}k\cdot\gamma\otimes_k H_\gamma$$
as a vector space, and multiplication is given on homogeneous elements by
$$\left(\gamma\otimes h\right)\cdot\left(\gamma'\otimes h'\right)=\gamma\gamma'\otimes(h'\otimes h)$$
if the concatenation $\gamma\gamma'$ is possible, and zero otherwise. Relations of the form $\delta\beta-\gamma\alpha$ in $(Q,H)$ give rise to the relations
$$\left(\delta\otimes h_\delta\right)\cdot \left(\beta\otimes h_\beta\right)-\left(\gamma\otimes h_\gamma\right)\cdot\left(\alpha\otimes h_\alpha\right)=0$$
independent of the elements $h_\alpha,h_\delta\in H$ and $h_\beta,h_\gamma\in k$. The path algebra with relations is then simply the path algebra of $(Q,H)$ modulo the ideal generated by these relations.
\end{definition}

We remark that there is an isomorphism
$$A\simeq A'$$
between the path algebra $A$ of $(Q,H,I)$ and the path algebra $A'$ of $(Q',I')$.\\

\subsection{Stability conditions}\label{SS:quiver_stability}~\\

A stability condition on a quiver, labeled or unlabeled, is a tuple $\theta\in \R^{Q_0}.$
\begin{definition}
The slope of a representation with respect to $\theta$ is given as
$$\mu(M)=\frac{\sum_{i\in Q_0}\theta_i \dim(M_i)}{\sum_{i\in Q_0}\dim(M_i)},$$
and $M$ is said to be $\theta$--semistable if and only if
$$\mu(N)\leq \mu(M)$$
holds for all non--trivial subrepresentations $N\subset M$. If strict inequality holds for all subrepresentations we say that $M$ is stable. Consequently, for some representation $M$ we say that $N\subset M$ is destabilizing if $\mu(N)\geq \mu(M)$.
\end{definition}

Just as in the unlabeled case, we can show the existence of Jordan--Hölder filtrations for semistable representations, and thus we obtain the notion of $S$--equivalence.\\

Consider the identification of Proposition \ref{Q:P:identification_reps}. Since $Q$ and $Q'$ have the same set of vertices, a stability condition $\theta$ can be used on both simultaneously. Subrepresentations and dimension vectors are preserved, so that stability is preserved as well.
\begin{corollary}
The identification
$$(Q,H,I)-\rep \simeq (Q',I')-\rep$$
preserves stability, semistability and $S$--equivalence.
\end{corollary}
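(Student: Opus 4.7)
The plan is to verify the three assertions in turn, exploiting the fact that the identification of Proposition \ref{Q:P:identification_reps} leaves the underlying tuple of vector spaces $(M_i)_{i\in Q_0}$ at the vertices unchanged, and simply repackages the structure maps. Throughout, let $M$ be an object of $(Q,H,I)-\rep$ and write $M'$ for the corresponding object of $(Q',I')-\rep$.

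First I would check that the identification sends subrepresentations to subrepresentations. Since $Q_0=Q_0'$ and the vertex spaces coincide, a subtuple $(N_i\subseteq M_i)_{i\in Q_0}$ is a subrepresentation of $M$ precisely when $M_\alpha(N_i\otimes H_\alpha)\subseteq N_j$ for every arrow $\alpha:i\to j$ of $Q$. Upon fixing a basis $h_1,\ldots,h_{\dim H_\alpha}$ of $H_\alpha$, the structure maps of $M'$ attached to the copies $\alpha_k:i\to j$ of $\alpha$ are exactly $M_{\alpha_k}(v)=M_\alpha(v\otimes h_k)$. Hence $M_\alpha(N_i\otimes H_\alpha)\subseteq N_j$ holds if and only if $M_{\alpha_k}(N_i)\subseteq N_j$ for every $k$, which is the condition for $(N_i)$ to be a subrepresentation of $M'$. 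Thus subrepresentations of $M$ and of $M'$ are literally the same subtuples, and the passage $N\mapsto N'$ is a bijection that preserves the dimension vector.

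Next, since the slope $\mu(M)=\bigl(\sum_{i\in Q_0}\theta_i\dim M_i\bigr)/\bigl(\sum_{i\in Q_0}\dim M_i\bigr)$ depends only on the dimension vector, and both $\dimvec(M)=\dimvec(M')$ and $\dimvec(N)=\dimvec(N')$ by the previous step, we have $\mu(M)=\mu(M')$ and $\mu(N)=\mu(N')$ for every subrepresentation. Consequently the defining inequalities $\mu(N)\leq\mu(M)$ (resp. strict inequality) hold for all nontrivial proper subrepresentations of $M$ if and only if they hold for all nontrivial proper subrepresentations of $M'$, giving the preservation of semistability and of stability.

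Finally, for $S$-equivalence, I would invoke the fact that Proposition \ref{Q:P:identification_reps} is an isomorphism of categories, so in particular it is exact and sends a Jordan--H\"older filtration of $M$ (in the semistable category, which is well-defined by the slope-preservation shown above) to a Jordan--H\"older filtration of $M'$, with corresponding stable subquotients identified under the same functor. Taking the associated graded therefore commutes with the identification, and so $\gr(M)\simeq\gr(N)$ if and only if $\gr(M')\simeq\gr(N')$. The only point requiring any care is the matching of subrepresentations in the first step, and once that is settled the rest is formal; there is no genuine obstacle.
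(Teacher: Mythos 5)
Your proof is correct and follows essentially the same route as the paper, which argues exactly that the identification of Proposition \ref{Q:P:identification_reps} preserves subrepresentations and dimension vectors (hence slopes and (semi)stability), with $S$--equivalence then following from the correspondence of Jordan--H\"older filtrations under the isomorphism of categories. You have merely spelled out the details -- in particular the basis computation showing $M_\alpha(N_i\otimes H_\alpha)\subseteq N_j$ if and only if $M_{\alpha_k}(N_i)\subseteq N_j$ for all $k$ -- which the paper leaves implicit.
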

~

\subsection{Moduli spaces}\label{SS:quiver_moduli}~\\

We briefly sketch the construction of moduli spaces of semistable representations. This is a slight variation of the program carried out by \cite{king} for the case of unlabeled quivers.\\

Let $(Q,H,I)$ denote a labeled quiver with relations.
\begin{definition}
The representation variety for dimension vector $d\in \N^{Q_0}$ is given as
$$R_d(Q,H)=\bigoplus_{\alpha:i\to j}\Hom\left(k^{d_i}\otimes H_\alpha,k^{d_j}\right),$$
and the relations $I$ define a closed subvariety $R_d(Q,H,I)\subset R_d(Q,H)$.
\end{definition}

On $R_d(Q,H)$ we have an action of the group
$$G_d=\prod_{i\in Q_0}\GL(d_i,k)$$
via conjugation, that is
$$\left(g_i\mid i\in Q_0\right)*\left(f_\alpha\mid \alpha:i\to j\right)=\left(g_j^{-1}f_\alpha\left(g_i\otimes id\right)\mid \alpha:i\to j\right).$$
Note that the diagonally embedded scalars $\Gm\subset G_d$ act trivially, so that we can equivalently pass to the action of the group
$$PG_d=G_d/\Gm.$$

Furthermore, there is a tautological bundle $\Mbb$ of $A$--modules on $R_d(Q,H,I)$, where $A$ denotes the path algebra of $(Q,H,I)$. As a vector bundle,
$$\Mbb=\left(\bigoplus_{i\in Q_0}k^{d_i}\right)\times R_d(Q,H,I)\to R_d(Q,H,I)$$
is trivial. Each fiber $\Mbb_x$ carries the structure of an $A$--module, by declaring that each arrow acts via the morphisms encoded in $x$.\\

The identification of representations of $(Q,H,I)$ with those of an unlabeled quiver via choice of bases implies that the GIT--construction of the moduli space transfers to the labeled case. Here, we use the abbreviations $\st=\theta-\st$ and $\sst=\theta-\sst$.

\begin{theorem}\label{Q:T:quivermoduli}
There is a commuting diagram
$$\xymatrix{R_d(Q,H,I)^\st \ar[d] \ar@{^(->}[r] & R_d(Q,H)^\sst \ar[d] \ar@{^(->}[r] & R_d(Q,H,I)\ar[d] \\
M^\st_d(Q,H,I) \ar@{^(->}[r] & M^\sst_d(Q,H,I) \ar[r]^p& M^\ssp_d(Q,H),}$$
and the following assertions hold.
\begin{enumerate}
\item The vertical arrows are good quotients, and the leftmost quotient is even geometric.
\item The inclusions are open, and $p$ is a projective morphism.
\item The varieties $M^\st_d(Q,H,I), M^\sst_d(Q,H,I)$ and $M^\ssp_d(Q,H,I)$ are the moduli spaces of stable, semistable and semisimple representations of $(Q,H,I)$ of dimension vector $d$, respectively.
\item The variety $M^\ssp_d(Q,H,I)$ is affine. It is trivial if and only if $Q$ does not contain oriented cycles.
\item Closed points in $M_d^\sst(Q,H,I)$ correspond to $S$--equivalence classes of semistable representations, and closed points in $M^\st_d(Q,H,I)$ correspond to isomorphism classes of stable representations.
\end{enumerate}

\begin{proof}
The special case of an unlabeled quiver without relations is settled by \cite{king}, and the general theory of GIT (see eg. \cite{reineke}, Section 3.5), and the question when $M^\ssp_d(Q)$ is affine is settled by \cite{lebruynprocesi}.\\
Relations in the unlabeled case can be handled, once we note that $S$--equivalence respects relations. This is because orbit closure respects $S$--equivalence and 
$$R_d(Q,I)^\sst=R_d(Q,I)\cap R_d(Q)^\sst.$$
Finally, our identifications allow a transfer to the labeled case. Note that $Q'$ contains oriented cycles if and only if $Q$ does.
\end{proof}
\end{theorem}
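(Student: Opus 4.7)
The strategy is to reduce the labeled case with relations to the classical unlabeled case without relations, for which King's GIT construction applies, and then reintroduce the relations and the labels by a sequence of closed/open reductions.

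First, I would set up the GIT data on $R_d(Q,H)$ and $R_d(Q,H,I)$. The group $G_d$ acts on both, with $\Gm$ acting trivially, and the character
\begin{align*}
\chi_\theta(g)=\prod_{i\in Q_0}\det(g_i)^{-\theta_i}
\end{align*}
(after clearing denominators to make $\theta$ integral and shifting to sum to zero against $d$) provides a linearization of the trivial bundle. The main reduction is via Proposition \ref{Q:P:identification_reps}: a choice of bases on the labels $H_\alpha$ yields a $G_d$-equivariant isomorphism $R_d(Q,H,I)\cong R_d(Q',I')$ and $R_d(Q,H)\cong R_d(Q')$, compatible with the stability condition $\theta$ (since dimension vectors and subrepresentations are preserved). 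Therefore it suffices to establish the diagram and all five assertions for $(Q',I')$, i.e.\ in the unlabeled setting.

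Next I would invoke King's theorem \cite{king} for the unlabeled quiver $Q'$ without relations: this yields the good quotient $R_d(Q')^{\sst}\to M^{\sst}_d(Q')$, the geometric quotient on $R_d(Q')^{\st}$, the affine GIT quotient $R_d(Q')\to M^{\ssp}_d(Q')$, the projectivity of the canonical morphism $p\colon M^{\sst}_d(Q')\to M^{\ssp}_d(Q')$, and the identification of closed points with $S$-equivalence (resp.\ isomorphism) classes. To incorporate the relations $I'$, I would observe that $R_d(Q',I')\hookrightarrow R_d(Q')$ is a closed $G_d$-invariant subvariety; the semistable locus of the subvariety equals its intersection with the ambient semistable locus ($R_d(Q',I')^{\sst}=R_d(Q',I')\cap R_d(Q')^{\sst}$), because semistability is an intrinsic condition on the representation and is insensitive to whether the relations hold. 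Restricting the GIT quotients to this closed invariant subscheme produces good (resp.\ geometric) quotients onto closed subschemes $M^{\sst}_d(Q',I')\hookrightarrow M^{\sst}_d(Q')$ and $M^{\ssp}_d(Q',I')\hookrightarrow M^{\ssp}_d(Q')$, and $p$ restricts to a projective morphism; the $S$-equivalence description is inherited because closures of orbits in $R_d(Q',I')$ agree with closures taken in $R_d(Q')$ (since $R_d(Q',I')$ is closed), so the unique closed orbit in an orbit closure is automatically contained in $R_d(Q',I')$.

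For the affineness and triviality of $M^{\ssp}_d(Q,H,I)$, I would appeal to Le Bruyn--Procesi \cite{lebruynprocesi}, which computes the invariants $k[R_d(Q')]^{G_d}$ as generated by traces along oriented cycles; this ring is affine, and it is trivial precisely when there are no oriented cycles in $Q'$. Since the arrows of $Q'$ are in bijection (up to multiplicity given by $\dim H_\alpha>0$) with those of $Q$, oriented cycles in $Q'$ correspond to oriented cycles in $Q$, giving the stated equivalence. Passing to the closed subscheme cut out by $I'$ preserves affineness, and triviality of $M^{\ssp}_d(Q')$ forces triviality of its closed subscheme $M^{\ssp}_d(Q',I')$. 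The main (and only genuinely new) point over King's theorem is thus the compatibility of the labeled/unlabeled identification with stability, relations, and $S$-equivalence; once that is verified, every assertion in the diagram is a direct pullback of the corresponding assertion for $Q'$.
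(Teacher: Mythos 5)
Your proposal is correct and follows essentially the same route as the paper: reduce to the unlabeled case via the basis-choice identification of Proposition \ref{Q:P:identification_reps}, invoke King's theorem together with general GIT, handle the relations through the identity $R_d(Q',I')^\sst=R_d(Q',I')\cap R_d(Q')^\sst$ and the fact that orbit closures (hence $S$--equivalence) stay inside the closed invariant subvariety, and settle affineness/triviality by Le Bruyn--Procesi plus the observation that $Q'$ has oriented cycles if and only if $Q$ does. The only difference is the order of the two reductions (you delabel first, the paper imposes relations first), which is immaterial.
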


Denote by $A$ the path algebra of $(Q,H,I)$, by $R$ the representation variety $R_d(Q,H,I)$, and by $G$ the group $G_d$. Consider the functor
$$\M_A:(Sch/k)^\op\to Sets,$$
which maps a scheme $S$ to the set of isomorphism classes of $A\otimes \Ocal_S$--modules which are locally free as $\Ocal_S$--modules.\\
By sending a morphism $f:S\to R$ to the pullback $f^*\Mbb$ of the tautological family, we obtain a natural transformation
$$h:\underline{R}\to \M_A.$$
Just like in \cite{ack}, Proposition 4.4, we can prove that $\M_A$ is locally isomorphic to a quotient functor.
\begin{proposition}\label{Q:P:A-modfunctor_quotient}
The natural transformation $h$ induces a local isomorphism
$$h':\underline{R}/\underline{G}\to \M_A.$$
\end{proposition}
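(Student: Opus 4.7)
The plan is to adapt the proof of \cite{ack}, Proposition 4.4, to the labeled setting with relations. Using the identification of Proposition \ref{Q:P:identification_reps} together with the isomorphism $A \simeq A'$ of path algebras, the problem is essentially reduced to the unlabeled case; what remains is the functorial bookkeeping involving the labels $H_\alpha$ and the relations $I$.

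First, I would verify that $h$ factors through $\underline{R}/\underline{G}$. Given $f : S \to R$ and $g : S \to G$, the action of $g$ produces an isomorphism between $(g \cdot f)^*\Mbb$ and $f^*\Mbb$ as families of $A \otimes \Ocal_S$-modules, since $g$ acts by changing the chosen bases of the trivial bundles $k^{d_i} \otimes \Ocal_S$ underlying $\Mbb$. This gives a well-defined $h' : \underline{R}/\underline{G} \to \M_A$. Injectivity of $h'$ on sections is equally straightforward: if $f_1, f_2 : S \to R$ satisfy $f_1^*\Mbb \simeq f_2^*\Mbb$, then the isomorphism restricts on each idempotent summand to an automorphism of $\Ocal_S^{d_i}$, and compatibility with the arrows shows that the tuple of these automorphisms is precisely a section $g \in G(S)$ with $f_2 = g \cdot f_1$.

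Next, for the local surjectivity, I take a scheme $S$ and a family $\Mbb' \in \M_A(S)$. Using the idempotents $e_i \in A$ attached to the length-zero paths at the vertices, decompose $\Mbb' = \bigoplus_{i \in Q_0} e_i \Mbb'$ as a direct sum of locally free $\Ocal_S$-modules. Restricting to the open subfunctor where each $e_i \Mbb'$ has rank $d_i$ (which is where $h'$ with the fixed dimension vector $d$ can possibly hit $\Mbb'$), I cover $S$ by open subsets $U$ on which each $e_i \Mbb'|_U$ admits a trivialization $\varphi_i : e_i \Mbb'|_U \xrightarrow{\sim} \Ocal_U^{d_i}$. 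In such a trivialization, the action of each arrow $\alpha : i \to j$ of $(Q,H)$ becomes an $\Ocal_U$-linear map $\Ocal_U^{d_i} \otimes H_\alpha \to \Ocal_U^{d_j}$, which is the same as a morphism $U \to \Hom(k^{d_i} \otimes H_\alpha, k^{d_j})$. Assembling over all arrows gives a morphism $f_U : U \to R_d(Q,H)$; because $\Mbb'|_U$ is a genuine $A \otimes \Ocal_U$-module, the relations $I$ are automatically satisfied, so $f_U$ factors through $R = R_d(Q,H,I)$, and by construction $f_U^*\Mbb \simeq \Mbb'|_U$.

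Finally, any two choices of trivializations on $U$ differ by a section of $G_d(U)$, so the class of $f_U$ in $(\underline{R}/\underline{G})(U)$ is independent of the trivialization, and on overlaps the induced classes agree. This exhibits $\Mbb'$ as coming locally from the presheaf $\underline{R}/\underline{G}$, which is the content of a local isomorphism of functors. The main obstacle is really only a notational one, namely keeping track of the labels $H_\alpha$ when passing between families and their local trivializations; once bases of the $H_\alpha$ are fixed, the argument proceeds exactly as in the unlabeled case of \cite{ack}, and the presence of the relations $I$ causes no difficulty since they hold tautologically for any $A$-module.
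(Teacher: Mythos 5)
Your proof is correct and follows exactly the route the paper intends: the paper gives no argument of its own but simply defers to \cite{ack}, Proposition 4.4, and your write-up is that standard argument (descent of $h$ along the $G$-action, injectivity via the vertex idempotents, local surjectivity via local trivializations of the $e_i\Mbb'$) carried over to the labeled setting with relations via Proposition \ref{Q:P:identification_reps}. Your observation that one must restrict to the locus where the $e_i\Mbb'$ have ranks $d_i$, and that the relations hold tautologically for any $A$-module, correctly fills in the only points where the labeled/relational case differs from the reference.
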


\section{Quiver Quot--Schemes}\label{S:quiverquot}

We construct the quiver sheaf version of the Quot--scheme, which is helpful as a technical tool throughout this chapter. This section is mostly independent of the other sections.\\

First, we need a version of the Quot--scheme for sheaves which uses the topological type instead of the Hilbert polynomials.\\

Recall the strategy of proof in the construction of the sheaf version of the Quot--scheme $\Quot^P_{E/X/S}$, parametrizing flat quotients of the sheaf $E$ with Hilbert polynomial $P$ (eg. consider \cite{huybrechtslehn}, Theorem 2.2.4). Reducing to the case
$$X=\P^n\to S=\Spec(k),$$
one constructs an embedding of functors
$$\Quotfunc^P_{E/X/S}\to \Grassfunc_k\left(H^0\left(E(m)\right),P\right),$$
where the right hand side is represented by the Grassmannian scheme. The Quot--scheme is then given as the component corresponding to $P$ of the flattening stratification of some suitable sheaf $F$ on the Grassmannian.\\

This proof carries over to the case of the Quot--scheme $\Quot^\tau_{E/X/S}$, parameterizing flat quotients of $E$ with topological type $\tau\in B(X)_\Q$. More precisely, the same construction as above gives an embedding
$$\Quotfunc^\tau_{E/X/S}\to \Grassfunc_k\left(H^0\left(E(m)\right),P\right),$$
where $P=P(\tau)$ is the Hilbert polynomial determined by $\tau$ (and the implicitly fixed relatively very ample line bundle). Since $\tau$ is locally constant in flat families, we may consider the component of the flattening stratification corresponding to $\tau$. The same argument as above shows that this component represents the Quot--functor.\\

We will mainly need the generalization to the Quiver Quot--scheme in the version using the topological type. Though all arguments hold for the version using Hilbert polynomials equally well.
\begin{definition}
Consider a projective scheme $X\to S$ over a noetherian base scheme $S$, a topological type $\tau \in B(X)_\Q^{Q_0}$ and a quiver sheaf $\E$ on $X$.
The Quiver Quot--functor
$$\Quotfunc^\tau_{\E/X/S}:(Sch/S)^\op\to Sets$$
assigns to a scheme $T\to S$ the set of equivalence classes of quotient quiver sheaves $q:\E_T\to\F$ on $X_T$ where $\F$ is flat over $T$ and the topological type of the fiberwise quiver sheaves $(\E)_t$ are equal to $\tau$.
\end{definition}

\begin{definition}
Consider a projective scheme $X\to S$ over a noetherian base scheme $S$, a tuple of polynomials $P \in \Q[T]^{Q_0}$ and a quiver sheaf $\E$ on $X$. Also, implicitly fix a relatively very ample line bundle on $X$.
The Quiver Quot--functor
$$\Quotfunc^P_{\E/X/S}:(Sch/S)^\op\to Sets$$
assigns to a scheme $T\to S$ the set of equivalence classes of quotient quiver sheaves $q:\E_T\to\F$ on $X_T$ where $\F$ is flat over $T$ and the Hilbert polynomials of the fibrewise sheaves at the vertices $(\E_i)_t$ are equal to $P_i$.
\end{definition}

\begin{proposition}\label{QQ:P:pullbackfunctor_representable}
Let $X\to S$ denote a projective scheme over a noetherian base scheme $S$, and let $E$ and $F$ denote coherent sheaves on $X$ such that $F$ is flat over $S$. Consider a morphism $f:E\to F$ and the functor
$$F:(Sch/S)^\op\to Sets,~ \left(T\to S\right) \mapsto \left\{\phi\in \Hom_S(T,X) \mid \Phi^*(f)=0\right\}.$$
This functor is represented by a closed subscheme of $X$.

\begin{proof}
Consider the functor
$$F':(Sch/S)^\op\to Sets,~\left(T\to S\right)\mapsto \Hom_{X_T}(E_T,F_T)$$
as in \cite{nitsure}, Theorem 5.8. It is represented by a scheme $V=\Spec \left(\Sym_{\Ocal_S}(Q)\right)$ for a suitable sheaf $Q$ on $S$.\\
In our situation, $f\in F'(X)$ corresponds to a morphism $\phi_f:X\to V$, so that Remark 5.9 in \cite{nitsure} implies that our functor $F$ is represented by the closed subscheme $\phi_f^{-1}(V_0)$, where $V_0\subset V$ is the image of the zero section $S\to V$.
\end{proof}
\end{proposition}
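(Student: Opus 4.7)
The plan is to reduce the representability question to the classical representability of the $\Hom$-functor between coherent sheaves, and then to cut out the vanishing locus as a preimage of a zero section.

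First, I would recall the classical result (as cited from Nitsure's exposition of Grothendieck's FGA) that for a projective morphism $X\to S$ with $S$ noetherian, and coherent sheaves $E,F$ on $X$ with $F$ flat over $S$, the functor
$$F':(Sch/S)^\op\to Sets,\quad (T\to S)\mapsto \Hom_{X_T}(E_T,F_T)$$
is representable by an $S$-affine linear scheme $V=\Spec_S\left(\Sym_{\Ocal_S}(Q)\right)$ for a suitable coherent $\Ocal_S$-module $Q$. The essential feature for our purposes is that $V$ carries a canonical zero section $V_0:S\hookrightarrow V$ (corresponding to the zero morphism in $F'(S)$), which is a closed embedding since $V$ is a linear space.

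Second, I would produce a classifying morphism $\phi_f:X\to V$ over $S$ from the given data $f:E\to F$. The trick is that, viewing $X$ as an $S$-scheme via its structure map, $f$ induces a canonical element of $F'(X)=\Hom_{X\times_S X}(p_1^*E,\,p_1^*F)$, namely $p_1^*f$, where $p_1:X\times_S X\to X$ is the first projection. Under the tautological bijection $F'(X)=\Hom_S(X,V)$, this element corresponds to the desired morphism $\phi_f$.

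Finally, I would identify the functor $F$ with the fiber product: for any $T\to S$ and $\phi\in\Hom_S(T,X)$, the pullback condition $\Phi^*(f)=0$ unwinds, via the universal property of $V$, to the statement that the composition $\phi_f\circ\phi:T\to V$ factors through the zero section $V_0\hookrightarrow V$. This is equivalent to $\phi$ factoring through the scheme-theoretic preimage $\phi_f^{-1}(V_0)\subset X$. Since $V_0\hookrightarrow V$ is a closed embedding and closed embeddings are stable under base change, $\phi_f^{-1}(V_0)$ is the desired closed subscheme of $X$ that represents $F$.

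The main potential obstacle is the representability of $F'$ itself, but this is exactly the content of the Nitsure reference invoked at the start; once it is granted, the rest is essentially a formal manipulation with universal properties. A secondary subtlety is making sure that the ``zero section'' of the linear scheme $V$ is literally a closed embedding (which is true since $V$ is affine over $S$ with symmetric-algebra structure sheaf), so that closedness of the preimage follows automatically.
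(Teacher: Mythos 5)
Your proposal is correct and follows essentially the same route as the paper: representability of the relative $\Hom$-functor via Nitsure's Theorem 5.8, the classifying morphism $\phi_f:X\to V$ attached to $f$, and the scheme-theoretic preimage of the zero section. You additionally spell out the identification of $f$ with the element $p_1^*f\in F'(X)$ and the fact that the zero section is a closed embedding, details the paper delegates to Remark 5.9 of the reference.
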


\begin{theorem}
The Quiver Quot--functor is represented by a projective scheme $\Quot^\tau_{\E/X/S}$, the so called Quiver Quot--scheme. It is given as the closed subscheme
$$\Quot^\tau_{\E/X/S}\subset \prod_{i\in Q_0} \Quot^{\tau_i}_{\E_i/X/S}$$
which satisfies the conditions to be compatible with the morphisms $\E_\alpha$.\\
A similar statement holds for the version using Hilbert polynomials.

\begin{proof}
By projection to the quotient sheaves at the vertices we get a natural transformation of functors
$$\Quotfunc^\tau_{\E/X/S}\to \prod_{i\in Q_0} \Quotfunc^{\tau_i}_{\E_i/X/S},$$
where the right hand side is clearly represented by the projective scheme
$$G=\prod_{i\in Q_0}G_i=\prod_{i\in Q_0} \Quot^{\tau_i}_{\E_i/X/S}.$$
Consider the universal quotients at the vertices
$$q_i:(\E_i)_{G_i}\to \U_i$$
on the Quot--schemes $G_i$, and denote by $e_i:\K_i\to (\E_i)_{G_i}$ their kernels.\\
For a morphism
$$\phi:T\to G$$
of noetherian schemes over $S$ and any arrow $\alpha:i\to j$ we obtain the following exact diagram via pullback.
$$\xymatrix{\Phi^*\K_i \ar[r]^{\Phi^*e_i} & (\E_i)_T \ar[r]^{\Phi^*q_i} \ar[d]_{(\E_\alpha)_T} & \Phi^*\U_i \ar[r] & 0 \\
\Phi^*\K_j \ar[r]^{\Phi^*e_j} & (\E_j)_T \ar[r]^{\Phi^*q_j} & \Phi^*\U_j \ar[r] & 0}$$
Hence the morphism $(\E_\alpha)_T$ descends to the quotients if and only if
$$0=\Phi^*q_j(\E_\alpha)_T\Phi^* e_i=\Phi^*(q_j(\E_\alpha)_G e_i),$$
which shows that the Quiver Quot--functor is given as the subfunctor of $\Hom_S(*,G)$ which satisfies finitely many pullback equations. It is thus represented by the intersection of the finitely many closed subschemes provided by Proposition \ref{QQ:P:pullbackfunctor_representable}.
\end{proof}
\end{theorem}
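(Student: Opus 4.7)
The plan is to exhibit the Quiver Quot-functor as a subfunctor of the product of the classical Quot-functors at the vertices, cut out by the condition that each arrow $\E_\alpha$ descends to a morphism between the universal quotients. Projection onto the quotient at each vertex gives a natural transformation
\begin{equation*}
\Quotfunc^\tau_{\E/X/S} \to \prod_{i\in Q_0} \Quotfunc^{\tau_i}_{\E_i/X/S},
\end{equation*}
and I already have at my disposal the representability of the right-hand side by the projective scheme $G=\prod_{i\in Q_0} G_i$, with $G_i=\Quot^{\tau_i}_{\E_i/X/S}$. Let $q_i:(\E_i)_{G}\to \U_i$ denote the pullbacks of the universal quotients to $G$, and let $\K_i=\ker(q_i)$ with inclusion $e_i:\K_i\to (\E_i)_G$.

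First I would spell out what needs to happen at the level of $T$-points: a morphism $\phi:T\to G$ factors through the Quiver Quot-functor precisely when, for each arrow $\alpha:i\to j$ of $Q$, the pulled back morphism $(\E_\alpha)_T$ sends $\phi^*\K_i$ into $\phi^*\K_j$, equivalently when the composition
\begin{equation*}
\phi^*\bigl(q_j \circ (\E_\alpha)_G \circ e_i\bigr) = \phi^*(q_j)\circ (\E_\alpha)_T\circ \phi^*(e_i)
\end{equation*}
vanishes. Thus the subfunctor of $\underline{G}$ carved out by the Quiver Quot condition is the intersection, over $\alpha \in Q_1$, of the pullback-vanishing loci for the morphisms $f_\alpha := q_j\circ (\E_\alpha)_G\circ e_i : \K_i \to \U_j$.

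Now the key input is Proposition \ref{QQ:P:pullbackfunctor_representable}: since $\U_j$ is flat over $G$ (being the universal quotient on a Quot-scheme), each such vanishing locus is represented by a closed subscheme of $G$. Intersecting the finitely many closed subschemes (one per arrow in the finite quiver) yields a closed subscheme $\Quot^\tau_{\E/X/S}\subset G$ which represents $\Quotfunc^\tau_{\E/X/S}$. Being a closed subscheme of the projective scheme $G$, it is itself projective over $S$. The version with fixed Hilbert polynomials $P_i$ in place of topological types $\tau_i$ is formally identical, once one invokes the classical $\Quot^{P_i}_{\E_i/X/S}$ instead of the topological-type variant discussed earlier in the appendix.

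The only subtle step is verifying that the descent condition for $\E_\alpha$ is indeed captured by the vanishing of $q_j\circ (\E_\alpha)_G\circ e_i$ in a functorial way compatible with arbitrary base change; this is a diagram chase using right-exactness of pullback applied to the short exact sequence $\K_i\to (\E_i)_G\to \U_i$, and is where the flatness of $\U_j$ over $G$ (needed so that Proposition \ref{QQ:P:pullbackfunctor_representable} applies with $F=\U_j$) enters. Everything else is a straightforward assembly of existing representability statements.
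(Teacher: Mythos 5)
Your proposal is correct and follows essentially the same route as the paper: realize the Quiver Quot--functor as the subfunctor of $\underline{G}$ cut out by the vanishing of the finitely many compositions $q_j\circ(\E_\alpha)_G\circ e_i:\K_i\to\U_j$, and represent each vanishing locus by a closed subscheme via Proposition \ref{QQ:P:pullbackfunctor_representable}. You even make explicit two points the paper leaves implicit, namely that the flatness of $\U_j$ over $G$ is what licenses the appeal to that proposition, and that projectivity follows because the intersection is closed in the projective scheme $G$.
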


\section{Langton's theorem}\label{S:langton}

This rather technical section is devoted to proving a version of Langton's theorem for multi--Gieseker semistable quiver sheaves. Suppose that we have a flat family of quiver sheaves over the spectrum of a discrete valuation ring. The theorem then states that if the fiber over the open point is semistable, the family can be modified over the closed point such that the fiber there becomes semistable as well.\\

For the proof we first introduce some technicalities, which are perhaps not of general interest outside of this section. For the proof of the theorem itself, we mostly follow the reasoning of \cite{huybrechtslehn}, Theorem 2.B.1.\\

Fix a quiver $Q$, a projective and smooth scheme $X$ over $k$, a dimension $d\leq \dim(X)$ and a stability condition $\left(\Lul,\sigma\right)$ on $Q$.\\

For a quiver sheaf $\E$ we consider the number
$$s(\E)=\max(\dim(\E_i))\in \N.$$
Following \cite{huybrechtslehn} Definition 1.6.1, the category $Q-\Coh(X)_s$ is given as the full subcategory of the category $Q-\Coh(X)$ of quiver sheaves $\E$ with $s(\E)\leq s$. Clearly,
$$Q-\Coh(X)_s\subset Q-\Coh(X)_t$$
for $s\leq t$ is a full subcategory, which obviously is closed under subobjects, quotients and extensions. Hence it forms a Serre subcategory, and we can consider the quotient category
$$Q-\Coh(X)_{d,d'}=Q-\Coh(X)_d/Q-\Coh(X)_{d'-1}.$$
For background on this construction we refer to \cite{gabriel}. The quotient category is again abelian and the canonical functor
$$Q-\Coh(X)_d\to Q-\Coh(X)_{d,d'}$$
is exact (\cite{gabriel}, Lemma III.1). Note that two objects $\E,\F$ in $Q-\Coh(X)_{d,d'}$ are isomorphic if and only if there exists an ordinary morphism $\phi:\E\to \F$ such that kernel and cokernel of $\phi$ are contained in $Q-\Coh(X)_{d'-1}$ (\cite{gabriel} Lemma III.4). In this case we say that they are isomorphic in dimension $d'$.\\
By additivity of Hilbert polynomials on exact sequences we thus get a well--defined map
$$P^\sigma:Q-\Coh(X)_{d,d'}\to \R[T]_{d,d'},$$
assigning to a quiver sheaf its multi--Hilbert polynomial with respect to the fixed stability condition. Here, $\R[T]_{d,d'}$ denotes the ordered vector space of polynomials of degree at most $d$ modulo polynomials of degree at most $d'-1$. By
$$p(\E)=p^\sigma(\E)$$
we denote the reduced version.\\

We say that $\E$ is pure in $Q-\Coh(X)_{d,d'}$ if $T_{d-1}(\E)=T_{d'-1}(\E)$. The definition of semistability and stability applies to the relative setting, where we replace the multi--Hilbert polynomial by its class in $\R[T]_{d,d'}$. This satisfies the properties of a stability condition in the sense of \cite{rudakov}.

\begin{remark}\label{L:R:slope_is_relative}
Clearly, for $d'=0$ we recover the definition of semistability in $Q-\Coh(X)$, and for $d'=d-1$ we obtain slope semistability. The case $d'=d$ is trivial because the reduced Hilbert polynomial is just the monomial $\frac{1}{d!}T^d$.
\end{remark}

\begin{remark}
Unless emphasized differently, we are concerned with ordinary quiver sheaves in the proof of Theorem \ref{L:T:langton}. That is, in writing $\F\in Q-\Coh(X)_{d,d'}$ we refer to an object in $Q-\Coh(X)_{d,d'}$ which is represented by the quiver sheaf $\F$.
\end{remark}

By Proposition 1.9 of \cite{rudakov}, there exist maximally destabilizing subobjects for objects in $Q-\Coh(X)_{d,d'}$. For technical reasons we want to make sure that these subobjects are represented by saturated quiver subsheaves.
\begin{lemma}\label{L:L:destab_saturated}
The maximally destabilizing subobject
$$\G\subset \F$$
of any quiver sheaf $\F\in Q-\Coh(X)_{d,d'}$ is represented by an actual quiver subsheaf $\G\subset \F$ which is saturated.\\
\begin{proof}
First consider any representative and the map $i:\G\to \F$ giving the inclusion. This provides the diagram
$$\xymatrix{\G \ar[d] \ar[r]^i & \F. \\
\G/\ker(i) \ar@{^(->}[ur]}$$
Since $\ker(i)$ is small we may replace $\G$ by $\G/\ker(i)$. In the second step consider the saturation
$$\G\subset \G_{\sat}\subset \F.$$
Assuming $\G_{\sat}/\G$ not to be small, we have a strict inclusion $\G\subset \G_{sat}$ in the quotient category. But this contradicts the maximality of $\G$ because $\G_{\sat}$ has larger multi--Hilbert polynomial (both ordinary and modulo smaller degrees). Hence, $\G$ and $\G_{\sat}$ are isomorphic in $\Coh_{d,d'}(X)$.
\end{proof}
\end{lemma}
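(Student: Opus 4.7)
The plan is to start from any representative of $\G$ and modify it in two stages until it becomes an honest saturated quiver subsheaf of $\F$ that represents the same subobject in the quotient category $Q-\Coh(X)_{d,d'}$.

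In the first stage, I would fix a quiver sheaf $\G$ together with a morphism $i \colon \G \to \F$ of ordinary quiver sheaves whose class in $Q-\Coh(X)_{d,d'}$ realizes the inclusion of the maximally destabilizing subobject. A morphism in the quotient category is a monomorphism precisely when its kernel lies in the Serre subcategory $Q-\Coh(X)_{d'-1}$, so $\ker(i)$ has sheaves of dimension at most $d'-1$ at every vertex. Passing from $\G$ to $\G/\ker(i)$ therefore does not change the isomorphism class in $Q-\Coh(X)_{d,d'}$ (by \cite{gabriel}, Lemma III.4), and $\G/\ker(i)$ is now an honest quiver subsheaf of $\F$ via $i$. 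After this reduction I may assume $\G \subseteq \F$ on the nose.

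In the second stage, I would form the saturation $\G \subseteq \G_\sat \subseteq \F$, which exists and is functorial at the quiver level by Lemma~\ref{QS:L:saturation_functorial}. The key step is to show that $\G_\sat/\G$ lies in $Q-\Coh(X)_{d'-1}$, i.e.\ has sheaves of dimension at most $d'-1$; once this is established, $\G$ and $\G_\sat$ become isomorphic in the quotient category, and $\G_\sat$ serves as the desired saturated representative. Suppose for contradiction that $\G_\sat/\G$ has a component of dimension in $\{d', \dots, d-1\}$. Saturation preserves the top-degree coefficient of every Hilbert polynomial, so $\G$ and $\G_\sat$ share the same multi--rank and
$$
p^\sigma_{\G_\sat} - p^\sigma_\G \;=\; \frac{P^\sigma_{\G_\sat/\G}}{\rk^\sigma(\G)}
$$
is a polynomial whose highest nonvanishing coefficient has degree at least $d'$ and is positive. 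Its class in the ordered vector space $\R[T]_{d,d'}$ is then strictly positive, so $\G_\sat$ would be a strictly more destabilizing subobject than $\G$ in the quotient category, contradicting the maximality of $\G$.

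The main obstacle I anticipate is the bookkeeping of the ordering on $\R[T]_{d,d'}$: one must confirm that the contribution of $\G_\sat/\G$ actually survives after quotienting out polynomials of degree at most $d'-1$, which is exactly why the case split on the dimension of $\G_\sat/\G$ is needed. Once this comparison has been set up in the quotient category, the remaining manipulations—factoring a morphism through its image, and replacing an object by an equivalent one modulo a small subobject—are routine adaptations of the classical saturation argument for ordinary coherent sheaves.
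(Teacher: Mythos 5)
Your proposal is correct and follows essentially the same two-step strategy as the paper: first quotient by the (small) kernel of the representing morphism to obtain an honest quiver subsheaf, then pass to the saturation and derive a contradiction with the maximality of the destabilizing subobject if $\G_\sat/\G$ fails to be small. Your explicit computation that $p^\sigma_{\G_\sat}-p^\sigma_\G=P^\sigma_{\G_\sat/\G}/\rk^\sigma(\G)$ is strictly positive in $\R[T]_{d,d'}$ just spells out the comparison the paper states more tersely.
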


We need another preparatory lemma.
\begin{lemma}\label{L:L:duals_isomorphic}
Suppose that $\E$ and $\F$ are quiver sheaves projective scheme $X$ of dimension $n$ such that $s(\E),s(\F)\leq d$ and such that there is an isomorphism $\varphi:\E\to \F$ in dimension $d-1$. Then the induced morphism
$$\varphi^D:\F^D\to \E^D$$
is an isomorphism.

\begin{proof}
It is sufficient to prove the corresponding assertion about sheaves on $X$ in a functorial way. To that end, consider the exact sequence
$$0\to \ker(\varphi) \to F \to \im(\varphi) \to 0,$$
where by assumption the codimension $c$ of the kernel is greater equal to $n-d$. Consider the induced exact sequence
\begin{align*}
\Ext^{c-1}(\ker(\varphi),\omega_X) &\to \Ext^{c}(\im(\varphi),\omega_X)\\
&\to \Ext^{c}(F,\omega_X)\to \Ext^{c}(\ker(\varphi),\omega_X).
\end{align*}
$$$$
By \cite{huybrechtslehn} Proposition 1.1.6 both terms involving the kernel vanish. Hence there is an isomorphism
$$\varphi^D:\Ext^{c}(\im(\varphi),\omega_X)\to F^D.$$
Applying the same argument to the cokernel sequence shows that
$$G^D \to \Ext^{c}(\im(\varphi),\omega_X)$$
is also an isomorphism, which finishes the proof.
\end{proof}
\end{lemma}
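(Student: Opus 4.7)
The plan is to reduce to the case of a single morphism of coherent sheaves. The dual $\E^D$ of a quiver sheaf is naturally defined vertex-by-vertex (producing a representation of the opposite quiver $Q^\op$), and the hypothesis that $\varphi:\E\to\F$ is an isomorphism in dimension $d-1$ translates, vertex by vertex, to each $\varphi_i:\E_i\to\F_i$ having both kernel and cokernel of dimension at most $d-1$. Functoriality of $\Ext^c(-,\omega_X)$ then reduces the claim to showing: for a morphism $\varphi:E\to F$ of coherent sheaves on $X$ with $s(E),s(F)\leq d$ and with $\ker(\varphi)$ and $\coker(\varphi)$ of dimension at most $d-1$, the induced map $\varphi^D=\Ext^c(\varphi,\omega_X):F^D\to E^D$ is an isomorphism, where $c=n-d$.

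The key step is to factor $\varphi$ through its image $I=\im(\varphi)$, obtaining the two short exact sequences
\begin{align*}
0 \to \ker(\varphi) \to E \to I \to 0,\\
0 \to I \to F \to \coker(\varphi) \to 0,
\end{align*}
and to apply the contravariant functor $\Ext^\bullet(-,\omega_X)$ to each. By \cite{huybrechtslehn}, Proposition 1.1.6, one has $\Ext^i(G,\omega_X)=0$ whenever $i<\mathrm{codim}(G)$; since $\ker(\varphi)$ and $\coker(\varphi)$ have codimension at least $c+1$, this immediately yields $\Ext^i(\ker(\varphi),\omega_X)=\Ext^i(\coker(\varphi),\omega_X)=0$ for all $i\leq c$.

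From the first long exact sequence, the vanishing of $\Ext^{c-1}(\ker(\varphi),\omega_X)$ and $\Ext^c(\ker(\varphi),\omega_X)$ collapses the middle portion into an isomorphism $\Ext^c(I,\omega_X)\xrightarrow{\sim} E^D$. The analogous argument applied to the second sequence should give $F^D\xrightarrow{\sim}\Ext^c(I,\omega_X)$, and composing the two produces the desired $\varphi^D:F^D\xrightarrow{\sim} E^D$. The main obstacle I would flag is the surjectivity in the second sequence: while $\Ext^c(\coker(\varphi),\omega_X)=0$ directly supplies injectivity of $\Ext^c(F,\omega_X)\to\Ext^c(I,\omega_X)$, surjectivity requires that the connecting map $\Ext^c(I,\omega_X)\to\Ext^{c+1}(\coker(\varphi),\omega_X)$ vanishes, which is not given by Proposition 1.1.6 alone. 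I expect this to follow from purity of the top Ext sheaves on the smooth scheme $X$ (cf.\ \cite{huybrechtslehn}, 1.1.10), or from an implicit purity assumption on $E$ and $F$ natural to the Langton setup, so that no nontrivial map from a pure sheaf of dimension $d$ into one of dimension $\leq d-1$ can exist.
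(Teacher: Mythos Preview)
Your approach is exactly the one taken in the paper: reduce to a single sheaf morphism, factor through the image, and apply the long exact $\Ext^\bullet(-,\omega_X)$ sequence together with the vanishing from \cite{huybrechtslehn}, Proposition~1.1.6. The paper simply writes ``applying the same argument to the cokernel sequence'' and leaves it at that.

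The asymmetry you flag between the two sequences is real, and the paper glosses over it. For the kernel sequence
\[
0 \to \ker(\varphi) \to E \to I \to 0
\]
the two outer terms $\Ext^{c-1}(\ker\varphi,\omega_X)$ and $\Ext^c(\ker\varphi,\omega_X)$ both vanish by~1.1.6(i), giving $I^D \cong E^D$ outright. For the cokernel sequence
\[
0 \to I \to F \to \coker(\varphi) \to 0,
\]
only $\Ext^c(\coker\varphi,\omega_X)=0$ follows directly, and one must still kill the connecting map $\Ext^c(I,\omega_X)\to\Ext^{c+1}(\coker\varphi,\omega_X)$.

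Your proposed fix is correct and does not require any extra purity hypothesis on $E$ or $F$. Proposition~1.1.6 in \cite{huybrechtslehn} also records that $\Ext^c(I,\omega_X)$ is pure of dimension $d$ (or zero), while $\Ext^{c+1}(\coker\varphi,\omega_X)$ has codimension at least $c+1$, hence dimension at most $d-1$. A map from a pure sheaf of dimension $d$ to a sheaf of dimension $\leq d-1$ is zero, so the connecting map vanishes and $F^D \cong I^D$ as needed. You may drop the tentative remark about an implicit purity assumption: none is required.
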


Let $X$ denote a projective scheme over an algebraically closed field $k$ of characteristic 0, and consider a field extension $k\subset K$. Denote by $\E_K$ the base change of a quiver sheaf $\E$ on $X$, and by $\Lul_K$ the base change of the tuple $\Lul$. We now show that semistability is preserved by field extensions. This is a variant of \cite{huybrechtslehn}, Theorem 1.3.7.
\begin{proposition}\label{L:P:extensions_semistability}
Let $\E$ denote a pure quiver sheaf on a projective scheme $X$ over $k$ which is semistable with respect to some stability condition $(\Lul,\sigma)$. Consider a finitely generated field extension $k\subset K$. The pullback $\E_K$ is a semistable quiver sheaf on $X_K$ with respect to $\left(\Lul_K,\sigma\right)$.

\begin{proof}
We even claim that the Harder--Narasimhan filtrations are compatible in the sense that
$$\HN_i\left(\E_K\right)=\HN_i\left(\E\right)_K.$$
First note that any morphism $l\to L$ of fields induces a flat morphism $\Spec(L)\to \Spec(l)$, so that
$$h^0\left(X_L,E_L\right)=h^0\left(X_l,E_l\right)$$
for any coherent sheaf $E_l$ on $X_l$ (compare with the proof of \cite{hartshorne} Proposition III.9.3). In particular, this implies that the Hilbert polynomials for quiver sheaves $\E$ on $X=X_k$ remain identical in the sense that
$$p^{\left(\Lul,\sigma\right)}\left(\E\right)=p^{\left(\Lul_K,\sigma\right)}\left(\E_K\right).$$
Further, the flatness implies that quiver subsheaves and quiver subquotients of $\E$ get mapped to quiver subsheaves and quiver subquotients of $\E_K$ respectively.\\

A first consequence of these remarks is that if $\E_K$ is semistable, so is $\E$.\\
If we can then show that the Harder--Narasimhan filtration of $\E_K$ is induced by some filtration $\F^*$ of $\E$ in the sense that $\HN_i\left(\E_K\right)=\left(\F^i\right)_K$, then $\F^*$ satisfies the properties of the Harder--Narasimhan filtration of $\E$.\\

By induction on the number of generators of the field extension $k\subset K$ we reduce to the case that $K=k(x)$, where $x$ is either transcendental or algebraic and hence separable over $k$ (note that $k$ is perfect).\\

In the separable case we pass to the normal hull, so that we may assume the extension to be Galois. Thus, $\HN_i(\E_K)$ is induced by a quiver subsheaf $\F^i\subset \E$ if and only if $\HN_i(\E_K)$ is invariant under the induced action of $G=\Gal\left(K/k\right)$ on $\E_K$. To see this, we note that the corresponding descent question for sheaves is locally a question whether a submodule $N\subset M\otimes_k K$ over $R\otimes_k K$, where $M$ is a module over some $k$--algebra $R$, is induced by a submodule $N'\subset M$ if $N$ is invariant under the induced action of $G$ on $M\otimes_k K$. This is true by \cite{milne}, Proposition 16.7, with $N'=M\otimes_k k\cap N$. Clearly, these descents are also respected by induced morphisms $f\otimes_k K:M_1\otimes_k K\to M_2\otimes_k K$.\\
In our situation, we can see that for any $g\in G$ the $g*\HN_i\left(\E_K\right)$ satisfy the properties of the Harder--Narasimhan filtration by applying our initial remarks to the induced morphism $g:\Spec(K)\to \Spec(K)$. Hence the Harder--Narasimhan filtration is invariant and we are done.\\

In the case where $x$ is transcendental over $k$, i.e. $K=k(x)$ is the field of rational functions, we can use a similar argument using the relative automorphism group $G=\Aut\left(K/k\right)$, once we note that the relative automorphisms
$$x\mapsto ax$$
for $a\in k^*$ have $k$ as their fixed point field (this follows from the fact that there are no invariant polynomials by using \cite{mukai} Proposition 6.2).
\end{proof}
\end{proposition}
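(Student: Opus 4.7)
The plan is to show that the Harder--Narasimhan filtration of $\E_K$ is the pullback of a filtration of $\E$; since $\E$ is semistable, its HN filtration is trivial, which forces the same for $\E_K$. The first step is to record the consequences of flatness of the base change $\Spec(K) \to \Spec(k)$: cohomology commutes with flat base change, so Hilbert polynomials (and hence the reduced multi--Hilbert polynomials $p^{(\Lul,\sigma)}$) are preserved, and pullback sends quiver subsheaves to quiver subsheaves and quotients to quotients. Two immediate consequences: if $\E_K$ is semistable then so is $\E$, and if the HN filtration of $\E_K$ arises as the pullback of a filtration $\F^\bullet$ of $\E$, then the multi--Hilbert polynomials of the subquotients agree, so $\F^\bullet$ satisfies the defining properties of the HN filtration of $\E$, and the claim follows.

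Next I would reduce to a simple extension by induction on the number of generators of $K/k$, leaving two cases: $K=k(x)$ with $x$ algebraic over $k$ (hence separable, since $\mbox{char}(k)=0$) or $x$ transcendental. In the separable case, after passing to the Galois closure, one may assume $K/k$ is Galois with group $G=\Gal(K/k)$. The key descent principle, applied locally, is that a submodule $N \subset M\otimes_k K$ over $R\otimes_k K$ descends to a submodule of $M$ iff it is $G$--invariant for the induced action; this descent is also automatically compatible with morphisms of $R$--modules of the form $f\otimes_k K$, so it extends from sheaves to quiver sheaves (the morphisms $\E_\alpha$ descend along with the subsheaves). Uniqueness of the HN filtration then implies $g\cdot \HN_i(\E_K)$ also satisfies the defining HN properties, so it coincides with $\HN_i(\E_K)$; hence the filtration is $G$--invariant and descends.

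For the transcendental case $K=k(x)$, I would replace $\Gal(K/k)$ by the relative automorphism group $G=\Aut(K/k)$. The key input is that the fixed field $K^G$ equals $k$ (because, for instance, the dilations $x\mapsto ax$ for $a\in k^\ast$ already fix only $k$, since there are no nonconstant $\Gm$--invariant polynomials), so the same descent principle applies. The uniqueness of the Harder--Narasimhan filtration again implies $G$--invariance, and hence the filtration descends to a filtration of $\E$.

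The main obstacle is the quiver--sheaf version of faithfully flat descent: one must check not only that each $\HN_i(\E_K)_j$ descends at each vertex $j \in Q_0$, but that the descended subsheaves are compatible with the structure morphisms $\E_\alpha$. This is what makes it essential to work with descent at the level of modules with a fixed ring action (here, the path algebra in the sense of Section~\ref{SS:quiver_cat}), rather than merely vertex by vertex; once framed this way, the compatibility is automatic from the functoriality of the descent correspondence. A small technical point to verify is that saturation and the HN filtration in the quotient category $Q-\Coh(X)_{d,d'}$, which are invoked elsewhere, are compatible with base change; but since both are characterized by the same universal properties in $Q-\Coh(X_K)_{d,d'}$ and flat pullback preserves dimensions of supports, this is straightforward.
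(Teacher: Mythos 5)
Your proposal follows essentially the same route as the paper's proof: flat base change preserves Hilbert polynomials and subobjects, reduction to simple extensions by induction, Galois descent of the (unique, hence invariant) Harder--Narasimhan filtration in the algebraic case, and descent via $\Aut(K/k)$ with fixed field $k$ in the transcendental case. The only addition is your explicit remark on compatibility of the descended subsheaves with the structure morphisms $\E_\alpha$, which the paper likewise handles by noting that descent is functorial in morphisms of the form $f\otimes_k K$.
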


\begin{lemma}\label{L:L:local_flatness_criterion}
Let $X\to S$ denote a morhpism of finite type between noetherian schemes. Suppose that $S_0\subset S$ is a closed subscheme defined by a nilpotent ideal sheaf $\I\subset \Ocal_S$. Then a quiver sheaf $\F$ on $X$ is flat over $S$ if and only if it is flat over $S_0$ and the natural multiplication map
$$\I\otimes _S \F \to \I F$$
is an isomorphism.
\begin{proof}
This follows from the sheaf version \cite{huybrechtslehn} Lemma 2.1.3 once we note that the notion of flatness can be checked at each vertex and the natural multiplication map for sheaves extends to quiver sheaves.
\end{proof}
\end{lemma}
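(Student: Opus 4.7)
The plan is to reduce the statement to the corresponding result for ordinary coherent sheaves by exploiting the fact that both the flatness condition and the natural multiplication map decompose vertex-by-vertex. No new ideas beyond the sheaf case are needed; the work is entirely in unpacking the definitions.

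First I would observe that, by the definition of a flat family of quiver sheaves adopted earlier in the paper, the quiver sheaf $\F$ is flat over $S$ (respectively over $S_0$) if and only if each component sheaf $\F_i$ is flat over $S$ (respectively over $S_0$) for every vertex $i\in Q_0$. So the flatness hypothesis on one side and the flatness conclusion on the other side both split into conjunctions of statements indexed by the vertices of $Q$.

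Next, the natural multiplication map $\I\otimes_S\F\to \I\F$ is really a morphism of quasi--coherent quiver sheaves: at each vertex $i$ it is the usual multiplication map $\I\otimes_S \F_i\to \I\F_i$, and for every arrow $\alpha:i\to j$ of $Q$ naturality of tensor product with respect to the structure morphism $\F_\alpha:\F_i\to \F_j$ produces a commuting square showing that the componentwise multiplication maps assemble into a morphism in $Q$--$\Coh(X)$. Since a morphism of quiver sheaves is an isomorphism if and only if each of its components is an isomorphism, the isomorphism condition also decomposes vertex-by-vertex.

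Combining these two reductions, the quiver-sheaf statement becomes the conjunction over $i\in Q_0$ of the ordinary sheaf statements, each of which is exactly the classical local flatness criterion (\cite{huybrechtslehn}, Lemma 2.1.3) applied to $\F_i$. There is essentially no obstacle here: once one has formalized that ``flat'' and ``isomorphism'' are detected vertex-wise, the result is a formal consequence of the sheaf version. The only point requiring mild care is verifying that the vertex-wise multiplication maps are compatible with the arrows $\F_\alpha$, but this is immediate from the bifunctoriality of $\otimes$.
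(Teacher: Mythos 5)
Your proposal is correct and follows exactly the paper's own argument: flatness of a quiver sheaf is by definition a vertex-wise condition, the multiplication map is the vertex-wise one assembled into a morphism of quiver sheaves (which is an isomorphism iff each component is), and so the statement reduces to \cite{huybrechtslehn}, Lemma 2.1.3, applied at each vertex. Nothing to add.
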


We are ready to prove Langton's theorem for families of semistable quiver sheaves.
\begin{theorem}\label{L:T:langton}
Let $R$ denote a discrete valuation ring with maximal ideal $\m=(\pi)$, field of fractions $K$, and residue field $k$.\\
Let $\F$ be an $R$--flat family of $d$--dimensional quiver sheaves on $X$ such that $\F_K=\F\otimes_R K$ is semistable in $Q-\Coh(X_K)_{d,d'}$ for some $d'<d$. Then there exists a quiver subsheaf $\E\subset \F$ such that $\E_K=\F_K$ and such that $\E_k$ is also semistable in $Q-\Coh(X)_{d,d'}$.

\begin{proof}
We prove the following (stronger) auxiliary statement:\\
Suppose that for $d'\leq \delta<d$ we have that $\F_k$ is semistable in $Q-\Coh(X)_{d,\delta+1}$ in addition to the assumptions of the theorem. Then there is a quiver subsheaf $\E\subset \F$ such that $\E_K=\F_K$ and $\E_k$ is semistable in $Q-\Coh(X)_{d,\delta}$.\\

Assuming that the auxiliary statement is true we obtain the statement of the theorem by induction on $\delta$, where the case $\delta=d-1$ is trivial (compare with Remark \ref{L:R:slope_is_relative}). From now on we assume that the auxiliary statement is false. By recursion we will define a descending sequence of quiver sheaves on $X_R$
$$\F=\F^0\supset \F^1 \supset \F^2 \supset \ldots,$$
where $\F^n_K=\F_K$ for all $n$. Under the assumption that the auxiliary statement is false each $\F^n_k$ is unstable in $Q-\Coh(X)_{d,\delta}$.\\
Suppose that $\F^n$ was already defined. Let $\K^n\subset \F^n_k$ be a saturated representative for the maximally destabilizing quiver subsheaf (given by Lemma \ref{L:L:destab_saturated}) and define $\G^n=\F^n_k/\K^n$ (note that $\G^n$ is pure). Then $\F^{n+1}$ is given as
$$\F^{n+1}=\ker\left(\F^n\to \F^n_k\to \G^n \right).$$
Note for later use that this implies $\K^{n-1}=\F^n/\pi \F^{n-1}$. Because outside of the closed point $\Spec(k)$, that is on $X_K$, every section gets mapped to zero we have $\F^{n+1}_K=\F^n_K$.\\
There is an obvious exact sequence
$$(S1): 0 \to \K^n \to \F^n_k \to \G^n \to 0.$$
Furthermore, we may now construct a second exact sequence
$$(S2): 0 \to \G^n \to \F^{n+1}_k \to \K^n \to 0.$$
Note that using induction these two exact sequences imply
$$(EQ1): P^\sigma(\F^n_k)=P^\sigma(\F_k)\in \R[T].$$

To construct the second sequence first note that $\F^n_k=\F^n/\pi\F^n$, where we denote the projection map as
$$q:\F^n\to \F^n_k=\F^n/\pi\F^n.$$
By construction of $\F^{n+1}$ and the universal property of the kernel we have an induced map $q_0:\F^{n+1}\to \K^n$. Note that $\pi\F^n\subset \F^{n+1}$, and $\pi\F^n$ clearly gets annihilated by $q$. Hence we obtain an induced morphism
$$\xymatrix{\F^{n+1}\ar[r]^{q_0} \ar[d] & \K^n \\
\F^{n+1}/\pi\F^n \ar[ur]_{q''}.}$$
Further note that $\pi\F^{n+1}\subset \pi\F^n$, so that $q''$ induces a map
$$q':\F^{n+1}_k=\F^{n+1}/\pi\F^{n+1}\to \K^n.$$
The kernel of this map consists of (classes of) sections of $\F^{n+1}$ which get annihilated by $q_0$. Hence
$$\ker(q')=\ker(q_0)/\pi\F^{n+1}=\pi\F^n/\pi\F^{n+1}.$$
By flatness, we further know that $\pi\F^n\simeq \F^n$ and $\pi\F^{n+1}\simeq \F^{n+1}$. Since the maps used for the construction of $\F^{n+1}$ are surjective we also have $\G^n\simeq \F^n/\F^{n+1}$. This yields $\ker(q')\simeq \G^n$.\\
To finish the construction of $(S2)$ observe that the surjectivity of $q'$ is inherited from $q_0$.\\

We define $\C^n=\G^n\cap \K^{n+1}$ considered as quiver subsheaves of $\F_k^{n+1}$ via the exact sequences above. Then the exact sequence $(S2)$ and the obvious inclusions $\C^n\subset \G^n$ and $\C^n\subset \K^{n+1}$ induce a map $\K^{n+1}/\C^n\to \K^n$
$$\xymatrix{0 \ar[r] & \G^n \ar[r] & \F^{n+1}_k \ar[r] & \K^n \ar[r] & 0\\
0 \ar[r] & \C^n \ar[r] \ar@{^(->}[u] & \K^{n+1} \ar[r] \ar@{^(->}[u] & \K^{n+1}/\C^n \ar[r] \ar@{-->}[u]_{i} & 0.}$$
A proof by diagram chasing shows that $i$ is an inclusion (note that $\C^n$ is a pullback). In a similar fashion, using sequence $(S1)$, we get a monomorphism $\G^n/\C^n\to \G^{n+1}$.\\

Assuming that $\C^n$ is not isomorphic to zero in $Q-\Coh(X_k)_{d,\delta}$ yields a contradiction as follows. In case $\C^n=\K^{n+1}$ we get the inequalities
$$(IE1): p^\sigma(\K^{n+1})=p^\sigma(\C^n)\leq p^\sigma_{\max}(\G^n)<p^\sigma(\K^n)\in \R[T]_{d,\delta}.$$
The rightmost inequality holds because $p^\sigma_{\max}(\G^n)$ and $p^\sigma(\K^n)$ are the second and first Harder--Narasimhan weights of $\F^n_k$ respectively. In case $\C^n\neq \K^{n+1}$ we have the inequalities
$$(IE2): p^\sigma(\C^n)< p^\sigma(\K^{n+1})< p^\sigma(\K^{n+1}/\C^n)\leq p^\sigma(\K^n)\in \R[T]_{d,\delta}.$$
The first and last inequalities hold by the defining property of a destabilizing subobject. Note that the first inequality is strict because of the assumption, and the latter inequality also uses the inclusion constructed above. The inequality in the middle is implied by the first one using the obvious exact sequence.\\

In any case we have
$$(IE3): p^\sigma(\K^{n+1})\leq p^\sigma(\K^n)\in \R[T]_{d,\delta}.$$
If $\C^n$ is not isomorphic to zero this holds by $(IE1)$ and $(IE2)$. And if $\C^n\simeq 0$ this holds because we have the inclusion $\K^{n+1}=\K^{n+1}/\C^n\subset \K^n$.\\

Since we assume $\F_k$ to be semistable in $Q-\Coh(X)_{d,\delta+1}$ we have that $p^\sigma(\K^n)\leq p^\sigma(\F_k^n)\in \R[T]_{d,\delta+1}$. But strict inequality would also imply
$$p^\sigma(\K^n)<p^\sigma(\F_k^n)\in \R[T]_{d,\delta},$$
contradicting the fact that $\F^n_k$ is not semistable. Together with $(EQ1)$ we thus arrive at
$$(EQ2): p^\sigma(\K^n)=p^\sigma(\F_k)\in \R[T]_{d,\delta+1}.$$
Hence
$$p^\sigma(\K^n)-p^\sigma(\F_k)=\beta_n T^{\delta}\in \R[T]_{d,\delta}$$
for some $\beta_n\in \R$. We need some properties of the sequence $\beta_n\in \R$.\\
Because $p^\sigma(\K^n)>p^\sigma(\F^n_k)=p^\sigma(\F_k)\in \R[T]_{d,\delta}$, which holds by unstability of $\F^n_k$ and $(EQ1)$, the $\beta_n$ are strictly positive, and by $(IE3)$ their sequence is decreasing. Finally, the possible values for $\beta_n$ are contained in discrete set (consider Lemma \ref{L:L:betadiscrete}). Hence, $\beta_n$ must become stationary for $n\gg 0$, and without loss of generality we restrict ourselves to such $n$ in the following.\\
This implies $p^\sigma(\K^n)=p^\sigma(\K^{n+1})$ in $\R[T]_{d,\delta}$, and because this contradicts both $(IE1)$ and $(IE2)$ it implies furthermore that
$$\G^n\cap \K^{n+1}=\C^n\simeq 0.$$
Observe that $\C^n$ is a quiver subsheaf of a pure quiver sheaf of dimension $d$ ($\G^n$ or $\K^{n+1}$). But since it can not have dimension $d$ it must equal zero, so that there are inclusions $i:\K^{n+1}\to \K^n$ as well as $j:\G^n\to \G^{n+1}$ in $Q-\Coh(X_k)_{d,\delta}$.\\
Note that $P^\sigma(\K^n)=P^\sigma(\K^{n+1})\in \R[T]_{d,\delta}$ as well for large enough $n$ because the rank of the $\K^n$ can not descend forever. Combined with $(EQ1)$ this gives
$$(EQ3): P^\sigma(\G^n)=P^\sigma(\G^{n+1})\in \R[T]_{d,\delta}.$$

Since $\G^n$ is pure the kernel of $j$ is either zero or of dimension $d$. Clearly the latter is absurd, and hence there are actual inclusions $j:\G^n\subset \G^{n+1}$. Thus we get exact sequences
$$0\to \G^n \to \G^{n+1} \to \G^n/\G^{n+1} \to 0,$$
where $(EQ3)$ implies that the rightmost term is isomorphic to zero in the category $Q-\Coh(X_k)_{d,\delta}$. So the $\G^n$ are isomorphic in dimension $\delta$ and thus in particular in dimension $d-1$. By Lemma \ref{L:L:duals_isomorphic} their reflexive hulls $(\G^n)^{DD}$ are all isomorphic, so that the sequence
$$\G^0\subset \G^1 \subset \G^2 \subset \ldots$$
is an increasing sequence of quiver subsheaves of the fixed quiver sheaf which is given as this reflexive hull, and must thus become stationary. Again we restrict to the case where $n\gg 0$ is such that this is the case and set $\G=\G^n$.\\
We can see that this implies that the sequences $(S1)$ and $(S2)$ split as follows. Consider the diagram
$$\xymatrix{0 \ar[r] & \K^{n+1} \ar[r] \ar@{=}[d] & \K^{n+1}\oplus \G^n \ar[r] \ar[d] & \G^n \ar[r] & 0 \\
0 \ar[r] & \K^{n+1} \ar[r] & \F^{n+1}_k \ar[r] & \G^{n+1} \ar[r] & 0,}$$
where the first row is given by $(S1)$ and the middle morphism as the sum of the injective morphisms in $(S1)$ and $(S2)$. Thus there is an induced morphism $\G^n\to \G^{n+1}$. By construction, this is exactly the inclusion morphism $\G^n\subset \G^{n+1}$, which turned out to be an isomorphism, so that
$$\F^{n+1}_k\simeq \K^{n+1}\oplus \G^n \simeq \K\oplus \G$$
for all $n$ large enough.\\

Define $\E^n=\F/\F^n$. Because $\pi\F^n\subset \F^{n+1}$, and so by induction $\pi^n\F\subset \F^n$, there is a well--defined quotient map
$$\F/\pi^n\F \to \F/\F^n=\E^n.$$
By the local flatness criterion for quiver sheaves \ref{L:L:local_flatness_criterion} we thus know that $\E^n$ is flat over $R/\pi^n$.\\

Next, we want to show $\E^n_k\simeq \G$, which gives us the topological type of $\E^n$.\\
Using Noether's isomorphism theorem we obtain
\begin{align*}
\E^n_k&=(\F/\F^n)/(\pi(\F/\F^n))=(\F/\F^n)/((\pi\F+\F^n)/\F^n)\simeq \F/(\pi\F+\F^n)\\
&\simeq (\F/\pi\F)/((\F^n+\pi\F)/\pi \F)=\F_k/\im(\alpha),
\end{align*}
where $\alpha$ is the composition of the morphisms
$$\alpha_n:\F^n_k\to \F^{n-1}_k,~f+\pi\F^n\mapsto f+\pi \F^{n-1}.$$
Note that
\begin{align*}
\ker(\alpha_n)&=\pi \F^{n-1}/\pi \F^n\simeq \F^{n-1}/\F^n\simeq \G^{n-1}\\
\im(\alpha_n)&=\F^n/\pi\F^{n-1}=\K^{n-1},
\end{align*}
which implies that we get decompositions
$$\F^n_k=\K^n\oplus \G^{n-1}=\im(\alpha_{n+1}) \oplus \ker(\alpha_n).$$
Hence $\im(\alpha)=\im(\alpha_1)=\K^0$ and $\E^n_k=\G^0\simeq \G$ as desired.\\

Summarizing these results we have shown that $\E^n$ is a quotient
$$\F_{R_n}\to \E^n \to 0,$$
which is flat over $R_n=R/\pi^n$. This corresponds to a morphism $\phi$ over $\Spec(R)$ which fits into a diagram
$$\xymatrix{\Quot^{\tau(\G)}_{\F/X_R/R}\ar[rr]^\sigma&&\Spec(R).\\
&\Spec(R_n)\ar[ul]^\phi \ar@{^(->}[ur]&}$$
Hence the image of $\sigma$ contains the closed subschemes $\Spec(R_n)$ of $\Spec(R)$ for all $n$, which is only possible if $\sigma$ is surjective.\\

The morphism $\Spec(K)\to \Spec(R)$ corresponds to a point $y\in \Spec(R)$ such that $k(y)\subset K$. By surjectivity of $\sigma$ we can find an inverse image $x\in \Quot^{\tau(\G)}_{\F/X_R/R}(\F,\tau(\G))$. Let $K'$ denote the common extension of the induced field extension $k(x)\subset k(y)$ and the extension $k(x)\subset K$, so that
$$\xymatrix{ & K' & \\
K \ar@{^(->}[ur] & & k(y) \ar@{_(->}[ul] \\
& k(x) \ar@{_(->}[ul] \ar@{^(->}[ur] &}$$
Reversing the correspondences used above the extension $k(y)\subset K'$ gives a morphism $\Spec(K')\to \Quot^{\tau(\G)}_{\F/X_R/R}$ over $\Spec(R)$ and hence a quotient
$$\F_{K'}\to \U \to 0$$
with topological type $\tau(\G)$. By Proposition \ref{L:P:extensions_semistability} we know that $\F_{K'}$ is semistable. But by our assumptions, the Hilbert polynomials satisfy the inequality
$$p^\sigma(\U)=p^\sigma(\G)>p^\sigma(\F^n_k)=p^\sigma(\F_{K'}).$$
This is a contradiction.
\end{proof}
\end{theorem}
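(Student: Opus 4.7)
The plan is to adapt the classical Langton argument (as in \cite{huybrechtslehn}, Theorem 2.B.1) to the quiver--sheaf quotient category $Q\text{-}\Coh(X)_{d,d'}$, using multi--Gieseker Hilbert polynomials modulo polynomials of degree $<d'$. I would prove by downward induction on $\delta$ the stronger auxiliary claim: if $\F_K$ is semistable in $Q\text{-}\Coh(X_K)_{d,d'}$ and $\F_k$ is already semistable in $Q\text{-}\Coh(X)_{d,\delta+1}$ (for some $d'\leq\delta<d$), then there is $\E\subset \F$ with $\E_K=\F_K$ and $\E_k$ semistable in $Q\text{-}\Coh(X)_{d,\delta}$. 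The base case $\delta=d-1$ is vacuous since the reduced multi--Hilbert polynomial in $\R[T]_{d,d-1}$ is the monomial $\frac{1}{d!}T^d$, which yields no stability condition.

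For the inductive step, assume to the contrary that no such $\E$ exists. Recursively build a descending chain $\F=\F^0\supset \F^1\supset \cdots$ by letting $\K^n\subset \F^n_k$ be a saturated representative (Lemma \ref{L:L:destab_saturated}) of the maximal destabilizing subobject in $Q\text{-}\Coh(X)_{d,\delta}$, setting $\G^n=\F^n_k/\K^n$, and defining $\F^{n+1}=\ker(\F^n\twoheadrightarrow \F^n_k\twoheadrightarrow \G^n)$. The key structural input is that, in addition to the tautological sequence $0\to \K^n\to \F^n_k\to \G^n\to 0$, one has a flipped sequence $0\to \G^n\to \F^{n+1}_k\to \K^n\to 0$; this would be verified by an elementary diagram chase using $R$--flatness of $\F^n$ (so $\pi\F^n\simeq \F^n$) exactly as in the sheaf case. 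These two sequences would give $P^\sigma(\F^n_k)=P^\sigma(\F_k)$ for all $n$.

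Next I would control the subobjects $\C^n:=\G^n\cap \K^{n+1}\subset \F^{n+1}_k$: assuming $\C^n$ is nonzero in the quotient category, comparing $p^\sigma(\C^n)$, $p^\sigma(\K^{n+1})$, $p^\sigma(\K^{n+1}/\C^n)$, $p^\sigma(\K^n)$ via the maximal--destabilizing property yields a strict decrease $p^\sigma(\K^{n+1})<p^\sigma(\K^n)$ in $\R[T]_{d,\delta}$, and in any case one obtains the non--strict inequality $p^\sigma(\K^{n+1})\leq p^\sigma(\K^n)$. Combined with semistability of $\F_k$ in $Q\text{-}\Coh(X)_{d,\delta+1}$, one shows $p^\sigma(\K^n)-p^\sigma(\F_k)=\beta_n T^\delta$ for a strictly positive, weakly decreasing sequence $\beta_n$. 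The discreteness of the possible values of $\beta_n$ (one would check this analogously to \cite{huybrechtslehn}, Lemma 2.B.2, adapted to multi--Hilbert polynomials) forces $\beta_n$ to stabilize, hence $\C^n=0$ in the quotient category and we get inclusions $\K^{n+1}\hookrightarrow \K^n$ and $\G^n\hookrightarrow \G^{n+1}$ in $Q\text{-}\Coh(X_k)_{d,\delta}$.

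For large $n$ these become isomorphisms in $Q\text{-}\Coh(X)_{d,d-1}$; then Lemma \ref{L:L:duals_isomorphic}, applied vertex--by--vertex to the $\G^n$, identifies all their reflexive hulls, so the ascending chain of quiver subsheaves $\G^n\subset(\G^n)^{DD}$ stabilizes to a fixed quiver sheaf $\G$. The two exact sequences then split, yielding $\F^{n+1}_k\simeq \K\oplus\G$. Setting $\E^n:=\F/\F^n$, the local flatness criterion for quiver sheaves (Lemma \ref{L:L:local_flatness_criterion}) shows $\E^n$ is flat over $R_n=R/\pi^n$ with $\E^n_k\simeq \G$. This produces morphisms $\Spec(R_n)\to \Quot^{\tau(\G)}_{\F/X_R/R}$ whose images cover every infinitesimal neighborhood of the closed point of $\Spec(R)$, forcing the structure morphism $\Quot^{\tau(\G)}_{\F/X_R/R}\to \Spec(R)$ to be surjective. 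Lifting the generic point $\Spec(K)\to\Spec(R)$ to $\Quot$ and passing to a common field extension $K'$ yields a quotient $\F_{K'}\twoheadrightarrow \U$ with $p^\sigma(\U)=p^\sigma(\G)>p^\sigma(\F_k)=p^\sigma(\F_{K'})$, contradicting the semistability of $\F_{K'}$ established via Proposition \ref{L:P:extensions_semistability}. The hard part will be verifying that the auxiliary exact sequence $(S2)$ and the diagram--chases around $\C^n$ genuinely go through in the quiver setting -- this requires that kernels, cokernels, saturations, and the maximal--destabilizing construction are all compatible with the vertex--wise structure, which is where Lemma \ref{L:L:destab_saturated} and the Serre subcategory formalism from \cite{gabriel} do the real work.
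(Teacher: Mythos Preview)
Your proposal is correct and follows essentially the same route as the paper's proof: the downward induction on $\delta$, the elementary modification sequence $\F^{n+1}=\ker(\F^n\to\G^n)$, the flipped exact sequence $(S2)$, the analysis of $\C^n$, the stabilization of $\beta_n$, the reflexive--hull argument via Lemma~\ref{L:L:duals_isomorphic}, and the Quot--scheme contradiction are all exactly as in the paper. One point to make explicit: the discreteness of the $\beta_n$ is \emph{not} a routine adaptation of \cite{huybrechtslehn}, Lemma~2.B.2 --- it genuinely requires $\sigma$ to be rational (this is the paper's Lemma~\ref{L:L:betadiscrete}), since otherwise the numerators $\sum_{i,j}\sigma_{ij}\alpha_\delta^{L_j}(\K^n_i)$ need not lie in a lattice; you should flag this hypothesis rather than leave it implicit.
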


The fact that the stability condition $\sigma$ is rational is crucial for the validity of the proof of Theorem \ref{L:T:langton}. Because of its significance, we state the relevant step as a separate lemma.
\begin{lemma}\label{L:L:betadiscrete}
With notation as in the proof of Theorem \ref{L:T:langton}, the numbers $\beta_n$ such that
$$p^\sigma(\K^n)-p^\sigma(\F_k)=\beta_nT^\delta$$
are contained in a discrete set, if $\sigma$ is rational.

\begin{proof}
Since the polynomial $p^\sigma(\F_k)$ is independent of $n$, it remains to show that the coefficients of $p^\sigma(\K^n)$ take values in a discrete set. Such a coefficient is given as
$$\frac{\sum_{i\in Q_0}\sum_{j=1}^N \sigma_{ij}\alpha^{L_j}_\delta(\K^n_i)}{\sum_{i\in Q_0}\sum_{j=1}^N \sigma_{ij}\alpha^{L_j}_d(\K^n_i)}.$$
By construction, $\K^n$ is a quiver subsheaf of $\F^n_k$, so that each $\alpha^{L_j}_d(\K^n_i)$ is an integer between $1$ and $\alpha^{L_j}_d(\K^n_i)$. The upper bound is independent of $n$ because $\F^n_K=\F^{n+1}_K$ and $\F^n$ is flat, so that there are only finitely many possible values for the denominator.\\
For $\delta<d$ it is well--known that the coefficients $\alpha^{L_j}_\delta(\K^n_i)$ take value in some lattice $(1/r!)\Z$. The numerator thus takes values in a set of the form
$$\Z a_1+\ldots+\Z a_r$$
for finitely many rational numbers $a_r$. By factoring out the denominators of the $a_i$, we see that this set is contained in a cyclic subgroup $\Z\alpha\subset \R$, and is thus discrete.
\end{proof}
\end{lemma}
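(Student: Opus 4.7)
The plan is to analyze the coefficient $\beta_n$ directly and show it lies in a finite union of translates of a cyclic subgroup of $\R$, from which discreteness follows.

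First, since $p^\sigma(\F_k) \in \R[T]$ is a fixed polynomial independent of $n$, it suffices to show that the coefficient of $T^\delta$ in the reduced multi--Hilbert polynomial $p^\sigma(\K^n)$ varies in a discrete subset of $\R$ as $n$ varies. Writing $\alpha_k^\sigma(\G) = \sum_{i \in Q_0}\sum_{j=1}^N \sigma_{ij} \alpha_k^{L_j}(\G_i)$, this coefficient equals
\begin{equation*}
\frac{1}{\delta!} \cdot \frac{\alpha_\delta^\sigma(\K^n)}{\alpha_d^\sigma(\K^n)},
\end{equation*}
so I would reduce the problem to showing the numerator takes values in a discrete set and the denominator takes values in a finite set.

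For the denominator, I would use that $\K^n \subset \F^n_k$ is a quiver subsheaf, so each $\alpha_d^{L_j}(\K^n_i)$ is a nonnegative integer bounded above by $\alpha_d^{L_j}(\F^n_{k,i})$. Since $\F^n$ is $R$--flat and $\F^n_K = \F^{n+1}_K = \cdots = \F_K$ is fixed, the number $\alpha_d^{L_j}(\F^n_{k,i}) = \alpha_d^{L_j}(\F_{K,i})$ is independent of $n$; hence each $\alpha_d^{L_j}(\K^n_i)$ lies in a finite set, and so does $\alpha_d^\sigma(\K^n)$. For the numerator, I would invoke the standard fact that the Hilbert polynomial $P^{L_j}_{\K^n_i}$, being integer--valued on $\Z$, can be written as a $\Z$--linear combination of the binomial coefficients $\binom{T}{k}$; consequently its coefficient in degree $\delta$ lies in the lattice $\tfrac{1}{r!}\Z$ for some fixed $r$ (e.g.\ $r = d$).

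At this point the rationality of $\sigma$ becomes crucial. Writing $\sigma_{ij} = u_{ij}/v$ for integers $u_{ij}$ and a common denominator $v$, the sum
\begin{equation*}
\alpha_\delta^\sigma(\K^n) = \frac{1}{v}\sum_{i,j} u_{ij}\,\alpha_\delta^{L_j}(\K^n_i)
\end{equation*}
lies in the finitely generated abelian subgroup of $\R$ generated by $\tfrac{1}{vr!}\Z$, which is itself a cyclic subgroup $\alpha\Z \subset \R$; hence $\alpha_\delta^\sigma(\K^n)$ takes values in a discrete set. Combining the two, the ratio lies in the (finite) union over the finitely many denominator values $D$ of the discrete sets $\tfrac{1}{\delta! \, D}(\alpha\Z)$, which is again discrete.

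The only real obstacle is the denominator, where I must verify both that $\alpha_d^\sigma(\K^n) \neq 0$ (so the ratio is defined) and that it takes only finitely many values. The first point follows because $\K^n$ is of dimension $d$ together with the non--degeneracy built into the definition of $(\R^{Q_0\times N})_+$; the second is a direct consequence of the flatness of $\F^n$ as explained. Were $\sigma$ merely real, the numerator would lie in a dense subgroup of $\R$ rather than a cyclic one, which is precisely why the rationality hypothesis cannot be dropped by this argument.
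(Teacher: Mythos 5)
Your proposal is correct and follows essentially the same route as the paper's proof: finiteness of the denominator values via integrality of $\alpha_d^{L_j}(\K^n_i)$ bounded by the $n$--independent coefficients of $\F^n_k$ (using flatness and $\F^n_K=\F_K$), and discreteness of the numerator via the $\tfrac{1}{r!}\Z$--lattice for Hilbert polynomial coefficients combined with the rationality of $\sigma$ to land in a cyclic subgroup of $\R$. Your added checks (nonvanishing of the denominator, and the remark that irrational $\sigma$ would only give a dense subgroup) are minor refinements of the same argument, not a different approach.
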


\end{appendices}


\begin{thebibliography}{99999999999}

\bibitem[A09]{alvarezconsul}
{\'Alvarez-C\'onsul, L.},
{\it Some results on the moduli spaces of quiver bundles},
 Geom. Dedicata {\bf 139} (2009), 99--120\\

\bibitem[AK07]{ack}
{Álvarez-Cónsul, Luis; King, Alastair.}
\textit{A functorial construction of moduli of sheaves},
{Invent. Math. \textbf{168} (2007), no. 3, 613--666}\\


\bibitem[F98]{fulton}
{Fulton, William.}
\textit{Intersection theory, Second edition},
{Results in Mathematics and Related Areas, \textbf{3}rd Series. A Series of Modern Surveys in Mathematics, 2. Springer-Verlag, Berlin, 1998}\\

\bibitem[G90]{gabriel}
{Gabriel, Pierre.}
\textit{Des catégories abéliennes (French)},
{Bull. Soc. Math. France \textbf{90} (1962) 323--448}\\


\bibitem[GRT16]{grt}
{Greb, Daniel; Ross, Julius; Toma, Matei},
\textit{Variation of Gieseker moduli spaces via quiver GIT},
{Geom. Topol. \textbf{20} (2016), no. 3, 1539--1610}\\

\bibitem[H77]{hartshorne}
{Hartshorne, Robin},
\textit{Algebraic geometry},
{Graduate Texts in Mathematics, No. \textbf{52}. Springer-Verlag, New York-Heidelberg, 1977}\\


\bibitem[HL10]{huybrechtslehn}
{Huybrechts, Daniel; Lehn, Manfred.}
\textit{The geometry of moduli spaces of sheaves},
{Second edition. Cambridge Mathematical Library. Cambridge University Press, Cambridge, 2010}\\

\bibitem[K94]{king}
{King, A. D.},
\textit{Moduli of representations of finite-dimensional algebras},
{Quart. J. Math. Oxford Ser. (2) \textbf{45} (1994), no. 180, 515--530}\\

\bibitem[La75]{langton}
{Langton, Stacy G.}, 
\textit{Valuative criteria for families of vector bundles on algebraic varieties},
{Ann. of Math. (2) \textbf{101} (1975), 88--110}\\


\bibitem[Lo64]{lojasiewicz}
{\L ojasiewicz, S.},
\textit{Triangulation of semi-analytic sets},
{Ann. Scuola Norm. Sup. Pisa (3) \textbf{18} (1964) 449--474}\\

\bibitem[LP90]{lebruynprocesi}
{Le Bruyn, Lieven; Procesi, Claudio},
\textit{Semisimple representations of quivers},
{Trans. Amer. Math. Soc. \textbf{317} (1990), no. 2, 585--598}\\


\bibitem[Mi15]{milne}
{Milne, J.S.},
\textit{(Topics in) Algebraic Geometry, Ch. 16: Descent theory},
{lecture notes, http://www.jmilne.org/math/CourseNotes/ag.html}\\

\bibitem[Mu03]{mukai}
{Mukai, Shigeru},
\textit{An introduction to invariants and moduli},
{Translated from the 1998 and 2000 Japanese editions by W. M. Oxbury. Cambridge Studies in Advanced Mathematics, \textbf{81}, 
Cambridge University Press, Cambridge, 2003}\\


\bibitem[MW97]{matsukiwentworth}
{Matsuki, Kenji; Wentworth, Richard},
\textit{Mumford-Thaddeus principle on the moduli space of vector bundles on an algebraic surface},
{Internat. J. Math. \textbf{8} (1997), no. 1, 97--148}\\

\bibitem[MFK94]{mumford}
{Mumford, D.; Fogarty, J.; Kirwan, F.}
\textit{Geometric invariant theory. Third edition},
{Ergebnisse der Mathematik und ihrer Grenzgebiete (2) \textbf{34} Springer-Verlag, Berlin, 1994}\\

\bibitem[N05]{nitsure}
{Nitsure, Nitin},
\textit{Construction of Hilbert and Quot schemes},
{Fundamental algebraic geometry, 105--137, Math. Surveys Monogr., \textbf{123}, Amer. Math. Soc., Providence, RI, 2005}\\


\bibitem[Re08]{reineke}
{Reineke, Markus},
\textit{Moduli of representations of quivers},
{Trends in representation theory of algebras and related topics, 589--637, EMS Ser. Congr. Rep., Eur. Math. Soc., Zürich, 2008}\\


\bibitem[Ru97]{rudakov}
{Rudakov, Alexei},
\textit{Stability for an abelian category.}
{J. Algebra \textbf{197} (1997), no. 1, 231--245}\\

\bibitem[Sch00]{schmitt-wall}
{Schmitt, Alexander},
\textit{Walls for Gieseker semistability and the Mumford-Thaddeus principle for moduli spaces of sheaves over higher dimensional bases},
{Comment. Math. Helv. \textbf{75} (2000), no. 2, 216--231}\\

\bibitem[Sch05]{schmitt-tuple}
{Schmitt, Alexander},
\textit{Moduli for decorated tuples of sheaves and representation spaces for quivers},
{Proc. Indian Acad. Sci. Math. Sci. \textbf{115} (2005), no. 1, 15--49}\\


\bibitem[Sch12]{schmitt-qsheaf}
{Schmitt, Alexander},
{\it A remark on semistability of quiver bundles}, Eurasian Math. J. {\bf 3} (2012), 1, 110--138\\

\bibitem[Si94]{simpson}
{Simpson, Carlos T},
\textit{Moduli of representations of the fundamental group of a smooth projective variety}
{I. Inst. Hautes Études Sci. Publ. Math. No. \textbf{79} (1994), 47--129}\\


\bibitem[T96]{thaddeus}
{Thaddeus, Michael},
\textit{Geometric invariant theory and flips},
{J. Amer. Math. Soc. \textbf{9} (1996), no. 3, 691--723}\\


\end{thebibliography}
\end{document}